\newcommand{\End}{\text{End}}
\newcommand{\HBC}{H_\text{BC}}
\newcommand{\chBC}{\text{ch}_\text{BC}}
\newcommand{\TdBC}{\text{Td}_\text{BC}}
\newcommand{\Hom}{\text{Hom}}
\newcommand{\id}{\text{id}}
\newcommand{\str}{\text{Tr}_{\text{s}}}
\newcommand{\pr}{\text{pr}}
\newcommand{\Vect}{\text{Vect}}
\newcommand{\Gr}{\text{Gr}}
\newcommand{\ch}{\text{ch}}
\newcommand{\diff}{\text{d}}
\newcommand{\Diff}{\text{Diff}}
\newcommand{\dpar}{\partial}
\newcommand{\dbar}{\overline{\partial}}
\newcommand{\coh}{\text{coh}}
\newtheorem{thm}{Theorem}[section]
\newtheorem{lemma}[thm]{Lemma}
\newtheorem{prop}[thm]{Proposition}
\newtheorem{coro}[thm]{Corollary}
\newtheorem{defi}[thm]{Definition}
\newtheorem{eg}[thm]{Example}
\newtheorem{rmk}[thm]{Remark}
\numberwithin{equation}{section}
\title{Superconnection and Orbifold Chern character}
\author[Ma]{Qiaochu Ma}
\address{Department of Mathematics\\ Texas A\&M University\\ College Station, TX 77840\\ USA}
\email{qiaochu@tamu.edu}
\author[Tang]{Xiang Tang}
\address{Department of Mathematics\\ Washington University in St. Louis\\ St. Louis, MO, 63130\\ USA}
\email{xtang@math.wustl.edu}
\author[Tseng]{Hsian-Hua Tseng}
\address{Department of Mathematics\\ Ohio State University\\ Columbus,  OH 43210\\ USA}
\email{hhtseng@math.ohio-state.edu}
\author[Wei]{Zhaoting Wei}
\address{Department of Mathematics\\ East Texas A\&M University\\ Commerce, TX 75428\\ USA}
\email{Zhaoting.Wei@tamuc.edu}
\begin{document}

\date{\today}

\begin{abstract}
We use flat antiholomorphic superconnections to study orbifold Chern character following the method introduced by Bismut, Shen, and Wei. We show the uniqueness of orbifold Chern character by proving a Riemann-Roch-Grothendieck theorem for orbifold embeddings.    
\end{abstract}

\maketitle

\tableofcontents

\section{Introduction}

\subsection{Overview}

\subsubsection{Orbifolds}
Orbifolds are geometric objects introduced by I. Satake \cite{Satake} as a natural generalization of manifolds. In analogy to manifolds, which are locally isomorphic to Euclidean spaces, an orbifold $X$ is a topological space equipped with the data of {\em orbifold charts} $\{(U,G)\}$, which exhibit this orbifold locally as a quotient $[U/G]$ of an Euclidean space $U$ by a finite group $G$. 

Following subsequent developments, it became clear that orbifolds provide a natural geometric context to consider group actions on manifolds: for instance, orbifolds arise naturally in symplectic reduction (or geometric invariant theory quotients). Parameter spaces for interesting mathematical structures, also known as moduli spaces, often carry natural orbifold structures. 

Orbifolds may be studied via {\em groupoids}. This approach is well-documented in the literature, see for example \cite{moerdijk1997orbifolds}, \cite{moerdijk2002orbifolds}, \cite{adem2007orbifolds}. From this point of view, an orbifold $X$ is represented by a groupoid $\mathcal{G}=(G_0, G_1)$. Here the topological space underlying $X$ is given by the quotient space $G_0/G_1$. Geometric objects on the orbifold $X$ are given by geometric objects on its groupoid representation $\mathcal{G}$ which are equivariant with respect to suitably defined actions of the groupoid $\mathcal{G}$. Different groupoids representing the same orbifold are related by {\em Morita equivalences}. Throughout the paper, we will assume to work with proper \'etale groupoids that are Morita equivalent to compact group actions. 

In this paper, we study {\em complex orbifolds} via groupoids. For this purpose, we present in Section \ref{sec:gpd_orb_reviews} some basic notions and properties of groupoids and orbifolds, most of which are taken from existing literature. Nevertheless, to develop the Riemann-Roch-Grothendieck theorem for orbifold embeddings, we introduce a concept (Definition \ref{defi: orbifold embedding}) of embedding generalized morphism for Lie groupoids. In our definition, an embedding of orbifolds only requires the associated morphisms on stabilizer groups to be injective, not necessarily isomorphisms. Such a flexibility is needed to study graph embeddings of orbifolds, c.f. \ref{rmk:graphembedding}.


\subsubsection{Coherent sheaves}
We consider the notion of sheaves on a complex orbifold $X$, via groupoids. For a complex groupoid $\mathcal{G}$, we formulate in Definition \ref{defi: coherent sheaves on orbifolds} the notion of coherent $\mathcal{G}$-sheaves on $\mathcal{G}$. The derived category $D^b_{\coh}(\mathcal{G})$ of coherent $\mathcal{G}$-sheaves on $\mathcal{G}$ is defined in Definition \ref{defi: bounded derived category}. For a complex orbifold $X$, different groupoid presentations of $X$ are Morita equivalent. As a consequence of Proposition \ref{prop: coherent and Morita equivalence}, different groupoids presenting the same $X$ have equivalent derived categories of coherent sheaves, which we define to be the derived category $$D^b_{\coh}(X)$$ of coherent sheaves on $X$.

Our study of coherent sheaves on $X$ closely follows the recent development \cite{bismut2023coherent} on complex manifolds. Inspired by \cite{block2010mukai}, we generalize the approach \cite{bismut2023coherent} to coherent sheaves on complex orbifolds via {\em antiholomorphic flat superconnections}. Roughly speaking, in a groupoid representation $\mathcal{G}=(G_0, G_1)$ of $X$, an {antiholomorphic flat superconnection} on $\mathcal{G}$ is a bounded, finite rank, $\mathbb{Z}$-graded, left, $\mathcal{G}$-equivariant, $C^{\infty}$-vector bundle $E^{\bullet}$ on $G_0$ together with a $\mathcal{G}$-equivariant superconnection with total degree $1$,
$$
A^{E^{\bullet}\prime\prime}\colon\wedge^{\bullet}\overline{T^{*}G_0}  \otimes E^{\bullet}\to \wedge^{\bullet}\overline{T^{*}G_0}  \otimes E^{\bullet},
$$
such that $A^{E^{\bullet}\prime\prime}\circ A^{E^{\bullet}\prime\prime}=0$. Details can be found in Definition \ref{defi: antiholo superconn}.

We show in Proposition \ref{prop: Morita equivalent and quasi-equivalent} that the above notion is independent of the choice of groupoid representation and is intrinsic to $X$, in the sense that the dg-category $B(\mathcal{G})$ of antiholomorphic flat superconnections on $\mathcal{G}$ remains unchanged when $\mathcal{G}$ undergoes Morita equivalences. This leads to the definition of $$B(X),$$ the dg-category of antiholomorphic flat superconnections on the orbifold $X$. We then develop the basics of antiholomorphic flat superconnections on complex orbifolds. And we establish the foundational result:
\begin{equation}\label{eqn:equivalence_intro}
D^b_{\coh}(X)\simeq \underline{B}(X),
\end{equation}
which is an equivalence between the category of coherent sheaves on $X$ and the homotopy category associated to the dg-category $B(X)$ of antiholomorphic flat superconnections on $X$. Details are given in Corollary \ref{coro: equiv of cats}. 

The equivalence (\ref{eqn:equivalence_intro}) provides a way to study coherent sheaves on complex orbifolds $X$ by working with antiholomorphic flat superconnections. In particular, for $\mathcal{F}\in D^b_{\coh}(X)$ and an embedding map $$i_{X,Y}: X\to Y,$$ it is in general not clear if $i_{X,Y,*}\mathcal{F}\in D^b_{\coh}(Y)$ admits a resolution by holomorphic vector bundles even when $\mathcal{F}$ does. However, as an element in $\underline{B}(X)$, $i_{X,Y,*}\mathcal{F}$ does admit a representation by an antiholomorphic flat superconnection. This observation, going back to \cite{block2006duality, bismut2023coherent}, is the key to our study of coherent sheaves on orbifolds in this paper.

\subsubsection{Chern character}

The main object of study in this paper is the {\em orbifold Chern character} on complex orbifolds. The study of orbifold Chern character goes back to the exploration of equivariant index theory, see for example \cite{Atiyah-Bott}, \cite{kawasaki}, \cite{segal1968equivariant}. Below is an incomplete list of works on Chern character of holomorphic vector bundles. 

In \cite[Section 2.3]{adem2007orbifolds}, one can find a discussion on defining Chern classes of certain holomorphic vector bundles on orbifolds using Chern-Weil theory, resulting in classes in de Rham cohomology. For a complex orbifold $X$, this leads to the definition of Chern characters of (complexes of) holomorphic vector bundles on $X$, taking values in the de Rham cohomology of $X$.


X. Ma \cite[Section 1.2]{ma2005orbifolds} incorporates isotropy group actions to give a definition of the {\em orbifold Chern character} of holomorphic vector bundles on a complex orbifold $X$, taking values in the Bott-Chern cohomology of the {\em inertia orbifold} $IX$ associated to $X$. Basic definitions and properties of Bott-Chern cohomology of a complex orbifold from the groupoid perspective are presented in Section \ref{sec:H_BC}. 

For an orbifold $X$, its inertia orbifold $IX$ is the (disconnected) orbifold that can be viewed as parametrizing pairs $(x,g)$ where $x\in X$ and $g$ is an element of the isotropy group of $x$. Locally on $X$, the inertia orbifold can be understood in terms of the local orbifold chart $(U,G)$:
\begin{equation*}
IX|_{[U/G]}=\coprod_{(g): \text{ conjugacy class of }G} [U^g/Z_G(g)],   
\end{equation*}
where $Z_G(g)\subset G$ is the centralizer of $g\in G$. See Remark \ref{rmk:inertia_local} for more details. 

Given a groupoid presentation $\mathcal{G}$ of $X$, the inertia orbifold $IX$ can be represented by the inertia groupoid $I\mathcal{G}$ associated to $\mathcal{G}$. This is defined in detail in Section \ref{subsec: inertia groupoids}. Therefore, we can study $IX$ using the groupoid $I\mathcal{G}$.

Inertia orbifolds arise in some natural contexts in the geometry of orbifolds, such as loop spaces \cite{LupercioUribe} and Riemann-Roch theorem \cite{kawasaki}.

In this article, we are interested in the Chern character of coherent sheaves $X$. Our main strategy is to invoke the equivalence (\ref{eqn:equivalence_intro}) and consider antiholomorphic flat superconnections on $X$. Together with a generalized metric $h$, we use curvatures of superconnections and supertrace to define the Chern character of an antiholomorphic flat superconnection $(E^\bullet, A^{E^{\bullet}\prime\prime})$,
$$\ch(A^{E^{\bullet}\prime\prime},h),$$
see Definition \ref{defi: Chern character} for details. We show in Section \ref{sec:chern} that this yields a well-defined Bott-Chern cohomology class on $IX$ associated to the object in $D^b_{\coh}(X)$ represented by $(E^\bullet, A^{E^{\bullet}\prime\prime})$. This gives the {\em orbifold Chern character}, which is a group homomorphism
\begin{equation}\label{eqn:chbc_intro}
\chBC: K(X)\to \HBC^{(=)}(IX,\mathbb{C}),
\end{equation}
from the $K$-group of coherent sheaves on $X$ to the Bott-Chern cohomology of the inertia orbifold $IX$. The detailed definition is given in Definition \ref{defi: Chern character of derived category}.

Our orbifold Chern character $\chBC$ in (\ref{eqn:chbc_intro}) is easily seen to satisfy the following properties:

\begin{enumerate}
\item[($\star 1$)]     
For complex vector bundles $E$ on complex orbifolds, our definition of $\chBC(E)$ agrees with the one in \cite[Section 1.2]{ma2005orbifolds}.
\item [($\star 2$)] 
$\chBC$ is functorial under pullbacks.
\end{enumerate}

We show that our $\chBC$ satisfies the following crucial property:

\begin{thm}\label{thm:RRG_embeddings}
Let $i_{X,Y}\colon X\hookrightarrow Y$ be an embedding of a compact complex orbifold groupoid. 
Let $\mathcal{F}\in D^b_{\coh}(X)$ and $i_{X,Y,*}\mathcal{F}\in D^b_{\coh}(Y)$ be its direct image. We have
\begin{equation}\label{eq: GRR for embedding}
\chBC(i_{X,Y,*}\mathcal{F})=Ii_{X,Y,*}\left(\frac{\chBC(\mathcal{F})}{\TdBC(N_{X/Y})}\right) \text{ in }\HBC^{(=)}(IY,\mathbb{C}),
\end{equation}
where $Ii_{X,Y}$ is the induced morphism between inertia groupoids.
\end{thm}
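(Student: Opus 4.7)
The plan is to imitate the superconnection proof of the embedding case of Grothendieck--Riemann--Roch due to Bismut--Lebeau, in the reformulation of Bismut--Shen--Wei, now carried out on groupoid presentations of $X$ and $Y$ and their inertias. First I represent $\mathcal{F}$ by an antiholomorphic flat superconnection $(E^{\bullet},A^{E^{\bullet}\prime\prime})$ on a groupoid $\mathcal{G}_{X}$ presenting $X$, using the equivalence (\ref{eqn:equivalence_intro}). Working in a $\mathcal{G}_{Y}$-equivariant tubular neighborhood of $\mathcal{G}_{X}$ inside a groupoid $\mathcal{G}_{Y}$ presenting $Y$, identified with the total space of $N_{X/Y}$, I build an explicit antiholomorphic flat superconnection $(F^{\bullet},A^{F^{\bullet}\prime\prime})$ on $\mathcal{G}_{Y}$ representing $i_{X,Y,*}\mathcal{F}$: $F^{\bullet}$ is the pullback of $E^{\bullet}\otimes\wedge^{\bullet}\overline{N}_{X/Y}^{*}$ along the projection, endowed with the Koszul-type superconnection built from $\pi^{*}A^{E^{\bullet}\prime\prime}$ together with Clifford multiplication by the tautological antiholomorphic section, and extended by zero outside the tubular neighborhood using an equivariant cutoff. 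This is the orbifold analogue of the construction of \cite{block2006duality, bismut2023coherent} and lands in $\underline{B}(Y)$ by Definition \ref{defi: antiholo superconn}.

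Next, I fix a generalized metric $h_{F}$ on $F^{\bullet}$ and introduce a one-parameter family $(A_{t}^{F^{\bullet}\prime\prime},h_{F,t})$ for $t\in(0,\infty)$ in which the metric is rescaled so as to concentrate the superconnection along $X$ as $t\to\infty$. The transgression formula underlying Definition \ref{defi: Chern character of derived category} shows that the resulting Chern forms $\ch(A_{t}^{F^{\bullet}\prime\prime},h_{F,t})$ are all cohomologous in $\HBC^{(=)}(IY,\mathbb{C})$, each representing $\chBC(i_{X,Y,*}\mathcal{F})$.

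The decisive step is to identify the limit as $t\to\infty$ on each twisted sector of the inertia groupoid $I\mathcal{G}_{Y}$. Localization in the spirit of Bismut--Lebeau shows that the limiting current vanishes outside $I\mathcal{G}_{X}\subset I\mathcal{G}_{Y}$, and on $I\mathcal{G}_{X}$ a Mathai--Quillen type Gaussian integration along the fibres of $N_{X/Y}$ produces the inverse equivariant Bott--Chern Todd form: the eigenvalues of the automorphism $g$ acting on $N_{X/Y}$ enter through the rescaled superconnection curvature on each twisted sector and combine into $\TdBC(N_{X/Y})$ in the normalization fixed in Section \ref{sec:H_BC}. The limit is therefore the current representing $Ii_{X,Y,*}(\chBC(\mathcal{F})/\TdBC(N_{X/Y}))$ on $IY$, and together with the transgression invariance of the previous step this yields (\ref{eq: GRR for embedding}).

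The main obstacle will be the uniform adiabatic-limit analysis on each sector of $I\mathcal{G}_{Y}$: one must control the heat-type Chern form asymptotics on orbifold charts $[U/G]$, verify vanishing away from the fixed locus of $g\in G$ acting on $N_{X/Y}$, and check that the leading term matches the Bott--Chern Todd class. Morita invariance, established in Proposition \ref{prop: Morita equivalent and quasi-equivalent}, reduces the verification to local charts, and the $\bar{\partial}\partial$-exact ambiguity inherent to Bott--Chern representatives absorbs the choices of generalized metric, tubular neighborhood, and equivariant cutoff used in the construction of $(F^{\bullet},A^{F^{\bullet}\prime\prime})$.
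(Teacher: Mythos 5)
Your proposal has a genuine gap: it only engages with embeddings that look locally like a submanifold with a normal bundle, i.e.\ the \emph{stabilizer-preserving} case of Definition \ref{defi: stabilizer-preserving embedding}. The notion of orbifold embedding used in Theorem \ref{thm:RRG_embeddings} (Definition \ref{defi: orbifold embedding}) only requires the induced maps on stabilizer groups to be injective, so a general embedding factors (Proposition \ref{prop: decompose an embedding into two types}) as an \emph{iso-spatial} embedding, locally $G\ltimes U\hookrightarrow H\ltimes U$ with $G\subsetneq H$ and the identity on $U$, followed by a stabilizer-preserving one. On the iso-spatial piece there is no normal direction at all: the tubular neighborhood is all of $Y$, the Koszul/Mathai--Quillen superconnection degenerates, and the $t\to\infty$ Gaussian integration along fibres of $N_{X/Y}$ produces nothing. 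What is actually needed there is that $i_{X,Y,*}\mathcal{F}$ is the induced object $H\times_G E^{\bullet}$ and that $\str[h\exp(-(1\times A^{E^{\bullet}})^2)]$ on the sector $X^h$ equals $\sum_i |Z_G(g_i)|^{-1}\sum_{\tilde h\in Z_H(g_i)}\tilde h^*\str[g_i\exp(-(A^{E^{\bullet}})^2)]$, which is the formula defining $If_*$ on inertia groupoids --- a character-theoretic identity about induced representations, not an output of adiabatic localization. Your argument as written would therefore prove the theorem only for stabilizer-preserving embeddings and would silently assert (\ref{eq: GRR for embedding}) for, e.g., $[\mathrm{pt}/G]\hookrightarrow[\mathrm{pt}/H]$, where it is a nontrivial statement about induction.

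For the stabilizer-preserving piece your route is genuinely different from the paper's: you propose a Bismut--Lebeau style analytic localization (rescaled metric, transgression, Gaussian limit producing $\TdBC(N_{X/Y})^{-1}$ sector by sector), whereas the paper deforms to the normal cone via the blow-up of $Y\times\mathbb{P}^1$ along $X\times\infty$, applies the Poincar\'e--Lelong equation, and reduces to the known equivariant Chern character of the Koszul complex on the exceptional divisor. Your approach could in principle work, but it carries the full analytic burden of uniform heat-form asymptotics on orbifold charts (which you flag but do not carry out) and requires justifying that the cutoff extension of the Koszul superconnection actually represents $i_{X,Y,*}\mathcal{F}$ in $\underline{B}(Y)$. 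Even granting all of that, the proof is incomplete without a separate treatment of the iso-spatial case.
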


We show that properties ($\star 1$), ($\star 2$), together with Theorem \ref{thm:RRG_embeddings} characterize $\chBC$:

\begin{thm}\label{thm:unique_Chern}
The orbifold Chern character $\chBC\colon K(X)\to \HBC^{(=)}(IX,\mathbb{C})$ in (\ref{eqn:chbc_intro}) is the unique group homomorphism satisfying ($\star 1$), ($\star 2$) and 
\begin{enumerate}
\item [($\star 3$)]
$\chBC$ satisfies the Riemann-Roch-Grothendieck formula for orbifold embeddings, Equation (\ref{eq: GRR for embedding}).
\end{enumerate}
\end{thm}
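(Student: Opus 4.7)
The plan is to consider the difference of two such homomorphisms and show it vanishes. Let $\widetilde{\chBC}\colon K(X)\to \HBC^{(=)}(IX,\mathbb{C})$ be any group homomorphism satisfying $(\star 1)$, $(\star 2)$, $(\star 3)$, and set $\delta:=\chBC-\widetilde{\chBC}$; the aim is to prove $\delta([\mathcal{F}])=0$ for every $\mathcal{F}\in D^b_{\coh}(X)$. Property $(\star 1)$, together with the additivity of $\delta$ under short exact sequences of vector bundles, immediately shows that $\delta$ vanishes on any $K$-class represented by a bounded complex of holomorphic vector bundles on $X$. The remaining task is to reduce the general coherent case to this vector-bundle case by means of the Riemann--Roch--Grothendieck formula $(\star 3)$.

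The central geometric step is to produce, for each $\mathcal{F}\in D^b_{\coh}(X)$, a closed orbifold embedding $i_{X,Y}\colon X\hookrightarrow Y$ into a compact complex orbifold $Y$ such that $i_{X,Y,*}\mathcal{F}$ admits a global bounded resolution by holomorphic vector bundles on $Y$. The flexibility of Definition \ref{defi: orbifold embedding} is crucial here: it allows, for instance, graph-type embeddings $X\hookrightarrow X\times Y_{0}$ with $Y_{0}$ a suitable projective orbifold carrying enough ample line bundles to resolve arbitrary coherent sheaves, and such a graph is only an embedding in the flexible sense because the isotropy map need not be an isomorphism. Alternatively, the equivalence (\ref{eqn:equivalence_intro}) lets one first represent $i_{X,Y,*}\mathcal{F}$ as an antiholomorphic flat superconnection on $Y$ and then deform this data into a complex of genuinely holomorphic vector bundles on $Y$. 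Once such an $i_{X,Y}$ is in hand, applying $(\star 3)$ to both $\chBC$ and $\widetilde{\chBC}$ and subtracting yields
\begin{equation*}
Ii_{X,Y,*}\left(\frac{\delta([\mathcal{F}])}{\TdBC(N_{X/Y})}\right)=0\quad\text{in }\HBC^{(=)}(IY,\mathbb{C}),
\end{equation*}
since $\delta([i_{X,Y,*}\mathcal{F}])=0$ by the vector-bundle step.

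To extract $\delta([\mathcal{F}])=0$ from this identity, I would invoke the orbifold self-intersection formula: pulling back via $Ii_{X,Y}^{*}$ and using $(\star 2)$ together with the Koszul resolution of $i_{X,Y,*}\mathcal{O}_{X}$ on $Y$, the composition $Ii_{X,Y}^{*}\circ Ii_{X,Y,*}$ is identified with multiplication by a Bott--Chern Euler-type class of $N_{X/Y}$ decomposed over the twisted sectors of $IX$; combined with $\TdBC(N_{X/Y})^{-1}$, this operator is invertible on each component of $IX$, because the moving part of $N_{X/Y}$ under the isotropy action contributes nonzero scalars in degree zero of Bott--Chern cohomology. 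The main obstacle will be the geometric reduction in the second paragraph: building, for an arbitrary compact complex orbifold $X$ (not assumed K\"ahler or projective) and an arbitrary $\mathcal{F}\in D^b_{\coh}(X)$, an ambient $Y$ on which $i_{X,Y,*}\mathcal{F}$ admits a global holomorphic vector-bundle resolution. Executing this will require both the full strength of Corollary \ref{coro: equiv of cats} and the flexible embedding notion of Definition \ref{defi: orbifold embedding}. A secondary technical point is to justify the projection formula against the vector-bundle Koszul class in the self-intersection step using only the given properties, rather than assuming ring multiplicativity of $\widetilde{\chBC}$ a priori.
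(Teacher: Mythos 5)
Your overall strategy diverges from the paper's and, as written, has two gaps that I do not see how to close. First, the ``central geometric step'' --- finding an embedding $i_{X,Y}\colon X\hookrightarrow Y$ such that $i_{X,Y,*}\mathcal{F}$ admits a \emph{global} bounded resolution by holomorphic vector bundles on $Y$ --- is precisely the thing that fails for general compact complex orbifolds; the nonexistence of such global locally free resolutions is the reason the whole antiholomorphic-superconnection formalism is needed in the first place. A graph embedding into $X\times Y_{0}$ does not help (ampleness on the $Y_{0}$ factor cannot resolve behaviour along the $X$ factor), and the equivalence $D^b_{\coh}(Y)\simeq\underline{B}(Y)$ produces a superconnection representative, not a complex of holomorphic bundles --- there is no ``deformation'' from the former to the latter in general. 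Second, even granting that step, the extraction of $\delta([\mathcal{F}])$ from $Ii_{X,Y,*}\bigl(\delta([\mathcal{F}])/\TdBC(N_{X/Y})\bigr)=0$ fails: $Ii_{X,Y,*}$ need not be injective, and the self-intersection operator $Ii_{X,Y}^{*}\circ Ii_{X,Y,*}$ is multiplication by an equivariant Euler-type class of $N_{X/Y}$ whose degree-zero component \emph{vanishes} on the untwisted sector of $IX$ (and on every sector where the isotropy element fixes a nonzero part of the normal bundle). On those components the operator is nilpotent, not invertible, so dividing by $\TdBC(N_{X/Y})$ does not rescue the argument. Your claim that ``the moving part of $N_{X/Y}$ contributes nonzero scalars'' is true only sector by sector where the normal bundle has no fixed part, which never covers the untwisted sector.

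The paper's actual proof follows Grivaux's argument and is structured quite differently: one inducts on dimension. Given $\mathcal{F}$, Hironaka's flattening theorem (which extends to orbifolds because flatness and blow-ups along smooth centers commute with \'etale base change) produces $\pi\colon\tilde{X}\to X$, a composition of blow-ups along smooth centers, such that $\pi^{*}\mathcal{F}$ has a locally free quotient of maximal rank; the discrepancy is then supported on the exceptional divisors. D\'evissage gives $G(D)\cong G_{D}(\tilde{X})$ for a smooth divisor $D$, so the supported classes are pushforwards from $D$, where $\delta$ vanishes by the dimension induction; the Riemann--Roch--Grothendieck formula $(\star 3)$ for the \emph{codimension-one} embedding $D\hookrightarrow\tilde{X}$ transports this vanishing to $\tilde{X}$, and $(\star 2)$ together with the (split) injectivity of blow-up pullbacks descends the conclusion to $X$. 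In other words, the embedding to which $(\star 3)$ is applied is the divisor inside the blow-up, used in the ``forward'' direction, rather than an embedding of $X$ into a larger ambient space that one then tries to invert. If you want to salvage your outline, you should replace the second and third paragraphs by this flattening-plus-d\'evissage induction.
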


We view Theorem \ref{thm:unique_Chern}, which is an orbifold version of the uniqueness result \cite[Theorem 9.4.1]{bismut2023coherent}, as a demonstration that our definition of $\chBC$ is the correct one.

After our paper appeared on the arXiv, we became aware of the paper \cite{Xu2025} by Guangzhe Xu. The paper \cite{Xu2025} establishes the main results of \cite{bismut2023coherent} in the setting of equivariant geometry of a finite group acting on a complex manifold. On one hand, \cite{Xu2025} establishes our Theorems \ref{thm:RRG_embeddings} and \ref{thm:unique_Chern} in the (more restrictive) setting of  equivariant geometry with respect to {\em finite} group actions. On the other hand, \cite{Xu2025} establishes Riemann-Roch-Grothendieck for proper morphisms between complex manifolds equivariant with respect to finite group actions, which is more general than what is available in this paper (our Theorem \ref{thm:RRG_embeddings} is only valid for embeddings).

Theorem \ref{thm:RRG_embeddings} is established in Section \ref{sec:RRG_embeddings}. For this, we need a structure result on orbifold embeddings. We introduce two kinds of embeddings of orbifolds: {\em stabilizer-preserving} embedding (Definition \ref{defi: stabilizer-preserving embedding}) and {\em iso-spatial} embedding (Definition \ref{defi: iso-spatial embedding}). We show that any orbifold embedding can be decomposed into the composition of an iso-spatial embedding followed by a stabilizer-preserving embedding, see Proposition \ref{prop: decompose an embedding into two types}. 

Our proof of Theorem \ref{thm:RRG_embeddings} is divided into three parts. We first treat the iso-spatial case by direct computations, see Theorem \ref{thm: GRR iso-spatial embedding}. This part is intrinsically associated with the geometry of stabilizer groups. To the best of our knowledge, all prior works did not consider embeddings of this type. Thus Theorem \ref{thm: GRR iso-spatial embedding} is a genuine new result in the study of the Riemann-Roch formula for embeddings of orbifolds. In contrast, the stabilizer-preserving case, Theorem \ref{thm: GRR stablizer-preserving embedding}, is established by a deformation to the normal cone argument following closely the method developed in \cite{bismut2023coherent}. When $\mathcal{F}$ is a vector bundle\footnote{\cite{ma2005orbifolds} assumed that the pushforward of $\mathcal{F}$ admits a locally free resolution.}, Theorem \ref{thm: GRR stablizer-preserving embedding} was proved in \cite{ma2005orbifolds}. Our Theorem \ref{thm: GRR stablizer-preserving embedding} covers all $\mathcal{F}\in D^b_{\coh}(X)$. In Section \ref{sec:pf_RRG_emb}, we put everything together to prove Theorem \ref{thm:RRG_embeddings}.

Theorem \ref{thm:RRG_embeddings} can be used to calculate the Chern character of pushforwards $i_{X,Y,*}\mathcal{F}$ of coherent sheaves under an embedding $i_{X,Y}\colon X\hookrightarrow Y$. This is very useful in practice.

\subsection{Outlook}
Riemann-Roch type results for orbifolds start with the work of T. Kawasaki \cite{kawasaki}, who proved a formula for holomorphic Euler characteristics of vector bundles on compact complex orbifolds. 

After Grothendieck (see \cite{berthelot1966seminaire}), Riemann-Roch type results refer to transformations from K-theory to suitable cohomology theories that commute with pushforwards of proper morphisms.

For algebraic orbifolds, more precisely Deligne-Mumford stacks, a Riemann-Roch theorem was proved by B. Toen \cite{Toen}.

We aim to establish a Riemann-Roch-Grothendieck theorem for complex orbifolds, which will calculate the orbifold Chern character $\chBC(f_*\mathcal{F})$ of the pushforward of a coherent sheaf $\mathcal{F}$ under a holomorphic map $f$.

A holomorphic map $$f\colon X\to Y$$ between complex orbifolds can be decomposed as the composition of the embedding $$i_f\colon X\to X\times Y$$ and the projection $$p\colon X\times Y\to Y.$$ Our plan to establish the Riemann-Roch-Grothendieck theorem for $f$ is by proving the Riemann-Roch-Grothendieck theorems for $i_f$ and $p$ separately. In this paper we prove the case of embeddings (Theorem \ref{thm:RRG_embeddings}), which covers $i_f$. In the sequel, we will prove the case that covers $p$ and thus complete the proof of the Riemann-Roch-Grothendieck for $f$. We will also apply our results to obtain a Riemann-Roch theorem for coherent sheaves on complex orbifolds {\em twisted} by flat $\mathbb{C}^*$-gerbes.

\subsection{Outline}
The rest of this paper is organized as follows. We present a treatment of groupoids and complex orbifolds in Section \ref{sec:gpd_orb_reviews}, which contains the definitions and results that we need. In Section \ref{sec:H_BC} we define Bott-Chern cohomology for complex orbifolds and develop its basic properties. We discuss coherent sheaves on complex orbifolds in Section \ref{section: coherent sheaves on complex orbifolds}. In Section \ref{Section: ahfs} we introduce and study antiholomorphic superconnections on complex orbifolds. In Section \ref{sec:equiv_cat} we establish an equivalence between coherent sheaves and antiholomorphic superconnections on a complex orbifold. Section \ref{sec:gen_metrics} contains a discussion about generalized metrics and their curvatures. The metric description of Chern character for complex orbifolds is given in Section \ref{sec:chern}. In Section \ref{sec:RRG_embeddings}, we prove the Riemann-Roch-Grothendieck Theorem for embeddings of orbifolds (Theorem \ref{thm:RRG_embeddings}) and apply it to establish the uniqueness property of the orbifold Chern character (Theorem \ref{thm:unique_Chern}).

\subsection{Acknowledgment}
We would like to thank J. Block, X. Ma, I. Moerdijk, S. Shen, G. Xu, and T. Y. Yu for inspiring discussions.  Ma's research was supported by the NSF grant DMS-1952669. Tang's research is supported in part by NSF grants DMS-1952551, DMS-2350181, and Simons Foundation Collaboration Grant MPS-TSM-00007714.  Tseng's research is supported in part by Simons Foundation Collaboration Grant \# 636211. Wei's research is supported in part by AMS-Simons Research Enhancement Grants for PUI Faculty.

\section{A review of groupoids and orbifolds}\label{sec:gpd_orb_reviews}

This section consists of two parts. In the first part, we review the related materials for groupoids and orbifolds; in the second part, we introduce the groupoid presentation of an orbifold embedding, which is crucial for our development of an orbifold Riemann-Roch-Grothendieck theorem. 
\subsection{Lie groupoids}
We briefly review the groupoid approach to orbifolds, mostly following \cite{moerdijk2002orbifolds}

\begin{defi}\label{defi: groupoid}
A groupoid is a (small) category in which each morphism is invertible. Alternatively, 
a groupoid $\mathcal{G}$ consists of a set $G_0$ of objects and a set $G_1$ of arrows.  There are maps $s$ and $t\colon G_1\rightrightarrows G_0$ which are called the source map and the target map, respectively. Moreover we have the unit map $u\colon G_0\to G_1$, the inverse map $i\colon G_1\to G_1$, and the composition map $m\colon G_1\times_{G_0}G_1\to G_1$.
\end{defi}

\begin{defi}\label{defi: groupoid homomorphism}
A homomorphism $\phi:\mathcal{H}\to\mathcal{G}$ is by definition a functor. In more detail, a homomorphism consists of two maps (both) denoted by $\phi: H_0\to G_0$ and $\phi: H_1\to G_1$, which together commute with all structure maps.
\end{defi}

We need to consider groupoids in the category of smooth manifolds. 

\begin{defi}\label{defi: Lie groupoid}
A Lie groupoid is a groupoid $\mathcal{G}$ for which $G_0$ and $G_1$ are smooth manifolds and the structure maps $s$, $t$, $u$, $i$, and $m$ are smooth. Furthermore, $s$ and $t\colon G_1\rightrightarrows G_0$  are required to be submersive.
\end{defi}

\begin{defi}\label{defi:Lie groupoid homomorphism}
A homomorphism $\phi\colon\mathcal{H}\to\mathcal{G}$ between Lie groupoids is by definition a smooth functor. In more detail, a homomorphism consists of two smooth maps (both) denoted by $\phi\colon H_0\to G_0$ and $\phi\colon H_1\to G_1$, which together commute with all structure maps.
\end{defi}

\begin{defi}\label{defi: groupoid equivalence}
A homomorphism $\phi\colon\mathcal{H}\to\mathcal{G}$ between Lie groupoids is called an equivalence if
\begin{enumerate}
\item the map
$$
t\circ \pi_1\colon G_1\times_{G_0}H_0\to G_0
$$
is a surjective submersion;
\item the square
$$
\begin{CD}
H_1 @>\phi>> G_1\\
@V(s,t)VV @VV(s,t)V\\
H_0\times H_0 @>\phi\times \phi>> G_0\times G_0
\end{CD}
$$
is cartesian.
\end{enumerate}
\end{defi}

\begin{defi}\label{defi: groupoid Morita equivalence}
Two Lie groupoids $\mathcal{G}$ and $\mathcal{G}^{\prime}$ are called Morita equivalent if there exists a third groupoid $\mathcal{H}$ and equivalences
$$
\mathcal{G}\overset{\phi}{\leftarrow} \mathcal{H} \overset{\phi^{\prime}}{\rightarrow} \mathcal{G}^{\prime}.
$$
\end{defi}

It can be shown that this defines an equivalence relation on Lie groupoids. To define an orbifold, we will work with a proper \'etale Lie groupoid.

\begin{defi}\label{defi: proper groupoid}
A Lie groupoid $\mathcal{G}$ is called a proper groupoid if the map $(s,t)\colon G_1\to G_0\times G_0$ is proper.
\end{defi}

\begin{defi}\label{defi: etale groupoid}
A Lie groupoid $\mathcal{G}$ is called an \'{e}tale groupoid if $s$ and $t$ are local diffeomorphisms.
\end{defi}

\begin{defi}\label{defi: effective groupoid}
Let $\mathcal{G}$ be an \'{e}tale groupoid. Then any arrow $g\colon x\to y$ in $\mathcal{G}$ induces a well-defined germ of a diffeomorphism $\tilde{g}\colon (U_x, x)\overset{\sim}{\to} (V_y,y)$ as $\tilde{g}=t\circ \hat{g}$, where $\hat{g}\colon U_x\to G_1$ is a section of the source map $s\colon G_1\to G_0$ defined on a sufficiently small neighborhood $U_x$ of $x$ and with $\hat{g}(x)=g$.

We call $\mathcal{G}$ effective (or reduced) if the assignment $g\mapsto \tilde{g}$ is faithful, or equivalently, if for each $x\in G_0$ the map $g\mapsto \tilde{g}$ is an injective group homomorphism $G_x\to \Diff_x(G_0)$.
\end{defi}

Now we can define orbifolds.

\begin{defi}\label{defi: orbifold}
An orbifold groupoid is a proper \'etale groupoid $\mathcal{G}$, and an orbifold $X$ has its underlying space given by the quotient space $G_0/G_1$ of an orbifold groupoid.

Two orbifolds $X=[G_0/G_1]$, $Y=[H_0/H_1]$ are said to be equivalent if the corresponding proper \'etale groupoid presentations $\mathcal{G}$ and $\mathcal{H}$ are Morita equivalent. 
\end{defi}

Next we describe the notion of groupoid actions on spaces.

\begin{defi}\label{defi: G-space and vector bundle}
Let $\mathcal{G}$ be a Lie groupoid. A (right) $\mathcal{G}$-space is a manifold $M$ equipped with an action by $\mathcal{G}$. This action is given by two smooth maps\footnote{The map $\pi$ is usually assumed to be submersive.} 
\begin{equation*}
\pi\colon M\to G_0,\quad \mu\colon M\times_{G_0}G_1\to M,
\end{equation*}
which satisfy the usual identities for an action. 

A vector bundle over $\mathcal{G}$ is a $\mathcal{G}$-space $E$ for which $\pi\colon E\to G_0$ is a vector bundle and the action of $\mathcal{G}$ on $E$ is fiberwise linear.

We can define a left $\mathcal{G}$-space and a left vector bundle over $\mathcal{G}$ in a similar manner.
\end{defi}

We need the following construction.

\begin{defi}\label{defi: fiber product}
Let $\mathcal{G}$ be a Lie groupoid. For a right $\mathcal{G}$-space $E$ and a left $\mathcal{G}$-space $F$, we define their fiber product $E\times_{\mathcal{G}} F$ to be
$$
E\times_{\mathcal{G}} F:=E\times_{G_0}F/\sim,
$$
where $\sim$ is the relation $(eg,f)\sim (e,gf)$ for $g\in G_1$.
\end{defi}

\begin{defi}\label{defi: principal G-bundle}
Let $\mathcal{G}$ be a Lie groupoid. A (right) $\mathcal{G}$-space $P$ is called free if $pg_1=pg_2$ implies $g_1=g_2$. It is called proper if the map $P\times_{G_0} G_1\to P$ is proper.

A (right) $\mathcal{G}$-space $P$ is called a principal $\mathcal{G}$-bundle if it is both free and proper.

We can define a left principal $\mathcal{G}$-bundle in the same way.
\end{defi}

\begin{defi}\label{defi: bi-bundle}
Let $\mathcal{G}$ and $\mathcal{H}$ be two Lie groupoids. A $\mathcal{G}$-$\mathcal{H}$ principal bibundle is a  manifold $P$ with commuting left
$\mathcal{G}$-space and right $\mathcal{H}$-space structures that are both principal, and satisfy the
following extra conditions:
\begin{enumerate}
\item The quotient space $\mathcal{G}\backslash P$ (with its quotient topology) is diffeomorphic to $H_0$ in a way
that identifies the map $P\to H_0$ with the quotient map $P\to \mathcal{G}\backslash P$.
\item The quotient space $P\slash \mathcal{H}$ (with its quotient topology) is diffeomorphic to $G_0$ in a way
that identifies the map $P\to G_0$ with the quotient map $P\to  P\slash \mathcal{H}$.
\end{enumerate}
\end{defi}

The following proposition gives an equivalent definition of Morita equivalence.
\begin{prop}\label{prop: equiv defi of Morita equiv}
Two Lie groupoids $\mathcal{G}$ and $\mathcal{H}$ are Morita equivalent in the sense of Definition \ref{defi: groupoid Morita equivalence} if and only if there exists a $\mathcal{G}$-$\mathcal{H}$ principal bibundle $P$.
\end{prop}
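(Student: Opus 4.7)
The plan is to prove each direction by an explicit construction, following classical techniques in Lie groupoid theory.

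For the forward direction ($\Rightarrow$), the key observation is that every equivalence functor $\phi\colon \mathcal{K}\to \mathcal{G}$ gives rise to a canonical $\mathcal{G}$-$\mathcal{K}$ principal bibundle $P_\phi := G_1 \times_{G_0} K_0$, formed by pulling back $s\colon G_1\to G_0$ along $\phi\colon K_0\to G_0$. The left $\mathcal{G}$-action is by composition on the first factor, the right $\mathcal{K}$-action is $(g,x)\cdot k = (g\phi(k), s(k))$, and the two conditions of Definition \ref{defi: groupoid equivalence} translate respectively into surjective submersivity of the left anchor and principality of the right $\mathcal{K}$-action. Applied to the given span $\mathcal{G}\xleftarrow{\phi}\mathcal{K}\xrightarrow{\phi'}\mathcal{H}$, one obtains bibundles $P_\phi$ and $P_{\phi'}$; the desired $\mathcal{G}$-$\mathcal{H}$ principal bibundle is then their tensor product
$$P \ :=\ \bigl(P_\phi \times_{K_0} P_{\phi'}\bigr) / \mathcal{K},$$
with $\mathcal{K}$ acting diagonally. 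Verifying that $P$ satisfies Definition \ref{defi: bi-bundle} amounts to checking that the diagonal $\mathcal{K}$-action is free and proper (so that the quotient is a smooth manifold) and that principality is preserved upon quotienting, both of which follow from principality of the constituent bibundles combined with properness of $\mathcal{K}$.

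For the converse ($\Leftarrow$), given a $\mathcal{G}$-$\mathcal{H}$ principal bibundle $P$ with anchors $\pi_\mathcal{G}, \pi_\mathcal{H}$, I would construct a witnessing Lie groupoid $\mathcal{K}$ as the action groupoid of the left $\mathcal{G}$-action on $P$: set $K_0 = P$ and $K_1 = G_1 \times_{G_0} P$, with source $(g,p)\mapsto p$ and target $(g,p)\mapsto g\cdot p$. The projection sending $(g,p)\mapsto g$ and $p\mapsto \pi_\mathcal{G}(p)$ is a Lie groupoid homomorphism; condition (1) of Definition \ref{defi: groupoid equivalence} follows because $\pi_\mathcal{G}$ is a surjective submersion (a consequence of bi-principality of $P$), and the cartesian-square condition (2) reduces to freeness of the $\mathcal{G}$-action with orbit space $H_0$. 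A homomorphism $\mathcal{K}\to \mathcal{H}$ is defined by sending $p\mapsto \pi_\mathcal{H}(p)$ and $(g,p)\mapsto h$, where $h\in H_1$ is the unique arrow (guaranteed by right principality) with $g\cdot p = p\cdot h$; a symmetric argument shows this is also an equivalence. Hence $\mathcal{G}\leftarrow \mathcal{K}\rightarrow \mathcal{H}$ is the desired span of equivalences.

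The main obstacle I expect is the verification of the cartesian-square condition of Definition \ref{defi: groupoid equivalence} in both directions, since this is where the principality axioms are genuinely used rather than mere essential surjectivity. In the forward direction, the delicate point is justifying the smooth manifold structure on $(P_\phi \times_{K_0} P_{\phi'})/\mathcal{K}$ and checking that the quotient inherits two-sided principality; a slice theorem for proper free actions of \'etale groupoids makes this work. In the backward direction, the same square condition consumes the full strength of the bi-principality hypothesis, since both freeness of the $\mathcal{G}$-action and the identification $P/\mathcal{H}\cong G_0$ (respectively $\mathcal{G}\backslash P\cong H_0$) are needed to identify the fiber product of target and source maps with $K_1$.
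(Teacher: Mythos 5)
The paper's own ``proof'' is a one-line citation to \cite{behrend2011differentiable}, so there is no internal argument to compare against; your forward direction (bundlization of each equivalence functor followed by the tensor product of bibundles) is the standard route and is fine in outline. The converse, however, has a genuine error. The one-sided action groupoid $\mathcal{K}=\mathcal{G}\ltimes P$ (with $K_0=P$, $K_1=G_1\times_{G_0}P$, arrows $p\to g\cdot p$) is \emph{not} equivalent to $\mathcal{G}$ in the sense of Definition \ref{defi: groupoid equivalence}: the cartesian condition (2) requires that arrows $p_1\to p_2$ in $\mathcal{K}$ biject with arrows $\pi_{\mathcal{G}}(p_1)\to\pi_{\mathcal{G}}(p_2)$ in $\mathcal{G}$, but for $g$ with $s(g)=\pi_{\mathcal{G}}(p_1)$ and $t(g)=\pi_{\mathcal{G}}(p_2)$ one only knows that $g\cdot p_1$ and $p_2$ lie in the same $\mathcal{H}$-orbit (the same fiber of $\pi_{\mathcal{G}}$), not that $g\cdot p_1=p_2$; the comparison map is injective but not surjective. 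Concretely, for a nontrivial finite group $\Gamma$ take $\mathcal{G}=\mathcal{H}=(\Gamma\rightrightarrows *)$ and $P=\Gamma$ with left and right translation: then $\mathcal{G}\ltimes P=\Gamma\ltimes\Gamma$ is Morita trivial (the action is free and transitive), whereas $\mathcal{G}$ is not. In general $\mathcal{G}\ltimes P$ is equivalent to the \emph{unit} groupoid on $H_0\cong\mathcal{G}\backslash P$, which destroys all isotropy of $\mathcal{H}$. Relatedly, your homomorphism $\mathcal{K}\to\mathcal{H}$ is ill-defined: since $\pi_{\mathcal{G}}(g\cdot p)=t(g)$ while $\pi_{\mathcal{G}}(p)=s(g)$, the points $g\cdot p$ and $p$ lie in different $\mathcal{H}$-orbits whenever $g$ is not a loop, so no $h$ with $g\cdot p=p\cdot h$ exists.

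The repair is to replace $\mathcal{G}\ltimes P$ by the two-sided action groupoid, which the paper already records in Remark \ref{rmk: groupoid from generalized morphism}: take $K_0=P$ and $K_1=\{(p,g,h)\in P\times G_1\times H_1:\ s(g)=\pi_{\mathcal{G}}(p),\ t(h)=\pi_{\mathcal{H}}(p)\}$ with $s(p,g,h)=p$ and $t(p,g,h)=g\cdot p\cdot h$. The projection $(p,g,h)\mapsto g$ to $\mathcal{G}$ now does satisfy the cartesian condition, precisely because right principality supplies, for each admissible $g$, a unique $h$ carrying $g\cdot p_1$ onto $p_2$ inside its $\mathcal{H}$-orbit; symmetrically, $(p,g,h)\mapsto h^{-1}$ is an equivalence onto $\mathcal{H}$ using left principality. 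With that substitution (and your forward direction essentially as written) the proposition follows.
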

\begin{proof}
See \cite[Theorem 2.2]{behrend2011differentiable}.
\end{proof}

\subsection{Generalized morphisms}\label{subsect: generalized morphisms}
In this section, we introduce the notion of generalized morphisms between groupoids.
\begin{defi}\label{defi: generalized morphism}
Let $\mathcal{G}$ and $\mathcal{H}$ be two Lie groupoids. A generalized morphism from $\mathcal{G}$ to $\mathcal{H}$ is a triple $(Z,\rho, \sigma)$ where
$$
G_0\overset{\rho}{\leftarrow} Z \overset{\sigma}{\rightarrow} H_0.
$$
The manifold $Z$ is endowed with a left $\mathcal{G}$-action and a right $\mathcal{H}$-action which commute, such that
\begin{enumerate}
\item the action of $\mathcal{H}$ is free and proper,
\item $\rho$ induces a diffeomorphism $Z/\mathcal{H}\simeq G_0$.
\end{enumerate}
\end{defi}

\begin{rmk}\label{rmk: groupoid from generalized morphism}
We can define a groupoid $\mathcal{Z}=(Z_0, Z_1)$ from a generalized morphism $(Z,\rho, \sigma)$ where $Z_0=Z$ and 
$$
Z_1=\{(z,g,h)\in Z\times G_1\times H_1| s(g)=\rho(z),~ t(h)=\sigma(z)\}.
$$
We define $s(z,g,h)=z$ and $t(z,g,h)=gzh$. In addition, we define
$$
(z_1,g_1,h_1)\cdot (z_2,g_2,h_2)=(z_2, g_1g_2, h_2h_1)
$$
when $z_1=g_2z_2h_2$.

We have natural groupoid homomorphisms $\phi_{\rho}\colon \mathcal{Z}\to \mathcal{G}$ and $\phi_{\sigma}\colon \mathcal{Z}\to \mathcal{H}$. Actually, on the $Z_0$ level, we define
$$
\phi_{\rho}(z)=\rho(z) \text{ and } \phi_{\sigma}(z)=\sigma(z).
$$
On the $Z_1$ level, we define 
$$
\phi_{\rho}(z,g,h)=g \text{ and } \phi_{\sigma}(z,g,h)=h^{-1}.
$$
It is easy to check that $\phi_{\rho}$ and $\phi_{\sigma}$ are groupoid homomorphisms and $\phi_{\rho}$ is a Morita equivalence morphism. If $(Z,\rho, \sigma)$ is a principal bibundle, then $\phi_{\sigma}$ is also a Morita equivalence morphism.
\end{rmk}

\begin{eg}\label{ex:morphism-generalized morphism}
Let $f\colon \mathcal{G}\to \mathcal{H}$ be a morphism between \'etale Lie groupoids. We consider the \emph{comma generalized morphism} $(C, \rho, \sigma)$ as follows. 

Define 
$$
C:=\{(g_0,h)\in G_0\times H_1| f_0(g_0)=t(h)\}.
$$
Since the map $t$ is \'etale, the space $C$ is a smooth manifold. 

Define the left $\mathcal{G}$-action on $C$ as
$$
g(g_0,h):=(t(g), f_1(g)h),
$$
and the right $\mathcal{H}$-action on $C$ as
$$
(g_0,h)\tilde{h}=(g_0,h\tilde{h}).
$$

The above left $\mathcal{G}$-action can be depicted in the diagram below:

\begin{tikzcd}
g_0 \arrow[dd, "f_0"'] &                                                   & t(g) \arrow[dd, "f_0"] &  &                                           \\
                               & {} \arrow[r, maps to]                             & {}                             &             &                                                                 \\
f_0 (g_0)=t( h)        & s(h) \arrow[l, "h"', bend right]   & t(f_1(g))=f_0(t(g))            & f_0 (g_0)=t(h) \arrow[l, "f_1(g)"', bend right, dashed] &s(h) \arrow[l, "h"', bend right, dashed] \arrow[ll, "f_1(g)h"', bend right=50].
\end{tikzcd}

The above right $\mathcal{H}$-action can be depicted in the diagram below:

\begin{tikzcd}
g_0 \arrow[dd, "f_0"'] &                                                   & g_0 \arrow[dd, "f_0"]          &                                          &                                  \\
                               & {} \arrow[r, maps to]                             & {}                             &                                               &                                          &                                  \\
f_0 (g_0)=t( h)        & s(h) \arrow[l, "h"', bend right]         &f_0 (g_0)= t(h) & s(h)=t(\tilde{h}) \arrow[l, "h"', bend right, dashed] & s(\tilde{h}) \arrow[l, "\tilde{h}"', bend right, dashed] \arrow[ll, "h\tilde{h}"', bend right=50].
\end{tikzcd}

We define the map $\rho\colon C\to G_0$ as 
$$
\rho(g_0,h)=g_0,
$$
and $\sigma\colon C\to H_0$ as
$$
\sigma(g_0,h)=s(h).
$$

It is straightforward to check that $(C,\rho, \sigma)$ gives a generalized morphism from $\mathcal{G}$ to $\mathcal{H}$ in the sense of Definition \ref{defi: generalized morphism}.
\end{eg}

For another generalized morphism $(W,\tau,\chi)$ from $\mathcal{H}$ to $\mathcal{K}$, we define the \emph{composition} of $(Z,\rho, \sigma)$ and $(W,\tau,\chi)$ as follows. We first consider the submanifold
$$
\tilde{Y}=\{(z,w)\in Z\times W|\sigma(z)=\tau(w)\in H_0\}.
$$
There is a right $\mathcal{H}$-action on $\tilde{Y}$ given by
$$
h(z,w)=(zh,h^{-1}w).
$$
Since the right $\mathcal{H}$-action on $Z$ is free and proper, so is the right $\mathcal{H}$-action on $\tilde{Y}$. Therefore the quotient
$Y:=\tilde{Y}/\mathcal{H}$ is a manifold.

Define $$\theta\colon Y\to G_0,\quad \theta(z,w)=\rho(z)$$
and $$\eta: Y\to K_0,\quad \eta(z,w)=\chi(w).$$
Moreover, we define a left $\mathcal{G}$-action on $Y$ as
$$
g(z,w)=(gz,w)
$$
and a right $\mathcal{K}$-action on $Y$ as
$$
(z,w)k=(z,wk).
$$
It is clear that the triple $(Y,\theta,\eta)$ gives a generalized morphism from $\mathcal{G}$ to $\mathcal{K}$, which we define to be the composition of $(Z,\rho, \sigma)$ and $(W,\tau,\chi)$. 

Moreover, if $f\colon \mathcal{G}\to \mathcal{H}$ and $g\colon \mathcal{H}\to \mathcal{K}$ are two ordinary morphisms, then it is easy to check that the comma generalized morphism of $f$ composed with the comma generalized morphism of $g$ is naturally isomorphic to the comma generalized morphism of $g\circ f$.

\begin{rmk}
Locally, an orbifold groupoid is a transformation groupoid associated with a finite group $G$ action on a manifold $X$. In this way, a generalized morphism can be represented as a groupoid homomorphism $G\ltimes X\to H\ltimes Y$  defined by a map $\underline{f}\colon X\to Y$ equivariant with respect to the group homomorphism $\bar{f}: G\to H$.
\end{rmk}

\subsection{Proper generalized morphisms}\label{subsect: proper generalized morphisms}
We introduce the notion of proper morphisms between complex orbifold groupoids following \cite[Section 7]{tu2004non}. We begin with some basic constructions.

For a generalized morphism $(Z,\rho, \sigma)$ from $\mathcal{G}$ to $\mathcal{H}$. We consider the quotient $\mathcal{G}\backslash Z$. Let $\tilde{\rho}$ be the map
$$
\tilde{\rho}\colon \mathcal{G}\backslash Z\to \mathcal{G}\backslash G_0
$$
induced by $\rho\colon Z\to G_0$. We have the following commutative diagram,
\begin{equation}
\begin{CD} G_0 @<\rho<< Z\\ @VV\pi_{\mathcal{G}}V @VV \pi_{\mathcal{G}\backslash Z}V\\ \mathcal{G}\backslash G_0 @<\tilde{\rho}<< \mathcal{G}\backslash Z \end{CD}
\end{equation}

Meanwhile, the map $\sigma\colon Z\to H_0$ induces a map
\begin{equation}\label{eq: sigma tilde}
\tilde{\sigma}\colon \mathcal{G}\backslash Z\to H_0
\end{equation}
since $\sigma$ satisfies $\sigma(gz)=\sigma(z)$. Furthermore, $\sigma$ and $\tilde{\sigma}$ are both $\mathcal{H}$-invariant maps, hence they induce maps
$$
\theta\colon Z/\mathcal{H}\cong G_0\to H_0/\mathcal{H}, \quad \tilde{\theta}\colon \mathcal{G}\backslash Z/\mathcal{H}\to H_0/\mathcal{H}
$$
which make the following diagram commute:
\begin{equation}
\begin{CD} Z @>\sigma>> H_0\\ @VV\pi_{\mathcal{H}\backslash Z}V @VV \pi_{\mathcal{H}}V\\ G_0 \cong Z/\mathcal{H} @>\theta>> H_0/\mathcal{H} \\  @VVV @VV\id V \\  \mathcal{G}\backslash Z/\mathcal{H}@>\tilde{\theta}>> H_0/\mathcal{H}.
\end{CD}
\end{equation}

We know that $\rho$ induces a diffeomorphism $Z/\mathcal{H}\simeq G_0$, hence $\mathcal{G}\backslash Z/\mathcal{H}\simeq  \mathcal{G}\backslash G_0$ and the map $Z/\mathcal{H}\to \mathcal{G}\backslash Z/\mathcal{H}$ agrees with the quotient map $G_0\to   \mathcal{G}\backslash G_0$. 

In summary, we have the following commutative diagram,
\begin{equation}
\begin{tikzcd}
Z \arrow[d, "\pi_{\mathcal{G}\backslash Z}"'] \arrow[rd, "\sigma"]                                                        &                                    \\
\mathcal{G}\backslash Z \arrow[d, "\tilde{\rho}"'] \arrow[r, "\tilde{\sigma}"] & H_0 \arrow[d, "\pi_{\mathcal{H}}"] \\
\mathcal{G}\backslash G_0 \arrow[r, "\tilde{\theta}"]                                   & H_0/\mathcal{H}.                \end{tikzcd}
\end{equation}

The following definitions are taken from \cite[Introduction]{tu2004non}.
\begin{defi}\label{defi: G-compact}
Let $Z$ be a space with $\mathcal{G}$ action. A subspace $Y$ of $Z$ is called $\mathcal{G}$-compact if $Y$ is $\mathcal{G}$-invariant and $Y/\mathcal{G}$ is compact.
\end{defi}

\begin{defi}\label{defi: locally proper morphism}
A generalized morphism $(Z,\rho, \sigma)\colon\mathcal{G}\to \mathcal{H}$ between Lie groupoids is called \emph{locally proper} if the action of $\mathcal{G}$ on $Z$ is proper.
\end{defi}

\begin{rmk}
Let $f \colon \mathcal{G}\to \mathcal{H}$ be a groupoid morphism. Then by \cite[Proposition 7.4]{tu2004non} the associated generalized groupoid morphism is locally proper if and only if the map $(f, r, s) \colon G_1\to 
H_1 \times G_0
\times G_0$ is proper.
\end{rmk}

\begin{prop}\label{prop: locally proper between proper groupoids}
If $\mathcal{G}$ is a proper Lie groupoid, then any generalized morphism $(Z,\rho, \sigma)\colon\mathcal{G}\to \mathcal{H}$  is locally proper.
\end{prop}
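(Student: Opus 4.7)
The plan is to unpack both definitions and then show that properness of the map $(s,t)\colon G_1\to G_0\times G_0$ directly forces properness of the action map. Recall that the $\mathcal{G}$-action on $Z$ is given by $\mu\colon G_1\times_{G_0} Z\to Z$, and being proper means that the associated map
\[
\alpha\colon G_1\times_{G_0} Z\longrightarrow Z\times Z,\qquad (g,z)\longmapsto (gz,z),
\]
is proper. So I would reduce the problem to showing that $\alpha^{-1}(K)$ is compact for every compact $K\subset Z\times Z$.

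First I would replace $K$ by a product $K_1\times K_2$ with $K_1,K_2\subset Z$ compact, which is harmless since $K\subset \pi_1(K)\times\pi_2(K)$. For any $(g,z)\in\alpha^{-1}(K_1\times K_2)$, one has $z\in K_2$ and $gz\in K_1$. Using that $\rho$ is $\mathcal{G}$-equivariant in the sense that $\rho(gz)=t(g)$ and $\rho(z)=s(g)$, I would conclude
\[
(s,t)(g)=(\rho(z),\rho(gz))\in \rho(K_2)\times \rho(K_1),
\]
where the right-hand side is compact in $G_0\times G_0$. Since $(s,t)\colon G_1\to G_0\times G_0$ is proper by hypothesis, the preimage $C:=(s,t)^{-1}(\rho(K_2)\times\rho(K_1))$ is a compact subset of $G_1$. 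Hence
\[
\alpha^{-1}(K_1\times K_2)\subset (C\times K_2)\cap (G_1\times_{G_0} Z),
\]
and the right-hand side is a closed subset of the compact space $C\times K_2$, hence compact. Finally, $\alpha^{-1}(K)$ is a closed subset of this compact set (using Hausdorffness of $Z\times Z$, which comes from properness of $\mathcal{G}$ in the standard way), and is therefore compact.

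The argument is almost purely formal once the conventions are pinned down. The only point requiring genuine care is the \emph{equivariance formula} $\rho(gz)=t(g)$ and $\rho(z)=s(g)$ for the left $\mathcal{G}$-action on $Z$ (together with the analogous right-action compatibility on the $\mathcal{H}$ side, which is not needed here), and the verification that $\alpha^{-1}(K)$ is closed in $G_1\times_{G_0} Z$. No further input beyond Definition \ref{defi: proper groupoid} and Definition \ref{defi: locally proper morphism} is needed, and in particular no use is made of $\mathcal{H}$ or of the principality of the $\mathcal{H}$-action on $Z$. I expect the whole proof to occupy only a few lines.
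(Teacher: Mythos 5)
Your proof is correct and follows essentially the same route as the paper: both factor the action map $(g,z)\mapsto(gz,z)$ through $(s,t)\colon G_1\to G_0\times G_0$ via $\rho$ and pull back compact sets using properness of $\mathcal{G}$. Your final step, bounding $\alpha^{-1}(K_1\times K_2)$ inside the compact set $(C\times K_2)\cap(G_1\times_{G_0}Z)$, is in fact slightly more careful than the paper's wording (the only small blemish is attributing Hausdorffness of $Z\times Z$ to properness of $\mathcal{G}$ rather than to $Z$ being a manifold).
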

\begin{proof}
Consider the map $\phi\colon G_1\times_{G_0} Z\to Z\times Z$ defined by $\phi(g, z)=(gz,z)$. 
Then we have the following commutative diagram
\begin{equation}
\begin{tikzcd}
G_1\times_{G_0} Z \arrow[rd, "{(t,s)}"'] \arrow[r, "\phi"] & Z\times Z \arrow[d, "{( \rho, \rho)}"] \\
                                                           & G_0\times G_0.                         
\end{tikzcd}
\end{equation}
For any compact subset $K\subset Z$, $K\times K$ is a compact subset of $Z\times Z$. Hence $(\rho, \rho)(K\times K)$ is a compact subset in $G_0\times G_0$. Since $\mathcal{G}$ is a proper Lie groupoid, the preimage $(t,s)^{-1}((\rho, \rho)(K\times K))$ is a compact subset in $G_1\times_{G_0} Z$. Since the diagram commutes, we know that $\phi^{-1}(K\times K)\subset (t,s)^{-1}((\rho, \rho)(K\times K))$ is also compact. So the map $G_1\times_{G_0}Z\to Z$ is proper.
\end{proof}

Now we are ready to present the main definition of this section.

\begin{defi}\label{defi: proper morphism}
A generalized morphism $(Z,\rho, \sigma)\colon\mathcal{G}\to \mathcal{H}$ between Lie groupoids is called \emph{proper} if it is locally proper and, in addition, for any compact subset $K$ in $H_0$, $\sigma^{-1}(K)$ is $\mathcal{G}$-compact.
\end{defi}

\begin{rmk}
It is clear that $(Z,\rho, \sigma)$ is proper if and only if it is locally proper and the map $\tilde{\sigma}$ in \eqref{eq: sigma tilde} is proper.
\end{rmk}

\begin{prop}\label{prop: proper equivalent}
Let $\mathcal{G}$ and $\mathcal{H}$ be proper Lie groupoids. A generalized morphism $(Z,\rho, \sigma)$ from $\mathcal{G}$ to $\mathcal{H}$ is proper if and only if the induced map $\tilde{\theta}\colon \mathcal{G}\backslash G_0 \to H_0/\mathcal{H} $ is a proper map between topological spaces.
\end{prop}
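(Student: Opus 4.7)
The plan is to exploit the hypothesis that $\mathcal{G}$ is proper (together with Proposition \ref{prop: locally proper between proper groupoids} and the remark after Definition \ref{defi: proper morphism}) to reduce the statement to the equivalence: $\tilde\sigma\colon \mathcal{G}\backslash Z\to H_0$ is a proper continuous map if and only if $\tilde\theta\colon \mathcal{G}\backslash G_0\to H_0/\mathcal{H}$ is proper. The two directions are then obtained by passing compact subsets back and forth across the commutative square
\[
\begin{tikzcd}
\mathcal{G}\backslash Z \arrow[r, "\tilde\sigma"]\arrow[d, "\tilde\rho"'] & H_0\arrow[d, "\pi_\mathcal{H}"] \\
\mathcal{G}\backslash G_0 \arrow[r, "\tilde\theta"] & H_0/\mathcal{H},
\end{tikzcd}
\]
using throughout the following standard fact about proper Lie groupoid actions on locally compact Hausdorff spaces: every compact subset of the orbit space lifts to a compact subset of the total space. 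I will apply this to $\mathcal{G}$ acting on $G_0$, to $\mathcal{H}$ acting on $H_0$, and to the free proper action of $\mathcal{H}$ on $Z$ whose orbit space is $G_0$ via $\rho$.

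For the forward direction, given a compact $C\subset H_0/\mathcal{H}$, I first lift it to a compact $C'\subset H_0$ with $\pi_\mathcal{H}(C')=C$. Assuming $\tilde\sigma$ proper, $\tilde\sigma^{-1}(C')$ is compact in $\mathcal{G}\backslash Z$, so its image $\tilde\rho(\tilde\sigma^{-1}(C'))$ is a compact subset of $\mathcal{G}\backslash G_0$. I then verify that this image equals $\tilde\theta^{-1}(C)$: the inclusion $\supseteq$ follows from the commutativity of the square; for $\subseteq$, given $[g_0]\in\tilde\theta^{-1}(C)$, pick any $z\in Z$ with $\rho(z)=g_0$, note that $\sigma(z)$ lies in the same $\mathcal{H}$-orbit as some $y\in C'$, and right-translate $z$ by an appropriate arrow of $\mathcal{H}$ to arrange $\sigma(z)\in C'$ without disturbing $\rho(z)$. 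Hence $\tilde\theta^{-1}(C)$ is compact.

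For the reverse direction, given a compact $K\subset H_0$, the set $L:=\tilde\theta^{-1}(\pi_\mathcal{H}(K))$ is compact in $\mathcal{G}\backslash G_0$. I lift $L$ successively to a compact $\tilde L\subset G_0$ via $\pi_\mathcal{G}$, and then to a compact $\tilde L'\subset Z$ via the quotient $Z\to Z/\mathcal{H}\cong G_0$ of the free proper $\mathcal{H}$-action. Because $(s,t)\colon H_1\to H_0\times H_0$ is proper, the set $A:=(t,s)^{-1}(\sigma(\tilde L')\times K)\subset H_1$ is compact. Using the conventions that $z'\cdot h$ is defined when $t(h)=\sigma(z')$ and that then $\sigma(z'\cdot h)=s(h)$, I establish the identification
\[
\tilde\sigma^{-1}(K)=\bigl\{\,[z'\cdot h]:(z',h)\in \tilde L'\times_{H_0}A\,\bigr\},
\]
which realizes $\tilde\sigma^{-1}(K)$ as the continuous image of the compact fiber product $\tilde L'\times_{H_0}A$, and hence is compact. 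The inclusion $\supseteq$ is immediate from $\sigma(z'\cdot h)=s(h)\in K$. For $\subseteq$, given $[z]\in\tilde\sigma^{-1}(K)$, the fact that $\tilde\rho([z])\in L=\tilde\rho(\pi_{\mathcal{G}\backslash Z}(\tilde L'))$ allows me, after replacing $z$ by a representative of its $\mathcal{G}$-orbit, to assume $\rho(z)=\rho(z')$ for some $z'\in\tilde L'$; the identification $Z/\mathcal{H}\cong G_0$ then yields a unique $h\in H_1$ with $z=z'\cdot h$, and this $h$ satisfies $t(h)=\sigma(z')\in\sigma(\tilde L')$ and $s(h)=\sigma(z)\in K$, so $h\in A$.

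The main obstacle is in the reverse direction: a priori one only has $\tilde\sigma^{-1}(K)\subset\tilde\rho^{-1}(L)$, and $\tilde\rho^{-1}(L)$ need not be compact because the fibers of $\tilde\rho$ are $\mathcal{H}$-orbits in $\mathcal{G}\backslash Z$. The remedy is to localize the lift through the compact $\tilde L'$ by constraining the relevant arrows of $\mathcal{H}$ to the compact set $A$ carved out by the source and target conditions, which is where the properness of $(s,t)\colon H_1\to H_0\times H_0$ does the essential work.
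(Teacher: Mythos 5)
Your proof is correct and follows essentially the same route as the paper's: after reducing to the properness of $\tilde\sigma$ via Proposition \ref{prop: locally proper between proper groupoids}, you lift compact sets through $\pi_{\mathcal{G}}$ and the free proper $\mathcal{H}$-quotient of $Z$ exactly as in Lemma \ref{lemma: tech in proper}, and you use properness of $(s,t)\colon H_1\to H_0\times H_0$ to confine the translating arrows to a compact set, which is the paper's $H_1'$ in the guise of your $A$. The only additions are your explicit write-up of the easy direction (which the paper dismisses as trivial) and a harmless swap of the labels $\subseteq$/$\supseteq$ in that part.
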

\begin{proof}[Proof of Proposition \ref{prop: proper equivalent}]
Only the "if" part is non-trivial.

We assume $\tilde{\theta}$ is a proper map.
Suppose $C$ is a compact subset of $H_0$, then $\pi_{\mathcal{H}}(C)$ is a compact subset of $H_0/\mathcal{H}$. Since $\tilde{\theta}$ is a proper map, we know that $\tilde{\theta}^{-1}(\pi_{\mathcal{H}}(C))$ is a proper subset of $\mathcal{G}\backslash G_0$.

Recall that we have the commutative diagram:
\begin{equation}
\begin{tikzcd}
Z/\mathcal{H}\cong G_0 \arrow[d, "\pi_{\mathcal{G}}"'] \arrow[r, "\theta"]                       & H_0/\mathcal{H} \\
\mathcal{G}\backslash Z/\mathcal{H}\cong \mathcal{G}\backslash G_0. \arrow[ru, "\tilde{\theta}"'] &                
\end{tikzcd}
\end{equation}
We need the following
\begin{lemma}\label{lemma: tech in proper}
There exists a compact subset $D$ in $G_0$ such that $\tilde{\theta}^{-1}(\pi_{\mathcal{H}}(C))$ is contained in $\pi_{\mathcal{G}}(D)$, and a compact subset $E$ in $Z$ such that $D$ is contained in $\rho(E)$.
\end{lemma}
\begin{proof}[Proof of Lemma \ref{lemma: tech in proper}]
We can choose such a $D$ by covering $\tilde{\theta}^{-1}(\pi_{\mathcal{H}}(C))$ by sufficiently small open subsets whose closures are also compact. Since the groupoid $\mathcal{G}$ is proper, we can find local sections of the map $\pi_{\mathcal{G}}\colon G_0\to \mathcal{G}\backslash G_0$ and choose $D$ to be the union of the local sections.

Similarly, we can cover $D$ by sufficiently small open subsets whose closures are also compact. Since $\mathcal{H}$ acts on $Z$ freely and properly, we can find local sections of the map $Z\to Z/\mathcal{H}$ and choose $E$ to be the union of the local sections.
\end{proof}

We resume the proof of Proposition \ref{prop: proper equivalent}. Let $D$ and $E$ be as in Lemma \ref{lemma: tech in proper}. We consider $\pi_{\mathcal{G}\backslash Z}(E)$, which is a compact subset of $\mathcal{G}\backslash Z$.

We can also consider $\tilde{\sigma}^{-1}(C)\subset \mathcal{G}\backslash Z$. It is clear that $\tilde{\rho}$ maps $\tilde{\sigma}^{-1}(C)$ to $\tilde{\theta}^{-1}(\pi_{\mathcal{H}}(C))$. In particular, for any $x\in \tilde{\sigma}^{-1}(C)$, we know that $\tilde{\rho}(x)\in \tilde{\theta}^{-1}(\pi_{\mathcal{H}}(C))$. By Lemma \ref{lemma: tech in proper}, we can find $y\in D$ and $z\in E$ such that 
$$
\pi_{\mathcal{G}}(y)=\tilde{\rho}(x) \text{ and } \pi_{Z/\mathcal{H}}(z)=y.
$$

Since we have the commutative diagram 
\begin{equation}
\begin{tikzcd}
Z \arrow[r, "\pi_{Z/\mathcal{H}}"] \arrow[d, "\pi_{\mathcal{G}\backslash Z}"']  & G_0\cong Z/\mathcal{H} \arrow[d, "\pi_{\mathcal{G}}"]              \\
\mathcal{G}\backslash Z \arrow[r, "\tilde{\rho}"'] & \mathcal{G}\backslash G_0\cong \mathcal{G}\backslash Z/\mathcal{H},
\end{tikzcd}
\end{equation}
we get
$$
\tilde{\rho}(\pi_{\mathcal{G}\backslash Z}(z))=\tilde{\rho}(x).
$$
In other words, $\pi_{\mathcal{G}\backslash Z}(z)$ and $x$ are in the same orbit of the $\mathcal{H}$-action on $\mathcal{G}\backslash Z$. Hence there exists an $h$ such that 
$$
\pi_{\mathcal{G}\backslash Z}(z)h=x.
$$
As  $x\in \tilde{\sigma}^{-1}(C)$, we get
$$
t(h)=\tilde{\sigma}(\pi_{\mathcal{G}\backslash Z}(z)h)=\tilde{\sigma}(x)\in C.
$$
On the other hand,
$$
s(h)=\tilde{\sigma}(\pi_{\mathcal{G}\backslash Z}(z))=\sigma(z)\in \sigma(E).
$$
In conclusion, for any $x\in \tilde{\sigma}^{-1}(C)$, we have
\begin{equation}\label{eq: x subset}
x\in \pi_{\mathcal{G}\backslash Z}(E)h, ~s(h)\in \sigma(E)\text{ and }t(h)\in C.
\end{equation}

Let
$$
H_1'=\{h\in H_1|s(h)\in \sigma(E)\text{ and }t(h)\in C\}.
$$
Since  $\mathcal{H}$ is proper and both $E$ and $C$ are compact sets, we know that $H_1'$ is compact. Moreover \eqref{eq: x subset} implies that $\tilde{\sigma}^{-1}(C)\subset \pi_{\mathcal{G}\backslash Z}(E)H_1'$, hence $\tilde{\sigma}^{-1}(C)$ is also compact.
\end{proof}

\subsection{Complex groupoids}
Here we consider groupoids in the category of complex manifolds.
\begin{defi}\label{defi: complex Lie groupoid}
A Lie groupoid $\mathcal{G}=(G_0, G_1)$ is called a complex Lie groupoid if $G_0$ and $G_1$ are complex manifolds and all structure maps are holomorphic.
\end{defi}

\begin{defi}\label{defi: complex orbifold}
A \emph{complex orbifold} is an orbifold $X$, which has a representation by a complex proper \'etale Lie groupoid $\mathcal{G}$.  

We call a complex proper \'etale Lie groupoid a \emph{complex orbifold groupoid}.

A morphism between complex orbifold groupoids is a generalized morphism in the sense of Definition \ref{defi: generalized morphism} where diffeomorphisms are replaced by holomorphic maps. 

Two complex orbifold groupoids are Morita equivalent if there is a principal bibundle that carries a bi-invariant complex structure such that the associated structure maps are holomorphic. 
\end{defi}

 \begin{rmk}\label{rmk:global quotient}In this paper, we assume to work with proper \'etale groupoids that are Morita equivalent to compact group actions. Under this assumption, a complex orbifold groupoid is Morita equivalent to a transformation groupoid associated with a compact group $K$ action on a maniofld $M$ that carries a $K$-invariant transverse holomorphic structure. 
 \end{rmk}
 
See \cite{tommasini2012orbifolds} for relations between Definition \ref{defi: complex orbifold} and other definitions of complex orbifolds.  

In the remainder of this section, we discuss a few basic facts about complex orbifolds.

\begin{defi}\label{defi: compact complex orbifold}
A complex orbifold represented by a groupoid $\mathcal{G}$ is \emph{compact} if its underlying topological space $G_0/G_1$ is compact. 
\end{defi}

\begin{rmk}
By Proposition \ref{prop: proper equivalent}, a complex orbifold is compact if and only if for one (and hence any) groupoid representative $\mathcal{G}$, the only generalized morphism from $\mathcal{G}$ to the trivial groupoid is proper.
\end{rmk}

\begin{defi}\label{defi: neighborhood}
Let $\mathcal{G}=(G_0,G_1)$ be a complex orbifold groupoid and $x\in G_0$ be a point. Then there exists a neighborhood $U$ of $x$ in $G_0$ such that $\mathcal{G}|_U$ is the action groupoid of a finite group $G$ acting holomorphically on $U$. We call $(U, G)$ a neighborhood of $x$ in $\mathcal{G}$.
\end{defi}

\begin{prop}\label{prop: existence of nbh}
For any point $x\in G_0$, a neighborhood of $x$ in $\mathcal{G}$ exists.
\end{prop}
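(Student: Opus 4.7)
The plan is to build the neighborhood in three stages: use properness and the étale property to show the isotropy group is finite, use the étale property to construct local sections indexed by this finite group, and then use properness again to rule out any extraneous arrows.

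First, let $G := s^{-1}(x) \cap t^{-1}(x)$ denote the isotropy group of $x$. Since $\mathcal{G}$ is proper, the diagonal fiber $(s,t)^{-1}(x,x)$ is compact; since $\mathcal{G}$ is étale, $G_1$ is locally biholomorphic to $G_0$ via $s$, so $(s,t)^{-1}(x,x)$ is discrete. Hence $G$ is a finite group. Next, for each $g \in G$, the étale property gives a neighborhood $V_g$ of $x$ in $G_0$ and a holomorphic section $\hat{g}\colon V_g \to G_1$ of $s$ with $\hat{g}(x)=g$. Setting $V := \bigcap_{g \in G} V_g$, a finite intersection, I obtain holomorphic maps $\tilde{g} := t \circ \hat{g}\colon V \to G_0$ with $\tilde{g}(x)=x$, each of which is a local biholomorphism since $t$ is étale and $\hat{g}$ is a section of the étale map $s$.

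The main step is to use properness to find $U \subset V$ with $(s,t)^{-1}(U \times U) = \bigsqcup_{g \in G} \hat{g}(U)$. The union $W := \bigsqcup_{g \in G} \hat{g}(V)$ is open in $G_1$ and contains the compact fiber $G = (s,t)^{-1}(x,x)$. By properness of $(s,t)$, the subset $G_1 \setminus W$ is closed and its image under $(s,t)$ is closed in $G_0 \times G_0$ and misses $(x,x)$. Hence there is a neighborhood $U \subset V$ of $x$ such that $(s,t)^{-1}(U \times U) \subset W$, and by shrinking $U$ further we may ensure the disjoint decomposition and that each $\hat{g}$ is injective on $U$; these sections are disjoint on $U$ because their values at $x$ are distinct. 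After one more shrinking I also arrange that $\tilde{g}(U) \subset V$ for all $g$, which is possible because $G$ is finite and each $\tilde{g}$ fixes $x$.

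Finally, I define the $G$-action on $U$ via $g \cdot y := \tilde{g}(y)$. Given $g_1, g_2 \in G$, the arrow $\hat{g}_1(\tilde{g}_2(y)) \cdot \hat{g}_2(y)$ lies in $(s,t)^{-1}(U \times U) = \bigsqcup_g \hat{g}(U)$, hence equals $\hat{h}(y)$ for a unique $h \in G$; evaluating at $y = x$ and using the groupoid composition forces $h = g_1 g_2$. Thus $\tilde{g}_1 \circ \tilde{g}_2 = \widetilde{g_1 g_2}$ on $U$, giving a holomorphic $G$-action. Shrinking $U$ one last time to a $G$-invariant open set $\bigcap_{g \in G} \tilde{g}^{-1}(U)$, the bijection from the previous step yields the desired isomorphism $\mathcal{G}|_U \simeq G \ltimes U$ of complex groupoids.

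The main obstacle is the third step: the properness argument must be set up carefully to exclude extra arrows between nearby points, because a priori there could be arrows close to identity other than those in the sections $\hat{g}$. The decomposition of $(s,t)^{-1}(U \times U)$ is what converts the groupoid structure into an honest finite group action, and everything else is bookkeeping.
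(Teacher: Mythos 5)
Your proof is correct in substance, but it takes a genuinely different route from the paper: the paper disposes of this proposition in one line by citing it as a special case of Weinstein's linearization theorem for proper Lie groupoids, whereas you give a self-contained elementary argument exploiting the \'etale hypothesis directly (finiteness of the isotropy group from properness plus discreteness of $s$-fibers, local bisections $\hat{g}$ through each isotropy element, and a properness/closedness argument to show $(s,t)^{-1}(U\times U)=\bigsqcup_{g}\hat{g}(U)$ for small $U$). What the citation buys is brevity and generality (linearization holds for proper groupoids that are not \'etale, at the cost of a much harder proof); what your argument buys is a transparent, hands-on description of the chart $(U,G)$ that is actually the one used implicitly throughout the rest of the paper, e.g.\ in Proposition \ref{prop: local chart compact} and Remark \ref{rmk: two kind embeddings local case}. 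One small repair: in verifying $\tilde{g}_1\circ\tilde{g}_2=\widetilde{g_1g_2}$ you place the composite arrow $\hat{g}_1(\tilde{g}_2(y))\cdot\hat{g}_2(y)$ in $(s,t)^{-1}(U\times U)$, but its target is $\tilde{g}_1(\tilde{g}_2(y))$, so arranging only $\tilde{g}(U)\subset V$ is not quite enough. The cleanest fix avoids the decomposition altogether at this step: both $y\mapsto\hat{g}_1(\tilde{g}_2(y))\cdot\hat{g}_2(y)$ and $y\mapsto\widehat{g_1g_2}(y)$ are local sections of the \'etale map $s$ passing through the same arrow $g_1g_2$ at $y=x$, hence agree on a neighborhood of $x$; since $G$ is finite, one further shrinking handles all pairs $(g_1,g_2)$ at once, after which your final $G$-invariant shrinking $\bigcap_{g}\tilde{g}^{-1}(U)$ completes the identification $\mathcal{G}|_U\simeq G\ltimes U$.
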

\begin{proof}
This is a special case of the linearization theorem \cite{weinstein:linearization}.
\end{proof}

For compact complex orbifolds (in the sense of Definition \ref{defi: compact complex orbifold}), we have the following further result.

\begin{prop}\label{prop: local chart compact}
For a compact complex orbifold, we can choose a representing orbifold groupoid $\mathcal{G}=(G_0,G_1)$ such that for every $x\in G_0$ and $U$ an invariant open neighborhood of $x$ in $G_0$ there is an arbitrarily small $\mathcal{G}$-invariant open subset $V$ on $G_0$ satisfying the following properties:
\begin{itemize}
\item $V$ is a disjoint union of open subsets $\{V_i\}$ such that each point in the $\mathcal{G}$-orbit of $x$ belongs to one and only one $V_i$;
\item  the restriction of $\mathcal{G}$ to each $V_i$ is isomorphic to the transformation groupoid $\Gamma_i\rtimes V_i\rightrightarrows V_i$ with $\Gamma_i$ a finite group. 
\end{itemize}
Such a neighborhood $V$ will be called a $\mathcal{G}$-invariant neighborhood throughout the paper. 
\end{prop}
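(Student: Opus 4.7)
The plan is to first exploit compactness of $X$ to pass to a Morita-equivalent representative with a very structured object space, and then for each $x$ to combine the linearization theorem (Proposition \ref{prop: existence of nbh}) with properness and étale-ness to construct $V$ around the now-finite orbit of $x$.

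First, I would fix any representative $\widetilde{\mathcal{G}}$ of $X$. Using compactness of the orbit space $X=\widetilde{G}_0/\widetilde{G}_1$ together with Proposition \ref{prop: existence of nbh}, one can choose finitely many orbifold charts $\{(\widetilde{U}_\alpha,\Gamma_\alpha)\}_{\alpha=1}^N$ of $\widetilde{\mathcal{G}}$ whose images cover $X$. Set $G_0:=\bigsqcup_{\alpha=1}^N \widetilde{U}_\alpha$ and define $\mathcal{G}$ as the pullback groupoid along the étale surjection $G_0\to \widetilde{G}_0$; this is Morita equivalent to $\widetilde{\mathcal{G}}$ and so represents the same orbifold. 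The payoff of this choice is that every $\mathcal{G}$-orbit is finite: indeed, its intersection with each $\widetilde{U}_\alpha$ is at most a single $\Gamma_\alpha$-orbit (since $\widetilde{U}_\alpha/\Gamma_\alpha$ embeds as an open subset of $X$), and there are only finitely many $\alpha$.

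With this $\mathcal{G}$, fix $x\in G_0$ and a $\mathcal{G}$-invariant open neighborhood $U$ of $x$. By invariance, the finite orbit $\mathcal{O}(x)=\{y_1,\ldots,y_k\}$ lies entirely in $U$. Applying Proposition \ref{prop: existence of nbh} at each $y_i$ and shrinking into $U$, we obtain a neighborhood $W_i\subset U$ on which $\mathcal{G}|_{W_i}\cong \Gamma_i\ltimes W_i$, where $\Gamma_i$ is the isotropy at $y_i$. Between each ordered pair $(y_i,y_j)$ there are only finitely many arrows, and by étale-ness each arrow $g:y_i\to y_j$ extends uniquely to a local biholomorphism $\phi_{ij}^g:W_i\to W_j$ (via a local section of $s$ followed by $t$). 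I would then shrink each $W_i$ to a $\Gamma_i$-invariant open $V_i\subset W_i$ chosen so that (a) the $V_i$ are pairwise disjoint; (b) each $\phi_{ij}^g$ maps $V_i$ into $V_j$; and (c) every $\mathcal{G}$-arrow with source in $V_i$ is one of the $\phi_{ij}^g$ applied to that source. Setting $V:=\bigsqcup_i V_i$, conditions (a)--(c) immediately deliver the two bulleted properties together with $\mathcal{G}$-invariance of $V$, while the $V_i$ may be taken arbitrarily small.

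The main obstacle is enforcing condition (c), i.e., ruling out ``spurious'' arrows from points of $V_i$ that are not continuations of the arrows in $\mathcal{O}(x)$. This is where properness of $(s,t):G_1\to G_0\times G_0$ is crucial: if for arbitrarily small choices of the $V_i$'s there existed arrows $g_n:u_n\to v_n$ with $u_n\in V_i$, $u_n\to y_i$, but $v_n\notin\bigsqcup_j V_j$, then by properness some subsequence of $\{g_n\}$ would converge to an arrow at $y_i$ whose target lies outside $\{y_1,\ldots,y_k\}$, contradicting $\mathcal{O}(y_i)=\mathcal{O}(x)$. A uniform compactness argument across the finite orbit then produces the required $V_i$'s and completes the proof.
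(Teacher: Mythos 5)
Your overall strategy --- pass to a finite-orbit representative built from finitely many charts, then linearize --- is exactly what the paper's two-sentence proof sketches, and your first step is fine. The gap is in the final step, where you try to enforce your condition (c) by properness. Properness of $(s,t)\colon G_1\to G_0\times G_0$ only gives compactness of $(s,t)^{-1}(K\times K')$ when \emph{both} $K$ and $K'$ are compact; to extract a convergent subsequence from your arrows $g_n\colon u_n\to v_n$ you would also need the targets $v_n$ to stay in a compact subset of $G_0$, and with $G_0=\bigsqcup_\alpha \widetilde{U}_\alpha$ a disjoint union of open charts this is exactly what fails: the $v_n$ can escape to the frontier of a chart component while their images in the underlying space converge. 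Concretely, take $X$ a compact complex manifold with trivial orbifold structure, charts $\widetilde{U}_1=X$ and $\widetilde{U}_2=D$ an open disk, and $x=p$ in the component $\widetilde{U}_1$ with $p\in\partial D$. The orbit of $x$ is the single point $p\in\widetilde{U}_1$, yet every invariant open set containing $x$ contains, for each $q\in D$ close to $p$, the second orbit point $q\in\widetilde{U}_2$, and these accumulate on the frontier of the component $\widetilde{U}_2$. So no union of small neighborhoods of the orbit points alone is invariant: condition (c) is not merely unproved, it is false for the representative you construct. What breaks is the strengthened form of the proposition that you are targeting, in which each $V_i$ is a small neighborhood of an orbit point.

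The statement as written only asks that $V$ be invariant, that it decompose into disjoint opens $V_i$ on each of which $\mathcal{G}$ restricts to a finite transformation groupoid, and that each orbit point lie in exactly one $V_i$; it does not ask that each $V_i$ contain an orbit point. For your step-one representative this weaker statement has a direct proof requiring neither the second round of linearization nor properness: arrows of the pullback groupoid between two points of the same component $\widetilde{U}_\alpha$ are precisely the arrows of $\widetilde{\mathcal{G}}|_{\widetilde{U}_\alpha}\cong\Gamma_\alpha\ltimes\widetilde{U}_\alpha$, so for \emph{any} invariant open $V$ the pieces $V_\alpha:=V\cap\widetilde{U}_\alpha$ are disjoint, $\Gamma_\alpha$-invariant, and satisfy $\mathcal{G}|_{V_\alpha}\cong\Gamma_\alpha\ltimes V_\alpha$, while each orbit point lies in exactly one component. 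Taking $V$ to be the saturation of a small neighborhood of $x$ (which lies in the given invariant $U$ automatically) finishes the argument. I would replace the second half of your proof with this.
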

\begin{proof}
Since the orbifold is compact, we can choose to work with a special orbifold groupoid $\mathcal{G}=(G_0,G_1)$ such that every $\mathcal{G}$-orbit on $G_0$ is a finite set. This property, together with the linearization theorem for proper \'etale groupoids \cite{weinstein:linearization}, gives the claim.
\end{proof}

\subsection{Orbifold embeddings}\label{subsec: orbifold embeddings}
A morphism $f\colon \mathcal{G}\to \mathcal{H}$ between Lie groupoids is called an \emph{embedding} if each $f_i\colon G_i\to H_i$, $i=0,1$ is an embedding of manifolds. The definition of embedding of orbifolds, which we develop in Definition \ref{defi: orbifold embedding} below, is more involved.

Let $\mathcal{G}$ and $\mathcal{H}$ be complex orbifold groupoids and $(Z,\rho, \sigma)$ be a generalized morphism from $\mathcal{G}$ to $\mathcal{H}$. Let $\tilde{\mathcal{G}}$ be the pullback of $\mathcal{G}$ to $Z$ under $\rho\colon Z\to G_0$. More explicitly, we have $\tilde{\mathcal{G}}=(\widetilde{G}_0,\widetilde{G}_1)$ where $\widetilde{G}_0=Z$ and 
\begin{equation}
\widetilde{G}_1=\{(z_1,g,z_2)\in Z\times G_1\times Z|\rho(z_1)=t(g),~\rho(z_2)=s(g)\}.
\end{equation}
The source and target maps of $\tilde{\mathcal{G}}$ are the obvious projections to $Z$. Similarly, we can define $\tilde{\mathcal{H}}$ to be the pullback of $\mathcal{H}$ to $Z$ under $\sigma\colon Z\to H_0$.
More explicitly, we have $\tilde{\mathcal{H}}=(\widetilde{H}_0,\widetilde{H}_1)$ where $\widetilde{H}_0=Z$ and 
\begin{equation}\label{eq: tilde H}
\widetilde{H}_1=\{(z_1,h,z_2)\in Z\times H_1\times Z|\sigma(z_1)=t(h),~\sigma(z_2)=s(h)\}.
\end{equation}
We can define a morphism $\phi\colon \tilde{\mathcal{G}}\to \tilde{\mathcal{H}}$ as follows: $\phi_0\colon \widetilde{G}_0\to \widetilde{H}_0$ is the identity map on $Z$. As for $\phi_1\colon \widetilde{G}_1\to \widetilde{H}_1$, consider an element $(z_1,g,z_2)\in \widetilde{G}_1$. Notice that we have
$$
\rho(gz_2)=t(g)=\rho(z_1)\in G_0.
$$
Since the $\mathcal{H}$-action on $Z$ is free and $\rho$ induces an isomorphism $Z/\mathcal{H}\overset{\simeq}{\longrightarrow} G_0$, there exists a unique $h\in H_1$ such that
$$
gz_2=z_1h.
$$
We define $\phi_1(z_1,g,z_2)$ to be 
\begin{equation}
\phi_1(z_1,g,z_2):=(z_1,h,z_2).
\end{equation}

\begin{defi}\label{defi: orbifold embedding}
A generalized morphism $(Z,\rho, \sigma)$ from $\mathcal{G}$ to $\mathcal{H}$ is an \emph{embedding} if it satisfies the following conditions.
\begin{enumerate}
\item The map $\sigma\colon Z\to H_0$ is a local embedding.
\item The image $\sigma(Z)$ is a closed subset of $H_0$ with locally {\em finite resolution}, i.e. for any $x$ in the image of $\sigma\colon Z\to H_0$, there is a neighborhood $U$ of $x$, and there are $z_1, ..., z_k\in Z$ with neighborhoods $V_1, ..., V_k$ such that for all $i$, $\sigma|_{V_i}: V_i\to \sigma(V_i)$ is a diffeomorphism, and $\sigma(Z)\cap U=\bigcup_{i}\sigma(V_i)$. 
\item The morphism $\phi\colon \tilde{\mathcal{G}}\to \tilde{\mathcal{H}}$ is a \emph{saturated embedding} of groupoids.
\end{enumerate}
Here, ``saturated'' means that for any $h\in \widetilde{H}_1$, there exists a $g\in \widetilde{G}_1$ such that $s(h)=s(\phi(g))$ and $t(h)=t(\phi(g))$.
\end{defi}

We explain Condition (2) in Definition \ref{defi: orbifold embedding} with the following example. 

\begin{eg}\label{ex:Z2embedding} Consider the $\mathbb{Z}_2$ action on $\mathbb{C}$ by $z\mapsto -z$. The associated groupoid is the transformation groupoid $\mathcal{G}=\mathbb{Z}_2\ltimes \mathbb{C}\rightrightarrows \mathbb{C}$. Similarly, we consider the $\mathbb{Z}_2\times \mathbb{Z}_2$ action on $\mathbb{C}\times \mathbb{C}$ by 
\begin{equation*}
\mathbb{Z}_2\times \mathbb{Z}_2\ni (\gamma_1, \gamma_2):  (\gamma_1, \gamma_2, z_1, z_2)\mapsto (\gamma_1(z_1), \gamma_2(z_2)) .
\end{equation*}
 The associated groupoid is $\mathcal{H}=(\mathbb{Z}_2\times \mathbb{Z}_2)\ltimes (\mathbb{C}\times \mathbb{C})\rightrightarrows \mathbb{C}\times \mathbb{C}$. We consider the morphism $f:\mathcal{G}\to \mathcal{H}$ by 
\[
f_0(z)=(z, z),\ f_1(\gamma, z)=(\gamma, \gamma, z, z),\quad \gamma\in \mathbb{Z}_2, z\in \mathbb{C}. 
\]
It is not hard to see that $f$ represents the diagonal embedding  $$
\Delta: [\mathbb{C}/\mathbb{Z}_2] \hookrightarrow[\mathbb{C}/\mathbb{Z}_2]\times [\mathbb{C}/\mathbb{Z}_2].$$ 

The generalized morphism associated with $f$ is constructed in Example \ref{ex:morphism-generalized morphism}. In particular, the bibundle $C$ is of the form $C=\mathbb{C}\times \mathbb{Z}_2\times \mathbb{Z}_2$ with the map $\gamma: C\to \mathcal{H}_0=\mathbb{C}\times \mathbb{C}$ by 
\[
\sigma(z, \gamma_1, \gamma_2)=(\gamma_1(z), \gamma_2(z)),\quad z\in \mathbb{C},\ (\gamma_1, \gamma_2)\in \mathbb{Z}_2\times \mathbb{Z}_2.
\]
We observe that the map $\sigma$ is an embedding on each component of $C=\mathbb{C}\times \mathbb{Z}_2\times \mathbb{Z}_2$. And the image of $\sigma$ is the union of the two axes $\{(z, 0)\in \mathbb{C}\times \mathbb{C}\,|\, z\in \mathbb{C}\}\cup \{(0,z)\in \mathbb{C}\times \mathbb{C}\,|\, z\in \mathbb{C}\}$, which is a closed subset of $\mathbb{C}\times \mathbb{C}$, but not a submanifold of $H_0$. 

We can choose $U$ to be the whole manifold $H_0=\mathbb{C}\times \mathbb{C}$, and $V_1=\mathbb{C}\times \{e\}\times \{e\}$, and $V_2=\mathbb{C}\times \{\gamma\}\times \{e\}$, where $e$ is the identity element of $\mathbb{Z}_2$ and $\gamma$ is the nontrivial element. It is straightforward to check that the restrictions of $\sigma$ on $V_1$ and $V_2$ are embeddings, and 
$$
\sigma(V_1)\cup\sigma(V_2)=\sigma(C)\cap U=\sigma(C).$$
This shows that $\sigma(C)$ has the locally finite resolution property introduced in (2) of Definition \ref{defi: orbifold embedding}. 
\end{eg}

\begin{prop}\label{prop: embedding of quotient spaces}
If a generalized morphism $(Z,\rho, \sigma)$ from $\mathcal{G}$ to $\mathcal{H}$ is an embedding, then it induces an embedding of the quotient space $G_0/\mathcal{G}$ into $H_0/\mathcal{H}$.
\end{prop}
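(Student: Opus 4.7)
The plan is to exhibit the induced map $\tilde{\theta}\colon G_0/\mathcal{G} \to H_0/\mathcal{H}$ from the commutative diagrams constructed just before Definition \ref{defi: G-compact}, and then verify continuity, injectivity, and the homeomorphism-onto-image property in turn. The defining isomorphism $Z/\mathcal{H} \cong G_0$ yields $\mathcal{G}\backslash Z/\mathcal{H} \cong G_0/\mathcal{G}$, and the composition $\pi_{\mathcal{H}}\circ\sigma\colon Z \to H_0/\mathcal{H}$ factors through this bi-quotient to define $\tilde{\theta}$; continuity is automatic from the universal property of the quotient topology.

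For injectivity, suppose $\tilde{\theta}([\rho(z_1)]) = \tilde{\theta}([\rho(z_2)])$, so there exists $h \in H_1$ with $t(h) = \sigma(z_1)$ and $s(h) = \sigma(z_2)$; then $(z_1, h, z_2) \in \widetilde{H}_1$ by \eqref{eq: tilde H}. The saturated embedding condition (3) of Definition \ref{defi: orbifold embedding}, together with $\phi_0 = \id_Z$, produces an element $(z_1, g, z_2) \in \widetilde{G}_1$, i.e., a $g \in G_1$ with $s(g) = \rho(z_2)$ and $t(g) = \rho(z_1)$, placing $\rho(z_1)$ and $\rho(z_2)$ in the same $\mathcal{G}$-orbit.

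For the homeomorphism-onto-image property, I would factor $\tilde{\theta}$ as $G_0/\mathcal{G} \to \sigma(Z)/\mathcal{H} \hookrightarrow H_0/\mathcal{H}$. The subset $\sigma(Z) \subset H_0$ is $\mathcal{H}$-invariant because any $h \in H_1$ with $t(h) = \sigma(z)$ satisfies $s(h) = \sigma(z\cdot h) \in \sigma(Z)$; combined with condition (1), this makes the inclusion $\sigma(Z)/\mathcal{H} \hookrightarrow H_0/\mathcal{H}$ a topological embedding, since any $\mathcal{H}$-saturated open in $\sigma(Z)$ is the trace of an $\mathcal{H}$-saturated open in $H_0$ obtained by saturating an ambient open extension (the $\mathcal{H}$-action on $H_0$ being by local diffeomorphisms). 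The first map is a continuous bijection by the injectivity argument, and it is open because for any $\mathcal{G}$-saturated open $W \subset G_0$, $\rho^{-1}(W) \subset Z$ is open and $\mathcal{H}$-invariant, and by condition (2), $\sigma(\rho^{-1}(W))$ is open and $\mathcal{H}$-invariant in $\sigma(Z)$, hence descends to an open subset of $\sigma(Z)/\mathcal{H}$. The main obstacle is precisely this last compatibility between the two topologies on $\sigma(Z)/\mathcal{H}$, which most directly uses the embedded submanifold hypothesis (1) together with the openness of the $\mathcal{H}$-orbit map (guaranteed by $\mathcal{H}$ being étale).
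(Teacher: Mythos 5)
Your proposal is correct, and its overall architecture matches the paper's: both factor the induced map as $G_0/\mathcal{G}\to\sigma(Z)/\mathcal{H}\hookrightarrow H_0/\mathcal{H}$, both first observe that $\sigma(Z)$ is $\mathcal{H}$-saturated via $s(h)=\sigma(z\cdot h)$, and both extract injectivity of the first map from the saturation hypothesis in condition (3) of Definition \ref{defi: orbifold embedding} exactly as you do (an arrow $(z_1,h,z_2)\in\widetilde{H}_1$ lifts to an arrow of $\widetilde{G}_1$ with the same source and target). Where you diverge is in the proof that the inclusion of the saturated sub-quotient $\sigma(Z)/\mathcal{H}\hookrightarrow H_0/\mathcal{H}$ is a topological embedding. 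The paper proves that this map is \emph{closed}: it takes a sequence $[l_k]\to[h]$, passes to a precompact chart where $\mathcal{H}$ restricts to a finite transformation groupoid, extracts a convergent subsequence, and uses that $L=\sigma(Z)$ is an embedded submanifold to conclude the limit lies in $L$; injective, continuous, and closed then gives a (closed) embedding. You instead prove the map is \emph{open onto its image} by extending an $\mathcal{H}$-saturated open subset of $\sigma(Z)$ to an ambient open set and saturating it, using that saturation of opens is open because $\mathcal{H}$ is \'etale. Your route is more elementary in that it avoids the sequential compactness/properness argument and works for any saturated embedded subspace of an \'etale groupoid; the paper's route yields the slightly stronger conclusion that the image of $G_0/\mathcal{G}$ is closed in $H_0/\mathcal{H}$, which your argument does not give (though the proposition as stated does not require it). The paper also routes the identification $G_0/\mathcal{G}\cong\sigma(Z)/\mathcal{H}$ through the Morita equivalence $\tilde{\mathcal{H}}\to\mathcal{H}|_L$ and the identity map $Z/\widetilde{G}_1\to Z/\widetilde{H}_1$, whereas you verify directly that the continuous bijection is open using conditions (1) and (2); both are sound, and your explicit check of openness is a detail the paper leaves implicit.
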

\begin{proof}
Let $(Z,\rho, \sigma)$ be an embedding generalized morphism. We can pass to consider the groupoid morphism $\tilde{\mathcal{G}} \to \mathcal{H}$. 

We observe that for any $z\in Z$ and $n=\sigma(z)\in H_0$, and any $h\in H_1$ such that $
t(h)=n$, we have $s(h)=\sigma(z\cdot h)$. Therefore, $L:=\sigma(Z)$ is an $\mathcal{H}$-saturated closed subset of $H_0$. 

Consider the diagram
$$
\begin{tikzcd}
H_1|_L \arrow[r, hook] \arrow[d, shift right] \arrow[d, shift left] & H_1 \arrow[d, shift left] \arrow[d, shift right] \\
L \arrow[r, "i", hook] \arrow[d, "\pi_L"']                          & H_0 \arrow[d, "\pi_\mathcal{H}"]                 \\
L/(H_1|_L) \arrow[r, "\bar{i}"]                                     & H_0/H_1  .                                       
\end{tikzcd}
$$
The map $\bar{i}\colon L/(H_1|_L)\to H_0/H_1$ is obviously continuous. Since $L$ is $\mathcal{H}$-saturated, $\bar{i}$ is injective. We can prove that $\bar{i}$ is a closed map.

Let $[l_k]\in L/(H_1|_L)$ and $[h]\in H_0/H_1$ be such that $[l_k]\to [h]$ in $ H_0/H_1$. We choose a lift $h\in H_0$ of $[h]$ and a precompact neighborhood $U$ of $h$ in $ H_0$ which is sufficiently small so that $U\to [U]\subset H_0/H_1$ is a quotient of $U$ by a finite group. Choose a lift $l_k\in U$ of $[l_k]$.

Since $U$ is precompact and the restriction $\mathcal{H}|_{U}$ is a transformation groupoid associated with a finite group action,  $\{l_k\}$ has a subsequence converging to $l_0\in U$. Without loss of generality, we can assume $l_k\to l_0$. Since $L$ is a closed subset of $H_0$, we know $l_0\in L$. Hence $[l_0]\in  L/(H_1|_L)$. We prove that $\bar{i}\colon L/(H_1|_L)\to H_0/H_1$ is a closed map and an embedding.

We consider the groupoid morphism
$$
\begin{tikzcd}
\widetilde{H}_1 \arrow[d, shift right] \arrow[d, shift left] \arrow[r] & H_1|_L \arrow[d, shift right] \arrow[d, shift left] \\
Z \arrow[r]                                                        & L.                                                  
\end{tikzcd}
$$
As $\widetilde{H}_1$ is the pullback of $H_1$ to $Z$ via $\sigma\colon Z\to L$, the above morphism is a Morita equivalence. Hence the quotient $Z/\widetilde{H}_1$ is homeomorphic to $L/( H_1|_L )$.

We also consider 
$$
\begin{tikzcd}
\widetilde{G}_1 \arrow[d, shift left] \arrow[d, shift right] \arrow[r, "\phi"] & \widetilde{H}_1 \arrow[d, shift left] \arrow[d, shift right] \\
Z \arrow[r, equal]                                           & Z.                                                       
\end{tikzcd}
$$
Since $\phi$ is saturated, the induced map $Z/\widetilde{G}_1 \to Z/\widetilde{H}_1 $ is the identity map.

Combining the above results, we conclude that the map $G_0/G_1\to H_0/H_1$ is an embedding.
\end{proof}

For later purposes, we need to distinguish the following two types of groupoid embeddings.

\begin{defi}\label{defi: stabilizer-preserving embedding}
A generalized morphism $(Z,\rho, \sigma)$ from $\mathcal{G}$ to $\mathcal{H}$ is a \emph{stabilizer-preserving} embedding if it is an embedding in the sense of Definition \ref{defi: orbifold embedding} and, in addition, the morphism $\phi\colon \tilde{\mathcal{G}}\to \tilde{\mathcal{H}}$ is an isomorphism of groupoids.
\end{defi}

\begin{rmk}
The notion of stabilizer-preserving morphism is also considered in a different context in \cite[Definition 5.2]{CDH}.    
\end{rmk}

\begin{defi}\label{defi: iso-spatial embedding}
A generalized morphism $(Z,\rho, \sigma)$ from $\mathcal{G}$ to $\mathcal{H}$ is an \emph{iso-spatial} embedding if it is an embedding in the sense of Definition \ref{defi: orbifold embedding} and, in addition, the map $\sigma\colon Z\to H_0$ is a surjective submersion.
\end{defi}

\begin{rmk}\label{rmk: two kind embeddings local case}
Locally, the groupoid $\mathcal{G}$ is given by a manifold $X$ with an action of a finite group $G$. A similar description is valid for the groupoid $\mathcal{H}$. When we restrict to the local case, an iso-spatial embedding is given by an inclusion of the finite group $\bar{f}\colon G\hookrightarrow H$ and the identity map on the base manifold $X$; and a stabilizer-preserving embedding is given by the identity map of the finite group and an embedding of complex manifolds $\underline{f}\colon X\hookrightarrow Y$.
\end{rmk}

We have the following factorization result for embeddings.

\begin{prop}\label{prop: decompose an embedding into two types}
Any groupoid embedding can be written as the composition of an iso-spatial embedding followed by a stabilizer-preserving embedding.
\end{prop}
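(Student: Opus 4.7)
The plan is to factor the given embedding through a canonical intermediate groupoid $\mathcal{K}$ obtained by restricting $\mathcal{H}$ to the image of $\sigma$. Given an embedding generalized morphism $(Z,\rho,\sigma)\colon \mathcal{G}\to \mathcal{H}$, I set $K_0:=\sigma(Z)$; this is an embedded submanifold of $H_0$ by Definition \ref{defi: orbifold embedding}(1). The argument used in the proof of Proposition \ref{prop: embedding of quotient spaces} shows that $K_0$ is $\mathcal{H}$-saturated, so $\mathcal{K}:=(K_0,\, H_1|_{K_0})$ is a well-defined complex orbifold subgroupoid of $\mathcal{H}$.

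I then produce two generalized morphisms whose composition recovers $(Z,\rho,\sigma)$. The first, $\pi\colon \mathcal{G}\to \mathcal{K}$, is given by $(Z,\rho,\sigma')$ where $\sigma'\colon Z\to K_0$ is $\sigma$ with codomain restricted to its image. The second, $\iota\colon \mathcal{K}\to \mathcal{H}$, is the comma generalized morphism associated to the inclusion of subgroupoids, namely $W=K_0\times_{H_0} H_1$ with $\tau(k,h)=k$ and $\chi(k,h)=s(h)$.

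To check that $\pi$ is iso-spatial (Definition \ref{defi: iso-spatial embedding}), note that $\sigma'$ is surjective by construction and étale (hence a surjective submersion) by Definition \ref{defi: orbifold embedding}(2). The essential observation for the remaining embedding conditions is that the pullback groupoid $\widetilde{\mathcal{K}}$ on $Z$ (via $\sigma'$) coincides tautologically with $\widetilde{\mathcal{H}}$ (via $\sigma$): since $K_1=H_1|_{K_0}$ and $K_0$ is $\mathcal{H}$-saturated, any $h\in H_1$ with $t(h)=\sigma(z_1)\in K_0$ automatically has $s(h)\in K_0$ and therefore $h\in K_1$. Consequently the morphism $\widetilde{\mathcal{G}}\to \widetilde{\mathcal{K}}$ induced by $\pi$ is literally the saturated embedding $\widetilde{\mathcal{G}}\to \widetilde{\mathcal{H}}$ supplied with the original data. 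To check that $\iota$ is stabilizer-preserving (Definition \ref{defi: stabilizer-preserving embedding}), I verify that the induced morphism on pullbacks over $W$ is an isomorphism of groupoids, which again reduces to the saturation observation above combined with the fact that $\mathcal{K}$ is the full restriction of $\mathcal{H}$.

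The last step is to confirm that $\iota\circ\pi$ recovers $(Z,\rho,\sigma)$ up to isomorphism of generalized morphisms. Applying the composition recipe from Section \ref{subsect: generalized morphisms}, the unreduced total space identifies with $\widetilde{Y}\cong\{(z,h)\in Z\times H_1 : t(h)=\sigma(z)\}$, carrying a free, proper right $\mathcal{K}$-action $(z,h)\cdot k=(zk,k^{-1}h)$. The map $(z,h)\mapsto zh$ descends to a bijection $\widetilde{Y}/\mathcal{K}\xrightarrow{\sim} Z$: well-definedness follows from $(zk)(k^{-1}h)=zh$, surjectivity from the section $z\mapsto (z,1_{\sigma(z)})$, and injectivity from the fact that $k:=h_1 h_2^{-1}$ lies in $K_1$ (by saturation) and carries $(z_1,h_1)$ to $(z_2,h_2)$ whenever $z_1 h_1=z_2 h_2$. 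Under this identification the induced maps $\theta,\eta$ recover $\rho$ and $\sigma$, completing the factorization. The main technical point is this final composition check; the rest of the argument is essentially tautological from the $\mathcal{H}$-saturation of $K_0$.
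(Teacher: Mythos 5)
Your proposal is correct and follows essentially the same route as the paper: both factor the embedding through the restricted groupoid $\mathcal{H}|_{\operatorname{Im}\sigma}$, with $(Z,\rho,\sigma)$ viewed as an iso-spatial embedding into that restriction and the inclusion $\mathcal{H}|_{\operatorname{Im}\sigma}\hookrightarrow\mathcal{H}$ as the stabilizer-preserving piece. The paper leaves the verification of the composition as ``easy to check,'' whereas you carry it out explicitly; the extra detail is consistent with the composition recipe of Section \ref{subsect: generalized morphisms}.
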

\begin{proof}
Let $(Z,\rho, \sigma)$ be an embedding from $\mathcal{G}$ to $\mathcal{H}$. We consider the groupoid $\mathcal{H}|_{\text{Im} \sigma}$. Since $(Z,\rho, \sigma)$ is an embedding, the generalized morphism $(Z,\rho, \sigma)$ from $\mathcal{G}$ to $\mathcal{H}|_{\text{Im} \sigma}$ is an iso-spatial embedding, and the morphism $\sigma\colon \mathcal{H}|_{\text{Im} \sigma}\hookrightarrow \mathcal{H}$ is a stabilizer-preserving embedding. It is easy to check that their composition is exactly $(Z,\rho, \sigma)$.
\end{proof}

\subsection{Graph generalized morphisms}\label{subsect: graph generalized morphism}
For a generalized morphism $(Z,\rho, \sigma)$ from $\mathcal{G}$ to $\mathcal{H}$, we can define its \emph{graph} generalized morphism from $\mathcal{G}$ to $\mathcal{G}\times \mathcal{H}$ as the triple $$(\Gr(Z), \Gr(\rho), \Gr(\sigma)),$$ where
\begin{equation}\label{eq: Gr(Z)}
\Gr(Z):=\{(g,z)\in G_1\times Z|s(g)=\rho(z)\}.
\end{equation}
The map $\Gr(\rho)\colon \Gr(Z)\to G_0$ is defined to be
$$
\Gr(\rho)(g,z):=t(g),
$$
and the map $\Gr(\sigma)\colon \Gr(Z)\to G_0\times H_0$ is defined to be
$$
\Gr(\sigma)(g,z):=(s(g),\sigma(z)).
$$
The left $\mathcal{G}$-action on $\Gr(Z)$ is defined to be
\begin{equation}
\tilde{g}(g,z):=(\tilde{g}g,z),
\end{equation}
and the right $\mathcal{G}\times \mathcal{H}$-action on $\Gr(Z)$ is defined to be
\begin{equation}
(g,z)(\tilde{g},h):=(g\tilde{g},\tilde{g}^{-1}zh).
\end{equation}

It is easy to check that $(\Gr(Z), \Gr(\rho), \Gr(\sigma))$ is indeed a generalized morphism from $\mathcal{G}$ to $\mathcal{G}\times \mathcal{H}$. 

Moreover, we consider the natural projection $\mathcal{G}\times \mathcal{H}\to \mathcal{H}$. It is easy to see that the associated comma generalized morphism is given by the triple $(G_0\times H_1, \pr_1\times t, s\circ \pr_2)$. It is easy to check that the composition of $(\Gr(Z), \Gr(\rho), \Gr(\sigma))$ with $(G_0\times H_1, \pr_1\times t, s\circ \pr_2)$ is canonically isomorphic to $(Z,\rho, \sigma)$. This justifies the term ``graph generalized morphism''.

\begin{prop}\label{prop: graph is embedding}
For a generalized morphism $(Z,\rho, \sigma)$ from $\mathcal{G}$ to $\mathcal{H}$, the graph generalized morphism $(\Gr(Z), \Gr(\rho), \Gr(\sigma))$ is an embedding from $\mathcal{G}$ to $\mathcal{G}\times \mathcal{H}$.
\end{prop}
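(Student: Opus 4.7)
The plan is to verify the three conditions of Definition~\ref{defi: orbifold embedding} for the graph generalized morphism $(\Gr(Z),\Gr(\rho),\Gr(\sigma))\colon \mathcal{G}\to \mathcal{G}\times\mathcal{H}$. The key opening observation is that the defining constraint $s(g)=\rho(z)$ on $\Gr(Z)$ forces
$$
\Gr(\sigma)(g,z)=(s(g),\sigma(z))=(\rho(z),\sigma(z)),
$$
so that $\Gr(\sigma)$ factors through $(\rho,\sigma)\colon Z\to G_0\times H_0$ and shares its image. A second key point is that, since $\mathcal{H}$ is proper \'etale and acts freely on $Z$ with $\rho$ the orbit projection, the map $\rho\colon Z\to G_0$ is itself \'etale.

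For conditions~(1) and~(2), I would pass to local \'etale sections $\tau\colon U\to Z$ of $\rho$ over small opens $U\subset G_0$. Each such section realizes $(\rho,\sigma)\circ\tau$ as the graph $u\mapsto(u,\sigma(\tau(u)))$ of a smooth map $U\to H_0$, which is an embedded submanifold of $G_0\times H_0$. Local compactness of the fibers of $\rho$ (from properness of $\mathcal{H}$) ensures that at most finitely many such branches meet any small neighborhood, establishing~(1) in the sense of Remark~\ref{rmk:finite}. For~(2), the \'etaleness of $s\colon G_1\to G_0$ identifies $T_{(g_0,z_0)}\Gr(Z)$ with $T_{z_0}Z$, under which $d\Gr(\sigma)$ becomes $Y\mapsto (d\rho(Y),d\sigma(Y))$, injective by \'etaleness of $\rho$. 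Combined with a dimension count and discreteness of the fibers of $\Gr(\sigma)$ (from \'etaleness of both $s$ and $\rho$), $\Gr(\sigma)$ is a local diffeomorphism onto its image.

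For condition~(3), I would unwind $\phi$ on arrows. Given an arrow $((g_1,z_1),\hat g,(g_2,z_2))$ in the pullback of $\mathcal{G}$ to $\Gr(Z)$ via $\Gr(\rho)$, the defining identity $\hat g\cdot(g_2,z_2)=(g_1,z_1)\cdot(\tilde g,h)$ together with the freeness of the $\mathcal{H}$-action on $Z$ produces the explicit formulas
$$
\tilde g=g_1^{-1}\hat g g_2,\qquad z_1h=\tilde g z_2,
$$
with $h$ uniquely determined. The inverse formula $\hat g=g_1\tilde g g_2^{-1}$ is smooth and makes $\phi_1$ injective; together with $\phi_0=\id_{\Gr(Z)}$, this gives the embedding property. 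For saturation, given any arrow $((g_1,z_1),(\tilde g,h),(g_2,z_2))$ of the pullback of $\mathcal{G}\times\mathcal{H}$ to $\Gr(Z)$ via $\Gr(\sigma)$, the same formula $\hat g:=g_1\tilde g g_2^{-1}$ yields an arrow of the pullback of $\mathcal{G}$ with $t(\hat g)=t(g_1)$ and $s(\hat g)=t(g_2)$, matching source $(g_2,z_2)$ and target $(g_1,z_1)$.

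The most delicate point is condition~(1): one must check that the locally defined graph branches fit together as an embedded submanifold of $G_0\times H_0$ in the orbifold sense allowed by Remark~\ref{rmk:finite}. This hinges on the \'etaleness of $\rho$, which itself is a consequence of $\mathcal{H}$ being proper \'etale and acting freely on $Z$.
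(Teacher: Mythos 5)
Your overall architecture matches the paper's: verify the three conditions of Definition \ref{defi: orbifold embedding}, use local \'etale sections of $s$ and $\rho$ to exhibit $\Gr(\sigma)$ locally as a graph (hence an immersion), and unwind $\phi_1$ explicitly on arrows. Your treatment of condition (3) is in fact slightly slicker than the paper's: the observation that $\hat g\mapsto (g_1^{-1}\hat g g_2,\,h)$ admits the globally defined smooth retraction $(\tilde g,h)\mapsto g_1\tilde g g_2^{-1}$ gives injectivity, immersivity, and homeomorphism onto the image in one stroke, whereas the paper proves that $\phi_1$ is a closed map by a separate sequence argument (Lemma \ref{phi is closed}) relying on freeness of the $\mathcal{H}$-action; your saturation argument coincides with the paper's (which dismisses it as clear by definition).

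There is, however, a genuine gap in your condition (1). Knowing that $\Gr(\sigma)$ is locally a graph over small opens of $G_0$ only makes it an immersion; to conclude that its \emph{image} is an embedded submanifold of $G_0\times H_0$ you must rule out accumulation of branches, i.e.\ show that $\Gr(\sigma)$ is a closed map. This is exactly where the paper does its real work (Lemma \ref{lemma: Gr is a closed map}): given $\Gr(\sigma)(\tilde g_k,z_k)\to (g,h)$, one writes $z_k=(\rho|_W)^{-1}(\rho(z_k))\cdot\tilde h_k$ using freeness of the $\mathcal{H}$-action, checks that the $\tilde h_k$ have sources and targets confined to precompact sets, and invokes \textbf{properness of $\mathcal{H}$} to extract a convergent subsequence $\tilde h_k\to \tilde h_0$, producing a preimage of $(g,h)$. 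Your justification --- ``local compactness of the fibers of $\rho$'' --- does not supply this: the fibers of $\rho$ are $\mathcal{H}$-orbits, which are discrete but in general infinite and non-compact, and \'etaleness of $\rho$ (which you identify as the crux in your closing paragraph) only yields the local branch structure, not closedness. What is actually needed is that $\{h\in H_1 : s(h)\in K_1,\ t(h)\in K_2\}$ is finite for compact $K_1,K_2$, which comes from properness of $\mathcal{H}$; this is also the ingredient behind the finiteness of branches (the paper's Lemma \ref{lemma: image of Gr finite branches}, which reduces the ambiguity to the finite isotropy group of $\sigma(z)$). Without an argument of this kind your proof of condition (1) is incomplete.
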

\begin{proof}
The proof consists of the following lemmas.

First, we show that the image of $\Gr(\sigma)\colon \Gr(Z)\to G_0\times H_0$ is a closed subset. 

\begin{lemma}\label{lemma: Gr is an immersion}
The map $\Gr(\sigma)$ is an immersion.
\end{lemma}
\begin{proof}[Proof of Lemma \ref{lemma: Gr is an immersion}]
Let $(g,z)$ be a point in $\Gr(Z)$. We know $s(g)=\rho(z)$. Notice that $\rho\colon Z\to G_0$ is a surjective submersion and $\mathcal{H}$ acts on $Z$ properly and freely. Since $\mathcal{H}$ is a proper \'etale groupoid, we know that $\rho$ is a local diffeomorphism.

Let $U$ be a neighborhood of $s(g)$, $V$ be a neighborhood of $g$, and $W$ be a neighborhood of $z$, such that $s\colon V\to U$ and $\rho\colon W\to U$  are diffeomorphisms. Therefore, $V\times_{G_0}W\cong U$ is a neighborhood of $(g,z)$ in $\Gr(Z)$, and the map 
$$
\Gr(\sigma)\colon V\times_{G_0}W\to G_0\times H_0
$$
 is given by 
$$
(g',w)\mapsto (g', \sigma\circ \rho^{-1}\circ s(g')).
$$
On $V\times_{G_0}W$, $\Gr(\sigma)$ is the graph of the map $\sigma\circ \rho^{-1}\circ s\colon V\to W$. 
Hence $\Gr(\sigma)$ is an immersion.
\end{proof}

\begin{lemma}\label{lemma: Gr is a closed map}
The map $\Gr(\sigma)$ is a closed map. 
\end{lemma}
\begin{proof}[Proof of Lemma \ref{lemma: Gr is a closed map}]
Let
$$
\{(\tilde{g}_k,z_k)\}\in \Gr(Z)= G_1\times_{G_0} Z
$$
be a sequence in $G_1\times_{G_0} Z$ such that
$$
\lim_{k\to \infty}\Gr(\sigma)(\tilde{g}_k,z_k)=(g,h)\in G_0\times H_0.
$$
We will show that there exists a $(\tilde{g},z)\in G_1\times_{G_0}Z$ such that $\Gr(\sigma)(\tilde{g},z)=(g,h)$.

Recall that 
$$
\Gr(\sigma)(\tilde{g}_k,z_k):=(s(\tilde{g}_k), \sigma(z_k))=(\rho(z_k),\sigma(z_k)).
$$
We denote $s(\tilde{g}_k)$ by $g_k$. It is clear that $\lim_{k\to \infty}g_k=g$. We choose $z_0\in Z$ such that $\rho(z_0)=g$. Since the $\mathcal{H}$-action on $Z$ is free and proper and $\mathcal{H}$ is an \'{e}tale groupoid, the map $\rho\colon Z\to Z/\mathcal{H}\cong G_0$ is \'{e}tale. Hence we can choose neighborhoods $W$ and $U$ of $z_0$ and $g$ respectively such that $\rho|_W\colon W\to U$ is a diffeomorphism.

Since the $\mathcal{H}$-action on $Z$ is free and proper, for every $z_k$, there exists a unique $\tilde{h}_k\in H_1$ such that 
$$
z_k=(\rho|_W)^{-1}(\rho(z_k))\cdot \tilde{h}_k.
$$
Hence $\sigma(z_k)=s(\tilde{h}_k)\to h\in H_0$.

Let $V$ be a neighborhood of $h$ in $H_0$. Then for sufficiently large $k$ we have $s(\tilde{h}_k)\in V$ and 
$$
t(\tilde{h}_k)=\sigma((\rho|_W)^{-1}(\rho(z_k)))\in \sigma(W).
$$

We can choose $V$ and $W$ to be precompact. By the properness of $\mathcal{H}$, we know that there is a subsequence of $\tilde{h}_k$ which has a limit $\tilde{h}_0\in H_1$. Without loss of generality, we can work with this subsequence. Hence 
$$
(u(\rho(z_k)), (\rho|_W)^{-1}(\rho(z_k))\cdot \tilde{h}_k)\in G_1\times_{G_0} Z
$$
 converge to 
$$
(u(\rho(z_0)), (\rho|_W)^{-1}(\rho(z_0))\cdot \tilde{h}_0)=(u(g), (\rho|_W)^{-1}(\rho(z_0))\cdot \tilde{h}_0)
$$ as $k$ goes to $\infty$. Here, $u\colon G_0\to G_1$ is the unit map. Let 
$$
\tilde{g}=u(g) \text{ and } z=(\rho|_W)^{-1}(\rho(z_0))\cdot \tilde{h}_0.
$$ 
We have $\Gr(\sigma)(\tilde{g},z)=(g, h)$ as expected. So $\Gr(\sigma)$ is a closed map.
\end{proof}

\begin{lemma}\label{lemma: image of Gr is an embedded subvariety}
The image of $\Gr(\sigma)$ is a closed subset of $G_0\times H_0$.
\end{lemma}
\begin{proof}[Proof of Lemma \ref{lemma: image of Gr is an embedded subvariety}]
By Lemma \ref{lemma: Gr is an immersion} and Lemma \ref{lemma: Gr is a closed map}, $\Gr(\sigma)\colon G_1\times_{G_0}Z \to G_0\times H_0$ is a closed immersion, hence its image is a closed subset of $G_0\times H_0$. 
\end{proof}

\begin{lemma}\label{lemma: image of Gr finite branches}
For any point $x$ in the image of $\Gr(\sigma)$, there exists an open neighborhood $U$ of $x$ such that $\text{Image}(\Gr(\sigma))\cap U$ has a finite resolution. 
\end{lemma}
\begin{proof}[Proof of Lemma \ref{lemma: image of Gr finite branches}]
Let $(g,z)$ and $(\tilde{g},\tilde{z})$ be another point in $\Gr(Z)$ such that $\Gr(\sigma)(g,z)=\Gr(\sigma)(\tilde{g},\tilde{z})$, then $s(g)=s(\tilde{g})=\rho(z)=\rho(\tilde{z})$. So there exist $g_1\in G_1$ and $h_1\in H_1$ such that $g_1V$ and $Wh_1$ are neighborhoods of $\tilde{g}\in G_1$ and $\tilde{z}\in Z$, respectively. It is clear that $s(h_1)=t(h_1)=\sigma(z)=\sigma(\tilde{z})$, i.e. $h_1$ is in the isotropy group of $\sigma(z)$, which is a finite group, since the groupoid $\mathcal{H}$ is proper.

The image of $g_1V\times_{G_0}Wh_1$ will be the same as that of $V\times_{G_0}W$ on the $G_0$ component but twisted by $h_1$ on the $H_0$ component. Hence $\text{Image}(\Gr(\sigma))$ restricted to a sufficiently small open neighborhood of $\Gr(\sigma)(g,z)$ is the union of the image of $V\times_{G_0}Wh_1$ over all $h_1\in H_{\sigma(z)}$, which is the isotropy group of $\sigma(z)$.  Recall from Lemma \ref{lemma: Gr is an immersion} that $\Gr(\sigma)$ is a diffeomorphism on $V\times{G_0} Wh_1$. We have obtained the desired local finite resolution property.  
\end{proof}

We can define the pullback groupoids $\tilde{\mathcal{G}}$, $\widetilde{\mathcal{G}\times \mathcal{H}}$ and the map $\phi\colon \tilde{\mathcal{G}}\to\widetilde{\mathcal{G}\times \mathcal{H}}$ as in Section \ref{subsec: orbifold embeddings}. In more detail, $\tilde{\mathcal{G}}=(\widetilde{G}_0,\widetilde{G}_1)$ where $\widetilde{G}_0=\Gr(Z)$ as in \eqref{eq: Gr(Z)}, and
$$
\widetilde{G}_1=\{((g_1,z_1),g, (g_2,z_2))\in \Gr(Z)\times G_1\times \Gr(Z)|s(g)=t(g_2),~t(g)=t(g_1)\}.
$$ 
Similarly, $\widetilde{\mathcal{G}\times \mathcal{H}}=(\widetilde{K}_0, \widetilde{K}_1)$ where $\widetilde{K}_0=\Gr(Z)$ and 
\begin{equation*}
\begin{split}
\widetilde{K}_1=\{&((g_1,z_1),g,h, (g_2,z_2))\in \Gr(Z)\times G_1\times H_1 \times \Gr(Z)|\\&s(g_1)=\rho(z_1)=t(g),~\sigma(z_1)=t(h),s(g_2)=\rho(z_2)=s(g),~\sigma(z_2)=s(h)\}.
\end{split}
\end{equation*}
Moreover, the map $\phi$ is defined by $\phi_0\colon \widetilde{G}_0\to \widetilde{K}_0$  being the identity map and $\phi_1\colon \widetilde{G}_1\to \widetilde{K}_1$ being
$$
\phi_1((g_1,z_1),g, (g_2,z_2))=((g_1,z_1),g_1^{-1}gg_2^{-1},h(z_1,g_1^{-1}gg_2^{-1},z_2), (g_2,z_2)),
$$
where $h(z_1,g_1^{-1}gg_2^{-1},z_2)$ is the unique element in $H_1$ such that 
$$
g_1^{-1}gg_2^{-1}z_2=z_1h(z_1,g_1^{-1}gg_2^{-1},z_2).
$$

By definition, it is clear that $\phi$ is saturated. 

\begin{lemma}\label{lemma: pullback groupoid injective immersion}
$\phi_1$ is an injective immersion.
\end{lemma}
\begin{proof}[Proof of Lemma \ref{lemma: pullback groupoid injective immersion}]
By definition, it is clear that $\phi$ is injective. 

We show that $\phi$ is an immersion. Let $U$ be an open subset of $G_1$ such that $s$ and $t$ are diffeomorphisms when restricted to $U$. Let $U_1$ and $U_2$ be open neighborhoods of $g_1$ and $g_2$, respectively, so that
$$
s\colon U_1\to s(U_1)\subset s(U),\quad t\colon U_2\to t(U_2)\subset t(U)
$$
are diffeomorphisms.

Since $\rho\colon Z\to G_0$ is a local diffeomorphism, we can choose open neighborhoods $V_1$ and $V_2$ of $z_1$ and $z_2$, respectively, so that
$$
\rho\colon V_1\to t(U_1),\quad \rho\colon V_2\to t(U_2)
$$
are also diffeomorphisms. Therefore we have the diffeomorphism
\begin{equation}\label{eq: diffeo U}
(U_1\times_{G_0}V_1)\times_{G_0}U\times_{G_0}(U_2\times_{G_0}V_2)\overset{\simeq}{\longrightarrow} U.
\end{equation}

On the other hand, we can choose an open neighborhood $W$ of $h(z_1,g_1^{-1}gg_2^{-1},z_2)$ in $H_1$ such that $s$ and $t$ map $W$ onto $\sigma(V_1)$ and $\sigma(V_2)$, respectively. Therefore we have another diffeomorphism
\begin{equation}\label{eq: diffeo U W}
(U_1\times_{G_0}V_1)\times_{G_0\times H_0}(U\times W)\times_{G_0\times H_0}(U_2\times_{G_0}V_2)\overset{\simeq}{\longrightarrow} U\times W.
\end{equation}

We consider the restriction of $\phi_1\colon \widetilde{G}_1\to \widetilde{G\times H}_1$ to $(U_1\times_{G_0}V_1)\times_{G_0}U\times_{G_0}(U_2\times_{G_0}V_2)$. Under the diffeomorphisms \eqref{eq: diffeo U} and  \eqref{eq: diffeo U W}, $\phi_1|_{(U_1\times_{G_0}V_1)\times_{G_0}U\times_{G_0}(U_2\times_{G_0}V_2)}$ is nothing but the embedding $U$ to $U\times W$. Therefore $\phi_1$ is an injective immersion.
\end{proof}

It remains to show that $\phi_1$ is an embedding, which reduces to showing the following
\begin{lemma}\label{phi is closed}
$\phi_1$ is a closed map.
\end{lemma}
\begin{proof}[Proof of Lemma \ref{phi is closed}]
Suppose we have a sequence $((g^k_1,z^k_1),g^k, (g^k_2,z^k_2))$ such that
$$
((g^k_1,z^k_1),(g^k_1)^{-1}g^k(g^k_2)^{-1},h(z^k_1,(g^k_1)^{-1}g^k(g^k_2)^{-1},z^k_2), (g^k_2,z^k_2))
$$
approaches to $((g^0_1,z^0_1),g^0,h^0, (g^0_2,z^0_2))$ as $k\to \infty$. This implies that as $k\to \infty$
\begin{equation}\label{eq: h to h0}
h(z^k_1,(g^k_1)^{-1}g^k(g^k_2)^{-1},z^k_2)\to h^0,
\end{equation}
and $g_i^k\to g_i^0$, $z_i^k\to z_i^0$ for $i=1$ and $2$, and 
$$
(g^k_1)^{-1}g(g^k_2)^{-1}\to g^0.
$$
Hence $g^k\to g_1^0g^0g_2^0$ as $k\to \infty$. In conclusion 
$$
((g^k_1,z^k_1),g^k, (g^k_2,z^k_2))\to ((g^0_1,z^0_1),g_1^0g^0g_2^0,  (g^0_2,z^0_2)),
$$
as $k\to \infty$. We need to show that
$$
\phi_1((g^0_1,z^0_1),g_1^0g^0g_2^0, (g^0_2,z^0_2))=((g^0_1,z^0_1),(g^0,h^0), (g^0_2,z^0_2)).
$$

By definition, we know that
$$
(g^k_1)^{-1}g^k(g^k_2)^{-1}z_2^k=z_1^k h(z^k_1,(g^k_1)^{-1}g^k(g^k_2)^{-1},z^k_2).
$$
Let $k\to\infty$, we get
$$
g^0 z_2^0=z_1^0h(z_1^0, (g_1^0)^{-1}g^0(g_2^0)^{-1}, z_2^0).
$$
Since we assume that the $\mathcal{H}$-action on $Z$ is free, we can show that for each $k$, the element $h(z^k_1,(g^k_1)^{-1}g^k(g^k_2)^{-1},z^k_2)\in H_1$ is contained in a precompact neighborhood of $h(z_1^0, (g_1^0)^{-1}g^0(g_2^0)^{-1}, z_2^0)$. Therefore
\begin{equation}\label{eq: h to h1}
h(z^k_1,(g^k_1)^{-1}g^k(g^k_2)^{-1},z^k_2)\to h(z_1^0, (g_1^0)^{-1}g^0(g_2^0)^{-1}, z_2^0)
\end{equation}
as $k\to \infty$. Combining \eqref{eq: h to h0} and \eqref{eq: h to h1}  we know that
$$
 h(z_1^0, (g_1^0)^{-1}g^0(g_2^0)^{-1}, z_2^0)=h^0,
$$
Hence 
$$
\phi_1((g^0_1,z^0_1),g_1^0g^0g_2^0,  (g^0_2,z^0_2))=((g^0_1,z^0_1),(g^0,h^0), (g^0_2,z^0_2)),
$$
as desired.
\end{proof}
We have now completed the proof of Proposition \ref{prop: graph is embedding} with Lemmas \ref{lemma: Gr is an immersion}-\ref{phi is closed}.
\end{proof}

\begin{rmk}\label{rmk:graphembedding}
In general, the embedding $(\Gr(Z), \Gr(\rho), \Gr(\sigma))$ is neither iso-spatial nor stabilizer-preserving. However, by Proposition \ref{prop: decompose an embedding into two types}, $(\Gr(Z), \Gr(\rho), \Gr(\sigma))$ is the composition of an iso-spatial embedding and a stabilizer-preserving embedding.
\end{rmk}

\subsection{Inertia groupoids}\label{subsec: inertia groupoids}
Following \cite[Section 6.4]{moerdijk2002orbifolds}, we can define the inertia groupoid of a complex orbifold groupoid $\mathcal{G}=(G_0, G_1)$. We consider
\begin{equation}
B_{\mathcal{G}}=\{g\in G_1| s(g)=t(g)\}.
\end{equation}
It is easy to see that $B_{\mathcal{G}}$ is a disjoint union of complex manifolds whose dimensions may vary with different components even when $G_0$ is connected. We also notice that $\mathcal{G}$ acts on the space $B_{\mathcal{G}}$ by conjugation. Then the \emph{inertia groupoid} $I\mathcal{G}$ is defined to be the semi-direct product $B_{\mathcal{G}}\ltimes \mathcal{G}$. More precisely,
\begin{equation}
(I\mathcal{G})_0=B_{\mathcal{G}},
\end{equation}
and for $g_1$ and $g_2\in S_{\mathcal{G}}$, an arrow $g_1\to g_2$ consists of $g\in G_1$ such that $gg_1g^{-1}=g_2$. In other words
\begin{equation}
(I\mathcal{G})_1=\{(g_1,g_2,g)|g_1,g_2\in B_{\mathcal{G}} \text{ and } gg_1g^{-1}=g_2\},
\end{equation} 
and $s(g_1,g_2,g)=g_1$, $t(g_1,g_2,g)=g_2$. It is clear that $I\mathcal{G}$ is also a proper, \'{e}tale complex Lie  groupoid.

\begin{defi}\label{defi: beta_G}
There is a proper holomorphic groupoid map $\beta_{\mathcal{G}}\colon I\mathcal{G}\to \mathcal{G}$ which maps $g\in (I\mathcal{G})_0$ to $s(g)=t(g)\in G_0$ and maps $(g_1,g_2,g)\in (I\mathcal{G})_1$ to $g\in G_1$.
\end{defi}

\begin{defi}\label{defi: tau_G}
On $I\mathcal{G}$ we have the tautological section $\tau_{\mathcal{G}}$ with value in $G_1$ given by
\begin{equation}
\tau_{\mathcal{G}}(g)=(g,g,g) \text{, for } g\in (I\mathcal{G})_0.
\end{equation}
\end{defi}

\begin{rmk}\label{rmk:inertia_local}
Locally an orbifold groupoid $\mathcal{G}$ is a finite group $G$ acting on a complex manifold $X$. In this case, the inertia groupoid $I\mathcal{G}$ is
\begin{equation}\label{eq: inertia groupoid local 1}
G \ltimes \coprod_{g\in G} X^g,
\end{equation}
where $X^g$ is the fixed point set of $g$, and the $G$-action is given as follows: for any $g'\in G$,
\begin{equation}
g'\colon X^g\to X^{g'g(g')^{-1}}, \quad x\mapsto g'x.
\end{equation}
It is easy to see that  \eqref{eq: inertia groupoid local 1} is Morita equivalent to 
\begin{equation}\label{eq: inertia groupoid local 2}
 \coprod_{(g)\in \text{Conj}(G)}Z_G(g)\ltimes X^g,
\end{equation}
where $\text{Conj}(G)$ is the set of conjugacy classes of $G$ and $Z_G(g)$ is the centralizer of $g$ in $G$.

From the viewpoint of \eqref{eq: inertia groupoid local 2}, the map $\beta_{\mathcal{G}}$ is the natural map 
$$
Z_G(g)\ltimes X^g\to G\ltimes X,
$$
which are inclusions on both $Z_G(g)$ and $X^g$. The map $\tau_{\mathcal{G}}$ is given on each component by
\begin{equation}
\tau_{\mathcal{G}}(x)=(g,x)\text{, for } x\in X^g.
\end{equation}
\end{rmk}

Next, we consider induced (generalized) morphisms between inertia groupoids.
\begin{defi}\label{defi: generalized morphism for inertial groupoid}
Let $(Z,\rho, \sigma)\colon \mathcal{G}\to \mathcal{H}$ be a generalized morphism. We define a generalized morphism
$$
(IZ,I\rho, I\sigma)\colon I\mathcal{G}\to I\mathcal{H}
$$
as follows: let 
\begin{equation}\label{eq: IZ}
IZ:=\{(g,z,h)\in G_1\times Z\times H_1|gzh=z\}.
\end{equation}
This implies
\begin{equation}
s(g)=t(g)=\rho(z), \text{ and }  s(h)=t(h)=\sigma(z),
\end{equation}
hence we define $I\rho\colon IZ\to (I\mathcal{G})_0$ and $I\sigma\colon IZ\to (I\mathcal{H})_0$ as
\begin{equation}\label{eq: Irho and Isigma}
I\rho(g,z,h):=g \text{, and } I\sigma(g,z,h):=h.
\end{equation}

The groupoid $I\mathcal{G}$ acts on $IZ$ from the left by
\begin{equation}\label{eq IG acts on IZ}
(g_1,g_2,g)\cdot (g_1,z,h):=(g_2,gz,h)=(gg_1g^{-1},gz,h),
\end{equation}
and $I\mathcal{H}$ acts on $IZ$ from the right by
\begin{equation}\label{eq IH acts on IZ}
 (g,z,h_2)\cdot (h_1,h_2,h):=(g,zh, h_1)=(g,zh, h^{-1}h_2h).
\end{equation}
\end{defi}

\begin{rmk}
Since the $\mathcal{H}$-action on $Z$ is free, the $h$ component of $IZ$ as in \eqref{eq: IZ} is uniquely determined by $(g,z)$ and the condition $gzh=z$. Hence $IZ$ is isomorphic to 
\begin{equation}\label{eq: IZ simplified}
\{(g,z)\in G_1\times Z|s(g)=t(g)=\rho(z)\}.
\end{equation}
\end{rmk}

\begin{rmk}\label{rmk: inertia morphism local case}
In the local case, let $f\colon G\ltimes X\to H\ltimes Y$ be a morphism which consists of $\underline{f}\colon X\to Y$ and $\bar{f}\colon G\to H$. Then the induced morphism 
$$
If\colon \coprod_{(g)\in \text{Conj}(G)}Z_G(g)\ltimes X^g\to  \coprod_{(h)\in \text{Conj}(H)}Z_H(h)\ltimes Y^h
$$ 
consists of $\overline{If}=\bar{f}\colon Z_G(g)\to Z_H(\bar{f}(g))$ and $\underline{If}$ is given on each component by $\underline{f}|_{X^g}\colon X^g\to Y^{\bar{f}(g)}$.
\end{rmk}

Let $(Z,\rho, \sigma)\colon \mathcal{G}\to \mathcal{H}$ be a Morita equivalence. It is easy to see that 
$$
(IZ,I\rho, I\sigma)\colon I\mathcal{G}\to I\mathcal{H}
$$
is also a Morita equivalence. In other words, the inertia groupoid is invariant under Morita equivalence as in Definition \ref{defi: groupoid Morita equivalence}, and hence we can consider the inertia orbifold of a complex orbifold.

The following proposition, which will be used later, shows that being stabilizer-preserving is preserved under taking induced (generalized) morphisms between inertia groupoids.
\begin{prop}\label{prop: inertia morphism of stablizer-preserving embedding is  a stablizer-preserving embedding}
Let $(Z,\rho, \sigma)\colon \mathcal{G}\to \mathcal{H}$ be a generalized morphism between complex orbifold groupoids. If $(Z,\rho, \sigma)$ is a stabilizer-preserving embedding, then $(IZ,I\rho, I\sigma)$ is also a stabilizer-preserving embedding.
\end{prop}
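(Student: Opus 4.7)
The plan is to verify the three conditions of Definition \ref{defi: orbifold embedding} for $(IZ, I\rho, I\sigma)$ and then to upgrade to the stabilizer-preserving condition of Definition \ref{defi: stabilizer-preserving embedding}. The underlying principle is that the hypothesis fiberwise identifies the $G$- and $H$-stabilizers at each point of $Z$, which is precisely the data that assembles into $I\mathcal{G}$ and $I\mathcal{H}$.

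First, I exploit the isomorphism $\phi\colon \tilde{\mathcal{G}} \to \tilde{\mathcal{H}}$ (with $\phi_0 = \mathrm{id}_Z$) to extract, for each $z \in Z$, a group isomorphism $\phi_z\colon G_{\rho(z)} \xrightarrow{\cong} H_{\sigma(z)}$ characterized by $g z = z \phi_z(g)$. Using the simplified description \eqref{eq: IZ simplified}, this gives a smooth bijection
$$\Phi\colon IZ \xrightarrow{\cong} Z \times_{H_0} B_\mathcal{H}, \qquad (g, z) \mapsto (z, \phi_z(g)),$$
where the fiber product uses $\sigma$ and $s = t$. Under $\Phi$, the map $I\sigma$ becomes the second projection. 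Since $\mathcal{H}$ is \'etale, $s|_{B_\mathcal{H}}$ is a local diffeomorphism; and since $\sigma$ is an \'etale submersion onto the embedded submanifold $\sigma(Z) \subset H_0$, the projection $Z \times_{H_0} B_\mathcal{H} \to B_\mathcal{H}$ is \'etale onto $s^{-1}(\sigma(Z)) \subset B_\mathcal{H}$, which is an embedded submanifold because $s$ is a local diffeomorphism. This gives conditions (1) and (2) of Definition \ref{defi: orbifold embedding}.

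For condition (3) and the stabilizer-preserving property, I construct $\phi^I\colon \widetilde{I\mathcal{G}} \to \widetilde{I\mathcal{H}}$ explicitly and show it is an isomorphism. Set $\phi^I_0 = \mathrm{id}_{IZ}$. A typical arrow of $(\widetilde{I\mathcal{G}})_1$ has the form $((g_1, z_1), (g_2, g_1, g), (g_2, z_2))$ with $g g_2 g^{-1} = g_1$, $(z_1, g, z_2) \in \tilde{G}_1$, and $g_i \in G_{\rho(z_i)}$. Letting $(z_1, h, z_2) := \phi_1(z_1, g, z_2)$ (so $g z_2 = z_1 h$) and $h_i := \phi_{z_i}(g_i)$ (so $g_i z_i = z_i h_i$), the computation
$$z_1 h h_2 = g z_2 h_2 = g g_2 z_2 = g_1 g z_2 = g_1 z_1 h = z_1 h_1 h,$$
combined with the freeness of the right $\mathcal{H}$-action on $Z$, gives $h h_2 h^{-1} = h_1$; hence $(h_2, h_1, h) \in (I\mathcal{H})_1$ with the correct source and target to be an arrow in $(\widetilde{I\mathcal{H}})_1$. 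Defining
$$\phi^I_1\bigl((g_1, z_1), (g_2, g_1, g), (g_2, z_2)\bigr) := \bigl((g_1, z_1), (h_2, h_1, h), (g_2, z_2)\bigr)$$
and running the same recipe with $\phi^{-1}$ in place of $\phi$ produces a two-sided inverse, so $\phi^I_1$ is a smooth bijection. Thus $\phi^I$ is an isomorphism of groupoids, which is in particular a saturated embedding, completing the argument.

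The main obstacle is the cocycle-like relation $h h_2 h^{-1} = h_1$ that certifies $\phi^I_1$ lands in $(\widetilde{I\mathcal{H}})_1$; the other verifications (fiberwise stabilizer isomorphism, \'etaleness, embeddedness of the image in $B_\mathcal{H}$) follow directly from the hypotheses once the fiberwise identification $\Phi$ from the first step is in hand.
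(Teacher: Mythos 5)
Your second half is correct: the identification of the image of $I\sigma$ with $(s|_{B_\mathcal{H}})^{-1}(\sigma(Z))$ does use the stabilizer-preserving hypothesis in the right way, and the cocycle computation $hh_2h^{-1}=h_1$ certifying that $\phi^I_1$ lands in $(\widetilde{I\mathcal{H}})_1$ is a clean way to make precise what the paper dismisses as ``clear, as they are local properties.'' The gap is in the first half, in the sentence ``Since $\mathcal{H}$ is \'etale, $s|_{B_\mathcal{H}}$ is a local diffeomorphism.'' This is false. The source map $s\colon H_1\to H_0$ is a local diffeomorphism, but $B_\mathcal{H}=\{h\in H_1\mid s(h)=t(h)\}$ is not open in $H_1$: in the local model $H\ltimes N$ the component of $B_\mathcal{H}$ indexed by $h$ is $\{h\}\times N^h$, and $s$ restricted to it is the inclusion of the fixed-point set $N^h\hookrightarrow N$, which in general has positive codimension (already for $\mathbb{Z}/2$ acting on $\mathbb{C}$ by $\pm 1$, the component over $-1$ is a single point). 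Consequently your conclusion that $(s|_{B_\mathcal{H}})^{-1}(\sigma(Z))$ is an embedded submanifold of $B_\mathcal{H}$ ``because $s$ is a local diffeomorphism'' does not follow: the preimage of a submanifold under an immersion is an intersection of two submanifolds, $N^h\cap\sigma(Z)$, which need not be a manifold without a transversality or cleanness hypothesis. The same issue infects the claim that $Z\times_{H_0}B_\mathcal{H}$ is a manifold and that the second projection is \'etale onto its image.

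The statement you need is true, but for a different reason that you must supply: $\sigma(Z)$ is $\mathcal{H}$-saturated (this is established in the proof of Proposition \ref{prop: embedding of quotient spaces}), so for $h$ in the isotropy group of a point $x\in\sigma(Z)$ the germ of the diffeomorphism induced by $h$ preserves $\sigma(Z)$ near $x$; hence near $x$ one has $N^h\cap\sigma(Z)=\sigma(Z)^h$, the fixed locus of a finite-order holomorphic automorphism of the complex submanifold $\sigma(Z)$, which is a complex submanifold of the expected dimension. This clean-intersection argument is the actual geometric content of condition (1) here, and it is what the paper's Lemma \ref{lemma: inertia morphism embedded subvariety} is organized around (the paper instead shows $I\sigma$ is a local embedding by restricting $\phi$ to product neighborhoods and then proves $I\sigma$ is a closed map by a sequential compactness argument using properness of $\mathcal{H}$). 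With that repair, your route -- globally identifying the image and then exhibiting the explicit isomorphism $\phi^I$ -- is a legitimate and arguably more transparent alternative to the paper's local-plus-closedness argument.
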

\begin{proof}[Proof of Proposition \ref{prop: inertia morphism of stablizer-preserving embedding is  a stablizer-preserving embedding}]
First we need to check that $(IZ,I\rho, I\sigma)$ satisfies the conditions in Definition \ref{defi: orbifold embedding}.

\begin{lemma}\label{lemma: inertia morphism embedded subvariety}
The image of $I\sigma$ is a closed subset of $(I\mathcal{H})_0$.
\end{lemma}
\begin{proof}[Proof of Lemma \ref{lemma: inertia morphism embedded subvariety}]
First, we show that $I\sigma$ is a local embedding. For $(g,z)\in IZ$, choose a neighborhood $U$ of $z$ in $Z$ such that $\rho|_U\colon U\to G_0$ is a diffeomorphism and $\sigma|_U\colon U\to H_0$ is an embedding. Choose a neighborhood $W$ of $g$ in $(IG)_0$ and $V$ of $I\sigma(g,z)$ in $(I\mathcal{H})_0$ such that $W$ and $V$ are embedded in $\rho(U)$ and $\sigma(U)$, respectively.

We observe that $U \leftindex_{\rho} \times_t W\leftindex_s\times_{\rho} U$ forms a neighborhood of $(z,g,z')$ in $Z \leftindex_{\rho} \times_t (I\mathcal{G})_1 \leftindex_s\times_{\rho} Z$, and $U \leftindex_{\sigma} \times_t V\leftindex_s\times_{\sigma} U$ forms a neighborhood of $(z,I\sigma(g,z), z')$ in $Z \leftindex_{\rho} \times_t (I\mathcal{H})_1 \leftindex_s\times_{\rho} Z$. Since 
$$
\phi\colon Z \leftindex_{\rho} \times_t G_1 \leftindex_s\times_{\rho} Z\to Z \leftindex_{\rho} \times_t H_1 \leftindex_s\times_{\rho} Z
$$
is an embedding, the restriction
$$
\phi\colon U \leftindex_{\rho} \times_t W\leftindex_s\times_{\rho} U\to U \leftindex_{\sigma} \times_t V\leftindex_s\times_{\sigma} U
$$
is also an embedding. Notice that $\rho|_U$ is a diffeomorphism, hence $U \leftindex_{\rho} \times_t W\leftindex_s\times_{\rho} U$ is diffeomorphic to $W\leftindex_s\times_{\rho} U$. On the other hand, $U \leftindex_{\sigma} \times_t V\leftindex_s\times_{\sigma} U$ is naturally embedded into $V$. This shows that the map $W\leftindex_s\times_{\rho} U\to V$ is an embedding. Hence $IZ\to (I\mathcal{H})_0$ is a local embedding.

We need to show that $I\sigma$ is a closed map. Let $h_n\in (I\mathcal{H})_0$ be in the image of $I\sigma$. In other words, there exist $g_n\in G_1$ and $z_n\in Z$ such that
$$
g_nz_nh_n=z_n.
$$
If $h_n\to h_0\in (I\mathcal{H})_0$, then we need to show that $h_0$ is also in the image of $I\sigma$.

Notice that $h_0\in H_1$ and $s(h_0)=t(h_0)\in H_0$. Since $(Z,\rho, \sigma)$ is an embedding, by Condition (2) of Definition \ref{defi: orbifold embedding} there exists an open neighborhood $\tilde{W}$ of $s(h_0)=t(h_0)$ such that $\sigma(Z)\cap \tilde{W}$ has a finite resolution. Since $\sigma(z_n)\in H_0$ converges to $s(h_0)=t(h_0)$, there exists a subsequence of $\{z_n\}$ whose images under $\sigma$ are contained in one $\sigma(V_i)$. By abuse of notation, we still denote this subsequence by $\{z_n\}$.

Recall that by Definition \ref{defi: orbifold embedding} Condition (2) $\sigma|_{V_i}$ is a diffeomorphism and $\rho|_{V_i}$ is a diffeomorphism to an open subset of $G_0$ if $V_i$ is sufficiently small.

We can also choose an open neighborhood $W$ of $h_0$ in $H_1$ such that $s|_W$ and $t|_W$ are both diffeomorphisms to $\tilde{W}$. Similarly, we choose an open neighborhood $V\subset G_1$ such that $s|_V$ is a diffeomorphism to $\rho(U)\subset G_0$ and $t|_V$ is a diffeomorphism to its image. 

Since $\sigma|_U$ is a diffeomorphism to a branch containing $\{\sigma(z_n)\}$, there exists a unique $z'_n\in U$ such that $\sigma(z'_n)=\sigma(z_n)$ for each $n\geq 1$. Therefore 
$$
(z'_n,\id,z_n)\in \widetilde{H}_1
$$
in the sense of \eqref{eq: tilde H}. Since $(Z,\rho, \sigma)\colon \mathcal{G}\to \mathcal{H}$ is an embedding, the induced map $\phi\colon \tilde{\mathcal{G}}\to \tilde{\mathcal{H}}$ is saturated. Hence there exists an $g'_n\in G_1$ such that
$$
(z'_n,g'_n,z_n)\in \widetilde{G}_1.
$$
Let $\phi(z'_n,g'_n,z_n)=(z'_n,h'_n,z_n)$. Then for each $n\geq 1$ we found $g'_n\in G_1$ and $h'_n\in H_1$ such that
\begin{equation}
g'_nz_nh'_n=z'_n.
\end{equation}
Since $g_nz_nh_n=z_n$, we get
\begin{equation}
g'_ng_n(g'_n)^{-1}z'_n(h'_n)^{-1}h_nh'_n=z'_n.
\end{equation}
We notice that 
\begin{equation}\label{eq: h'_n}
s(h'_n)=t(h'_n)=\sigma(z'_n).
\end{equation}

Since there are only finitely many isotropy elements for every point in $H_0$, \eqref{eq: h'_n} implies that there exists a subsequence of $\{h'_n\}$ that converges to some $h'_0\in H_1$. Notice that both $h'_0$ and $h_0$ are in the isotropy group of $\sigma(z_0)$. On the other hand, we know that $h_n$ converges to $h_0\in H_1$, therefore $(h'_n)^{-1}h_nh'_n$ converges to $(h'_0)^{-1}h_0h'_0$.  Again, since $\phi\colon \tilde{\mathcal{G}}\to \tilde{\mathcal{H}}$ is an embedding, $g'_ng_n(g'_n)^{-1}$ also converges to some $g'_0\in G_1$. Since $z'_n\in U$, there is also a subsequence of $z'_n$ which converges to $z'_0\in Z$.

Therefore, we can assume that $(g'_ng_n(g'_n)^{-1},z'_n)$ converges to $(g'_0,z'_0)\in IZ$. Hence,  $(h'_0)^{-1}h_0h'_0=I\sigma(g'_0,z'_0)$ is in the image of $I\sigma$. Therefore, $h_0$ is also in the image of $I\sigma$ as $h_0=I\sigma(g'_0,z'_0(h'_0)^{-1})$.
\end{proof}

\begin{lemma}\label{lemma: inertia finitely many branches nbh}
For each point in the image of $I\sigma$ in $(I\mathcal{H})_0$, there exists a neighborhood such that it has a finite resolution.
\end{lemma}
\begin{proof}[Proof of Lemma \ref{lemma: inertia finitely many branches nbh}]
It follows from the fact that every point in $H_0$ has only finitely many isotropy elements.
\end{proof}

Conditions (1) and (3) of Definition \ref{defi: orbifold embedding} are clear, as they are local properties and $(Z,\rho, \sigma)$ is a stabilizer-preserving embedding. This completes the proof of Proposition \ref{prop: inertia morphism of stablizer-preserving embedding is  a stablizer-preserving embedding}.
\end{proof}

\begin{rmk}
In general $(IZ,I\rho, I\sigma)$ is not an iso-spatial embedding even if $(Z,\rho, \sigma)$ is.
\end{rmk}

\section{Bott-Chern cohomology of complex orbifolds}\label{sec:H_BC}
In this section, we introduce the Bott-Chern cohomology for complex orbifolds. 

\subsection{Definition}
Let $X$ be an $n$-dimensional complex orbifold with a groupoid representation $\mathcal{G}=(G_0, G_1)$. Let $$\Omega^{p,q}_{\mathcal{G}}(G_0,\mathbb{C})$$ be the space of $\mathcal{G}$-equivariant $(p,q)$-forms on $G_0$. $\Omega^{\bullet,\bullet}_{\mathcal{G}}(G_0,\mathbb{C})$ is a bigraded algebra with the ordinary wedge product.

It is clear that the operators $\dpar$ and $\dbar$ map $\Omega^{p,q}_{\mathcal{G}}(G_0,\mathbb{C})$ to $\Omega^{p+1,q}_{\mathcal{G}}(G_0,\mathbb{C})$ and $\Omega^{p,q+1}_{\mathcal{G}}(G_0,\mathbb{C})$, respectively.

\begin{rmk}
In the local case, $\mathcal{G}=G\ltimes X$ and $\Omega^{p,q}_{\mathcal{G}}(G_0,\mathbb{C})$ is nothing but $\Omega^{p,q}_G(X,\mathbb{C})$, the space of $G$-invariant $(p,q)$-forms on $X$.
\end{rmk}

\begin{defi}\label{defi: Bott-Chern cohomology}
The {\em Bott-Chern cohomology group} $\HBC^{p,q}(\mathcal{G},\mathbb{C})$ is defined as
\begin{equation}\label{eq: Bott-Chern cohomology}
\HBC^{p,q}(\mathcal{G},\mathbb{C}):=\big(\Omega^{p,q}_{\mathcal{G}}(G_0,\mathbb{C})\cap\ker \diff \big) /\dbar\dpar \Omega^{p-1,q-1}_{\mathcal{G}}(G_0,\mathbb{C}),
\end{equation}
where $\diff=\dpar+\dbar$ is the de Rham differential.
\end{defi}

$\HBC^{\bullet,\bullet}(\mathcal{G},\mathbb{C})$ inherits the structure of a bigraded algebra from $\Omega^{\bullet,\bullet}_{\mathcal{G}}(G_0,\mathbb{C})$. As in \cite[Section 2.1]{bismut2023coherent}, we put
\begin{equation}\label{eq: Bott-Chern cohomology =}
\Omega^{(=)}_{\mathcal{G}}(G_0,\mathbb{C})=\bigoplus_{p=0}^n\Omega^{p,p}_{\mathcal{G}}(G_0,\mathbb{C}), ~\HBC^{(=)}(\mathcal{G},\mathbb{C})=\bigoplus_{p=0}^n\HBC^{p,p}(\mathcal{G},\mathbb{C}).
\end{equation}

We can similarly define the real vector space $\Omega^{(=)}_{\mathcal{G}}(G_0,\mathbb{R})$ and $\HBC^{(=)}(\mathcal{G},\mathbb{R})$, which are algebras as well.

\begin{rmk}
For a general complex groupoid we need to use the Bott-Schumann double complexes to define its Bott-Chern cohomology. However, in the special case of a proper \'{e}tale groupoid, Definition \ref{defi: Bott-Chern cohomology} is sufficient.
\end{rmk}

\subsection{Current and Bott-Chen cohomology}
A review of the theory of currents in complex geometry can be found in \cite[Section 3.1 and Section 3.2]{griffiths1994principles}. 

Let $\mathcal{D}^{p,q}_{\mathcal{G}}(G_0,\mathbb{C})$ denote the space of $\mathcal{G}$-equivariant $(p,q)$-currents on $G_0$. We can also define the operators $\dpar$ and $\dbar$ which map $\mathcal{D}^{p,q}_{\mathcal{G}}(G_0,\mathbb{C})$ to $\mathcal{D}^{p+1,q}_{\mathcal{G}}(G_0,\mathbb{C})$ and $\mathcal{D}^{p,q+1}_{\mathcal{G}}(G_0,\mathbb{C})$, respectively. Again, let $\diff=\dpar+\dbar$.
 
By an argument similar to that in \cite{angella2013cohomologies}, we can prove that
\begin{equation}
\HBC^{p,q}(\mathcal{G},\mathbb{C})\cong \big(\mathcal{D}^{p,q}_{\mathcal{G}}(G_0,\mathbb{C})\cap\ker \diff \big) /\dbar\dpar \mathcal{D}^{p-1,q-1}_{\mathcal{G}}(G_0,\mathbb{C}).
\end{equation}

\subsection{Functorial properties of Bott-Chern cohomology}\label{subsec: functorial Bott-Chern cohomology}
Let $\mathcal{G}$ and $\mathcal{H}$ be complex orbifold groupoids of dimensions $n$ and $n^{\prime}$ respectively. Let $f=(Z,\rho, \sigma)\colon\mathcal{G}\to \mathcal{H}$ be a generalized morphism. Then we have morphisms of bigraded algebras $$\sigma^*\colon\Omega^{\bullet,\bullet}_{\mathcal{H}}(H_0,\mathbb{C})\to  \Omega^{\bullet,\bullet}_{\mathcal{G}\times \mathcal{H}}(Z,\mathbb{C}),\quad  \rho^*\colon \Omega^{\bullet,\bullet}_{\mathcal{G}}(G_0,\mathbb{C}) \to  \Omega^{\bullet,\bullet}_{\mathcal{G}\times \mathcal{H}}(Z,\mathbb{C}).$$ Moreover $\rho^*\colon \Omega^{\bullet,\bullet}_{\mathcal{G}}(G_0,\mathbb{C})\to \Omega^{\bullet,\bullet}_{\mathcal{G}\times \mathcal{H}}(Z,\mathbb{C})$ is an isomorphism. Thus we can define $f^*\colon= (\rho^*)^{-1}\circ \sigma^*$, and it induces a morphism of bigraded algebras $\HBC^{\bullet,\bullet}(\mathcal{H},\mathbb{C})$ to $\HBC^{\bullet,\bullet}(\mathcal{G},\mathbb{C})$. Also, $f^*$ preserves the corresponding real vector spaces.

By duality, we can define the pushforward map $f_*=\sigma_*\circ (\rho_*)^{-1}$ which maps $\mathcal{D}^{p,q}_{\mathcal{G}}(G_0,\mathbb{C})$ to $\mathcal{D}^{p-n+n^{\prime},q-n+n^{\prime}}_{\mathcal{H}}(H_0,\mathbb{C})$. 
Moreover, by Definition \ref{defi: generalized morphism for inertial groupoid} we have a generalized morphism $If\colon I\mathcal{G}\to I\mathcal{H}$ between inertia groupoids, hence a morphism
\begin{equation}\label{eq: pushforward current inertia}
If_*\colon \HBC^{\bullet,\bullet}(I\mathcal{G},\mathbb{C})\to \HBC^{\bullet-n+n^{\prime},\bullet-n+n^{\prime}}(I\mathcal{H},\mathbb{C}).
\end{equation}

\begin{prop}\label{prop: Bott-Chern is independent of choice}
The Bott-Chern cohomology is independent of the choice of the groupoid representation. In particular, if two complex orbifold groupoids $\mathcal{G}$ and $\mathcal{H}$ are Morita equivalent, then we have 
\begin{equation}
\HBC^{p,q}(\mathcal{G},\mathbb{C})\cong 
\HBC^{p,q}(\mathcal{H},\mathbb{C}).
\end{equation}
\end{prop}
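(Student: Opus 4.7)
The plan is to invoke the functorial setup of Section \ref{subsec: functorial Bott-Chern cohomology} and show that every Morita equivalence lifts to an isomorphism of double complexes of equivariant forms, whence to an isomorphism of Bott-Chern cohomologies. Since any two groupoid representations of the same complex orbifold are Morita equivalent, the ``in particular'' statement is in fact the entire content of the proposition.

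By Proposition \ref{prop: equiv defi of Morita equiv}, a Morita equivalence between $\mathcal{G}$ and $\mathcal{H}$ is represented by a $\mathcal{G}$-$\mathcal{H}$ principal bibundle $(Z,\rho,\sigma)$, viewed as a generalized morphism $f\colon\mathcal{G}\to\mathcal{H}$. Section \ref{subsec: functorial Bott-Chern cohomology} already records that the pullback
$$
\rho^*\colon\Omega^{\bullet,\bullet}_{\mathcal{G}}(G_0,\mathbb{C})\to\Omega^{\bullet,\bullet}_{\mathcal{G}\times\mathcal{H}}(Z,\mathbb{C})
$$
is an isomorphism. The symmetric condition (1) in Definition \ref{defi: bi-bundle} guarantees that $\sigma\colon Z\to H_0$ is likewise a quotient map by a free and proper groupoid action, so exactly the parallel argument shows that $\sigma^*$ is an isomorphism as well. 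Consequently $f^*=(\rho^*)^{-1}\circ\sigma^*$ is an isomorphism of bigraded algebras from $\Omega^{\bullet,\bullet}_{\mathcal{H}}(H_0,\mathbb{C})$ to $\Omega^{\bullet,\bullet}_{\mathcal{G}}(G_0,\mathbb{C})$.

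Since pullback along holomorphic maps commutes with $\partial$ and $\overline{\partial}$, the map $f^*$ intertwines the $d$-kernel and the $\overline{\partial}\partial$-image on either side; passing to the quotient as in \eqref{eq: Bott-Chern cohomology} yields the required isomorphism $\HBC^{p,q}(\mathcal{H},\mathbb{C})\cong\HBC^{p,q}(\mathcal{G},\mathbb{C})$. The main technical point, and in my view the only one requiring care, is the verification that $\sigma^*$ is an isomorphism when the $\mathcal{G}$-action on $Z$ is principal: one must check that any $\mathcal{G}$-invariant $(p,q)$-form on $Z$ descends to a form on $H_0$ and that this descent is compatible with the residual $\mathcal{H}$-equivariance on both sides. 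This is a descent argument for forms along a principal groupoid bundle, and it relies crucially on $\mathcal{G}$ being proper and \'{e}tale so that $Z\to\mathcal{G}\backslash Z=H_0$ is locally modeled on a finite quotient, where invariant forms pull back bijectively. Once this descent is established, functoriality $(g\circ f)^*=f^*\circ g^*$ of pullback shows the isomorphism does not depend on the choice of bibundle, completing the proof.
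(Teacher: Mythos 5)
Your proof is correct, but it takes a different route from the paper's. The paper goes back to Definition \ref{defi: groupoid Morita equivalence}: it reduces to a single strict equivalence homomorphism $\psi\colon\mathcal{H}\to\mathcal{G}$ in the sense of Definition \ref{defi: groupoid equivalence} and then cites the argument of \cite[Lemma 2.2]{adem2007orbifolds} to get $\Omega^{p,q}_{\mathcal{G}}(G_0,\mathbb{C})\cong\Omega^{p,q}_{\mathcal{H}}(H_0,\mathbb{C})$, from which the statement is immediate. You instead invoke the bibundle characterization (Proposition \ref{prop: equiv defi of Morita equiv}) and run the descent of equivariant forms along both legs of the principal bibundle, observing that $\rho^*$ is an isomorphism (as already recorded in Section \ref{subsec: functorial Bott-Chern cohomology}) and that $\sigma^*$ is one too by the symmetry of the bibundle conditions, so that $f^*=(\rho^*)^{-1}\circ\sigma^*$ is an isomorphism of bigraded algebras commuting with $\partial$ and $\dbar$. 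Both arguments are sound and rest on the same underlying fact --- invariant forms pull back bijectively along an \'etale quotient by a free and proper action. Your version is more self-contained (it does not outsource the key isomorphism to an external reference) and meshes with the generalized-morphism formalism the paper uses for $f^*$ elsewhere; the paper's version is shorter by citation and avoids having to verify the symmetric descent for $\sigma^*$ explicitly. One minor caveat: you should note that the two legs $\rho,\sigma$ are holomorphic local diffeomorphisms because the bibundle carries a bi-invariant complex structure with holomorphic structure maps (Definition \ref{defi: complex orbifold}); this is what makes $f^*$ respect the $(p,q)$-bigrading and the operators $\partial$, $\dbar$.
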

\begin{proof}
By Definition \ref{defi: groupoid Morita equivalence}, we only need to consider the Bott-Chern cohomology under an equivalence $\psi\colon \mathcal{H}\to \mathcal{G}$ as in Definition \ref{defi: groupoid equivalence}. By the same argument as in the proof of \cite[Lemma 2.2]{adem2007orbifolds}, we obtain that 
\begin{equation}
\Omega^{p,q}_{\mathcal{G}}(G_0,\mathbb{C})\cong \Omega^{p,q}_{\mathcal{H}}(H_0,\mathbb{C}).
\end{equation}
Hence $\HBC^{p,q}(\mathcal{G},\mathbb{C})\cong 
\HBC^{p,q}(\mathcal{H},\mathbb{C})$.
\end{proof}

\subsection{Pushforward map \texorpdfstring{$If_*$}{} for iso-spatial embeddings}\label{subsec: pushforward cohomology iso-spatial}
In this subsection we consider the case that $f\colon \mathcal{G}\to \mathcal{H}$ is an iso-spatial embedding. We give an explicit description of the pushforward map 
$$
If_*\colon \HBC^{\bullet,\bullet}(I\mathcal{G},\mathbb{C})\to \HBC^{\bullet-n+n^{\prime},\bullet-n+n^{\prime}}(I\mathcal{H},\mathbb{C}),
$$
at the level of differential forms when we restrict to the local case. 

As in Remark \ref{rmk: two kind embeddings local case}, the iso-spatial embedding is locally given by the inclusion of finite groups $G\hookrightarrow H$ and the identity map on the complex manifold $X$. Hence, by Remark \ref{rmk: inertia morphism local case} the induced morphism between inertia groupoids, 
\begin{equation}\label{eqn:If_local}
If\colon  \coprod_{(g)\in C(G)}Z_G(g)\ltimes X^g\to  \coprod_{(h)\in C(H)}Z_H(h)\ltimes X^h,
\end{equation}
consists of the inclusion of finite groups $G\hookrightarrow H$ together with the identity map $X^g\to X^g$. For each $\omega\in \Omega^{p,q}_G(X^g,\mathbb{C})$, we define  
\begin{equation}\label{eq: pushforward forms iso-spatial inertia}
If_*(\omega)\colon=\frac{1}{|Z_G(g)|}\sum_{h\in Z_H(g)}h^*\omega,
\end{equation}
where $Z_G(g)$ and $Z_H(g)$ are the centralizer of $g$ in $G$ and $H$, respectively.

\begin{prop}\label{prop: pushforward map of forms for iso-spatial embedding local case}
Let $f\colon G\ltimes X\to H\ltimes X$ be an iso-spatial embedding and $If$ be the induced morphism between inertia groupoids as in (\ref{eqn:If_local}). Then the pushforward map $If_*$ in \eqref{eq: pushforward forms iso-spatial inertia} gives the pushforward map $If_*$ in \eqref{eq: pushforward current inertia}.
\end{prop}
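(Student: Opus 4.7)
The plan is to compute the intrinsic pushforward $(If)_* = I\sigma_* \circ (I\rho_*)^{-1}$ directly in the local iso-spatial model and match it term-by-term with the explicit averaging formula \eqref{eq: pushforward forms iso-spatial inertia}.

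First, I would identify the comma generalized morphism associated with $f\colon G\ltimes X \to H\ltimes X$. One finds $Z \cong H \times X$ with $\rho(\eta, y) = \eta y$ and $\sigma(\eta, y) = y$, both of which are local diffeomorphisms since $H$ is finite and $\mathcal{H}$ is \'{e}tale. Unpacking Definition \ref{defi: generalized morphism for inertial groupoid}, the inertia generalized morphism is
\[
IZ \cong \{(\gamma, \eta, y) \in G \times H \times X : \gamma\cdot \eta y = \eta y\},
\]
with $I\rho(\gamma, \eta, y) = (\gamma, \eta y) \in (I\mathcal{G})_0$ and $I\sigma(\gamma, \eta, y) = (\eta^{-1}\gamma\eta, y) \in (I\mathcal{H})_0$, the latter obtained by solving $gzh = z$. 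Both $I\rho$ and $I\sigma$ remain local diffeomorphisms in the iso-spatial case.

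Next, I would unwind the pushforward on a form $\omega\in \Omega^{p,q}_G(X^g, \mathbb{C})$ regarded as living on the $g$-component of the raw presentation $(I\mathcal{G})_0 = \coprod_\gamma X^\gamma$. Since $I\rho$ is \'{e}tale, $(I\rho_*)^{-1}$ coincides with the pullback $I\rho^*$. Restricted to the $\gamma=g$ locus in $IZ$, which decomposes as $\bigsqcup_{\eta\in H} X^{\eta^{-1}g\eta}$, the pullback along the $\eta$-slice is the pullback of $\omega$ by the map $y\mapsto \eta y$. Applying $I\sigma_*$, again fibrewise a sum under an \'{e}tale map, and restricting the output to the $g$-component of $(I\mathcal{H})_0$, picks up exactly the $\eta \in Z_H(g)$ contributions, yielding $\sum_{\eta\in Z_H(g)} \eta^*\omega$.

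Finally, the prefactor $1/|Z_G(g)|$ emerges from the passage between the raw presentation $(I\mathcal{G})_0=\coprod_\gamma X^\gamma$ used in Definition \ref{defi: generalized morphism for inertial groupoid} and the reduced Morita-equivalent presentation $\coprod_{(g)} Z_G(g) \ltimes X^g$ of Remark \ref{rmk:inertia_local}. In the reduced model, a single $Z_G(g)$-invariant form on $X^g$ represents an entire $\mathcal{G}$-orbit of data spread across the conjugate fixed-point loci $X^{\alpha g \alpha^{-1}}$ for $\alpha \in G/Z_G(g)$; the associated overcounting by $|Z_G(g)|$ is cancelled by the normalization in \eqref{eq: pushforward forms iso-spatial inertia}. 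The main obstacle is purely combinatorial: one must carefully track the left/right action conventions, the identification of components under $I\sigma$, and the Morita equivalence between the two presentations, to recover precisely the factor $1/|Z_G(g)|$ rather than $1/|Z_H(g)|$ or some coset index. All other ingredients reduce to unwinding the definitions and using the fact that $I\rho$ and $I\sigma$ are \'{e}tale.
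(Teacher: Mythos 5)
Your proposal takes a genuinely different route from the paper's, and the route as written has a gap at precisely the decisive point. The paper never unwinds $I\sigma_*\circ(I\rho_*)^{-1}$ on the space $IZ$: since the pushforward of currents is \emph{defined} by duality (Section \ref{subsec: functorial Bott-Chern cohomology}), it simply verifies the adjunction $\langle If_*\omega,\theta\rangle=\langle \omega, If^*\theta\rangle$ against a test form $\theta\in\Omega^{\bullet}_{Z_H(g)}(X^g,\mathbb{C})$, using the orbifold pairings $\omega(\theta)=\tfrac{1}{|Z_G(g)|}\int_{X^g}\omega\wedge\theta$ on the source and the analogous pairing with $\tfrac{1}{|Z_H(g)|}$ on the target. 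Plugging in \eqref{eq: pushforward forms iso-spatial inertia}, the $Z_H(g)$-invariance of $\theta$ collapses the sum and the identity reduces to the count $\tfrac{|Z_H(g)|}{|Z_H(g)||Z_G(g)|}=\tfrac{1}{|Z_G(g)|}$. That single verification is the entire content of the proposition.

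In your version, everything you actually carry out (identifying $Z\cong H\times X$, describing $IZ$, \'etale-ness of $I\rho$ and $I\sigma$, the appearance of $\sum_{\eta\in Z_H(g)}\eta^*\omega$) is routine, while the step you explicitly defer as "the main obstacle" --- recovering the prefactor $1/|Z_G(g)|$ --- is the whole theorem. Note also that the assertion that $I\sigma_*$ acts on a smooth form by a fibrewise sum is itself only meaningful after fixing the normalization of the current pairing on each orbifold, so you are postponing rather than avoiding the same bookkeeping. Finally, your heuristic for the factor does not quite parse: the $\mathcal{G}$-orbit of the component $X^g$ in the raw presentation $\coprod_\gamma X^\gamma$ consists of $[G:Z_G(g)]$ components, and the factor $1/|Z_G(g)|$ arises as $[G:Z_G(g)]/|G|$ after dividing by the cut-off normalization $1/|G|$, not as the cancellation of an "overcounting by $|Z_G(g)|$". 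I suggest replacing the component-by-component unwinding with the direct duality check against a $Z_H(g)$-invariant test form, which turns the normalization question into one line.
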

\begin{proof}
Let $\omega\in \Omega^{\bullet}_{Z_G(g)}(X^g,\mathbb{C})$, we can consider $\omega$ as a $Z_G(g)$-invariant current on $X^g$ by
\begin{equation}
\omega(\theta):=\frac{1}{|Z_G(g)|}\int_{X^g}\omega\wedge \theta,
\end{equation}
where $\theta$ is a testing form $\theta\in \Omega^{\bullet}_{Z_G(g)}(X^g,\mathbb{C})$.

It is clear that for any form $\theta\in \Omega^{\bullet}_{Z_H(g)}(X^g,\mathbb{C})$, we have $If^*(\theta)=\theta$ considered as a form in $\Omega^{\bullet}_{Z_G(g)}(X^g,\mathbb{C})$. Therefore we have 
\begin{equation}\label{eq: form pullback pairing}
\omega(If^*(\theta))=\frac{1}{|Z_G(g)|}\int_{X^g}\omega\wedge\theta.
\end{equation}
On the other hand, by \eqref{eq: pushforward forms iso-spatial inertia} we have
\begin{equation}
If_*(\omega)(\theta)=\frac{1}{|Z_H(g)|}\int_{X^g}If_*(\omega)\wedge \theta=\frac{1}{|Z_H(g)||Z_G(g)|}\sum_{h\in Z_H(g)}\int_{X^g}h^*\omega\wedge\theta.
\end{equation}
Since $\theta$ is $Z_H(g)$-invariant, we have 
\begin{equation}
h^*\omega\wedge\theta=h^*(\omega\wedge \theta), \text{ for any } h\in Z_H(g),
\end{equation}
hence 
\begin{equation}
If_*(\omega)(\theta)=\frac{1}{|Z_H(g)||Z_G(g)|}\sum_{h\in Z_H(g)}\int_{X^g}h^*(\omega\wedge\theta)=\frac{|Z_H(g)|}{|Z_H(g)||Z_G(g)|}\int_{X^g}\omega\wedge\theta,
\end{equation}
which is exactly \eqref{eq: form pullback pairing}. Hence \eqref{eq: pushforward forms iso-spatial inertia} indeed gives the pushforward map $If_*$ in \eqref{eq: pushforward current inertia}.
\end{proof}

\section{Coherent sheaves on complex orbifolds}\label{section: coherent sheaves on complex orbifolds}
In this section, we discuss the notion of coherent sheaves on complex orbifolds from the viewpoint of groupoids. We follow the construction in \cite[Section 2.1]{moerdijk2001etale}.

\begin{defi}\label{defi: G-sheaf}
Let $\mathcal{G}=(G_0, G_1)$ be a complex orbifold groupoid. A right $\mathcal{G}$-sheaf is a sheaf $\mathcal{F}$ on $G_0$ together with a right action of $\mathcal{G}$. In more detail, each $g\colon x\to y$ in $G_1$ gives a map 
$$
\mathcal{F}_x\to \mathcal{F}_y, ~ a\mapsto a\cdot g,
$$
where $\mathcal{F}_x$ is the stalk of $\mathcal{F}$ at $x$. It satisfies the conditions that $(a\cdot g_1)\cdot g_2=a\cdot g_1g_2$ and $a\cdot 1_x=a$. If $\mathcal{F}$ admits additional structures, say vector space, algebra, etc., then we require the $\mathcal{G}$-action to preserve such structures.

We define left $\mathcal{G}$-sheaves in the same way.
\end{defi}

The following definitions can be found in \cite[Section 3]{moerdijk1997orbifolds}.

\begin{defi}\label{defi: structure sheaf O_G}
Let $\mathcal{G}=(G_0, G_1)$ be a complex orbifold groupoid. We can define the structure sheaf $\mathcal{O}_{\mathcal{G}}$ on $\mathcal{G}$ as the sheaf of germs of holomorphic
functions on $G_0$. This sheaf carries natural left and right $\mathcal{G}$-actions.
\end{defi}

\begin{defi}\label{defi: sheaf of O-G modules}
Let $\mathcal{G}=(G_0, G_1)$ be a complex orbifold groupoid. A sheaf of $\mathcal{O}_{\mathcal{G}}$ modules $\mathcal{F}$ is a sheaf on $G_0$ equipped with a left $\mathcal{G}$ action and an $\mathcal{O}_{\mathcal{G}}$ action which are compatible. Compatibility means that we have 
\begin{equation}
g\cdot (fs)=(g\cdot f) (g\cdot s),
\end{equation}
where $g\in G_1$, $f\in \mathcal{O}_{\mathcal{G}}$, and $s\in \mathcal{F}$. 

We denote the category of sheaves of $\mathcal{O}_{\mathcal{G}}$ modules by $\mathcal{O}_{\mathcal{G}}\text{-Mod}$.
\end{defi}

The following result is essential in the construction of derived functors.

\begin{prop}\label{prop: enough injectives}
The category $\mathcal{O}_{\mathcal{G}}\text{-Mod}$ is an abelian category with enough injectives.
\end{prop}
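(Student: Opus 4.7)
The plan is to handle the two assertions separately: the abelian structure by a routine check, then enough injectives via a coinduction functor right adjoint to the forgetful functor to ordinary sheaves of $\mathcal{O}_{G_0}$-modules.

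For the abelian part, I would observe that a morphism $\phi\colon \mathcal{F} \to \mathcal{F}'$ in $\mathcal{O}_{\mathcal{G}}\text{-Mod}$ is a morphism of sheaves of $\mathcal{O}_{G_0}$-modules on $G_0$ intertwining the $\mathcal{G}$-actions. Kernels and cokernels formed in $\mathcal{O}_{G_0}\text{-Mod}$ inherit canonical $\mathcal{G}$-equivariant structures, because the stalk maps $\mathcal{F}_x \to \mathcal{F}_{gx}$ induced by arrows $g\in G_1$ are natural with respect to morphisms of $\mathcal{G}$-sheaves. Consequently, kernels, cokernels, images and coimages all exist in $\mathcal{O}_{\mathcal{G}}\text{-Mod}$, and the isomorphism $\mathrm{coim} \cong \mathrm{im}$ is inherited from the abelian category $\mathcal{O}_{G_0}\text{-Mod}$. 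In particular, the forgetful functor $U\colon \mathcal{O}_{\mathcal{G}}\text{-Mod} \to \mathcal{O}_{G_0}\text{-Mod}$ is exact.

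For the injectives, I would construct a right adjoint $R$ to $U$. Because $\mathcal{G}$ is \'etale, the source and target maps $s, t\colon G_1 \rightrightarrows G_0$ are local diffeomorphisms, so both $s^*$ and $t_*$ are exact. I would set
\begin{equation*}
R(\mathcal{I}) \;=\; t_* s^* \mathcal{I},
\end{equation*}
endowed with the left $\mathcal{G}$-equivariant structure induced by groupoid composition $m\colon G_1 \times_{G_0} G_1 \to G_1$ and the compatible $\mathcal{O}_{\mathcal{G}}$-action obtained by pulling back along $s$ and pushing forward along $t$. Unwinding the equivariance condition on the source of a morphism yields the adjunction
\begin{equation*}
\Hom_{\mathcal{O}_{\mathcal{G}}}(\mathcal{F}, R(\mathcal{I})) \;\cong\; \Hom_{\mathcal{O}_{G_0}}(U\mathcal{F}, \mathcal{I}),
\end{equation*}
whose counit is the projection onto the component indexed by the identity arrows.

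Given this adjunction, the remainder is mechanical. Since $U$ is exact, its right adjoint $R$ sends injective $\mathcal{O}_{G_0}$-modules to injective $\mathcal{O}_{\mathcal{G}}$-modules. The category $\mathcal{O}_{G_0}\text{-Mod}$ has enough injectives, this being standard for sheaves of modules over a ringed space, so for any $\mathcal{F} \in \mathcal{O}_{\mathcal{G}}\text{-Mod}$ we may embed $U\mathcal{F} \hookrightarrow \mathcal{I}$ into an injective $\mathcal{I}$; composing the unit $\mathcal{F} \to R U \mathcal{F}$ with $R$ applied to this embedding produces an embedding $\mathcal{F} \hookrightarrow R(\mathcal{I})$ into an injective object of $\mathcal{O}_{\mathcal{G}}\text{-Mod}$. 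I expect the main obstacle to be the explicit verification that $R(\mathcal{I}) = t_* s^* \mathcal{I}$ genuinely carries a $\mathcal{G}$-equivariant $\mathcal{O}_{\mathcal{G}}$-module structure satisfying the compatibility of Definition \ref{defi: sheaf of O-G modules}, together with the unit--counit identities of the adjunction. The \'etale hypothesis is crucial here: it keeps both $s^*$ and $t_*$ exact and sidesteps any need for averaging or integration over the finite stabilizer groups.
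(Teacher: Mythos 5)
Your proof is correct, but it takes a genuinely different route from the paper. The paper disposes of the proposition in two lines: it declares the claim to be local, reduces to the case of a finite group acting on a complex manifold, and cites Grothendieck's T\^ohoku paper (Propositions 5.1.1 and 5.1.2) for equivariant sheaves under a finite group. Your argument is instead global: you build the coinduction functor $R=t_*s^*$ as a right adjoint to the exact forgetful functor $U$ and run the standard ``right adjoint of an exact functor preserves injectives'' argument, embedding $\mathcal{F}$ via the unit $\mathcal{F}\to RU\mathcal{F}\to R(\mathcal{I})$. This buys you a self-contained proof that sidesteps the (not entirely innocent) assertion that having enough injectives is a local property --- injectives do not glue, and making the paper's localization honest would in effect require an induction/coinduction construction of the very kind you carry out; in fact Grothendieck's Proposition 5.1.2 is itself the finite-group instance of your coinduction argument, so your proof can be read as globalizing the paper's cited ingredient and absorbing the reduction step. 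Two small caveats: the exactness of $t_*$ that you call ``crucial'' is neither needed nor guaranteed (for a proper \'etale groupoid $t$ is \'etale but not in general proper, so $t_*$ is only left exact; but left exactness of $R$ is automatic from its being a right adjoint, and that is all the argument uses); and you should fix the variance conventions carefully, since the paper's $\mathcal{O}_{\mathcal{G}}$-modules carry a \emph{left} $\mathcal{G}$-action, which dictates whether the coinduced object is $t_*s^*\mathcal{I}$ or $s_*t^*\mathcal{I}$.
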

\begin{proof}
The claim is local, so we can reduce to the case that $\mathcal{G}$ is the action groupoid of a finite group acting on a complex manifold. The claim then follows from \cite[Proposition 5.1.1 and Proposition 5.1.2]{grothendieck1957quelques}.
\end{proof}

Let $f\colon \mathcal{G}\to \mathcal{H}$ be a holomorphic morphism between complex orbifold groupoids. We can define the pull back functor $$f^*\colon \mathcal{O}_{\mathcal{H}}\text{-Mod}\to \mathcal{O}_{\mathcal{G}}\text{-Mod}$$ as in \cite[Section 3.1]{moerdijk2001etale}. In more detail, for a sheaf $\mathcal{F}\in \mathcal{O}_{\mathcal{H}}\text{-Mod}$, the map $f_0\colon G_0\to H_0$ gives a sheaf $f^{-1}\mathcal{F}$ on $G_0$, which has a natural left $\mathcal{G}$-action: for each $g\colon x\to y$ in $G_1$, the action of $g$ on $a\in (f^{-1}\mathcal{F})_x$ is simply given by
$$
g\cdot a:=f(g)\cdot a.
$$
Then we define $f^*\mathcal{F}$ as
\begin{equation}
f^*\mathcal{F}:=\mathcal{O}_{\mathcal{G}}\otimes_{f^{-1}\mathcal{O}_{\mathcal{H}}}f^{-1}\mathcal{F},
\end{equation}
and the $\mathcal{G}$-action on $f^*\mathcal{F}$ by
$$
g(s\otimes a):=(g\cdot s)\otimes (f(g)\cdot a).
$$
It is clear that $f^*\mathcal{F}$ is a sheaf of $\mathcal{O}_{\mathcal{G}}$ modules. We define pullbacks of generalized morphisms in the same way as in Corollary \ref{coro: functoriality of superconnections}.

\begin{defi}\label{defi: coherent sheaves on orbifolds}
Let $\mathcal{G}$ be a complex orbifold groupoid. A sheaf $\mathcal{F}$ of $\mathcal{O}_{\mathcal{G}}$-modules is called a \emph{coherent} sheaf if it satisfies the following conditions.
\begin{enumerate}
\item $\mathcal{F}$ is finite type, i.e. for every $x\in G_0$ there exists an open neighborhood $(U,G)$ of $x$ (Definition \ref{defi: neighborhood}) and a $\mathcal{G}$-sheaf $\mathcal{M}$, a finite rank free sheaf on $G_0$, such that there exists a $\mathcal{G}|_{U}$-equivariant surjective map $\mathcal{M}|_{U}\twoheadrightarrow \mathcal{F}|_U$;
\item For every $(U,G)$  and any $\mathcal{G}|_U$-equivariant  map $\phi\colon \mathcal{M}|_U\to \mathcal{F}|_U$, the kernel of $\phi$ is also finite type.
\end{enumerate}
We denote the category of coherent sheaves on $\mathcal{G}$ by $\coh(\mathcal{G})$.   
\end{defi}

\begin{rmk}
Condition (1) in Definition \ref{defi: coherent sheaves on orbifolds} has the flavor of global resolutions of sheaves. Here we discuss the role of such a condition in the context of Riemann-Roch theorems in algebraic geometry. 

The Grothendieck-Riemann-Roch theorem for Deligne-Mumford stacks proved by Toen \cite{Toen} is in the setting closest to this paper's. Toen's thereom requires a few assumptions on the Deligne-Mumford stacks, including the property that every coherent sheaf is the quotient of a locally free sheaf, see \cite[HYPOTH\'ESE 4.9 (2)]{Toen}. This property is called the {\em resolution property} and is a property of the stack. In \cite{totaro}, Totaro proved that (under minor assumptions) an algebraic stack has the resolution property if and only if it is a quotient stack of a quasi-affine scheme by a general linear group. 

In view of the above, the resolution property plays a prominent role for Riemann-Roch theorem for Deligne-Mumford stacks and it is equivalent to some global geometric property of the stacks. For this reason, Condition (1) in Definition \ref{defi: coherent sheaves on orbifolds}, which we view as analogous to the resolution property in algebraic geometry, is a reasonable requirement to impose when considering Riemann-Roch for orbifolds.

\end{rmk}

\begin{prop}\label{prop: weak condition on coherent} 
Suppose that $\mathcal{G}$ is Morita equivalent to a transformation groupoid associated with a compact group $K$ action on a manifold $M$. A sheaf $\mathcal{F}$ of $\mathcal{O}_{\mathcal{G}}$-modules is coherent if it satisfies Condition (2) in Definition \ref{defi: coherent sheaves on orbifolds} and the following weakened version of Condition (1):

$({1}^{\prime})$. For every $x\in G_0$, there exists an open neighborhood $(U,G)$ of $x$ such that there exists a (not necessarily $\mathcal{G}$-equivariant) surjective map $\mathcal{O}_U^n\twoheadrightarrow \mathcal{F}|_U$.
\end{prop}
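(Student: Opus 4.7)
The plan is to upgrade the non-equivariant surjection supplied by condition $(1')$ to a $\mathcal{G}$-equivariant one, paying the cost of multiplying the rank by $|G|$. Fix $x\in G_0$, take the $G$-invariant neighborhood $(U,G)$ supplied by $(1')$, and let $\phi\colon \mathcal{O}_U^n\twoheadrightarrow \mathcal{F}|_U$ be the given surjection of $\mathcal{O}_U$-modules (not assumed $G$-equivariant). The key construction is to form the rank-$n|G|$ free $\mathcal{O}_U$-module $\mathcal{O}_U^n\otimes_{\mathbb{C}}\mathbb{C}[G]$, endowed with the $G$-action combining the natural action on $\mathcal{O}_U^n$ (through the $G$-action on $\mathcal{O}_U$) with the left regular representation on $\mathbb{C}[G]$, $h\cdot e_g:=e_{hg}$. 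Define
$$
\Phi\colon \mathcal{O}_U^n\otimes_{\mathbb{C}}\mathbb{C}[G]\to \mathcal{F}|_U,\qquad \Phi(s\otimes e_g):=g\cdot\phi(g^{-1}\cdot s).
$$

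Three routine verifications remain, all using the compatibility identity $g\cdot(fs)=(g\cdot f)(g\cdot s)$ from Definition \ref{defi: sheaf of O-G modules}. First, $\mathcal{O}_U$-linearity of $\Phi$: since $\phi$ is $\mathcal{O}_U$-linear and $g$, $g^{-1}$ are ring automorphisms of $\mathcal{O}_U$ satisfying the compatibility identity on $\mathcal{F}|_U$, one checks directly that $\Phi(f\cdot(s\otimes e_g))=f\cdot\Phi(s\otimes e_g)$. Second, $G$-equivariance: a short computation gives
$$
\Phi(h\cdot(s\otimes e_g))=\Phi((h\cdot s)\otimes e_{hg})=(hg)\cdot\phi\bigl((hg)^{-1}\cdot(h\cdot s)\bigr)=(hg)\cdot\phi(g^{-1}\cdot s)=h\cdot\Phi(s\otimes e_g).
$$
Third, surjectivity: the restriction of $\Phi$ to the summand $\mathcal{O}_U^n\otimes e_e$, where $e$ denotes the identity of $G$, recovers $\phi$, which is surjective. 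Hence $\mathcal{F}|_U$ admits a $\mathcal{G}$-equivariant surjection from a free rank-$n|G|$ $\mathcal{O}_U$-module, which is exactly condition (1).

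I do not anticipate any serious obstacle. The only subtle point worth flagging is that the $\mathcal{G}$-equivariant structure on $\mathcal{O}_U^n\otimes\mathbb{C}[G]$ is the twisted one described above rather than the diagonal extension of the standard $G$-action on $\mathcal{O}_U$; this is consistent with condition (1), which imposes no restriction on the specific $\mathcal{G}$-equivariant structure on the free module $\mathcal{O}_U^{n|G|}$ beyond requiring it to make the module into a valid $\mathcal{G}$-equivariant $\mathcal{O}_U$-module. Since condition (2) is given by hypothesis, (1) and (2) together confirm that $\mathcal{F}$ is coherent.
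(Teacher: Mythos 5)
Your proof is correct and is essentially identical to the paper's: the module $\mathcal{O}_U^n\otimes_{\mathbb{C}}\mathbb{C}[G]$ with the twisted action $h\cdot(s\otimes e_g)=(h\cdot s)\otimes e_{hg}$ is exactly the paper's $\bigoplus_{g\in G}(\mathcal{O}_U^n)_g$ with $h\cdot s=hs\in(\mathcal{O}_U^n)_{hg}$, and your averaging-free formula $\Phi(s\otimes e_g)=g\cdot\phi(g^{-1}\cdot s)$ together with the equivariance computation matches the paper's $\tilde{\phi}$ verbatim. Your additional checks of $\mathcal{O}_U$-linearity and the remark on the choice of equivariant structure are correct and only make explicit what the paper leaves implicit.
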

\begin{proof} The proof consists of two steps. 

\noindent{\bf Step I. Local construction.} Let $\phi\colon \mathcal{O}_U^n\twoheadrightarrow \mathcal{F}|_U$ be a surjective map, which is not necessarily $\mathcal{G}$-equivariant. Consider
$$
\mathcal{S}|_U:=\bigoplus_{g\in G} (\mathcal{O}_U^n)_g,
$$
where $(\mathcal{O}|_{U})_g$ is a copy of $ \mathcal{O}_U^n$ indexed by $g$. Since $G$ is a finite group, $\mathcal{S}|_U=\oplus_{g\in G} (\mathcal{O}_U^n)_g$ is still a finite-rank free sheaf on $U$. Define the 
$G$-action on $\mathcal{S}|_U$ as follows: for a section $s\in \Gamma(\mathcal{O}_U^n)_g$ and $h\in G$, we have $h\cdot s=hs\in (\mathcal{O}_U^n)_{hg}$, where the $hs$ on the right-hand side is the action of $h$ on the original $\mathcal{O}_U^n$.

Now we define $\tilde{\phi}\colon \mathcal{S}|_U\to \mathcal{F}|_U$. For $s\in \Gamma(\mathcal{O}_U^n)_g$, define
$$
\tilde{\phi}(s):=g\phi(g^{-1}s)
$$
where $g^{-1}s$ comes from the original $G$-action on $\mathcal{O}_U^n$. We can check that for any $h\in G$,
$$
\tilde{\phi}(h\cdot s)=hg\phi((hg)^{-1}hs)=hg\phi(g^{-1}s)=h(\tilde{\phi}(s)),
$$
hence $\tilde{\phi}$ is $\mathcal{G}|_U$-equivariant. It is clear that $\tilde{\phi}$ is still surjective. 

\noindent{\bf Step II. Global construction.} To construct the global $\mathcal{G}$-sheaf $\mathcal{M}$, by Remark \ref{rmk:global quotient}, we start with passing $\mathcal{G}$ to a Morita equivalent groupoid that is a transformation groupoid $K\ltimes M$ associated with a compact group $K$-action on a manifold $M$ that carries a $K$-invariant transverse\footnote{By a transverse complex structure, we mean a complex structure on the normal bundle to $K$-orbits in $M$.} complex structure. We refer the reader to \cite{moerdijk1997orbifolds} for the detail of the constructions of $K$, $M$, and the $K$-action on $M$. In particular, $G_0$ is an embedded (possible disconnected) submanifold of $M$ that is transverse to every $K$-orbit. The restriction of the transformation groupoid $K\ltimes M$ to $U$ is exactly the groupoid $\mathcal{G}$.   

Inside $M$, there is a $K$-invariant open subset $\widetilde{U}$ associated with the open subset $U$ such that $\mathcal{G}|_U$ is Morita equivalent to the transformation groupoid associated with the $K$-action on $\widetilde{U}$. Furthermore, the sheaf $\mathcal{S}|_U$ constructed on $U$ in Step I defines a $K$-equivariant vector bundle $\widetilde{\mathcal{S}}|_{\widetilde{U}}$ on $\widetilde{U}$, whose restriction to $U$ coincides with $\mathcal{S}|_U$. 

We apply the construction in the proof of \cite[Proposition 2.4]{segal1968equivariant} to the sheaf $\widetilde{\mathcal{S}}|_{\widetilde{{U}}}$ and find a $K$-module $\mathbf{M}$ and $\widetilde{\mathcal{S}}|_{\widetilde{{U}}}$ is a $K$-invariant direct summand of $\widetilde{U}\times \mathbf{M}$. Now on $M$, consider the sheaf $\underline{\mathbf{M}}:=M\times \mathbf{M}$ on $M$ and $\mathcal{M}$, the restriction of $\underline{\mathbf{M}}$ to $G_0$. It follows from its definition that $\mathcal{M}$ is a finite rank free sheaf over $G_0$ and carries a $\mathcal{G}$-sheaf structure. As $\widetilde{\mathcal{S}}|_{\widetilde{U}}$ is a direct summand of $\widetilde{U}\times \mathbf{M}$, we obtain a $K$-equivariant surjective map from $\underline{\mathbf{M}}_{\widetilde{U}}$ to $\widetilde{\mathcal{S}}|_{\widetilde{U}}$, whose restriction to $U$ is a surjective $\mathcal{G}|_U$-equivariant morphism $\psi$ from $\mathcal{M}|_U$ to $\mathcal{S}|_U$. Its composition with $\tilde{\phi}$ in Step I. defines a $\mathcal{G}|_U$-equivariant surjective map from $\mathcal{M}|_U$ to $\mathcal{F}|_U$. Hence we have showed that $\mathcal{F}$ satisfies Condition (1) in Definition \ref{defi: coherent sheaves on orbifolds}.
\end{proof}


The following result is a version of Oka's coherence theorem for complex orbifolds. 

\begin{prop}\label{prop: Oka coherence}
Let $\mathcal{G}$ be a complex orbifold groupoid. Then the sheaf $\mathcal{O}_{\mathcal{G}}$ is coherent.
\end{prop}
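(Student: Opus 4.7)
The plan is to reduce to the local model where $\mathcal{G}$ is the transformation groupoid of a finite group action on a complex manifold, and then bootstrap the classical Oka coherence theorem using the symmetrization device from Proposition \ref{prop: weak condition on coherent}.

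\emph{Localization.} Coherence is a local property in the sense of Definition \ref{defi: coherent sheaves on orbifolds}, so by Proposition \ref{prop: existence of nbh} it suffices to work on a chart $(U,G)$ on which $\mathcal{G}|_U$ is the action groupoid $G\ltimes U$ of a finite group $G$ (the stabilizer of a chosen point $x$) acting holomorphically on $U$. On such a chart, $\mathcal{O}_{\mathcal{G}}|_U$ is simply the classical holomorphic structure sheaf $\mathcal{O}_U$ equipped with the natural pullback $G$-action. The first condition of Definition \ref{defi: coherent sheaves on orbifolds} is then trivial: the identity $\id\colon\mathcal{O}_U\to\mathcal{O}_U$ is tautologically $G$-equivariant and surjective, so $\mathcal{O}_{\mathcal{G}}$ is of finite type.

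\emph{Kernel of an equivariant map.} For condition (2), let $(V,G)$ be a chart and let $\phi\colon\mathcal{O}_V^n\to\mathcal{O}_V$ be any $G$-equivariant morphism; I need to show that $\ker\phi$ is of finite type. For each $y\in V$, the classical Oka coherence theorem applied to the complex manifold $V$ provides an open neighborhood $W\ni y$ together with a (a priori non-equivariant) surjection $\psi\colon\mathcal{O}_W^m\twoheadrightarrow\ker\phi|_W$. Replacing $W$ by the intersection of its translates under the finite stabilizer $G_y\subset G$, I may arrange that $W$ is $G_y$-invariant and that $(W,G_y)$ is a legitimate chart of $\mathcal{G}$ around $y$.

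\emph{Equivariance by averaging.} The construction from the proof of Proposition \ref{prop: weak condition on coherent} now applies verbatim to $\ker\phi$: form $\bigoplus_{g\in G_y}(\mathcal{O}_W^m)_g$ with the $G_y$-action permuting summands via $h\cdot s := hs\in(\mathcal{O}_W^m)_{hg}$ for $s\in(\mathcal{O}_W^m)_g$, and set $\tilde\psi(s) := g\,\psi(g^{-1}s)$ for $s\in(\mathcal{O}_W^m)_g$. Then $\tilde\psi$ is a $G_y$-equivariant surjection from a finite-rank free $\mathcal{O}_W$-module onto $\ker\phi|_W$, which verifies Definition \ref{defi: coherent sheaves on orbifolds}(1) for $\ker\phi$ on the chart $(W,G_y)$ around $y$. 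Since $y\in V$ was arbitrary, $\ker\phi$ is of finite type, completing the verification of Definition \ref{defi: coherent sheaves on orbifolds}(2) for $\mathcal{O}_{\mathcal{G}}$.

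\emph{Main obstacle.} The substantive analytic input is the classical Oka theorem on complex manifolds, which is taken for granted. The only genuinely groupoid-theoretic point, passing from a non-equivariant local surjection to a $G$-equivariant one, is precisely what Proposition \ref{prop: weak condition on coherent} is designed to package; the only care required is the bookkeeping on stabilizers (using $G_y$ rather than $G$) and the $G_y$-invariant shrinkage of the neighborhood on which the classical Oka surjection is obtained.
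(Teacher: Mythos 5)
Your proposal is correct and follows essentially the same route as the paper: reduce to a local chart, invoke the classical Oka theorem after forgetting the group action, and then equivariantize the resulting finite-type presentation of $\ker\phi$ using the finiteness of the stabilizer (the paper takes the $G$-orbit of a finite generating set, which is the same induction construction you import from Proposition \ref{prop: weak condition on coherent}). Your extra care with the stabilizer $G_y$ and the $G_y$-invariant shrinking of the neighborhood is a harmless refinement of the paper's argument.
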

\begin{proof}
We only need to prove that $\mathcal{O}_{\mathcal{G}}$ satisfies Condition (2) in Definition \ref{defi: coherent sheaves on orbifolds}. For any $\mathcal{G}|_U$-equivariant  map $\phi\colon \mathcal{M}_U\to \mathcal{O}_U$. Forget the $G$-action, by the ordinary Oka's coherence theorem, $\ker \phi$ is finite type, i.e. there exists a finite set $S$ in $\ker \phi$ that generates $\ker \phi$. Let $GS$ be the $G$-orbit of $S$. Since $G$ and $S$ are both finite, $GS$ is still a finite subset of $\ker \phi$. Take the free $\mathcal{O}_U$-module with a basis $GS$. Then it has an obvious $G$-action and an $\mathcal{G}$-equivariant surjection to $\ker \phi$. By Proposition \ref{prop: weak condition on coherent}, $\ker{\phi}$ is finite type and therefore $\mathcal{O}_\mathcal{G}$ is coherent.
\end{proof}

The following result is a version of the Syzygy theorem for complex orbifold groupoids.

\begin{prop}\label{prop: Syzygy}
Let $\mathcal{G}$ be a complex orbifold groupoid. Then for any coherent sheaf $\mathcal{F}$ and every $x\in G_0$ there exists an open neighborhood $(U,G)$ of $x$ such that there exists a $\mathcal{G}$-equivariant finite length resolution
\begin{equation}\label{eq: finite resolution of coherent sheaf}
0\to \mathcal{M}^{-k}_U\to \mathcal{M}^{-k+1}_U\to\ldots\to \mathcal{M}^0_U\to \mathcal{F}|_U,
\end{equation}
where $\mathcal{M}^0_U,..., \mathcal{M}^{-k+1}_U$ are $\mathcal{G}$-sheaves of finite rank free\footnote{We call a $\mathcal{G}$-sheaf $\mathcal{M}_U$ free if it is a $\mathcal{G}$-sheaf of finite rank free $\mathcal{O}_U$-modules that is the restriction of a $\mathcal{G}$-sheaf of finite rank free $\mathcal{O}_{G_0}$-modules as in part (1) of Definition \ref{defi: coherent sheaves on orbifolds}.} $\mathcal{O}_{U}$-modules and $\mathcal{M}^{-k}_U$ is a $\mathcal{G}$-sheaf of finite rank locally free $\mathcal{O}_{U}$-modules.
\end{prop}
\begin{proof}
By Definition \ref{defi: coherent sheaves on orbifolds}, for any $x\in G_0$, there exists an open neighborhood $(U,G)$ of $x$, a $\mathcal{G}$-equivariant  finite rank free sheaf $\mathcal{M}^0_U$ on $U$ together with a $\mathcal{G}$-equivariant surjective map
$$
d_0: \mathcal{M}^0_U\twoheadrightarrow \mathcal{F}|_U.
$$
Again by Definition  \ref{defi: coherent sheaves on orbifolds}, $\ker(d_0)$ is also coherent hence there exists a $\mathcal{G}$-equivariant  finite rank free sheaf $\mathcal{M}^{-1}_U$ on (possibly smaller) $U$ together with a $\mathcal{G}$-equivariant surjective map
$$
d_{-1}: \mathcal{M}^{-1}_U\twoheadrightarrow \ker(d_0).
$$
We then obtain a resolution
$$
\mathcal{M}^{-1}_U\xrightarrow{{d_{-1}}}\mathcal{M}^0_U\overset{d_0}{\longrightarrow} \mathcal{F}|_U.
$$
We continue this process. Notice that since $G_0$ is non-singular, there exists a $k$ such that $\ker (d_{-k+1})$ is a locally free sheaf on $U$, where $d_{-k+1}$ is the $\mathcal{G}$-equivariant map
$$
d_{-k+1}: \mathcal{M}^{-k+1}_U\to \mathcal{M}^{-k+2}_U.
$$ 
We then take 
$$
\mathcal{M}^{-k}_U:=\ker d_{-k+1}, 
$$
and $d_{-k}: \mathcal{M}^{-k}_U\to \mathcal{M}^{-k+1}_U$ the inclusion map. We obtain the resolution
$$
0\to \mathcal{M}^{-k}_U\xrightarrow{{d_{-k}}} \mathcal{M}^{-k+1}_U\xrightarrow{{d_{-k+1}}} \ldots\longrightarrow \mathcal{M}^0_U\overset{d_0}{\longrightarrow}  \mathcal{F}|_U.
$$
Notice that all $\mathcal{M}^i_U$'s except $\mathcal{M}^{-k}_U$ are free sheaves.
\end{proof}

\begin{rmk}\label{rmk: resolution longer than k}
If the resolution in \eqref{eq: finite resolution of coherent sheaf} has length $k$, then we can find a finite-length resolution with length arbitrarily longer than $k$. Actually, in the proof of Proposition \ref{prop: Syzygy}, the locally free sheaf $\ker d_{-k+1}$ is still coherent. Hence instead of setting $
\mathcal{M}^{-k}_U:=\ker d_{-k+1}
$, we can find a $\mathcal{G}$-equivariant finite rank free sheaf $\mathcal{M}^{-k}_U$ on $U$ together with a $\mathcal{G}$-equivariant surjective map
$$
d_{-k}: \mathcal{M}^{-k}_U\twoheadrightarrow \ker(d_{-k+1}).
$$
Again $\ker(d_{-k})$ is locally free hence coherent. We can continue this resolution and terminate at any degree $-l\leq -k$. In the resolution 
$$
0\to \mathcal{M}^{-l}_U\xrightarrow{{d_{-l}}} \mathcal{M}^{-l+1}_U\xrightarrow{{d_{-l+1}}} \ldots\longrightarrow \mathcal{M}^0_U\overset{d_0}{\longrightarrow}  \mathcal{F}|_U.
$$
We still have the property that all $\mathcal{M}^i_U$'s except $\mathcal{M}^{-l}_U$ are free sheaves.
\end{rmk}

Let $\mathcal{O}_{\mathcal{G}}^{\infty}$ denote the sheaf of $C^{\infty}$-functions on $G_0$. For a sheaf of $\mathcal{O}_{\mathcal{G}}$-modules $\mathcal{F}$, let $\mathcal{F}^{\infty}$ denote the tensor product $\mathcal{F}\otimes_{\mathcal{O}_{\mathcal{G}}} \mathcal{O}_{\mathcal{G}}^{\infty}$. For a $C^{\infty}$-vector bundle $V$ on $G_0$, let $\mathcal{O}_{\mathcal{G}}^{\infty}V$ denote the associated sheaf of $C^{\infty}$-sections of $V$.

We then prove that, after tensoring $\mathcal{O}^{\infty}_{G_0}$, any bounded complex with coherent cohomologies is quasi-isomorphic to a bounded complex of locally free sheaves. We first prove it in an important special case.

\begin{lemma}\label{lemma: global resolution special case}
Let $\mathcal{G}$ be a compact complex orbifold groupoid. For a bounded complex of sheaves  $\mathcal{F}^{\bullet}$ of $\mathcal{O}_{\mathcal{G}}\text{-Mod}$ with coherent cohomologies. If  $\mathcal{F}^i=0$ for $i<0$ and $\mathcal{H}^i(\mathcal{F}^{\bullet})=0$ for $i>0$, then there exists a bounded complex of finite-dimensional $C^{\infty}$-, left $\mathcal{G}$-vector bundles $Q^{\bullet}$ on $G_0$ and a quasi-isomorphism
$$
Q^{\bullet}\overset{\sim}{\longrightarrow} \mathcal{F}^{\bullet,\infty}.
$$
In addition, we can choose $Q^{\bullet}$ such that $Q^i$ is a trivial  $\mathcal{G}$-vector bundle except for the leftmost nonzero $Q^i$.
\end{lemma}
\begin{proof}
The proof is similar to the first part of the proof of \cite[Proposition 6.3]{Xu2025} and \cite[Proposition 6.3.2]{bismut2023coherent}. 

    Since $\mathcal{H}^0(\mathcal{F}^{\bullet})$ is a coherent sheaf, by Proposition \ref{prop: Syzygy}, for each $x\in G_0$ there exists an open neighborhood $(U,G)$ of $x$ such that there exists a $\mathcal{G}$-equivariant finite length resolution
$$
0\to \mathcal{M}^{-l_U}_U\to\ldots\to \mathcal{M}^0_U\to \mathcal{H}^0(\mathcal{F}^{\bullet})|_U,
$$
where $\mathcal{M}^0_U,..., \mathcal{M}^{-l_U+1}_U$ are $\mathcal{G}$-sheaves of finite rank free $\mathcal{O}_{U}$-modules and $\mathcal{M}^{-l_U}_U$ is a $\mathcal{G}$-sheaf of finite rank locally free $\mathcal{O}_U$-modules.
Since $\mathcal{G}$ is compact, by Proposition \ref{prop: local chart compact}, we can then choose $\mathcal{G}$-invariant neighborhoods $\{U_i\}$ so that they form a finite open cover of $G_0$. 
Since $\mathcal{O}_{U_i}^{\infty}$ is flat over $\mathcal{O}_{U_i}$, we obtain the exact sequence of $\mathcal{O}_{U_i}^{\infty}$-modules
\begin{equation}\label{eq: resolution C infty}
0\to \mathcal{O}_{U_i}^{\infty}\mathcal{M}_i^{-l_{U_i}}\to\ldots\to \mathcal{O}_{U_i}^{\infty}\mathcal{M}_i^0\to \mathcal{H}^0(\mathcal{F}^{\bullet,\infty})|_{U_i}.
\end{equation}

Let
$$
l=\sup_i l_{U_i}.
$$
According to Remark \ref{rmk: resolution longer than k}, We can then assume that on each $U_i$, the complex has length $l$, and in \eqref{eq: resolution C infty} the sheaves $\mathcal{O}_{U_i}^{\infty}\mathcal{M}^0_i,..., \mathcal{O}_{U_i}^{\infty}\mathcal{M}^{-l+1}_i$ are $\mathcal{G}$-sheaves of finite rank free $\mathcal{O}_{U_i}^{\infty}$-modules and $\mathcal{O}_{U_i}^{\infty}\mathcal{M}^{-l}_i$ is a $\mathcal{G}$-sheaf of finite rank locally free $\mathcal{O}_{U_i}^{\infty}$-modules.

We will do  induction on $l$. If $l=0$, then this means that $\mathcal{H}^0(\mathcal{F}^{\bullet})$ is locally free. Since $\mathcal{F}^i=0$ for $i<0$,  and $\mathcal{H}^i(\mathcal{F}^{\bullet})=0$ for $i>0$, we obtain the resolution
$$
\mathcal{H}^0(\mathcal{F}^{\bullet,\infty})\overset{\sim}{\longrightarrow} \mathcal{F}^{\bullet,\infty}.
$$

Assume now that $l\geq 1$ and the claim is true for the case  $\leq l-1$.

We denote the complex $\mathcal{O}_{U_i}^{\infty}\mathcal{M}_i^{-l}\to\ldots\to \mathcal{O}_{U_i}^{\infty}\mathcal{M}_i^0$ by $\mathcal{O}_{U_i}^{\infty}\mathcal{M}_i^{\bullet}$. Since $\mathcal{F}^i=0$ for $i<0$, \eqref{eq: resolution C infty} gives a morphism of complexes
$$
\phi_i: \mathcal{O}_{U_i}^{\infty}\mathcal{M}_i^{\bullet}\to \mathcal{F}^{\bullet,\infty}|_{U_i}.
$$
Let $\mathcal{C}_{U_i}$ be the mapping cone
$$
\mathcal{C}^{\bullet}_{U_i}:=\text{cone}(\mathcal{O}_{U_i}^{\infty}\mathcal{M}_i^{\bullet}\to \mathcal{F}^{\bullet,\infty}|_{U_i}).
$$
Actually $\mathcal{C}^{\bullet}_{U_i}$ is the complex
$$
0\to \mathcal{O}_{U_i}^{\infty}\mathcal{M}_i^{-l}\to\ldots\to \mathcal{O}_{U_i}^{\infty}\mathcal{M}_i^0\overset{\phi_i}{\longrightarrow} \mathcal{F}^{0,\infty}|_{U_i}\to \mathcal{F}^{1,\infty}|_{U_i}\to \ldots
$$

Since $\mathcal{H}^i(\mathcal{F}^{\bullet})=0$ for $i>0$, we know that $\phi_i$ is a quasi-isomorphism hence $\mathcal{C}^{\bullet}_{U_i}$ is exact.

Since $l\geq 1$, each $\mathcal{M}^{0}_i$ is a free sheaf on $U_i$, we can extend $\mathcal{M}^{0}_i$ to $G_0$ and define
\begin{equation}
\mathcal{M}=\bigoplus_i \mathcal{M}_i^0,
\end{equation}
which is a $\mathcal{G}$-equivariant finite rank free  sheaf of $\mathcal{G}_{G_0}$-modules on $G_0$.

Notice that each $\phi_i$ is a  $\mathcal{G}$-equivariant morphism. We use the $\mathcal{G}$-invariant\footnote{The properties of $\mathcal{G}$-invariant neighborhood in Proposition \ref{prop: local chart compact} allow us to choose a $\mathcal{G}$-invariant partition of unity on $G_0$.} partition of unity $s_i$ on $G_0$ and define
\begin{equation}\label{eq:phi}
\phi:=\sum_i s_i \phi_i.
\end{equation}
It is clear that $\phi\colon \mathcal{O}_{G_0}^{\infty}\mathcal{M}\to \mathcal{F}^{0,\infty}$ is a $\mathcal{G}$-equivariant morphism of $\mathcal{O}_{G_0}^{\infty}$-modules. Moreover, for each $U_i$, $\phi$ induces a surjection
$\phi|_{U_i}: \mathcal{O}_{U_i}^{\infty}\mathcal{M}|_{U_i}\twoheadrightarrow \mathcal{H}^0(\mathcal{F}^{\bullet,\infty})|_{U_i}$, therefore $\phi$ induces a surjection $\phi: \mathcal{O}_{G_0}^{\infty}\mathcal{M}\twoheadrightarrow \mathcal{H}^0(\mathcal{F}^{\bullet,\infty})$.

We then show that for each $U_i$, the morphism $\phi|_{U_i}: \mathcal{O}_{U_i}^{\infty}\mathcal{M}|_{U_i}\twoheadrightarrow \mathcal{H}^0(\mathcal{F}^{\bullet,\infty})|_{U_i}$ lifts to a $\mathcal{G}$-equivariant morphism of $\mathcal{O}_{U_i}^{\infty}$-modules
$$
\psi_i: \mathcal{O}_{U_i}^{\infty}\mathcal{M}|_{U_i}\to \mathcal{O}_{U_i}^{\infty}\mathcal{M}^{\bullet}_i
$$
such that the following diagram commutes:
\begin{equation} \label{eq: LocalLift}
\begin{tikzcd}
 & \mathcal{O}_{U_i}^\infty \mathcal{M}^{\bullet}_i\arrow[d, "\phi_i"] \\
\mathcal{O}_{U_i}^\infty \mathcal{M}|_{U_i} \arrow[r, "\phi|_{U_i}"] \arrow[ru, "\psi_i", dotted] & \mathcal{F}^{\bullet,\infty}|_{U_i}                
\end{tikzcd}	
\end{equation}
Actually, a non $\mathcal{G}$-equivariant lift
$$
\tilde{\psi_i}: \mathcal{O}_{U_i}^\infty \mathcal{M}|_{U_i}\to \mathcal{O}_{U_i}^\infty \mathcal{M}^{\bullet}_i
$$
exists as in the proof of \cite[Proposition 6.3.2]{bismut2023coherent}. Notice that $U_i$ is a $\mathcal{G}$-invariant open subset of $G_0$. We can choose $U_i$ small enough so that $\mathcal{G}|_{U_i}$ is Morita equivalent to the action groupoid of a finite group $G_i$ acting on $U_i$. We then define $\psi_i$ as the average
\begin{equation}
\psi_i:=\frac{\sum_{g\in G_i}g\cdot \tilde{\psi_i}}{|G_i|}.
\end{equation}
It is clear that $\psi_i$ is $\mathcal{G}$-equivariant and fits into \eqref{eq: LocalLift}.

We then take the mapping cones of 
$$
\psi_i:  \mathcal{O}_{U_i}^\infty \mathcal{M}^{\bullet}|_{U_i}\to  \mathcal{O}_{U_i}^\infty \mathcal{M}^{\bullet}_i
$$
and 
$$
\phi|_{U_i}: \mathcal{O}_{U_i}^\infty \mathcal{M}^{\bullet}|_{U_i}\to \mathcal{F}^{\bullet,\infty}|_{U_i}.
$$
The quasi-isomorphism $\phi_i: \mathcal{O}_{U_i}^\infty \mathcal{M}^{\bullet}_i\to \mathcal{F}^{\bullet,\infty}|_{U_i}$ give the following quasi-isomorphism of  complexes:
{\small
\begin{equation} \label{eq: QuasiComplex}
\begin{tikzcd}
0 \arrow[r] & \mathcal{O}_{U_i}^\infty \mathcal{M}^{-l}_i \arrow[r] \arrow[d] & \cdots \arrow[r] \arrow[d] & \mathcal{O}_{U_i}^\infty \mathcal{M}^{-1}_i \oplus \mathcal{O}_{U_i}^\infty \mathcal{M}|_{U_i}\arrow[r, "\gamma_i"] \arrow[d] & \mathcal{O}_{U_i}^\infty \mathcal{M}^{0}_i \arrow[r] \arrow[d, "\phi_i"]       & 0 \arrow[r] \arrow[d]                     & \cdots \\
0 \arrow[r] & 0 \arrow[r]                              & \ldots \arrow[r]                & \mathcal{O}_{U_i}^\infty \mathcal{M}|_{U_i} \arrow[r, "\phi|_{U_i}"]                                            & {\mathcal{F}^{0,\infty}|_{U_i}} \arrow[r] & {\mathcal{F}^{1,\infty}|_{U_i}} \arrow[r] & \cdots 
\end{tikzcd}	
\end{equation}
}
Notice that the two rows of \eqref{eq: QuasiComplex} are the mapping cones of $\psi_i$ and  $\phi|_{U_i}$, respectively. Moreover, the second row of \eqref{eq: QuasiComplex} is the restriction to $U_i$ of a globally defined complex $\text{cone}(\phi)$.

Since $\mathcal{H}^i(\mathcal{F}^{\bullet})=0$ for $i>0$ and $\phi|_{U_i}: \mathcal{O}_{U_i}^{\infty}\mathcal{M}|_{U_i}\twoheadrightarrow \mathcal{H}^0(\mathcal{F}^{\bullet,\infty})|_{U_i}$ is surjective, the cohomology of $\text{cone}(\phi)$ concentrates in degree $-1$, which we denote by $\mathcal{H}^{-1}$. In particular we know that the $0$th cohomology of $\text{cone}(\phi)$ vanishes, hence the $0$th cohomology of the first row of \eqref{eq: QuasiComplex} also vanishes. Therefore the map
$$
\gamma_i: \mathcal{O}_{U_i}^\infty \mathcal{M}^{-1}_i \oplus \mathcal{O}_{U_i}^\infty \mathcal{M}|_{U_i}\to \mathcal{O}_{U_i}^\infty \mathcal{M}^{0}_i
$$
in \eqref{eq: QuasiComplex} is surjective. Hence $\ker(\gamma_i)$ is a locally free sheaf on $U_i$.

We can then truncate the first row of \eqref{eq: QuasiComplex} at degree $-1$ without changing the second row. The resulting complexes are
\begin{equation} \label{eq: IllusieShorterComplex}
\begin{tikzcd}
0 \arrow[r] & \mathcal{O}_{U_i}^\infty \mathcal{M}^{-l}_i\arrow[r] \arrow[d] & \cdots \arrow[r] \arrow[d] & \mathcal{O}_{U_i}^\infty \mathcal{M}^{-2}_i\arrow[r] \arrow[d]& \ker(\gamma_i)\arrow[r] \arrow[d] & 0\arrow[r] \arrow[d]       & \cdots  \\
0 \arrow[r] & 0 \arrow[r]                              & \cdots \arrow[r]  &  0 \arrow[r]            & \mathcal{O}_{U_i}^\infty \mathcal{M}|_{U_i} \arrow[r, "\phi|_{U_i}"]                                            & {\mathcal{F}^{0,\infty}|_{U_i}} \arrow[r] &  \cdots 
\end{tikzcd}	
\end{equation}
Notice that the vertical map is still a quasi-isomorphism.

If $l=1$, then $\mathcal{M}_i^{j}=0$ for $j\leq -2$. Hence \eqref{eq: IllusieShorterComplex} becomes
\begin{equation} \label{eq: IllusieShorterComplex l=1}
\begin{tikzcd}
0 \arrow[r] & \ker(\gamma_i)\arrow[r] \arrow[d] & 0\arrow[r] \arrow[d]       & \cdots  \\
  0 \arrow[r]            & \mathcal{O}_{U_i}^\infty \mathcal{M}|_{U_i} \arrow[r, "\phi|_{U_i}"]                                            & {\mathcal{F}^{0,\infty}|_{U_i}} \arrow[r] &  \cdots 
\end{tikzcd}	
\end{equation}
The quasi-isomorphism between the two rows implies that
\begin{equation}\label{eq: kernel gamma}
\ker(\gamma_i)=\ker (\phi)|_{U_i} \text{ on each } U_i.
\end{equation}
 Let $\mathcal{K}:=\ker (\phi)$. \eqref{eq: kernel gamma} tells us that $\mathcal{K}$ is a locally free sheaf of $\mathcal{O}_{G_0}^\infty$-modules on $G_0$. Hence the complex $\mathcal{K}\to \mathcal{O}_{G_0}^\infty \mathcal{M}$ gives the resolution of $\mathcal{F}^{\bullet, \infty}$.

Now we consider the case $l\geqslant 2$. Although $\ker(\gamma_i)$ is locally free on $U_i$, it may not be extended to $G_0$. To solve this problem\footnote{We thank Guangzhe Xu for pointing out this problem and sharing the idea of the proof hereafter.}, we modify \eqref{eq: IllusieShorterComplex} by adding $ \mathcal{O}_{U_i}^\infty\mathcal{M}^{0}_i$ to degree $-2$ and $-1$ in the first row, resulting the following quasi-isomorphism of complexes
{\tiny
\begin{equation} \label{eq: IllusieShorterComplex2}
\begin{tikzcd}
0 \arrow[r] & \mathcal{O}_{U_i}^\infty \mathcal{M}^{-l}_i \arrow[r] \arrow[d] & \cdots \arrow[r] \arrow[d] & \mathcal{O}_{U_i}^\infty \mathcal{M}^{-2}_i\oplus \mathcal{O}_{U_i}^\infty\mathcal{M}^{0}_i \arrow[r] \arrow[d]& \ker(\gamma_i)\oplus \mathcal{O}_{U_i}^\infty\mathcal{M}^{0}_i\arrow[r] \arrow[d] & 0\arrow[r] \arrow[d]       & \cdots  \\
0 \arrow[r] & 0 \arrow[r]                              & \cdots \arrow[r]  &  0 \arrow[r]            & \mathcal{O}_{U_i}^\infty \mathcal{M}|_{U_i} \arrow[r, "\phi|_{U_i}"]                                            & {\mathcal{F}^{0,\infty}|_{U_i}} \arrow[r] &  \cdots 
\end{tikzcd}	
\end{equation}
}
Since
$$
\gamma_i: \mathcal{O}_{U_i}^\infty \mathcal{M}^{-1}_i\oplus \mathcal{O}_{U_i}^\infty \mathcal{M}|_{U_i}\to \mathcal{O}_{U_i}^\infty \mathcal{M}^{0}_i
$$
is surjective, we have an isomrphism of $\mathcal{O}_{U_i}^\infty$-modules
\begin{equation}\label{eq: isom of kernel} \ker(\gamma_i)\oplus \mathcal{O}_{U_i}^\infty\mathcal{M}^{0}_i\cong \mathcal{O}_{U_i}^\infty \mathcal{M}^{-1}_i\oplus \mathcal{O}_{U_i}^\infty \mathcal{M}|_{U_i}.
\end{equation}
Since $l\geqslant 2$, the right hand side of \eqref{eq: isom of kernel}  is the restriction of a sheaf of free $\mathcal{O}_{G_0}^\infty$-modules on $G_0$. We obtain the following quasi-isomorphism of complexes
{\tiny
\begin{equation} \label{eq: IllusieShorterComplex3}
\begin{tikzcd}
0 \arrow[r] & \mathcal{O}_{U_i}^\infty \mathcal{M}^{-l}_i \arrow[r] \arrow[d] & \cdots \arrow[r] \arrow[d] & \mathcal{O}_{U_i}^\infty \mathcal{M}^{-2}_i\oplus \mathcal{O}_{U_i}^\infty\mathcal{M}^{0}_i \arrow[r] \arrow[d]& \mathcal{O}_{U_i}^\infty \mathcal{M}^{-1}_i \oplus \mathcal{O}_{U_i}^\infty \mathcal{M}|_{U_i}\arrow[r] \arrow[d] & 0\arrow[r] \arrow[d]       & \cdots  \\
0 \arrow[r] & 0 \arrow[r]                              & \cdots \arrow[r]  &  0 \arrow[r]            & \mathcal{O}_{U_i}^\infty \mathcal{M}|_{U_i} \arrow[r, "\phi|_{U_i}"]                                            & {\mathcal{F}^{0,\infty}|_{U_i}} \arrow[r] &  \cdots 
\end{tikzcd}	
\end{equation}
}

Let $\mathcal{O}_{U_i}^\infty (\tilde{\mathcal{M}_i}^{\bullet})$ and $\tilde{\mathcal{F}}^{\bullet,\infty}$ denote the first and second row of \eqref{eq: IllusieShorterComplex3}, respectively. Then on each $U_i$, $\mathcal{O}_{U_i}^\infty (\tilde{\mathcal{M}_i}^{\bullet})$ is a resolution of $\tilde{\mathcal{F}}^{\bullet,\infty}$  with length $l-1$. Moreover each component of   $\mathcal{O}_{U_i}^\infty (\tilde{\mathcal{M}_i}^{\bullet})$  is a free sheaf except the leftmost one. Then by the induction hypothesis, there exists a  bounded complex of finite-dimensional $C^{\infty}$-, left $\mathcal{G}$-vector bundles $\tilde{Q}^{\bullet}$ on $G_0$ and a quasi-isomorphism
$$
\tilde{Q}^{\bullet}\overset{\sim}{\longrightarrow} \tilde{\mathcal{F}}^{\bullet,\infty}.
$$
In addition, we can choose $\tilde{Q}^{\bullet}$ such that $\tilde{Q}^i$ is a trivial  $\mathcal{G}$-vector bundle except for the leftmost nonzero $\tilde{Q}^i$. Now let 
\begin{equation}
Q^{\bullet}:=\tilde{Q}^{\bullet}\to \mathcal{O}_{G_0}^\infty \mathcal{M}\to 0.
\end{equation}
The $Q^{\bullet}$ gives a resolution of $\mathcal{F}^{\bullet,\infty}$. We completed the induction.
\end{proof}

\begin{prop}\label{prop: global resolution}
Let $\mathcal{G}$ be a compact complex orbifold groupoid. For a bounded complex of sheaves  $\mathcal{F}^{\bullet}$ of $\mathcal{O}_{\mathcal{G}}\text{-Mod}$ with coherent cohomologies, there exists a bounded complex of finite-dimensional $C^{\infty}$-, left $\mathcal{G}$-vector bundles $Q^{\bullet}$ on $G_0$ and a quasi-isomorphism
$$
Q^{\bullet}\overset{\sim}{\longrightarrow} \mathcal{F}^{\bullet,\infty}.
$$
\end{prop}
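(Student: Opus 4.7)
The plan is to reduce to the case of a single coherent sheaf, prove that case by iterating Lemma~\ref{lemma: surjective morphism exists}, and then bootstrap to bounded complexes with coherent cohomology by an induction on cohomological amplitude.

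\textbf{Base case: a single coherent sheaf $\mathcal{F}$.} I would first construct a finite global $\mathcal{G}$-equivariant resolution
$$
0 \to \mathcal{O}_{\mathcal{G}}^{\infty} V^{-N} \to \cdots \to \mathcal{O}_{\mathcal{G}}^{\infty} V^{0} \to \mathcal{F}^{\infty} \to 0.
$$
Apply Lemma~\ref{lemma: surjective morphism exists} to obtain a $\mathcal{G}$-equivariant surjection $\phi_0\colon(\mathcal{O}_{\mathcal{G}}^{\infty})^{n_0}\twoheadrightarrow \mathcal{F}^{\infty}$, and set $V^0 = \underline{\mathbb{C}}^{n_0}$. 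Let $\mathcal{K}_1=\ker\phi_0$. Because $\mathcal{O}_{\mathcal{G}}^{\infty}$ is flat over $\mathcal{O}_{\mathcal{G}}$ and the local Syzygy theorem (Proposition~\ref{prop: Syzygy}) gives a finite $\mathcal{G}$-equivariant holomorphic resolution on each orbifold chart, $\mathcal{K}_1$ is locally of the form $\mathcal{O}_{\mathcal{G}}^{\infty}\otimes_{\mathcal{O}_{\mathcal{G}}}\mathcal{K}_1^{\mathrm{hol}}$ with $\mathcal{K}_1^{\mathrm{hol}}$ coherent and hence finite type. The same partition-of-unity globalization used in the proof of Lemma~\ref{lemma: surjective morphism exists}, combined with the equivariant covers afforded by Proposition~\ref{prop: local chart compact}, then yields a global $\mathcal{G}$-equivariant surjection $\phi_1\colon (\mathcal{O}_{\mathcal{G}}^{\infty})^{n_1}\twoheadrightarrow \mathcal{K}_1$. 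Iterate. The process terminates in at most $\dim_{\mathbb{C}} G_0$ steps: the local Syzygy bound in Proposition~\ref{prop: Syzygy} ensures that after finitely many steps the remaining kernel becomes locally free over $\mathcal{O}_{\mathcal{G}}^\infty$, so it is already of the form $\mathcal{O}_{\mathcal{G}}^\infty V^{-N}$ for a finite rank $C^{\infty}$ vector bundle $V^{-N}$.

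\textbf{Inductive step: bounded complexes with coherent cohomology.} I would argue by induction on the number of nonzero cohomology sheaves. If $\mathcal{F}^{\bullet}$ has coherent cohomology concentrated in a single degree, it is quasi-isomorphic to that cohomology and the base case applies. For the inductive step, let $b$ be the largest integer with $\mathcal{H}^{b}(\mathcal{F}^{\bullet})\neq 0$. Apply the base case to obtain a finite $C^{\infty}$-vector-bundle resolution $\mathcal{O}_{\mathcal{G}}^\infty U^{\bullet}\xrightarrow{\sim}\mathcal{H}^{b}(\mathcal{F}^{\bullet})^{\infty}$ placed in degrees $\leq b$, and lift the composite $\mathcal{O}_{\mathcal{G}}^\infty U^{\bullet}\to \mathcal{H}^{b}(\mathcal{F}^{\bullet})^{\infty}\hookrightarrow \mathcal{F}^{b,\infty}/\mathrm{im}$ to an actual morphism of complexes $\mathcal{O}_{\mathcal{G}}^\infty U^{\bullet}\to \mathcal{F}^{\bullet,\infty}$ using projectivity of free $C^\infty$-modules and Lemma~\ref{lemma: surjective morphism exists} applied to the kernels at each intermediate degree. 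The mapping cone of this morphism has one fewer nonzero cohomology sheaf (all of which remain coherent), so the induction hypothesis provides a bounded $C^\infty$ resolution of it; shifting and splicing via the cone triangle yields the desired $V^{\bullet}$.

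\textbf{Main obstacle.} The core technical difficulty is ensuring that the construction remains globally $\mathcal{G}$-equivariant, stays finitely generated at each step, and \emph{terminates} at a bounded length. Equivariance is handled by the finite-group-averaging trick used in Proposition~\ref{prop: weak condition on coherent}, applied in the $C^{\infty}$ setting on each chart supplied by Proposition~\ref{prop: local chart compact} and then patched with an equivariant partition of unity using compactness. Finite generation of the successive kernels relies on the flatness of $\mathcal{O}_{\mathcal{G}}^{\infty}$ over $\mathcal{O}_{\mathcal{G}}$ so that the local holomorphic Syzygy resolutions of Proposition~\ref{prop: Syzygy} survive in the $C^\infty$ category. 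Finally, the uniform termination of the iterative resolution is enforced by the dimension bound on local projective dimension coming from Proposition~\ref{prop: Syzygy}, which is what makes the whole resolution bounded rather than merely bounded-above.
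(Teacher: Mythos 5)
Your proposal is correct and follows essentially the same route as the paper, which simply combines Lemma \ref{lemma: surjective morphism exists} with the globalization argument of \cite[Proposition 6.3]{bismut2023coherent}: iterate the equivariant surjection, terminate via the local Syzygy theorem (Proposition \ref{prop: Syzygy}) together with flatness of $\mathcal{O}_{\mathcal{G}}^{\infty}$ over $\mathcal{O}_{\mathcal{G}}$, and treat complexes by induction on cohomological amplitude. The one imprecision is your claim that $\ker\phi_0$ is locally of the form $\mathcal{O}_{\mathcal{G}}^{\infty}\otimes_{\mathcal{O}_{\mathcal{G}}}\mathcal{K}_1^{\mathrm{hol}}$ --- since $\phi_0$ is a smooth rather than holomorphic surjection, the kernel is only a local direct summand of such a module up to a free summand (Schanuel's lemma), but that weaker statement already gives the finite generation of the successive kernels and the termination bound you need.
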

\begin{proof} 
The proof of this result is similar to \cite[Proposition 6.3.2]{bismut2023coherent}. Without loss of generality, we may assume $\mathcal{F}^i=0$ for $i<0$. 

Let
$$
k=\sup\{i\,|\,\mathcal{H}^i(\mathcal{F}^{\bullet})=0\}.
$$
We know $k$ is finite and $k\in \mathbb{N}$. We will prove our result by induction on $k$. We will also prove that we can choose $Q^{\bullet}$ such that $Q^i=0$ for $i>k$.

The $k=0$ case follows from Lemma \ref{lemma: global resolution special case}. The induction is exactly the same as the proof of   \cite[Proposition 6.3.2]{bismut2023coherent}.
\end{proof}

It is clear that for a morphism $f\colon \mathcal{G}\to \mathcal{H}$ between complex orbifold groupoids, the induced functor $f^*\colon \mathcal{O}_{\mathcal{H}}\text{-Mod}\to \mathcal{O}_{\mathcal{G}}\text{-Mod}$ restricts to a functor 
$$f^*\colon \coh(\mathcal{H})\to \coh(\mathcal{G}).$$

\begin{prop}\label{prop: coherent and Morita equivalence}
If $f\colon \mathcal{G}\to \mathcal{H}$ is an equivalence between complex orbifold groupoids in the sense of Definition \ref{defi: groupoid equivalence}, then the induced functor $f^*\colon \coh(\mathcal{H})\to \coh(\mathcal{G})$ is an equivalence of categories.
\end{prop}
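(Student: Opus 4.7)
My strategy is to construct a quasi-inverse to $f^{*}$ via the bibundle associated with the equivalence, and to verify both coherence preservation and the inverse property locally using the orbifold chart structure. By Proposition~\ref{prop: equiv defi of Morita equiv}, the equivalence $f \colon \mathcal{H} \to \mathcal{G}$ is encoded by a $\mathcal{G}$-$\mathcal{H}$ principal bibundle; concretely one may take $Z = G_1 \times_{G_0} H_0$ with structure maps $\rho = t \circ \pi_1 \colon Z \to G_0$ and $\sigma = \pi_2 \colon Z \to H_0$. Since $\mathcal{H}$ acts freely and properly on $Z$ with quotient $\rho$, the functor $f^{*}$ can be rephrased as ``pull back along $\sigma$, then descend along the principal $\mathcal{H}$-bundle $\rho$,'' with the $\mathcal{G}$-equivariance coming from the left $\mathcal{G}$-action on $Z$. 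Reversing the roles (using that $\mathcal{G}$ acts freely and properly on $Z$ with quotient $\sigma$), I define a candidate quasi-inverse $f_{!} \colon \coh(\mathcal{G}) \to \coh(\mathcal{H})$ by ``pull back along $\rho$, then descend along the principal $\mathcal{G}$-bundle $\sigma$.''

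Next, I will verify that both $f^{*}$ and $f_{!}$ preserve coherence by a local argument. By Proposition~\ref{prop: local chart compact}, one may cover $G_0$ (respectively $H_0$) by $\mathcal{G}$-invariant (respectively $\mathcal{H}$-invariant) open subsets on which the groupoid restricts to a transformation groupoid $\Gamma \ltimes V$ with $\Gamma$ a finite group. Using condition (1) of Definition~\ref{defi: groupoid equivalence}, these covers can be matched so that each chart on $\mathcal{G}$ sits opposite a chart on $\mathcal{H}$ along the bibundle $Z$; condition (2) then identifies the local stabilizer groups canonically. On a matched pair of charts, coherent sheaves on $\mathcal{G}$ and $\mathcal{H}$ are ordinary $\Gamma$-equivariant coherent $\mathcal{O}_V$-modules, and the operations defining $f^{*}$ and $f_{!}$ reduce to pullback along a local holomorphic diffeomorphism of charts, which manifestly preserves coherence by the standard finite-group equivariant theory already used in Propositions~\ref{prop: weak condition on coherent} and \ref{prop: Oka coherence}.

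Finally, to establish the natural isomorphisms $f_{!} \circ f^{*} \simeq \id_{\coh(\mathcal{H})}$ and $f^{*} \circ f_{!} \simeq \id_{\coh(\mathcal{G})}$, I will invoke the canonical isomorphisms of composite bibundles: the composition of $Z$ with its opposite bibundle, in either order, recovers the unit bibundle of $\mathcal{H}$ or $\mathcal{G}$ respectively, and under the dictionary between bibundles and pull-descent operations this induces the identity functor on coherent sheaves. The main obstacle will be book-keeping the descent along the principal bundle $\sigma \colon Z \to H_0$ in a way that is globally compatible with the $\mathcal{H}$-equivariant structure; this reduces via the local charts above to descent of equivariant coherent sheaves along a free quotient by a finite group, which is standard, so that the global argument is then a matter of checking that the local matchings glue coherently along the overlaps of the chart decomposition.
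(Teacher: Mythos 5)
Your proposal is correct in substance, but it takes a more explicit route than the paper, whose entire proof is a one-line reduction to Moerdijk--Pronk (\cite[Definition 3.1, Remark (3)]{moerdijk1997orbifolds}), replacing smooth functions by holomorphic ones. What you do instead is essentially re-prove that result for coherent $\mathcal{O}$-modules: you realize the equivalence as a principal bibundle via Proposition \ref{prop: equiv defi of Morita equiv}, describe $f^{*}$ as pull-back along one leg followed by descent along the other (both of which are local biholomorphisms since the groupoids are \'etale and the actions are free and proper), build the quasi-inverse from the opposite bibundle, and check the unit/counit isomorphisms using the fact that the bibundle composed with its opposite is the unit bibundle --- the same fact the paper records later as Lemma \ref{lemma: bibundle fiber product} in its treatment of superconnections (Proposition \ref{prop: Morita equivalent and quasi-equivalent}). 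Your approach buys a self-contained argument and makes the preservation of coherence visible (local biholomorphic pullback plus invariants under a free finite local action, exactly in the spirit of Propositions \ref{prop: weak condition on coherent} and \ref{prop: Oka coherence}), whereas the paper's citation is shorter but leaves the holomorphic adaptation implicit. Two small points of bookkeeping: you write the equivalence as $f\colon\mathcal{H}\to\mathcal{G}$ and take $Z=G_1\times_{G_0}H_0$, whereas the statement has $f\colon\mathcal{G}\to\mathcal{H}$, so the roles of the two legs should be swapped (harmless, since Morita equivalence is symmetric, but worth fixing); and you should note explicitly that the bibundle description of $f^{*}$ agrees with the paper's definition $f^{*}\mathcal{F}=\mathcal{O}_{\mathcal{G}}\otimes_{f^{-1}\mathcal{O}_{\mathcal{H}}}f^{-1}\mathcal{F}$, which holds because $f_{0}$ is \'etale for an equivalence of \'etale groupoids, so that $f^{-1}\mathcal{O}_{\mathcal{H}}\cong\mathcal{O}_{\mathcal{G}}$.
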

\begin{proof}
This follows from the works of Moerdijk and Pronk \cite[Defintion 3.1, Remark (3)]{moerdijk1997orbifolds} by replacing the sheave of smooth functions with the sheave of holomorphic functions. 
\end{proof}

We take the following definition from \cite[Section 3.2]{moerdijk2005lie}.

Let $f= (Z, \rho, \sigma)\colon \mathcal{G}\to \mathcal{H}$ be a generalized morphism between complex orbifold groupoids. The pullback functor $f^*\colon \mathcal{O}_{\mathcal{H}}\text{-Mod}\to \mathcal{O}_{\mathcal{G}}\text{-Mod}$ has a right adjoint 
$$f_*\colon \mathcal{O}_{\mathcal{G}}\text{-Mod}\to \mathcal{O}_{\mathcal{H}}\text{-Mod}.$$ 
More explicitly, given an $\mathcal{O}_{\mathcal{G}}$-module $E$, we pull it back to $Z$ to get an $\mathcal{O}_{\mathcal{Z}}$-module $\rho^*(E)$. Then we push $\rho^*(E)$ to $\mathcal{H}$ by taking the $\mathcal{G}$-invariant sections of the usual pushforward of sheaves between complex manifolds under $\sigma$. The pushforward inherits a natural $\mathcal{H}$-action as $\rho^*(E)$ is an $\mathcal{G}\times \mathcal{H}$ module. The $\mathcal{G}$-action on $Z$ commutes with the $\mathcal{H}$-action and $\mathcal{O}_{\mathcal{Z}}$-module structure. The pushforward $\sigma_*\rho^*(E)$ is an $\mathcal{O}_{\mathcal{H}}$-module. We define
\begin{equation}\label{eq: definitionofpushforward}
f_*(E):=\sigma_*\rho^*(E).
\end{equation}

\begin{prop}\label{prop: pushforward adjoint}
The push forward functor $f_*\colon \mathcal{O}_{\mathcal{G}}\text{-Mod}\to \mathcal{O}_{\mathcal{H}}\text{-Mod}$ defined in (\ref{eq: definitionofpushforward}) is right adjoint to the pullback functor $f^*\colon \mathcal{O}_{\mathcal{H}}\text{-Mod}\to \mathcal{O}_{\mathcal{G}}\text{-Mod}$.
\end{prop}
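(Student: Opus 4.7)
The plan is to reduce the adjunction to two well-known adjunctions: one for Morita equivalences and one for ordinary groupoid morphisms. This reduction is effected by the factorization of the generalized morphism $f = (Z, \rho, \sigma)$ through the associated groupoid $\mathcal{Z}$ of Remark \ref{rmk: groupoid from generalized morphism}.

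First, recall that $f$ factors as $\mathcal{G} \overset{\phi_\rho}{\leftarrow} \mathcal{Z} \overset{\phi_\sigma}{\rightarrow} \mathcal{H}$, where $\phi_\rho$ is a Morita equivalence and $\phi_\sigma$ is an ordinary holomorphic groupoid morphism. Unwinding the construction in \eqref{eq: definitionofpushforward}, the $\mathcal{G}$-invariant sections of $\sigma_*\rho^*E$ coincide, via the identification $Z/\mathcal{H} \simeq G_0$ from Definition \ref{defi: generalized morphism}, with $\phi_{\sigma *}(\phi_\rho^* E)$ as $\mathcal{O}_{\mathcal{H}}$-modules. Similarly, the pullback along the generalized morphism can be written as $f^*F \cong \phi_{\rho *}(\phi_\sigma^* F)$, where $\phi_{\rho *}$ is a quasi-inverse to the equivalence $\phi_\rho^*$ on categories of $\mathcal{O}$-modules.

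Second, I would extend Proposition \ref{prop: coherent and Morita equivalence} from coherent sheaves to arbitrary $\mathcal{O}$-modules: for the Morita equivalence $\phi_\rho$, the pair $(\phi_\rho^*, \phi_{\rho *})$ gives mutually quasi-inverse equivalences of categories $\mathcal{O}_{\mathcal{G}}\text{-Mod} \simeq \mathcal{O}_{\mathcal{Z}}\text{-Mod}$, in particular yielding the natural isomorphism
$$\Hom_{\mathcal{O}_{\mathcal{G}}}(\phi_{\rho *}M, E) \cong \Hom_{\mathcal{O}_{\mathcal{Z}}}(M, \phi_\rho^*E).$$
This is the same Moerdijk-Pronk argument cited in Proposition \ref{prop: coherent and Morita equivalence}, but with the coherence hypothesis removed. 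For the ordinary groupoid morphism $\phi_\sigma$, the classical sheaf-theoretic adjunction $(\phi_\sigma^{-1}, \phi_{\sigma *})$ lifts, via the tensor-hom adjunction, to an adjunction $(\phi_\sigma^*, \phi_{\sigma *})$ between $\mathcal{O}_{\mathcal{Z}}\text{-Mod}$ and $\mathcal{O}_{\mathcal{H}}\text{-Mod}$; compatibility with the groupoid equivariance follows from the fact that $\phi_\sigma$ intertwines the $\mathcal{Z}$- and $\mathcal{H}$-actions on all structure maps.

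Composing the two natural isomorphisms will then give
$$\Hom_{\mathcal{O}_{\mathcal{G}}}(f^*F, E) = \Hom_{\mathcal{O}_{\mathcal{G}}}(\phi_{\rho *}\phi_\sigma^*F, E) \cong \Hom_{\mathcal{O}_{\mathcal{Z}}}(\phi_\sigma^*F, \phi_\rho^*E) \cong \Hom_{\mathcal{O}_{\mathcal{H}}}(F, \phi_{\sigma *}\phi_\rho^*E) = \Hom_{\mathcal{O}_{\mathcal{H}}}(F, f_*E),$$
as required, with naturality in $E$ and $F$ inherited from the component adjunctions. The main obstacle is carefully verifying the identification $f_*E \cong \phi_{\sigma *}(\phi_\rho^*E)$: this requires tracking how the $\mathcal{G}$-invariance taken in \eqref{eq: definitionofpushforward} corresponds to pushforward along the groupoid morphism $\phi_\sigma$, and how the left $\mathcal{G}$-action and right $\mathcal{H}$-action on $\rho^*E$ interact with the pushforward $\sigma_*$. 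A local reduction using Proposition \ref{prop: local chart compact} to the case where $\mathcal{G}$ and $\mathcal{H}$ are transformation groupoids of finite groups---so that stalks decompose as finite direct sums indexed by group elements and invariants become averages---makes this transparent and also confirms that the extension of Proposition \ref{prop: coherent and Morita equivalence} to all $\mathcal{O}$-modules is indeed formal.
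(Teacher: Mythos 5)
Your proposal is correct and follows essentially the same route as the paper: the paper's (two-sentence) proof likewise rests on the observations that $\sigma_*$ is right adjoint to $\sigma^*$ and that $\rho^*$ is an equivalence of categories of $\mathcal{O}$-modules, and then composes the two adjunctions exactly as you do. You have merely written out in full the identifications $f_*E\cong\phi_{\sigma*}\phi_\rho^*E$ and $f^*F\cong(\phi_\rho^*)^{-1}\phi_\sigma^*F$ and the naturality checks that the paper leaves implicit.
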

\begin{proof}
It is clear that $\sigma_*$ is right adjoint to $\sigma^*$. Since $\rho^*$ is an equivalence, we get the desired result.
\end{proof}

\begin{coro}\label{coro: pushforward functorial}
The pushforward functor defined in (\ref{eq: definitionofpushforward}) is functorial. More precisely, for $f\colon \mathcal{G}\to \mathcal{H}$ and $p\colon \mathcal{H}\to \mathcal{K}$, we have
\begin{equation}
(p f)_*=p_*\circ f_*\colon \mathcal{O}_{\mathcal{G}}\text{-Mod}\to \mathcal{O}_{\mathcal{K}}\text{-Mod}.
\end{equation}
\end{coro}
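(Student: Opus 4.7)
The plan is to exploit the adjunction established in Proposition \ref{prop: pushforward adjoint} together with the uniqueness of right adjoints. Concretely, I would first verify that the pullback functor is (contravariantly) functorial: for generalized morphisms $f\colon \mathcal{G}\to \mathcal{H}$ and $p\colon \mathcal{H}\to \mathcal{K}$, there is a natural isomorphism
\begin{equation*}
(p\circ f)^{*}\cong f^{*}\circ p^{*}\colon \mathcal{O}_{\mathcal{K}}\text{-Mod}\to \mathcal{O}_{\mathcal{G}}\text{-Mod}.
\end{equation*}
This is the substantive step. Unwinding the definition of the composition of generalized morphisms given in Section \ref{subsect: generalized morphisms}, with $f=(Z,\rho,\sigma)$ and $p=(W,\tau,\chi)$, the composite is $(Y,\theta,\eta)$ with $Y=(Z\times_{H_0}W)/\mathcal{H}$. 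The pullback along $p\circ f$ factors via the pullback to $\tilde{Y}=Z\times_{H_0}W$ and descent along the free and proper $\mathcal{H}$-action, and the projections $\tilde{Y}\to Z$, $\tilde{Y}\to W$ identify this with first pulling back along $\tau$, passing through $\mathcal{H}$, and then pulling back along $\rho$. Verifying this amounts to a diagram chase together with the equivalence of categories from Proposition \ref{prop: coherent and Morita equivalence} applied to the Morita equivalences induced by $\rho$ and $\tau$.

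Once the identity $(p\circ f)^{*}\cong f^{*}\circ p^{*}$ is in hand, the corollary follows purely formally. By Proposition \ref{prop: pushforward adjoint}, $f_{*}$, $p_{*}$, and $(p\circ f)_{*}$ are right adjoints to $f^{*}$, $p^{*}$, and $(p\circ f)^{*}$ respectively. The composition $p_{*}\circ f_{*}$ is right adjoint to $f^{*}\circ p^{*}$, since a composition of adjunctions is an adjunction. By the functorial isomorphism above, $p_{*}\circ f_{*}$ is also right adjoint to $(p\circ f)^{*}$. Uniqueness of right adjoints up to unique natural isomorphism then yields
\begin{equation*}
(p\circ f)_{*}\cong p_{*}\circ f_{*},
\end{equation*}
as desired.

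The main obstacle is the pullback functoriality statement $(p\circ f)^{*}\cong f^{*}\circ p^{*}$. For ordinary morphisms this is standard, but for generalized morphisms one must carefully track the $\mathcal{H}$-action on the intermediate fiber product $\tilde{Y}$ and check compatibility with the $\mathcal{O}$-module structure after taking $\mathcal{H}$-invariants. The cleanest way is to observe that the groupoid $\tilde{\mathcal{Y}}$ associated to $\tilde{Y}$ (pullback of $\mathcal{G}\times\mathcal{H}\times\mathcal{K}$) is Morita equivalent to both $\mathcal{Y}$ and to the iterated fiber products used in defining $f^{*}p^{*}$; Proposition \ref{prop: coherent and Morita equivalence} then converts these Morita equivalences into the required natural isomorphism of pullback functors. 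Everything else is bookkeeping with the standard adjunction formalism.
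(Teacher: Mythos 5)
Your proposal is correct and follows essentially the same route as the paper: the paper's proof is simply ``It is immediate from Proposition \ref{prop: pushforward adjoint},'' i.e.\ the adjunction plus uniqueness of right adjoints, with the contravariant functoriality of pullback taken as understood (the paper records the corresponding fact for compositions of generalized morphisms in Section \ref{subsect: generalized morphisms}). Your write-up just makes explicit the pullback-functoriality step that the paper leaves implicit; note only that the corollary asserts an equality where the adjunction argument delivers a canonical natural isomorphism, a standard abuse also present in the paper.
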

\begin{proof}
It is immediate from Proposition \ref{prop: pushforward adjoint}.
\end{proof}

\begin{prop}\label{prop:pushforwardofcoherentsheaves}
For a proper generalized morphism $f\colon \mathcal{G}\to \mathcal{H}$, the induced functor $f_*\colon \mathcal{O}_{\mathcal{G}}\text{-Mod}\to \mathcal{O}_{\mathcal{H}}\text{-Mod}$ restricts to a functor $f_*\colon \coh(\mathcal{G})\to \coh(\mathcal{H})$.
\end{prop}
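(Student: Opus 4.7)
The plan is to reduce to the classical Grauert direct image theorem on complex manifolds plus finite-group bookkeeping, using the $\mathcal{G}$-compactness built into the definition of a proper generalized morphism. Coherence of an $\mathcal{O}_\mathcal{H}$-module is local on $H_0$, so it suffices to verify the coherence of $f_*\mathcal{F}$ in an arbitrary neighborhood of a point $y \in H_0$.

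\emph{Local setup.} Choose a neighborhood $(V,H)$ of $y$ as in Definition \ref{defi: neighborhood}, so $\mathcal{H}|_V \cong H \ltimes V$ with $H$ a finite group. By Definition \ref{defi: proper morphism}, $\sigma^{-1}(V)$ is $\mathcal{G}$-compact, hence $\sigma^{-1}(V)/\mathcal{G}$ is compact. I would apply Proposition \ref{prop: local chart compact} to the pullback groupoid $\mathcal{Z} = (\mathcal{G}\times\mathcal{H}) \rightrightarrows Z$ of Remark \ref{rmk: groupoid from generalized morphism} to cover $\sigma^{-1}(V)$ by finitely many $\mathcal{G}$-saturated open subsets $W_1,\dots,W_n$, each presented as a transformation groupoid $G_i \ltimes X_i$ with $X_i$ a complex manifold and $G_i$ a finite group, and so that $\sigma|_{W_i}: X_i \to V$ is $G_i$-equivariant holomorphic (with respect to some homomorphism $G_i \to H$). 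Shrinking $V$ if necessary and using Proposition \ref{prop: proper equivalent} together with the properness of $\tilde\sigma$, I can arrange that the underlying holomorphic map $X_i \to V$ is proper for each $i$.

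\emph{Applying Grauert.} By Proposition \ref{prop: coherent and Morita equivalence} applied to $\rho: \mathcal{Z} \to \mathcal{G}$, the sheaf $\rho^*\mathcal{F}$ is coherent on $\mathcal{Z}$, so on each chart it restricts to a $G_i$-equivariant coherent analytic sheaf $\mathcal{F}_i$ on $X_i$. The classical Grauert direct image theorem for proper holomorphic maps of complex manifolds gives that $(\sigma|_{X_i})_* \mathcal{F}_i$ is a coherent sheaf of $\mathcal{O}_V$-modules, and it carries a natural $G_i$-action induced from $\mathcal{F}_i$. Taking $G_i$-invariants of a coherent equivariant sheaf under a finite group action preserves coherence: the local surjection $\mathcal{O}_V^N \twoheadrightarrow ((\sigma|_{X_i})_*\mathcal{F}_i)^{G_i}$ is obtained by averaging a non-equivariant surjection over $G_i$, as in Proposition \ref{prop: weak condition on coherent}, and the kernel of an equivariant map to $\mathcal{O}_V^m$ is coherent by Oka's theorem (Proposition \ref{prop: Oka coherence}) applied in the equivariant setting.

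\emph{Gluing and the main obstacle.} Finally, $(f_*\mathcal{F})|_V$ is assembled from the $\mathcal{G}$-invariant sections of $\sigma_*\rho^*\mathcal{F}$ contributed by the finitely many $W_i$'s, and inherits the $H$-action from the residual action on $V$. The hardest step in the argument is ensuring this last step is coherent: one must check that the $W_i$'s can be chosen so that the overlap relations (i.e., the $\mathcal{G}$-action gluing them) produce a genuinely $H$-equivariant coherent $\mathcal{O}_V$-module, not merely a sheaf that is locally coherent on each $W_i/G_i$. This is where the full strength of $\mathcal{G}$-compactness of $\sigma^{-1}(V)$ and the finiteness of $H$ are both used — the former reduces the gluing to finitely many pieces, and the latter lets us pass from the $G_i$-invariant local pushforwards to a genuine $H$-equivariant coherent sheaf by a further averaging, completing the proof.
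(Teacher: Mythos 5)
Your overall strategy differs from the paper's: the paper's proof is a two-line reduction, first to the case $\mathcal{F}=\mathcal{O}_{\mathcal{G}}$ and then, via $f_*(E)=\sigma_*\rho^*(E)$, to the coherence of $\sigma_*(\mathcal{O}_{\mathcal{Z}})$, which it declares clear by definition. Your instinct that a Grauert-type finiteness input is needed in the generality of Definition \ref{defi: proper morphism} (where $\sigma$ may have positive-dimensional fibers, e.g.\ the projection of a compact orbifold to a point) is reasonable. However, your execution has a step that fails: you cannot ``arrange that the underlying holomorphic map $X_i\to V$ is proper for each $i$.'' The restriction of a proper map to an open subset of the source is essentially never proper, and $\sigma^{-1}(V)$ cannot in general be decomposed into finitely many \emph{clopen} $\mathcal{G}$-saturated global-quotient pieces (its quotient may be connected without being a global quotient). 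Consequently Grauert's theorem cannot be applied chart by chart: for the projection of a compact complex manifold $M$ to a point, $\Gamma(W_i,\mathcal{O})$ is infinite-dimensional for every proper open chart $W_i\subset M$, even though $\Gamma(M,\mathcal{O})$ is finite-dimensional. The finiteness of the direct image is a global phenomenon over each point of $H_0$, so a correct argument along your lines must apply the finiteness theorem to the full proper map of complex spaces $\mathcal{G}\backslash Z\to H_0$ (or an equivariant Grauert theorem for the proper groupoid action), not to individual charts; localizing on the source destroys exactly the compactness that produces coherence.

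A second, smaller problem is that your final paragraph identifies the gluing of the local pushforwards into an $H$-equivariant coherent sheaf as ``the hardest step'' and then asserts, rather than proves, that $\mathcal{G}$-compactness and finiteness of $H$ handle it; as written this is a plan, not an argument. (Had the individual pieces been coherent, this step would actually be routine: invariant sections over $\sigma^{-1}(V)$ form the kernel of a map between finite direct sums of coherent sheaves on the $W_i$ and on their overlaps, and equivariance is recovered by the averaging of Proposition \ref{prop: weak condition on coherent}. The genuine obstruction in your proof is the one in the previous paragraph, not this one.) Minor points: $V$ should be replaced by a relatively compact neighborhood before invoking Definition \ref{defi: proper morphism}, and Proposition \ref{prop: local chart compact} is stated for a representing groupoid of a compact orbifold, so its application to the pullback groupoid over $\sigma^{-1}(V)$ needs justification.
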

\begin{proof}
It suffices to prove that $f_*(\mathcal{O}_{\mathcal{G}})$ is coherent. By \eqref{eq: definitionofpushforward}, it reduces to proving that $\sigma_*(\mathcal{O}_{\mathcal{Z}})$ is coherent, which is clear by definition.
\end{proof}

\begin{defi}\label{defi: bounded derived category}
Let 
$$D^b_{\coh}(\mathcal{G})$$
be the derived category of bounded complexes of objects in $\mathcal{O}_{\mathcal{G}}\text{-Mod}$  with coherent cohomologies.
\end{defi}

\begin{defi}\label{defi: derived tensor product}
For $\mathcal{F}_1$ and $\mathcal{F}_2\in D^b_{\coh}(\mathcal{G})$, we define their derived tensor product $$\mathcal{F}_1\otimes^L  \mathcal{F}_2$$ via flat resolutions as in \cite[\href{https://stacks.math.columbia.edu/tag/06YH}{Tag 06YH}]{stacks-project}.
\end{defi}

\begin{defi}\label{defi: derived pull back}
For a generalized morphism $f\colon \mathcal{G}\to \mathcal{H}$, we define the \emph{derived pull back} $$Lf^*\colon D^b_{\coh}(\mathcal{H})\to D^b_{\coh}(\mathcal{G})$$ as in \cite[\href{https://stacks.math.columbia.edu/tag/07BD}{Tag 07BD}]{stacks-project}.
\end{defi}

\begin{defi}\label{defi: derived push forward}
For a proper generalized morphism $f\colon \mathcal{G}\to \mathcal{H}$, we define the \emph{derived pushforward} 
$$Rf_*\colon D^b_{\coh}(\mathcal{G})\to D^b_{\coh}(\mathcal{H})$$ by injective resolutions as in \cite[Section III.6]{gelfand2003methods}. By Proposition \ref{prop: enough injectives}, this definition makes sense.
\end{defi}

By \cite[Theorem III.7.1]{gelfand2003methods}, we have a natural isomorphism 
\begin{equation}
R(f\circ g)_*\overset{\sim}{\longrightarrow}Rf_*\circ Rg_*.
\end{equation}

According to \cite[Proposition  III.8.12]{gelfand2003methods}, we can compute $Rf_*$ via soft resolutions instead of injective resolutions. Notice that \cite[Proposition III.8.12]{gelfand2003methods} is about $Rf_!$, the right derived functor of the proper pushforward functor $f_!$. Nevertheless, in our case $f_*$ and $f_!$ coincide, as both $\mathcal{G}$ and $ \mathcal{H}$ are required to be compact.

Let $K(D^b_{\coh}(\mathcal{G}))$ denote the Grothendieck group of $D^b_{\coh}(\mathcal{G})$. Using the same argument as in \cite[Tag 0FCP]{stacks-project}, we get
$$
K(D^b_{\coh}(\mathcal{G}))\cong K(\mathcal{G}).
$$
For a holomorphic map $f\colon \mathcal{G}\to \mathcal{H}$, the derived pushforward map $Rf_*$ induces a homomorphism 
\begin{equation}
f_!\colon K(\mathcal{G})\to K(\mathcal{H}).
\end{equation}

\section{Antiholomorphic flat superconnection on complex orbifolds}\label{Section: ahfs}
In this section, we introduce the theory of antiholomorphic flat superconnections on complex orbifolds. 

\subsection{Basic constructions}
Our presentation of this section follows that of \cite{block2010mukai}.

\begin{lemma}
On a complex orbifold groupoid $\mathcal{G}$, the bundle of $(0,1)$-cotangent vectors $\overline{T^{*}G_0}$ is a $\mathcal{G}$-$\mathcal{G}$ bibundle.
\end{lemma}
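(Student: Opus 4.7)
The plan is to exploit the étale structure of $\mathcal{G}$ to extract from each arrow a germ of local biholomorphism of $G_0$, then to define the two actions by pulling back $(0,1)$-cotangent vectors along these germs and their inverses. The projection $\pi\colon \overline{T^*G_0}\to G_0$ will serve as both the left and right moment maps.

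First, I would recall, following Definition \ref{defi: effective groupoid}, that because $\mathcal{G}$ is étale each arrow $g\in G_1$ with $s(g)=x$ and $t(g)=y$ admits a local holomorphic section $\hat g$ of $s$ through $g$ on a neighborhood of $x$; composing with $t$ yields a germ of biholomorphism $\tilde g = t\circ\hat g\colon (U_x,x)\to (V_y,y)$, independent of the choice of $\hat g$. Since $\mathcal{G}$ is a complex Lie groupoid (Definition \ref{defi: complex Lie groupoid}), $\tilde g$ is holomorphic, and its differential $d\tilde g_x\colon T_xG_0\to T_yG_0$ is a $\mathbb{C}$-linear isomorphism respecting the bidegree decomposition. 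Consequently the transpose yields a $\mathbb{C}$-linear isomorphism $(d\tilde g_x)^{*}\colon \overline{T^*_y G_0}\to \overline{T^*_x G_0}$, and a reverse isomorphism using $\tilde g^{-1}$.

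Next, I would define the right $\mathcal{G}$-action by $\alpha\cdot g := (d\tilde g_x)^{*}\alpha$ for $\alpha\in\overline{T^*_{y}G_0}$ with $y=t(g)$, and the left $\mathcal{G}$-action by $g\cdot \alpha := (d\tilde g^{-1}_{y})^{*}\alpha$ for $\alpha\in\overline{T^*_x G_0}$ with $x=s(g)$. Compatibility with groupoid composition follows from the identity $\widetilde{g_1g_2}=\tilde g_1\circ\tilde g_2$ at the germ level and the chain rule: for the right action, $\alpha\cdot(g_1g_2)=(\alpha\cdot g_1)\cdot g_2$; dually for the left action. Smoothness and linearity on fibers are immediate from the holomorphicity of the structure maps of $\mathcal{G}$, and the pair of moment maps for the left and right actions both equal $\pi$ with $\pi(g\cdot \alpha)=t(g)$ and $\pi(\alpha\cdot g)=s(g)$.

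The essential compatibility to check is that the left and right actions commute. For an arrow $g$ acting on the left and an arrow $h$ acting on the right, both simultaneously defined at $\alpha$, the equality $(g\cdot \alpha)\cdot h = g\cdot (\alpha\cdot h)$ reduces to the commutativity of the pullbacks under $\tilde g^{-1}$ and $\tilde h$, which act on independent copies of $G_0$ in the relevant germs and hence commute by functoriality of the cotangent construction. Finally, I would verify the bibundle conditions of Definition \ref{defi: bi-bundle} by noting that the quotient of $\overline{T^*G_0}$ by the right $\mathcal{G}$-action fibers over $G_0$ in the desired way, because the projection $\pi$ is $\mathcal{G}$-equivariant in the appropriate sense, and similarly for the left action.

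The main obstacle, I expect, is a bookkeeping one: keeping the source/target conventions for the two actions consistent with Definition \ref{defi: bi-bundle}, so that the simultaneous use of left and right moment maps both coinciding with $\pi$ does not create conflicts with the equivariance of $\pi$ under either action. Once the correct conventions are pinned down, the actual computations are routine manipulations with germs of biholomorphisms and the chain rule.
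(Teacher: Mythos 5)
Your proposal is correct and follows the same idea as the paper, which disposes of the lemma in one sentence by observing that the $\mathcal{G}$-action on $G_0$ preserves the complex structure; you simply spell out the details (germs of local biholomorphisms from the \'etale structure, complex-linearity of their differentials preserving the $(0,1)$-part, and the chain-rule verification of the action axioms). The bookkeeping issue you flag about the two moment maps both being $\pi$ is real but harmless here, since the lemma is only used to form one-sided fiber products such as $\wedge^{\bullet}\overline{T^{*}G_0}\times_{\mathcal{G}}E^{\bullet}$.
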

\begin{proof}
This follows from the fact that the groupoid $\mathcal{G}$-action on $G_0$ preserves the complex structure. 
\end{proof}

\begin{defi}\label{defi: antiholo superconn}
Let $X$ be a complex orbifold with a groupoid representation $\mathcal{G}=(G_0, G_1)$. An \emph{antiholomorphic flat superconnection} on $\mathcal{G}$ is a bounded, finite rank, $\mathbb{Z}$-graded, left $\mathcal{G}$-equivariant, $C^{\infty}$-vector bundle $E^{\bullet}$ on $G_0$ together with a $\mathcal{G}$-equivariant superconnection\footnote{We will use $\wedge^{\bullet}\overline{T^{*}G_0}  \otimes E^\bullet$ to denote the tensor product of two $\mathcal{G}$-equivariant bundles $\wedge^{\bullet}\overline{T^{*}G_0}$ and $E^\bullet$} of total degree $1$,
$$
A^{E^{\bullet}\prime\prime}\colon \wedge^{\bullet}\overline{T^{*}G_0}  \otimes E^{\bullet}\to \wedge^{\bullet}\overline{T^{*}G_0}  \otimes E^{\bullet},
$$
such that $A^{E^{\bullet}\prime\prime}\circ A^{E^{\bullet}\prime\prime}=0$.

In more detail, $A^{E^{\bullet}\prime\prime}$ decomposes into
\begin{equation}\label{eq: decomposition of anti super conn}
A^{E^{\bullet}\prime\prime}=v_0+\nabla^{E^{\bullet}\prime\prime}+v_2+\ldots,
\end{equation}
where 
$$
\nabla^{E^{\bullet}\prime\prime}\colon E^{\bullet}\to \overline{T^{*}G_0}  \otimes E^{\bullet}
$$
 is a $\mathcal{G}$-equivariant $\dbar$-connection and
\begin{equation}\label{eq: cohesive module}
v_i\colon E^{\bullet}\to \wedge^{i}\overline{T^{*}G_0}  \otimes E^{\bullet+1-i}
\end{equation}
is $\mathcal{G}$-equivariant and $C^{\infty}(G_0)$-linear.
\end{defi}

Antiholomorphic flat superconnections on $\mathcal{G}$ form a dg-category denoted by $$B(\mathcal{G}).$$ In more detail, let $(E^{\bullet}, A^{E^{\bullet}\prime\prime})$ and $(F^{\bullet}, A^{F^{\bullet}\prime\prime})$ be two flat antiholomorphic superconnections on $X$ where
$$
A^{E^{\bullet}\prime\prime}=v_0+\nabla^{E^{\bullet}\prime\prime}+v_2+\ldots
$$ 
and
$$
A^{F^{\bullet}\prime\prime}=u_0+\nabla^{F^{\bullet}\prime\prime}+u_2+\ldots.
$$ 
A morphism $\phi\colon (E^{\bullet}, A^{E^{\bullet}\prime\prime})\to (F^{\bullet}, A^{F^{\bullet}\prime\prime})$ of degree $k$ is given by
$$
\phi=\phi_0+\phi_1+\ldots
$$
where
$$
\phi_i\colon  E^{\bullet}\to \wedge^{i}\overline{T^{*}G_0}  \otimes {\color{black}{F}}^{\bullet+k-i}
$$
is $\mathcal{G}$-equivariant and $C^{\infty}(G_0)$-linear.

The differential of $\phi$ is given by 
$$
d\phi=A^{F^{\bullet}\prime\prime}\phi-(-1)^k\phi A^{E^{\bullet}\prime\prime}.
$$
More explicitly, the $l$-th component $(d\phi)_l\colon E^{\bullet}\to \wedge^{l}\overline{T^{*}G_0}  \otimes {\color{black}{F}}^{\bullet+k+1-l}$ is given by
\begin{equation}
(d\phi)_l=\sum_{i\neq 1}\big(u_i\phi_{l-i}-(-1)^k\phi_{l-i}v_i\big)+\nabla^{F^{\bullet}\prime\prime}\phi_{l-1}-(-1)^k\phi_{l-1}\nabla^{E^{\bullet}\prime\prime}.
\end{equation}

\subsection{Pullback and equivalence}
We want to study the functoriality property of $B(\mathcal{G})$. First, we define the pullbacks under holomorphic morphisms.

\begin{prop}\label{prop: pull back functor}
Let $\psi\colon \mathcal{G}\to \mathcal{H}$ be a holomorphic morphism between complex orbifold groupoids. Then $\psi$ induces a pullback dg-functor
\begin{equation}
\psi^*_b\colon B(\mathcal{H})\to B(\mathcal{G}).
\end{equation}
\end{prop}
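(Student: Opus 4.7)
The plan is to define $\psi^*_b$ on objects by pulling back both the graded vector bundle and the superconnection, then extend to morphisms, and finally verify that the construction is compatible with compositions and differentials so that it constitutes a dg-functor.

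For an object $(E^{\bullet}, A^{E^{\bullet}\prime\prime})\in B(\mathcal{H})$, I first set $\psi^*_b E^{\bullet}:=\psi_0^* E^{\bullet}$, the ordinary $C^{\infty}$ pullback of the graded bundle along $\psi_0\colon G_0\to H_0$. The left $\mathcal{H}$-equivariant structure on $E^{\bullet}$ combined with the homomorphism $\psi_1\colon G_1\to H_1$ (which commutes with source, target, unit, inverse, and composition, Definition \ref{defi:Lie groupoid homomorphism}) produces a canonical left $\mathcal{G}$-equivariant structure on $\psi_0^* E^{\bullet}$. Next, since $\psi$ is holomorphic, the differential $d\psi_0$ restricts to a bundle map $\overline{T^*H_0}\to \overline{T^*G_0}$ that is $\mathcal{G}$-equivariant (via $\psi_1$), so we obtain a graded algebra map on $\wedge^{\bullet}\overline{T^*H_0}\times_{\mathcal{H}} E^{\bullet}$ with values in $\wedge^{\bullet}\overline{T^*G_0}\times_{\mathcal{G}} \psi_0^* E^{\bullet}$. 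I then define $\psi^*_b A^{E^{\bullet}\prime\prime}$ component by component using the decomposition (\ref{eq: decomposition of anti super conn}): for each $i\neq 1$, the tensorial operator $v_i$ pulls back to a $C^{\infty}(G_0)$-linear, $\mathcal{G}$-equivariant operator $\psi_0^* v_i$ of the same bidegree; and the $\dbar$-connection $\nabla^{E^{\bullet}\prime\prime}$ pulls back to a $\mathcal{G}$-equivariant $\dbar$-connection $\psi_0^*\nabla^{E^{\bullet}\prime\prime}$ by the usual pullback formula for connections.

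The flatness condition $A^{E^{\bullet}\prime\prime}\circ A^{E^{\bullet}\prime\prime}=0$ is preserved: since pullback of bundle maps along a smooth map is a covariant, algebra-compatible functor, and since the pullback of the $\dbar$-connection satisfies $(\psi_0^*\nabla^{E^{\bullet}\prime\prime})(\psi_0^*s)=\psi_0^*(\nabla^{E^{\bullet}\prime\prime}s)$ after the natural identification, each identity obtained by expanding $(A^{E^{\bullet}\prime\prime})^2=0$ into components $v_i$, $\nabla^{E^{\bullet}\prime\prime}$ pulls back to the corresponding identity for $\psi^*_b A^{E^{\bullet}\prime\prime}$. Thus $\psi^*_b(E^{\bullet}, A^{E^{\bullet}\prime\prime})\in B(\mathcal{G})$. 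On morphisms, for $\phi\colon (E^{\bullet}, A^{E^{\bullet}\prime\prime})\to (F^{\bullet}, A^{F^{\bullet}\prime\prime})$ of degree $k$ with components $\phi_i\colon E^{\bullet}\to \wedge^{i}\overline{T^*H_0}\times_{\mathcal{H}} F^{\bullet+k-i}$, I define $\psi^*_b\phi$ to have components $\psi_0^*\phi_i$; these are again $\mathcal{G}$-equivariant and $C^{\infty}(G_0)$-linear.

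Finally, I verify that $\psi^*_b$ respects composition and differentials. Composition compatibility follows because pullback is compatible with the wedge product of $\overline{T^*}$-valued forms and with composition of $C^{\infty}$-linear operators. For the differential, the identity
\begin{equation*}
d(\psi^*_b\phi)=(\psi^*_b A^{F^{\bullet}\prime\prime})(\psi^*_b\phi)-(-1)^k(\psi^*_b\phi)(\psi^*_b A^{E^{\bullet}\prime\prime})=\psi^*_b(d\phi)
\end{equation*}
holds componentwise by the same reasoning used for flatness, since each summand defining $(d\phi)_l$ pulls back to the corresponding summand defining $(d\psi^*_b\phi)_l$. The main technical point, and the step requiring the most care, is checking that the identifications $\psi_0^*(\wedge^{\bullet}\overline{T^*H_0}\times_{\mathcal{H}} E^{\bullet})\cong \wedge^{\bullet}\overline{T^*G_0}\times_{\mathcal{G}} \psi_0^*E^{\bullet}$ are natural and $\mathcal{G}$-equivariant, and that $\psi^*_b\nabla^{E^{\bullet}\prime\prime}$ is truly a $\dbar$-connection on $\psi_0^*E^{\bullet}$ with respect to the pulled back $\mathcal{G}$-equivariant structure; once these are established, the remainder of the verification is a routine unwinding of the definitions.
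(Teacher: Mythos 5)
Your proposal is correct and matches the paper's approach: the paper's proof is the one-line statement that one simply takes pullback bundles, pullback connections, and pullback bundle maps, which is exactly what you carry out in detail. The verifications you flag (naturality of the identification $\psi_0^*(\wedge^{\bullet}\overline{T^*H_0}\times_{\mathcal{H}} E^{\bullet})\cong \wedge^{\bullet}\overline{T^*G_0}\times_{\mathcal{G}}\psi_0^*E^{\bullet}$, holomorphy of $\psi_0$ preserving the $(0,1)$ part, and equivariance via $\psi_1$) are indeed the only points requiring care, and your treatment of them is sound.
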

\begin{proof}
We simply have pullback bundles, pullback connections, and pullback bundle maps.
\end{proof}

\begin{rmk}
Here we use the notation $\psi^*_b$ to emphasize that we pull back objects of the dg-category $B(\mathcal{H})$.
\end{rmk}

The following result tells us that the dg-category is independent of the representative groupoid.
\begin{prop}\label{prop: Morita equivalent and quasi-equivalent}
If $\mathcal{G}$ and $\mathcal{H}$ are Morita equivalent, then the dg-categories of flat antiholomorphic superconnections on $\mathcal{G}$ and $\mathcal{H}$ are quasi-equivalent.
\end{prop}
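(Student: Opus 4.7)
The plan is to reduce the assertion to the case of a single equivalence of Lie groupoids $\phi\colon \mathcal{K} \to \mathcal{G}$ in the sense of Definition \ref{defi: groupoid equivalence}, and then show that the pullback dg-functor $\phi_b^*\colon B(\mathcal{G}) \to B(\mathcal{K})$ from Proposition \ref{prop: pull back functor} is a quasi-equivalence. Since Morita equivalence is generated by such equivalences via the zigzag $\mathcal{G} \leftarrow \mathcal{K} \to \mathcal{H}$ of Definition \ref{defi: groupoid Morita equivalence}, composing the resulting quasi-equivalences yields the desired statement.

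For the first requirement of quasi-equivalence, I would argue that $\phi_b^*$ is in fact an isomorphism on each Hom complex. A morphism $\phi\colon (E^\bullet, A^{E^\bullet\prime\prime}) \to (F^\bullet, A^{F^\bullet\prime\prime})$ consists of a finite sum of $\mathcal{G}$-equivariant, $C^\infty(G_0)$-linear bundle maps $\phi_i\colon E^\bullet \to \wedge^i \overline{T^*G_0} \times_\mathcal{G} F^{\bullet+k-i}$, with the differential built from pre/post-composition with $A^{E^\bullet\prime\prime}$ and $A^{F^\bullet\prime\prime}$. The two conditions in Definition \ref{defi: groupoid equivalence} (the surjective submersion $t \circ \pi_1$ and the cartesian square on arrows) imply an equivalence between the category of $\mathcal{G}$-equivariant smooth vector bundles on $G_0$ and the category of $\mathcal{K}$-equivariant smooth vector bundles on $K_0$, exactly paralleling Proposition \ref{prop: coherent and Morita equivalence} with smooth in place of holomorphic modules. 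This equivalence carries through to equivariant $C^\infty$-linear bundle maps with values in $\wedge^i\overline{T^*G_0}$-twisted bundles, and intertwines composition with superconnection data, so the pullback is a bijection on morphisms that commutes with the dg-differential.

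For essential surjectivity on $\underline{B}(\cdot)$, I would construct a descent procedure. Given $(F^\bullet, A^{F^\bullet\prime\prime}) \in B(\mathcal{K})$, the $\mathcal{K}$-equivariant $\mathbb{Z}$-graded smooth vector bundle $F^\bullet$ descends, by the bundle-level equivalence above, to a $\mathcal{G}$-equivariant smooth bundle $E^\bullet$ on $G_0$. Each $C^\infty(K_0)$-linear component $v_i$ from the decomposition \eqref{eq: decomposition of anti super conn} descends to a $C^\infty(G_0)$-linear, $\mathcal{G}$-equivariant map on $E^\bullet$; and the equivariant $\dbar$-connection $\nabla^{F^\bullet\prime\prime}$ descends because the equivalence $\phi$ is holomorphic, so pullback intertwines the respective Dolbeault operators. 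The resulting operator $A^{E^\bullet\prime\prime}$ inherits flatness from $(A^{F^\bullet\prime\prime})^2 = 0$ via the isomorphism of Hom complexes established in the previous step, applied to the identity morphism. By construction, $\phi_b^*(E^\bullet, A^{E^\bullet\prime\prime})$ is canonically isomorphic to $(F^\bullet, A^{F^\bullet\prime\prime})$ in $B(\mathcal{K})$.

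The main obstacle will be making the descent of the superconnection components rigorous --- in particular, verifying that each $v_i$ descends compatibly with the $\mathbb{Z}$-grading and with the $\mathcal{G}$-action in all internal degrees, and that the full sum $A^{E^\bullet\prime\prime} = v_0 + \nabla^{E^\bullet\prime\prime} + v_2 + \cdots$ squares to zero as an operator on $\wedge^\bullet \overline{T^*G_0} \times_\mathcal{G} E^\bullet$. Once this compatibility is established, combining the bijection on Hom complexes with the descent inverse to $\phi_b^*$ shows that $\phi_b^*$ is a dg-equivalence (hence a quasi-equivalence), and composing along the zigzag completes the proof.
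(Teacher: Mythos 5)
Your proposal is correct and is in substance the same argument as the paper's: both rest on the equivalence of categories of equivariant $C^\infty$-vector bundles induced by a Morita equivalence, the canonical identification of the antiholomorphic form bundles coming from the fact that all groupoids involved are \'etale and the structure maps are holomorphic, and the transport of the superconnection components $v_i$ and $\nabla^{E^\bullet\prime\prime}$ along that identification. The only (cosmetic) difference is that you work with the zigzag of strict equivalences from Definition \ref{defi: groupoid Morita equivalence} and verify that each pullback $\phi_b^*$ is fully faithful and essentially surjective by descent, whereas the paper invokes Proposition \ref{prop: equiv defi of Morita equiv} to pass to a principal bibundle $P$ and writes down the functor $P_*$ together with an explicit inverse via Lemmas \ref{lemma: bibundle fiber product} and \ref{lemma: isom of forms}.
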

\begin{proof}[Proof of Proposition \ref{prop: Morita equivalent and quasi-equivalent}]
The proof is essentially the same as that of \cite[Theorem 7.2, Corollary 7.3]{block2010mukai}. We give a proof here to show the geometric picture.

By Proposition \ref{prop: equiv defi of Morita equiv}, there exists a $\mathcal{G}$-$\mathcal{H}$ principal bibundle $P$. We will construct a dg-functor $P_*\colon B(\mathcal{H})\to B(\mathcal{G})$ using $P$. First, for a left $\mathcal{H}$-vector bundle $E$ on $\mathcal{H}$, we construct the fiber product $P\times_{\mathcal{H}} E$ as in Definition \ref{defi: fiber product}. From the definition of the principal bibundle $P$, it is easy to show that $P\times_{\mathcal{H}} E$ is a vector bundle over $\mathcal{G}$.

We can also consider $P$ as a left $\mathcal{H}$-space with $h\cdot p:=ph^{-1}$. Hence we can define the fiber product $P\times_{\mathcal{H}} P$. Similarly, we can consider $P$ as a right $\mathcal{G}$-space and define $P\times_{\mathcal{G}} P$.

\begin{lemma}\label{lemma: bibundle fiber product}
There are canonical isomorphisms $\mathcal{H}\simeq P\times_{\mathcal{G}}P$ and $\mathcal{G}\cong P\times_{\mathcal{H}} P$, which are compatible with the actions.
\end{lemma}
\begin{proof}[Proof of Lemma \ref{lemma: bibundle fiber product}] See \cite[Proposition 4.6.2]{del2013lie}.
\end{proof}

\begin{lemma}\label{lemma: isom of forms}
For complex orbifold groupoids $\mathcal{G}$ and $\mathcal{H}$ and a $\mathcal{G}$-$\mathcal{H}$ principal bibundle $P$, we have a canonical isomorphism of $\mathcal{G}$-$\mathcal{H}$ bimodules,
\begin{equation}\label{eq: isom of forms}
\wedge^{i}\overline{T^{*}G_0}  \times_{\mathcal{G}} P\cong P\times_{\mathcal{H}} \wedge^{i}\overline{T^{*}H_0}. 
\end{equation}
\begin{proof}[Proof of Lemma \ref{lemma: isom of forms}]
Since $\mathcal{G}$ and $\mathcal{H}$ are \'{e}tale, both sides are canonically isomorphic to $ \wedge^{i}\overline{T^{*}P} $.
\end{proof}
\end{lemma}

Now for $(E^{\bullet}, A^{E^{\bullet}\prime\prime})\in B(\mathcal{H})$, we define $P_*(E^{\bullet}, A^{E^{\bullet}\prime\prime})$ as follows: the graded bundles are given by $P\times_{\mathcal{H}} E^{\bullet}$. As for the superconnection, we know that
$$
v_i\colon P\times_{\mathcal{H}} E^{\bullet}\to P\times_{\mathcal{H}} \wedge^{i}\overline{T^{*}H_0}  \times_{\mathcal{H}} E^{\bullet+1-i}.
$$
Using the isomorphism \eqref{eq: isom of forms}, we can consider $v_i$ as 
\begin{equation}\label{eq: F on superconnections}
P_*(v_i)\colon P\times_{\mathcal{H}} E^{\bullet}\to \wedge^{i}\overline{T^{*}G_0}  \times_{\mathcal{G}} P \times_{\mathcal{H}} E^{\bullet+1-i}.
\end{equation}
The $P_*(\nabla^{E^{\bullet}\prime\prime})$ is defined in a similar way.

Lemma \ref{lemma: bibundle fiber product} ensures that the map $E\mapsto P\times_{\mathcal{H}} E$ gives an equivalence of categories $\Vect(\mathcal{H})\to \Vect(\mathcal{G})$ with inverse $F\mapsto  P\times_{\mathcal{G}} F$, where we consider $P$ as a right $\mathcal{G}$-space in the second functor. Hence we define $P_*^{-1}$ on bundles as 
$$
F\mapsto  P\times_{\mathcal{G}} F
$$
and define $P_*^{-1}$ on superconnections in the same way as in \eqref{eq: F on superconnections}. By Lemma \ref{lemma: bibundle fiber product} and Lemma \ref{lemma: isom of forms}, it is clear that $P_*^{-1}$ gives an inverse of $P_*$. We finish the proof of Proposition \ref{prop: pull back functor}.
\end{proof}

\begin{coro}\label{coro: functoriality of superconnections}
Let $f=(Z,\rho, \sigma)\colon\mathcal{G}\to \mathcal{H}$ be a generalized morphism between complex orbifold groupoids. Then $f$ induces a dg-functor 
\begin{equation}
f^*_b\colon B(\mathcal{H})\to B(\mathcal{G}).
\end{equation}
\end{coro}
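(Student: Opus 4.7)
The plan is to reduce to Proposition~\ref{prop: pull back functor} and Proposition~\ref{prop: Morita equivalent and quasi-equivalent} by factoring the generalized morphism $f = (Z, \rho, \sigma)$ through its associated groupoid $\mathcal{Z}$ from Remark~\ref{rmk: groupoid from generalized morphism}. Recall that $\mathcal{Z}$ comes equipped with \emph{ordinary} holomorphic morphisms $\phi_\rho \colon \mathcal{Z} \to \mathcal{G}$ and $\phi_\sigma \colon \mathcal{Z} \to \mathcal{H}$, where $\phi_\rho$ is a Morita equivalence morphism.

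First I would apply Proposition~\ref{prop: pull back functor} to the honest morphism $\phi_\sigma$ to obtain a dg-functor
$$
(\phi_\sigma)^*_b \colon B(\mathcal{H}) \to B(\mathcal{Z}).
$$
Next, since $\phi_\rho \colon \mathcal{Z} \to \mathcal{G}$ is a Morita equivalence, the construction in the proof of Proposition~\ref{prop: Morita equivalent and quasi-equivalent} (applied with $P = Z$, viewed as a $\mathcal{G}$-$\mathcal{Z}$ bibundle) produces an explicit quasi-inverse dg-functor
$$
P_*^{-1} \colon B(\mathcal{Z}) \to B(\mathcal{G}),
$$
defined on objects by the fiber product $E^{\bullet} \mapsto P \times_{\mathcal{Z}} E^{\bullet}$ and on superconnections by the transport described in Lemma~\ref{lemma: isom of forms}. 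We then set
$$
f^*_b \;:=\; P_*^{-1} \circ (\phi_\sigma)^*_b \colon B(\mathcal{H}) \to B(\mathcal{G}).
$$

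Unraveling this composition yields a concrete and essentially canonical recipe: given $(E^\bullet, A^{E^\bullet \prime\prime}) \in B(\mathcal{H})$, pull the underlying bundle and superconnection back along $\sigma \colon Z \to H_0$, producing a $\mathcal{G}$-$\mathcal{H}$-equivariant object over $Z$; then descend along $\rho \colon Z \to G_0$ using that the $\mathcal{H}$-action on $Z$ is free and proper with $Z/\mathcal{H} \simeq G_0$, and that the commuting $\mathcal{G}$-action on $Z$ endows the descent with the required left $\mathcal{G}$-equivariance. The fact that $f^*_b$ is a dg-functor (preserving differentials, units, and composition of morphisms) follows because both factors preserve the dg-structure: for $(\phi_\sigma)^*_b$ this is Proposition~\ref{prop: pull back functor}, and for $P_*^{-1}$ this is built into the bibundle construction of Proposition~\ref{prop: Morita equivalent and quasi-equivalent}.

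The main technical step will be to check that the assignment $f \mapsto f^*_b$ is well-defined up to canonical natural isomorphism: two groupoid presentations $\mathcal{Z}$ and $\mathcal{Z}'$ of the same generalized morphism are themselves linked by a Morita equivalence compatible with $\phi_\rho, \phi_\sigma$, so the two resulting compositions agree via the functoriality in Proposition~\ref{prop: Morita equivalent and quasi-equivalent}. A secondary bookkeeping point is verifying compatibility with composition of generalized morphisms: given another $g \colon \mathcal{H} \to \mathcal{K}$, one forms the associated groupoid of the composite from $(Z \times_{H_0} W)/\mathcal{H}$ as in Section~\ref{subsect: generalized morphisms} and traces through the definitions, invoking Lemma~\ref{lemma: isom of forms} to identify the relevant sheaves of antiholomorphic forms on the intermediate spaces. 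I expect this bookkeeping, rather than any single conceptual difficulty, to be the main obstacle.
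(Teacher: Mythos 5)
Your proposal is correct and follows essentially the same route as the paper: factor $f$ through the associated groupoid $\mathcal{Z}$ of Remark \ref{rmk: groupoid from generalized morphism}, pull back along the honest morphism $\phi_\sigma$ via Proposition \ref{prop: pull back functor}, and invert $(\phi_\rho)^*_b$ using the Morita-equivalence machinery of Proposition \ref{prop: Morita equivalent and quasi-equivalent}, setting $f^*_b = ((\phi_\rho)^*_b)^{-1}\circ(\phi_\sigma)^*_b$. The extra remarks on independence of presentation and compatibility with composition go beyond what the paper records but are consistent with it.
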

\begin{proof}
As shown in Remark \ref{rmk: groupoid from generalized morphism}, from $(Z,\rho, \sigma)$ we can define a groupoid $\mathcal{Z}$ together with morphisms $\phi_{\rho}\colon \mathcal{Z}\to \mathcal{G}$ and $\phi_{\sigma}\colon \mathcal{Z}\to \mathcal{H}$ where $\phi_{\rho}$ is a Morita equivalence. Hence by Proposition \ref{prop: Morita equivalent and quasi-equivalent}, $(\phi_{\rho})^*_b$ is a dg-equivalence. We define $f^*_b$ as $f^*_b:=((\phi_{\rho})^*_b)^{-1}\circ (\phi_{\sigma})^*_b$.
\end{proof}

We denote by 
$$B(X)$$
the dg-category of flat antiholomorphic superconnections on the complex orbifold $X$.

We also need the following local version of antiholomorphic flat superconnections.

\begin{lemma}\label{lemma: local antiholomorphic flat superconnecction}
When we restrict $\mathcal{G}$ to a small open set $U$, $\mathcal{G}|_U$ becomes the transformation groupoid associated with a finite group $G$ acting on a complex manifold $U$. Then an antiholomorphic flat superconnection restricts to a $G$-equivariant antiholomorphic flat superconnection on $U$.  
\end{lemma}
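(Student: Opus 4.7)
The plan is to reduce the statement to a direct application of the pullback functor from Proposition \ref{prop: pull back functor}, combined with the explicit local description of orbifold groupoids coming from the linearization theorem. The main point is that there is nothing new to prove analytically; what is required is an unraveling of the definitions on a suitable chart.

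First, given a point $x\in G_0$, I would invoke Proposition \ref{prop: existence of nbh} to produce an open neighborhood $U$ of $x$ in $G_0$ so that $\mathcal{G}|_U$ is (isomorphic as a complex Lie groupoid to) the transformation groupoid $G\ltimes U$ for a finite group $G$ acting holomorphically on $U$. The canonical open inclusion
\begin{equation*}
\iota\colon \mathcal{G}|_U \hookrightarrow \mathcal{G}
\end{equation*}
is a holomorphic groupoid morphism in the sense of Definition \ref{defi:Lie groupoid homomorphism}, since on objects it is the open embedding $U\hookrightarrow G_0$ and on arrows it is the corresponding open embedding of arrow manifolds, both of which are compatible with all structure maps.

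Next, I would apply the pullback dg-functor $\iota_b^{*}\colon B(\mathcal{G})\to B(\mathcal{G}|_U)$ furnished by Proposition \ref{prop: pull back functor} to an antiholomorphic flat superconnection $(E^{\bullet}, A^{E^{\bullet}\prime\prime})\in B(\mathcal{G})$. Concretely, this amounts to restricting the underlying graded $C^{\infty}$-vector bundle $E^{\bullet}$ from $G_0$ to $U$, restricting the components $v_i$ and $\nabla^{E^{\bullet}\prime\prime}$ in the decomposition \eqref{eq: decomposition of anti super conn}, and observing that $\overline{T^{*}G_0}|_U=\overline{T^{*}U}$ since $\iota$ is an open embedding. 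All the defining properties (boundedness, finite rank, $\mathbb{Z}$-grading, $C^{\infty}(U)$-linearity of the $v_i$, and flatness $A^{E^{\bullet}\prime\prime}\circ A^{E^{\bullet}\prime\prime}=0$) are preserved by restriction to an open subset, so the restriction is again an antiholomorphic flat superconnection on the groupoid $\mathcal{G}|_U$.

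Finally, I would translate the $(G\ltimes U)$-equivariance coming out of the definition of $B(\mathcal{G}|_U)$ into ordinary $G$-equivariance on the complex manifold $U$. A left $(G\ltimes U)$-equivariant $C^{\infty}$-vector bundle is by definition a $C^{\infty}$-vector bundle on $U$ equipped with a fiberwise linear action of the arrow space $G\times U$ covering the action on $U$; unwinding Definition \ref{defi: G-space and vector bundle} shows that this is the same datum as a $G$-equivariant $C^{\infty}$-vector bundle on $U$ in the classical sense. The same unraveling applied to the components $v_i$ and $\nabla^{E^{\bullet}\prime\prime}$ identifies them with $G$-equivariant $C^{\infty}(U)$-linear bundle maps and a $G$-equivariant $\dbar$-connection on $U$, respectively, thereby producing the desired $G$-equivariant antiholomorphic flat superconnection on $U$. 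The only step that requires any care is the last identification; once one is comfortable that left actions of an action groupoid $G\ltimes U$ on a bundle over $U$ coincide with equivariant actions in the classical sense, the lemma follows.
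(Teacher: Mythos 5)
Your proposal is correct and follows essentially the same route as the paper: the paper's proof simply observes that $U\rtimes G$ is an open subgroupoid of $\mathcal{G}$ and that restriction of an equivariant antiholomorphic flat superconnection to an open subgroupoid preserves all the defining structure. Your version spells out the same argument in more detail (via the pullback functor of Proposition \ref{prop: pull back functor} along the open inclusion and the identification of $(G\ltimes U)$-equivariance with classical $G$-equivariance), but there is no substantive difference in approach.
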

\begin{proof}
Observe that $U\rtimes G$ is an open subgroupoid of $\mathcal{G}$. The Lemma follows from the property that the restriction of an equivariant antiholomorphic flat superconnection to an open subgroupoid is again an equivariant antiholomorphic flat superconnection. 
\end{proof}

We define mapping cones and shifts in $B(X)$. For a degree zero closed map $\phi\colon (E^{\bullet}, A^{E^{\bullet}\prime\prime})\to (F^{\bullet}, A^{F^{\bullet}\prime\prime})$, its mapping cone $(C^{\bullet}, A^{C^{\bullet}\prime\prime}_{\phi})$ is defined by
\begin{equation}
C^{n}=E^{n+1}\oplus F^n,
\end{equation}
and
\begin{equation}
A^{C^{\bullet}\prime\prime}=\begin{bmatrix}A^{E^{\bullet}\prime\prime}&0\\ \phi(-1)^{\deg(\cdot)}& A^{F^{\bullet}\prime\prime}\end{bmatrix}.
\end{equation}
The shift $(E^{\bullet}, A^{E^{\bullet}\prime\prime})[1]$ is defined by
\begin{equation}
E[1]^{n}=E^{n+1},
\end{equation}
and
$$
A^{E^{\bullet}\prime\prime}[1]=A^{E^{\bullet}\prime\prime}(-1)^{\deg(\cdot)}.
$$
It is clear that they give $B(X)$ a pre-triangulated structure, hence its homotopy category $$\underline{B}(X)$$ is a triangulated category.

\subsection{Tensor product}
Let $\mathcal{G}$ be a complex orbifold groupoid and $\mathcal{E}=(E^{\bullet}, A^{E^{\bullet}\prime\prime})$ and $\mathcal{F}=(F^{\bullet}, A^{F^{\bullet}\prime\prime})$ be two objects in $B(\mathcal{G})$. 

\begin{defi}\label{defi: tensor product of antiholo}
We define the tensor product of $\mathcal{E}$ and $\mathcal{F}$, denoted by $\mathcal{E}\widehat{\otimes}_b \mathcal{F}$, to be $$(E^{\bullet}\widehat{\otimes} F^{\bullet}, A^{E^{\bullet}\prime\prime}\widehat{\otimes}_b A^{F^{\bullet}\prime\prime}),$$ where $E^{\bullet}\widehat{\otimes} F^{\bullet}$ is the graded tensor product of graded vector bundles over $G_0$, and $A^{E^{\bullet}\prime\prime}\widehat{\otimes}_b A^{F^{\bullet}\prime\prime}$ is the graded tensor product of superconnections.

In particular, the $\mathcal{G}$-action on $E^{\bullet}\widehat{\otimes} F^{\bullet}$ and $A^{E^{\bullet}\prime\prime}\widehat{\otimes}_b A^{F^{\bullet}\prime\prime}$ is the diagonal action.
\end{defi}

\section{An equivalence of categories}\label{sec:equiv_cat}
The goal of this section is to explain the equivalence between coherent sheaves and antiholomorphic superconnections on a complex orbifold. This is an extension of the case for complex manifolds treated in \cite{bismut2023coherent}.

\subsection{The functor}\label{subsection: the functor}

For a flat antiholomorphic superconnection $(E^{\bullet}, A^{E^{\bullet}\prime\prime})$, let $\mathcal{E}^n$ be the sheafification of 
$$
\bigoplus_{p+q=n} \wedge^p\overline{T^*G_0}\times_G E^q.
$$ 
Note that $\mathcal{E}^n$ is a $\mathcal{G}$-sheaf of $C^{\infty}$-modules hence a sheaf of $\mathcal{O}_{\mathcal{G}}$-modules.

It is clear that $A^{E^{\bullet}\prime\prime}$ gives a map of $\mathcal{O}_{\mathcal{G}}$-modules $\mathcal{E}^n\to \mathcal{E}^{n+1}$.

\begin{prop}\label{prop: sheafification has coherent cohomologies}
The cochain complex of sheaves of $\mathcal{O}_{\mathcal{G}}$-modules $(\mathcal{E}^{\bullet}, A^{E^{\bullet}\prime\prime})$ has coherent cohomologies.
\end{prop}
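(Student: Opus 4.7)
The plan is to reduce the statement to the non-equivariant case on a complex manifold, invoke the analogous result of \cite{bismut2023coherent}, and then upgrade back to the orbifold setting via the equivariance criterion of Proposition \ref{prop: weak condition on coherent}.

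Since coherence is a local property, it suffices to verify the conclusion in a neighborhood of every point $x\in G_0$. By Proposition \ref{prop: existence of nbh}, we may pick a chart $(U,G)$ where $\mathcal{G}|_U = G\ltimes U$ with $G$ a finite group acting holomorphically on the complex manifold $U$. By Lemma \ref{lemma: local antiholomorphic flat superconnecction}, the restriction of $(E^{\bullet},A^{E^{\bullet}\prime\prime})$ to this chart is a $G$-equivariant flat antiholomorphic superconnection on $U$. Forgetting the $G$-action, it is an ordinary flat antiholomorphic superconnection on $U$ in the sense of \cite{bismut2023coherent}, to which the corresponding result on complex manifolds applies: each cohomology sheaf $\mathcal{H}^n(\mathcal{E}^{\bullet}|_U, A^{E^{\bullet}\prime\prime}|_U)$ is a coherent $\mathcal{O}_U$-module.

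The manifold-case argument, which I am importing as a black box, proceeds by filtering $\mathcal{E}^{\bullet}|_U$ by antiholomorphic form degree. The $E_0$-differential of the resulting spectral sequence is the bundle map $v_0$, and the quadratic flatness relations extracted from $(A^{E^{\bullet}\prime\prime})^2=0$ force $\nabla^{E^{\bullet}\prime\prime}$ to descend to a flat $\dbar$-operator on $\ker v_0/\mathrm{Im}\, v_0$. A parametric $\dbar$-Poincar\'e lemma, combined with Oka's coherence theorem (our Proposition \ref{prop: Oka coherence}), identifies the abutment with coherent holomorphic data. The technical heart — realizing the superconnection complex as being locally quasi-isomorphic to a bounded complex of coherent holomorphic sheaves — is the principal obstacle, and this is precisely what is taken from \cite{bismut2023coherent}.

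Finally, the sheaves $\mathcal{H}^n$ inherit $G$-actions from the $G$-equivariance of $(E^{\bullet},A^{E^{\bullet}\prime\prime})$, making them $G$-equivariant $\mathcal{O}_U$-modules. As they are coherent in the non-equivariant sense, condition $(1')$ of Proposition \ref{prop: weak condition on coherent} is immediate. Condition $(2)$ also holds, because the kernel of any $G$-equivariant map $\mathcal{O}_U^n\to \mathcal{H}^n|_U$ is the kernel of a morphism between coherent $\mathcal{O}_U$-modules, hence coherent and thus of finite type. Proposition \ref{prop: weak condition on coherent} then yields that $\mathcal{H}^n$ is coherent on $\mathcal{G}|_U$. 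Since $x$ was arbitrary, the cohomology sheaves of $(\mathcal{E}^{\bullet},A^{E^{\bullet}\prime\prime})$ are coherent on $\mathcal{G}$.
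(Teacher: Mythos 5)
Your proposal is correct and follows essentially the same route as the paper: localize to a chart $(U,G)$, forget the group action and invoke the complex-manifold result of \cite{bismut2023coherent} (the paper cites Theorem 5.3 there, giving a local, not necessarily equivariant, quasi-isomorphism to a bounded complex of locally free sheaves), and then upgrade to equivariant coherence via Proposition \ref{prop: weak condition on coherent}. The only caveat is that your verification of Condition (2) implicitly needs the same averaging argument as Condition $(1')$ to get an \emph{equivariant} finite-type presentation of the kernel, but that is exactly the content of Proposition \ref{prop: weak condition on coherent} and matches the paper's (even terser) treatment.
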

\begin{proof}
By \cite[Theorem 5.3]{bismut2023coherent}, for each $x$, there exists a local chart $(U,H)$ such that $(\mathcal{E}^{\bullet}|_U, A^{E^{\bullet}\prime\prime})$ is (not necessarily $H$-equivariantly) quasi-isomorphic to a bounded complex of locally free sheaves. By Proposition \ref{prop: weak condition on coherent}, we can make this quasi-isomorphism $H$-equivariant.
\end{proof}

\begin{defi}\label{defi: the functor}
The assignment $(E^{\bullet}, A^{E^{\bullet}\prime\prime})\mapsto (\mathcal{E}^{\bullet}, A^{E^{\bullet}\prime\prime})$ defines a functor
\begin{equation}
F_X\colon \underline{B}(X)\to D^b_{\coh}(X)
\end{equation}
where $\underline{B}(X)$ is the homotopy category of the dg-category of flat antiholomorphic superconnections on $X$.
\end{defi}

\subsection{Equivalence}
For an $\mathcal{F}\in D^b_{\coh}(X)$ considered as a complex of $\mathcal{G}$-sheaves on a groupoid representative $\mathcal{G}=(G_0, G_1)$. Let $\mathcal{F}^{\infty}$ be the tensor product $\mathcal{O}_{G_0}^{\infty}\otimes_{\mathcal{O}_{\mathcal{G}}} \mathcal{F}$ and put
\begin{equation}
\overline{\mathcal{F}}^{\infty}=\mathcal{O}^{\infty}_{G_0}(\wedge \overline{T^*G_0})\otimes_{\mathcal{O}_{\mathcal{G}}} \mathcal{F}.
\end{equation}
We equip $\overline{\mathcal{F}}^{\infty}$ with the Dolbeault differential $\dbar$ and it is clear that the canonical morphism $\mathcal{F}\to \overline{\mathcal{F}}^{\infty}$ is a quasi-isomorphism of complexes of $\mathcal{O}_{\mathcal{G}}$-modules.

\begin{thm}\label{thm: ess surj}
The functor $F_X$ is essentially surjective when $X$ is compact. More precisely, for any object $\mathcal{F}\in D^b_{\coh}(X)$ there exists an object $\mathcal{E}=(E^{\bullet}, A^{E^{\bullet}\prime\prime})\in B(X)$ together with a $\mathcal{G}$-morphism of $\mathcal{O}^{\infty}_{G_0}(\wedge \overline{T^*G_0})$-modules $\phi\colon F_X(\mathcal{E})\to \overline{\mathcal{F}}^{\infty}$ which
is a quasi-isomorphism of complexes of $\mathcal{O}_{G_0}$-modules. Hence $F_X(\mathcal{E})$ and $\mathcal{F}$ are isomorphic in $D^b_{\coh}(X)$.
\end{thm}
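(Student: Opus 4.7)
The plan is to follow the strategy developed in the manifold case in \cite{bismut2023coherent} (going back to \cite{block2010mukai}), adapted to the orbifold setting by means of the groupoid-equivariant tools established earlier in the paper.

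Fix a groupoid representation $\mathcal{G}=(G_0,G_1)$ of $X$. First, I would apply Proposition \ref{prop: global resolution} to $\mathcal{F}$ to obtain a bounded complex $V^{\bullet}$ of finite-rank, $C^{\infty}$, left $\mathcal{G}$-equivariant vector bundles on $G_0$ together with a $\mathcal{G}$-equivariant quasi-isomorphism $\mathcal{O}^{\infty}_{\mathcal{G}}V^{\bullet}\xrightarrow{\sim}\mathcal{F}^{\bullet,\infty}$ of complexes of $\mathcal{O}^{\infty}_{\mathcal{G}}$-modules. Tensoring with the Dolbeault resolution, I obtain a $\mathcal{G}$-equivariant quasi-isomorphism
\begin{equation*}
\mathcal{O}^{\infty}_{G_0}(\wedge\overline{T^{*}G_0})\otimes_{\mathcal{O}^{\infty}_{\mathcal{G}}}\mathcal{O}^{\infty}_{\mathcal{G}}V^{\bullet}\longrightarrow \overline{\mathcal{F}}^{\infty},
\end{equation*}
so it suffices to produce an antiholomorphic flat superconnection $A^{V^{\bullet}\prime\prime}$ on $V^{\bullet}$ whose differential on sheafified sections is chain-homotopy equivalent (in a $\mathcal{G}$-equivariant way) to the one on the left-hand side, and to arrange the comparison map so that it remains a quasi-isomorphism after the transition to $A^{V^{\bullet}\prime\prime}$.

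The core step is the construction of $A^{V^{\bullet}\prime\prime}=v_0+\nabla^{V^{\bullet}\prime\prime}+v_2+\cdots$ by an obstruction-theoretic / homological perturbation argument carried out equivariantly on $\mathcal{G}$. I would start with $v_0$ defined to be the differential of $V^{\bullet}$ coming from the resolution, and pick any $\mathcal{G}$-equivariant $\dbar$-connection $\nabla^{V^{\bullet}\prime\prime}$ on each $V^{p}$ (this exists by averaging using a partition of unity subordinate to the finite $\mathcal{G}$-invariant covers guaranteed by Proposition \ref{prop: local chart compact}, exploiting properness of $\mathcal{G}$). The failure of $(v_0+\nabla^{V^{\bullet}\prime\prime})^{2}=0$ is a $C^{\infty}_{\mathcal{G}}$-linear, $\mathcal{G}$-equivariant operator in $\wedge^{2}\overline{T^{*}G_0}\otimes\text{End}(V^{\bullet})$ whose image is a cocycle for the induced differential and which, by standard local-to-global / Dolbeault arguments, is a coboundary locally. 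A higher term $v_{2}$ can then be chosen to kill this obstruction; iterating, one inductively defines $v_{i}$, $i\geq 2$, whose equivariance is preserved at each step by the same averaging trick over the finite isotropies of $\mathcal{G}$. Because $V^{\bullet}$ is bounded and finite rank, this process terminates (or becomes compatible with an obvious nilpotency bound), giving a flat antiholomorphic superconnection $(V^{\bullet},A^{V^{\bullet}\prime\prime})\in B(\mathcal{G})$.

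Finally, I would upgrade the quasi-isomorphism $\mathcal{O}^{\infty}_{\mathcal{G}}V^{\bullet}\to\mathcal{F}^{\bullet,\infty}$ to a morphism $\phi\colon F_X(V^{\bullet},A^{V^{\bullet}\prime\prime})\to \overline{\mathcal{F}}^{\infty}$ of $\mathcal{O}^{\infty}_{G_0}(\wedge\overline{T^{*}G_0})$-modules, by the same inductive perturbation procedure applied to the comparison map (adjusting $\phi=\phi_{0}+\phi_{1}+\cdots$ degree by degree so that $d\phi=0$). Invoking Morita invariance of $B(X)$ (Proposition \ref{prop: Morita equivalent and quasi-equivalent}) and of $D^{b}_{\coh}(X)$ (Proposition \ref{prop: coherent and Morita equivalence}) shows the construction is independent of the groupoid presentation. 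The main obstacle is the inductive construction of the $v_i$ together with the equivariant comparison map: one must check at each stage that the obstruction cocycle is a coboundary \emph{after} averaging with the groupoid action, i.e.\ that equivariant Dolbeault cohomology in the relevant bidegree vanishes locally, which is precisely where properness and \'etale-ness of $\mathcal{G}$ (ensuring finite isotropy and existence of equivariant partitions of unity) are essential.
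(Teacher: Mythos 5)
Your first step agrees with the paper: both proofs begin by invoking Proposition \ref{prop: global resolution} to produce a bounded complex $E^{\bullet}$ of finite-rank $C^{\infty}$ $\mathcal{G}$-bundles with a quasi-isomorphism $\mathcal{O}^{\infty}_{\mathcal{G}}E^{\bullet}\to\mathcal{F}^{\infty}$. The gap is in your core step, the construction of the higher terms $v_{i}$. You propose to build $A^{E^{\bullet}\prime\prime}=v_{0}+\nabla^{\prime\prime}+v_{2}+\cdots$ intrinsically on $E^{\bullet}$ by killing the obstruction $(v_{0}+\nabla^{\prime\prime})^{2}$, arguing that the obstruction cocycle ``is a coboundary locally'' because equivariant Dolbeault cohomology vanishes locally. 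This does not work: the obstruction is a \emph{global} class, and local $\dbar$-exactness of a globally defined closed section gives no global primitive; the relevant global cohomology (that of $\wedge^{\bullet}\overline{T^{*}G_{0}}\otimes\End(E^{\bullet})$ with the differential induced by $v_{0}$ and $\nabla^{\prime\prime}$) does \emph{not} vanish on a compact orbifold. Indeed, your argument never uses the quasi-isomorphism to $\mathcal{F}^{\infty}$ when constructing the $v_{i}$, so it would apply verbatim to an arbitrary bounded complex of $C^{\infty}$ bundles; taking a complex concentrated in a single degree, it would equip every $C^{\infty}$ bundle with a holomorphic structure (for a one-term complex the only possible component of $A^{E^{\bullet}\prime\prime}$ besides $\nabla^{\prime\prime}$ vanishes for degree reasons, so flatness forces $(\nabla^{\prime\prime})^{2}=0$), which is false.

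The paper's proof avoids this by never doing intrinsic obstruction theory on $E^{\bullet}$: it observes that $\overline{\mathcal{F}}^{\infty}$ already carries the flat superconnection $d^{\mathcal{F}}+\dbar$ and \emph{transfers} this structure to $E^{\bullet}$ along the quasi-isomorphism, constructing the $v_{i}$ and the components $\phi_{i}$ of the comparison map simultaneously. The obstructions at each stage live in the cohomology of $\Hom(E^{\bullet},N^{\bullet})$ with $N^{\bullet}$ the acyclic cone of the comparison map, and the key input is Lemma \ref{lemma: hom-proj}: $\Hom(E^{\bullet},N^{\bullet})$ is acyclic whenever $N^{\bullet}$ is. The proof of that lemma is where the genuine equivariant work happens --- each term of $E^{\bullet}$ must be exhibited as a direct summand of a trivial $\mathcal{G}$-bundle (Segal's argument, using compactness and properness) --- not in equivariant partitions of unity for local Dolbeault vanishing. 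Your last step (perturbing $\phi$ degree by degree) is the right idea, but it must be run \emph{jointly} with the construction of the $v_{i}$ and justified by this acyclicity statement, which is missing from your proposal.
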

\begin{proof}[Proof of Theorem \ref{thm: ess surj}]
By Proposition \ref{prop: global resolution}, there exists a bounded complex of finite-dimensional complex $C^{\infty}$-vector bundles $E^{\bullet}$ on $X$ and a quasi-isomorphism
\begin{equation}\label{eq: quasi-isom of the base complex}
\mathcal{O}_X^{\infty}E^{\bullet}\overset{\sim}{\to} \mathcal{F}^{\infty}.
\end{equation}

Notice that $\mathcal{F}^{\infty}$ has a flat antiholomorphic superconnection
$$
A^{\mathcal{F}}=d^{\mathcal{F}}+\dbar.
$$

We want to use the quasi-isomorphism \eqref{eq: quasi-isom of the base complex} to lift the superconnection $A^{\mathcal{F}}$ to $E^{\bullet}$. For this purpose, we need the following lemma.

\begin{lemma}\label{lemma: hom-proj}
Let $X$ be a complex orbifold and $E^{\bullet}$ be a bounded complex of finite dimensional $C^{\infty}$-vector bundles on $X$. Then for any acyclic complex of sheaves of $\mathcal{O}^{\infty}_X$-modules $N^{\bullet}$, the complex of morphisms $\text{Hom}(E^{\bullet},N^{\bullet})$ is still acyclic.
\end{lemma}
\begin{proof}[Proof of Lemma \ref{lemma: hom-proj}] The claim is standard and we prove by induction on the amplitude of $E^{\bullet}$, which is due to \cite[Tag 013R]{stacks-project}. First, if the amplitude is zero, then $E^{\bullet}$ is a single $C^{\infty}$-vector bundle $E$ on $X$. Using a similar argument as in \cite[Proposition 2.4]{segal1968equivariant}, we can find a finite-dimensional trivial vector bundle $T$ on $X$ such that $E$ is a direct summand of $T$. Then the claim is obvious.

Suppose that the claim holds if the amplitude of $E^{\bullet}$ is no more than $l$. Now consider $E^{\bullet}$ with amplitude $l+1$. Let $\phi\colon E^{\bullet}\to N^{\bullet}$ be a closed morphism of degree $k$ with components $\phi^s\colon E^s\to N^{s+k}$. Closedness means
$$
d_{N^{\bullet}}\circ \phi^{s}-(-1)^k\phi^{s+1}\circ d_{E^{\bullet}}=0.
$$
Let $E^m$ be the highest non-zero component of $E^{\bullet}$. Then we have
$$
d_{N^{\bullet}}\circ \phi^{m}=0.
$$
Since $N^{\bullet}$ is acyclic, there exists a map $\psi^m\colon E^m\to N^{m+k-1}$ such that
$$
d_{N^{\bullet}}\circ \psi^m=\phi^m.
$$
Now we consider the ``naive" truncation $\sigma_{\leq m-1}E^{\bullet}$, which is of amplitude $l$. Define 
\[
\tilde{\phi}\colon \sigma_{\leq m-1}E^{\bullet}\to N^{\bullet}
\]
to be
$$
\tilde{\phi}^s=\begin{cases} \phi^s, & \mbox{if } s\leq m-1, \\ \phi^m-(-1)^k\psi^m\circ d_{E^{\bullet}}, & \mbox{if } s=m.\end{cases}
$$
Then it is easy to see that $\tilde{\phi}$ is closed, hence there exists a map $\tilde{\theta}\colon \sigma_{\leq m-1}E^{\bullet}\to N^{\bullet}$ of degree $k-1$ such that
$$
d_{N^{\bullet}}\circ \tilde{\theta}^{s}-(-1)^{k-1}\tilde{\theta}^{s+1}\circ d_{E^{\bullet}}=\tilde{\phi}.
$$
In particular, we have
$$
d_{N^{\bullet}}\circ \tilde{\theta}^{m-1}=\tilde{\phi}^{m-1}=\phi^m-(-1)^k\psi^m\circ d_{E^{\bullet}}.
$$
We now define $\theta\colon E^{\bullet}\to N^{\bullet}$ as
$$
\theta^s=\begin{cases}  \tilde{\theta}^s, & \mbox{if } s\leq m-1, \\  \psi^m, & \mbox{if } s=m. \end{cases}
$$
It is clear that 
$$
d_{N^{\bullet}}\circ\theta^{s}-(-1)^{k-1}\theta^{s+1}\circ d_{E^{\bullet}}=\phi.
$$
\end{proof}

The rest of the proof of Theorem \ref{thm: ess surj} is exactly the same as the proof of \cite[Theorem 6.3.6]{bismut2023coherent}, by Lemma \ref{lemma: hom-proj}.
\end{proof}

The following proposition can be proved in the same way as the proofs of \cite[Theorem 5.7, Theorem 5.10, and Proposition 6.8]{bismut2023coherent}.
\begin{prop}\label{prop: quasi-isom is homo inver}
Let$X$ be a complex orbifold, $(E^{\bullet}, A^{E^{\bullet}\prime\prime})$ and $(F^{\bullet}, A^{F^{\bullet}\prime\prime})$ be two flat antiholomorphic superconnection on $X$. More explicitly we have
$$
A^{E^{\bullet}\prime\prime}=v_0+\nabla^{E^{\bullet}\prime\prime}+v_2+\ldots
$$ 
and
$$
A^{F^{\bullet}\prime\prime}=u_0+\nabla^{F^{\bullet}\prime\prime}+u_2+\ldots.
$$ 

Let $\phi=\phi_0+\phi_1+\ldots$ be a closed degree $0$ morphism from $(E^{\bullet}, A^{E^{\bullet}\prime\prime})$ to $(F^{\bullet}, A^{F^{\bullet}\prime\prime})$. Then the following are equivalent.
\begin{enumerate}
\item $\phi_0$ gives a homotopy equivalence between complexes of $C^{\infty}$-vector bundles $(E^{\bullet}, v_0)$ and $(F^{\bullet}, u_0)$;
\item $\phi$ is a homotopy equivalence, i.e. $\phi$ has an inverse in the homotopy category $\underline{B}(X)$;
\item $\phi$ induces a quasi-isomorphism between $F_X(E^{\bullet}, A^{E^{\bullet}\prime\prime})$ and $F_X(F^{\bullet}, A^{F^{\bullet}\prime\prime})$, where $F_X$ is the functor introduced in Definition \ref{defi: the functor}.
\end{enumerate}
\end{prop}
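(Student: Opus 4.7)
The plan is to establish the cycle of implications $(2) \Rightarrow (3) \Rightarrow (1) \Rightarrow (2)$, adapting the non-equivariant arguments of \cite[Theorems 5.7, 5.10, Proposition 6.8]{bismut2023coherent} to the $\mathcal{G}$-equivariant setting. The implication $(2) \Rightarrow (3)$ is essentially formal: since $F_X$ is a dg-functor, it sends a homotopy equivalence in $B(X)$ to a quasi-isomorphism of complexes of $\mathcal{O}_{\mathcal{G}}$-modules, hence to an isomorphism in $D^b_{\coh}(X)$.

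For $(3) \Rightarrow (1)$, I would pass to the mapping cone $(C^\bullet, A^{C^\bullet\prime\prime}_\phi)$ in $B(X)$. Applying $F_X$ yields the mapping cone of $F_X(\phi)$, which is acyclic by assumption. Locally on an orbifold chart $(U,G)$, the form-degree filtration on the Dolbeault-type complex $F_X(C^\bullet)$ has $E_1$-page computed from the pointwise cohomology of $(C^\bullet, v_0^C)$ tensored with antiholomorphic forms, so acyclicity of $F_X(C^\bullet)$ forces $(C^\bullet, v_0^C)$ to be pointwise acyclic. A bounded complex of finite-rank $\mathcal{G}$-equivariant $C^\infty$-vector bundles with fiberwise trivial cohomology is $\mathcal{G}$-equivariantly contractible: locally on each chart $(U,G)$ one constructs a contracting homotopy by a $C^\infty$ splitting and then averages over $G$; the local pieces patch via a $\mathcal{G}$-invariant partition of unity, as in the proof of Lemma \ref{lemma: surjective morphism exists}. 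Contractibility of the cone is equivalent to $\phi_0$ being a $\mathcal{G}$-equivariant chain homotopy equivalence between $(E^\bullet, v_0)$ and $(F^\bullet, u_0)$, giving $(1)$.

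The main technical step is $(1) \Rightarrow (2)$, which I would prove by inductively constructing a homotopy inverse $\psi = \psi_0 + \psi_1 + \cdots$ and homotopies $h_E, h_F$, organized by form-degree. Using $(1)$, choose a $\mathcal{G}$-equivariant homotopy inverse $\psi_0$ of $\phi_0$, together with $\mathcal{G}$-equivariant homotopies $h_E^{(0)}, h_F^{(0)}$ trivializing $\psi_0\phi_0-\id$ and $\phi_0\psi_0-\id$ up to $[v_0,\cdot]$ and $[u_0,\cdot]$ respectively. At form-degree $k$, the equation to solve has the schematic shape
$$
[v_0,\psi_k]\pm [\psi_k,u_0] = R_k,
$$
where $R_k$ is an explicit expression in the previously constructed $\psi_{<k}, h_E^{(<k)}, h_F^{(<k)}$ and in the higher components $v_{>0}, u_{>0}, \phi_{\geq 0}$. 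Expanding the flatness identities $(A^{E^\bullet\prime\prime})^2=0$, $(A^{F^\bullet\prime\prime})^2=0$ and the closedness of $\phi$ shows that $R_k$ is $v_0$-$u_0$-closed, and hypothesis $(1)$ then produces $\psi_k$ as a $\mathcal{G}$-equivariant primitive via $h_E^{(0)}$ and $h_F^{(0)}$; a parallel inductive construction yields the higher components of the new homotopies.

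The hardest part will be the bookkeeping: verifying the closedness of each $R_k$ from the Maurer--Cartan type identities and maintaining $\mathcal{G}$-equivariance at every stage. Equivariance is preserved because the averaging procedure on each chart $(U,G)$ commutes with the linear algebra operations defining $R_k$, and the constructed $\psi_k$ can be globalized via the same partition-of-unity argument used above. Boundedness of $E^\bullet$ and $F^\bullet$ ensures that the induction terminates in finitely many steps, producing the required homotopy inverse in $B(X)$ and closing the cycle.
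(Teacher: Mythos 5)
Your overall route is the same as the paper's: the paper simply asserts that the proofs of \cite[Theorems 5.7, 5.10 and Proposition 6.8]{bismut2023coherent} carry over to the equivariant setting, and your proposal is exactly that adaptation, with the correct equivariant ingredients identified (averaging homotopies over the finite isotropy groups, patching with $\mathcal{G}$-invariant partitions of unity, which works because $v_0$ is $C^{\infty}(G_0)$-linear, and the terminating perturbative induction over antiholomorphic form degree for $(1)\Rightarrow(2)$). The implications $(2)\Rightarrow(3)$ and $(1)\Rightarrow(2)$ are sound as sketched.

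There is, however, a genuine gap in your argument for $(3)\Rightarrow(1)$. You filter $F_X(C^{\bullet})$ by antiholomorphic form degree and claim that acyclicity of the total complex forces the $E_1$-page, hence the pointwise cohomology of $(C^{\bullet},v_0^C)$, to vanish. This deduction fails on two counts. First, vanishing of the abutment of a (bounded) spectral sequence does not imply vanishing of $E_1$: classes on $E_1$ can be killed by higher differentials $d_r$, $r\geq 1$, which here are induced by $\nabla^{\pprime}$ and the $v_i$, $i\geq 2$. Second, the $E_1$-page is the cohomology of the complex of sheaves of $C^{\infty}$-sections with differential $v_0$, and this agrees with ``pointwise cohomology tensored with antiholomorphic forms'' only where the fiberwise cohomology $H^{\bullet}(C^{\bullet}_x,v_0(x))$ has locally constant rank; at rank-jump points the two differ. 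Handling exactly these two issues (via the analyticity of the support and of the rank-jump loci, and a top-degree/induction argument rather than a naive filtration) is the substantive content of the cited results of \cite{bismut2023coherent}, so this step must be imported from there rather than rederived by the spectral-sequence shortcut. Once fiberwise acyclicity of $(C^{\bullet},v_0^C)$ is granted, your equivariant contractibility argument and the rest of the cycle go through.
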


\begin{defi}\label{defi: quasi-isomorphism}
We call a closed degree $0$ morphism $\phi$ from $ (E^{\bullet}, A^{E^{\bullet}\prime\prime})$ to $(F^{\bullet}, A^{F^{\bullet}\prime\prime})$ a quasi-isomorphism if it satisfies the equivalent conditions in Proposition \ref{prop: quasi-isom is homo inver}.
\end{defi}

\begin{thm}\label{thm: fully faithful}
The functor $F_X\colon \underline{B}(X)\to D^b_{\coh}(X)$ is fully faithful.
\end{thm}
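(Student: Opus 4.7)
The plan is to adapt the proof of the manifold analogue \cite[Theorem 6.3.7]{bismut2023coherent} by exploiting finite group actions on local charts. For two antiholomorphic flat superconnections $\mathcal{E}=(E^\bullet, A^{E^\bullet\prime\prime})$ and $\mathcal{F}=(F^\bullet, A^{F^\bullet\prime\prime})$ on a groupoid representative $\mathcal{G}=(G_0,G_1)$ of $X$, I would first introduce the $\mathcal{G}$-equivariant sheaf-Hom complex $\mathcal{H}om_b^\bullet(\mathcal{E},\mathcal{F})$ on $G_0$ whose local sections are $\mathcal{G}$-equivariant $C^\infty$-linear maps $\phi=\phi_0+\phi_1+\cdots\colon E^\bullet\to \wedge^\bullet \overline{T^*G_0}\times_\mathcal{G} F^\bullet$ with differential $d\phi=A^{F^\bullet\prime\prime}\phi-(-1)^{|\phi|}\phi A^{E^\bullet\prime\prime}$. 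By construction, the global sections of $\mathcal{H}om_b^\bullet(\mathcal{E},\mathcal{F})$ form the dg-Hom complex in $B(X)$, whose degree-zero cohomology is $\Hom_{\underline{B}(X)}(\mathcal{E},\mathcal{F})$.

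Next, I would build a natural chain map
\begin{equation*}
\Psi\colon \mathcal{H}om_b^\bullet(\mathcal{E},\mathcal{F})\longrightarrow R\mathcal{H}om_{\mathcal{O}_\mathcal{G}}\bigl(F_X(\mathcal{E}),F_X(\mathcal{F})\bigr)
\end{equation*}
of complexes of sheaves on $\mathcal{G}$, and show it is a quasi-isomorphism. The claim is local on $G_0$, so by Proposition \ref{prop: local chart compact} and Lemma \ref{lemma: local antiholomorphic flat superconnecction} I may restrict to an orbifold chart $(U,G)$ on which $\mathcal{G}|_U=G\ltimes U$ with $G$ a finite group, and $\mathcal{E}|_U$, $\mathcal{F}|_U$ are $G$-equivariant antiholomorphic flat superconnections on $U$. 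The non-equivariant version over $U$ is exactly \cite[Section 6.3]{bismut2023coherent}; since $G$ is finite, the functor of $G$-invariants is exact on $\mathbb{C}$-vector spaces, so averaging transfers the non-equivariant quasi-isomorphism to the $G$-equivariant one. An induction on the amplitude of $E^\bullet$ in the spirit of Lemma \ref{lemma: hom-proj} reduces the comparison to the case where $E^\bullet$ is a single vector bundle, where it can be verified directly.

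To pass from sheaves to global sections, I would invoke the existence of $\mathcal{G}$-invariant partitions of unity, obtained by averaging ordinary partitions of unity over the finite stabilizers, which are available by Proposition \ref{prop: local chart compact} and the compactness hypothesis. Consequently every term of $\mathcal{H}om_b^\bullet(\mathcal{E},\mathcal{F})$ is a fine sheaf of $C^\infty$-modules on $\mathcal{G}$, so its hypercohomology coincides with the cohomology of its global sections. Combined with Proposition \ref{prop: sheafification has coherent cohomologies} and Theorem \ref{thm: ess surj} to control $F_X(\mathcal{E})$ and $F_X(\mathcal{F})$ in $D^b_{\coh}(X)$, taking $H^0$ of the quasi-isomorphism $\Gamma(G_0,\Psi)$ yields the desired bijection
\begin{equation*}
\Hom_{\underline{B}(X)}(\mathcal{E},\mathcal{F})\overset{\sim}{\longrightarrow} \Hom_{D^b_{\coh}(X)}(F_X(\mathcal{E}),F_X(\mathcal{F})),
\end{equation*}
that is, full faithfulness of $F_X$.

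The principal obstacle I foresee is handling the equivariance carefully throughout the sheaf-level comparison with the manifold case. In particular, one must ensure that passing between $G$-equivariant Hom complexes and their non-equivariant counterparts, followed by $G$-averaging, produces the correct derived Hom on the quotient orbifold chart rather than picking up spurious group cohomology. This is ultimately controlled by the vanishing $H^{>0}(G,M)=0$ for any $\mathbb{C}[G]$-module $M$ when $G$ is finite, but verifying that the soft-sheaf argument survives averaging and gluing across charts of the Morita-equivalence class is the step where the orbifold proof genuinely diverges from the manifold template.
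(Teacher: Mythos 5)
Your proposal is correct and follows essentially the same route as the paper, which simply defers to the proof of \cite[Theorem 6.5.1]{bismut2023coherent}: localize to charts $(U,G)$ with $G$ finite, run the manifold-case comparison of the superconnection Hom complex with $R\mathcal{H}om$ of the sheafifications, and transfer equivariance by exactness of $G$-invariants over $\mathbb{C}$, then globalize via soft sheaves and invariant partitions of unity. The only cosmetic point is that invoking Theorem \ref{thm: ess surj} is unnecessary for full faithfulness.
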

\begin{proof}
The proof is the same as the proof of \cite[Theorem 6.5.1]{bismut2023coherent}.
\end{proof}

We arrive at the main result of this section.
\begin{coro}\label{coro: equiv of cats}
The functor $F_X\colon \underline{B}(X)\to D^b_{\coh}(X)$ is an equivalence of triangulated categories.
\end{coro}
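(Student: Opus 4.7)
The plan is to assemble the corollary from the two main theorems just established, \ref{thm: ess surj} and \ref{thm: fully faithful}, after checking that $F_X$ is compatible with the triangulated structures on source and target. The hard work has been done: essential surjectivity is Theorem \ref{thm: ess surj} and full faithfulness is Theorem \ref{thm: fully faithful}, so once $F_X$ is promoted to a triangulated functor, an equivalence of triangulated categories follows by the standard criterion.

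First, I would record that $\underline{B}(X)$ inherits a triangulated structure from the pre-triangulated dg-structure on $B(X)$: shifts and mapping cones are defined in Section \ref{Section: ahfs} by the block-matrix formula for $A^{C^{\bullet}\prime\prime}_\phi$ and by $A^{E^{\bullet}\prime\prime}[1]=A^{E^{\bullet}\prime\prime}(-1)^{\deg(\cdot)}$. On the target side, $D^b_{\coh}(X)$ is triangulated in the standard way. The sheafification construction $(E^\bullet,A^{E^{\bullet}\prime\prime})\mapsto(\mathcal{E}^\bullet,A^{E^{\bullet}\prime\prime})$ of Section \ref{subsection: the functor} is additive and respects the grading, so $F_X([1])\simeq F_X(\cdot)[1]$. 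Similarly, for a closed degree-zero morphism $\phi\colon(E^\bullet,A^{E^{\bullet}\prime\prime})\to(F^\bullet,A^{F^{\bullet}\prime\prime})$, the sheafification of the cone $(C^\bullet,A^{C^{\bullet}\prime\prime}_\phi)$ is canonically isomorphic to the usual mapping cone of $F_X(\phi)$, because the block-matrix form of $A^{C^{\bullet}\prime\prime}_\phi$ literally is the cone differential after tensoring with $\wedge^{\bullet}\overline{T^*G_0}$ and sheafifying. Thus $F_X$ sends distinguished triangles in $\underline{B}(X)$ to distinguished triangles in $D^b_{\coh}(X)$, and hence is a triangulated functor.

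Given this, the corollary follows immediately: by Theorem \ref{thm: fully faithful}, $F_X$ is fully faithful, and by Theorem \ref{thm: ess surj}, every object $\mathcal{F}\in D^b_{\coh}(X)$ is isomorphic in the derived category to $F_X(\mathcal{E})$ for some $\mathcal{E}\in B(X)$. A fully faithful, essentially surjective triangulated functor is an equivalence of triangulated categories, which proves the claim.

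The only non-routine step is the compatibility with cones, but because cones in $B(X)$ are defined by exactly the same block-matrix recipe as in any pre-triangulated dg-category, and the functor $F_X$ is $\mathcal{O}_{G_0}^\infty(\wedge\overline{T^*G_0})$-linear on morphisms and additive on objects, I do not expect a genuine obstacle here; the verification amounts to unwinding the definitions of the sheafification $\mathcal{E}^\bullet$ and the cone differential side by side. The essential content of the equivalence is therefore concentrated in the two preceding theorems, which have already been proved.
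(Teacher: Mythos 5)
Your proposal is correct and follows exactly the same route as the paper: the paper's proof simply notes that $F_X$ preserves the triangulated structure and then invokes Theorems \ref{thm: ess surj} and \ref{thm: fully faithful}. Your additional unwinding of the compatibility of $F_X$ with shifts and mapping cones is precisely the step the paper declares to be clear, so the two arguments coincide.
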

\begin{proof}
It is clear that $F_X$ preserves the triangulated structure. Then we conclude by Theorem \ref{thm: ess surj} and Theorem \ref{thm: fully faithful}.
\end{proof}

Next, we discuss some compatibility results of the equivalence $F_X$.
\begin{prop}[Compatibility with pull-backs]\label{prop: compatibility with pull-backs}
Let $f\colon X\to Y$ be a morphism. Let $f^*_b\colon B(Y)\to B(X)$ be as in Proposition \ref{prop: pull back functor} and $Lf^*\colon D^b_{\coh}(Y)\to D^b_{\coh}(X)$ be as in Definition \ref{defi: derived pull back}. Then $f^*_b$ and $Lf^*$ are compatible with $F_X$.
\end{prop}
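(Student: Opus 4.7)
The plan is to reduce the claim to the case of an ordinary holomorphic groupoid morphism and then to convert the naive pullback of the Dolbeault-type complex produced by $F$ into the derived pullback by invoking flatness of smooth over holomorphic functions.

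First, any generalized morphism $(Z,\rho,\sigma)$ representing $f$ factors as $(\phi_\rho)^{-1}\circ \phi_\sigma$ through the auxiliary groupoid $\mathcal{Z}$ of Remark \ref{rmk: groupoid from generalized morphism}, with $\phi_\rho$ a Morita equivalence and $\phi_\sigma$ an ordinary morphism. Both $f^*_b$ (Corollary \ref{coro: functoriality of superconnections}) and $Lf^*$ are defined through this factorization, and Propositions \ref{prop: Morita equivalent and quasi-equivalent} and \ref{prop: coherent and Morita equivalence} tell us that the Morita-side pullbacks are equivalences on each category. The functor $F$ is built from the sheafification of $\wedge^\bullet\overline{T^*G_0}\times_{\mathcal{G}} E^\bullet$, which is intrinsic to the orbifold, so it commutes with these Morita equivalences. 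It therefore suffices to verify the compatibility for an ordinary holomorphic morphism $\psi\colon \mathcal{G}\to \mathcal{H}$ of orbifold groupoids.

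For such a $\psi$, both $\psi^*\circ F_Y$ and $F_X\circ \psi^*_b$ are obtained by sheafifying the appropriate graded tensor product, and the holomorphicity of $\psi$ induces a canonical $\mathcal{G}$-equivariant isomorphism $\psi_0^*\overline{T^*H_0}\cong \overline{T^*G_0}$. Combining this with the standard identification between pullback and tensor product produces a natural morphism of complexes of $\mathcal{O}_{\mathcal{G}}$-modules
\[
\Phi\colon \psi^*F_Y(E^\bullet,A^{E^\bullet\prime\prime})\longrightarrow F_X\bigl(\psi^*_b(E^\bullet,A^{E^\bullet\prime\prime})\bigr).
\]
Because each $v_i$ in the decomposition (\ref{eq: decomposition of anti super conn}) of $A^{E^\bullet\prime\prime}$ is $C^\infty(G_0)$-linear and $\nabla^{E^\bullet\prime\prime}$ is a $\mathcal{G}$-equivariant $\dbar$-connection, $\Phi$ intertwines the two differentials term by term, and it is clearly an isomorphism of the underlying graded sheaves.

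It remains to identify the ordinary pullback $\psi^*F_Y(E^\bullet,A^{E^\bullet\prime\prime})$ with the derived pullback $L\psi^*F_Y(E^\bullet,A^{E^\bullet\prime\prime})$, and this is where the one non-formal input enters. It suffices to show that each component sheaf of $F_Y(E^\bullet,A^{E^\bullet\prime\prime})$ is flat over $\mathcal{O}_{\mathcal{H}}$. By Lemma \ref{lemma: local antiholomorphic flat superconnecction} flatness can be checked locally on $H_0$, where the orbifold groupoid reduces to a finite-group transformation groupoid and the sheaf in question becomes an $H$-equivariant $\mathcal{O}^\infty$-locally free sheaf. The classical fact that $\mathcal{O}^\infty$ is a flat $\mathcal{O}$-module on any complex manifold (Malgrange) then gives the desired flatness, and the finite equivariance is preserved trivially. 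Once this flatness is in hand the remainder of the argument is entirely formal, mirroring the manifold version in \cite{bismut2023coherent}; the anticipated main obstacle is precisely this invocation of smooth-over-holomorphic flatness, everything else being a naturality check.
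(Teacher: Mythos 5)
Your overall architecture is reasonable --- reduce to an ordinary groupoid morphism via the factorization through $\mathcal{Z}$ and Morita invariance, build a comparison map, and use flatness of $\mathcal{O}^{\infty}$ over $\mathcal{O}$ (Malgrange) to replace $L\psi^*$ by the naive pullback --- and the flatness observation is indeed the right non-formal input. (The paper itself gives no argument here, declaring the check routine, so there is no competing proof to compare against.) However, your Step 2 contains a genuine error. For a general holomorphic morphism $\psi$ there is no isomorphism $\psi_0^*\overline{T^{*}H_0}\cong\overline{T^{*}G_0}$; holomorphicity only gives the (conjugate) codifferential $\psi_0^*\overline{T^{*}H_0}\to\overline{T^{*}G_0}$, which is an isomorphism only when $\psi_0$ is \'etale. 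In the factorization $f=(\phi_\rho)^{-1}\circ\phi_\sigma$ it is $\rho$ that is \'etale, while $\sigma\colon Z\to H_0$ --- the map you are left with after the Morita reduction --- is an arbitrary holomorphic map; in the very applications of this proposition (the embeddings $i_{Y,W}$, $i_{P,W}$ and the projections $p_{X\times\mathbb{P}^1,X}$, $q_{P,X\times\infty}$ in Theorem \ref{thm: GRR stablizer-preserving embedding}) it is an embedding or a submersion with positive-dimensional fibers. Consequently your map $\Phi$, while well defined and compatible with the differentials, is \emph{not} an isomorphism of the underlying graded sheaves: already for $\mathcal{E}=(\mathcal{O}^{\infty},\dbar)$ one side is built from $\wedge^{p}\overline{T^{*}H_0}$ and the other from $\wedge^{p}\overline{T^{*}G_0}$, which have different ranks when $\dim X\neq\dim Y$.

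What is actually true, and what you need to prove, is that $\Phi$ is a \emph{quasi-isomorphism}. This is not a pure naturality check: one must argue, for instance, that locally (after restricting to a chart $(U,G)$ and replacing $\mathcal{E}$ by a $G$-equivariantly quasi-isomorphic bounded complex of holomorphic bundles, as in Proposition \ref{prop: sheafification has coherent cohomologies}) both $\psi^*F_Y(\mathcal{E})$ and $F_X(\psi^*_b\mathcal{E})$ are complexes of flat $\mathcal{O}$-modules computing the same object $L\psi^*F_Y(\mathcal{E})$ in $D^b_{\coh}$, and that $\Phi$ realizes this identification --- e.g.\ by checking it on the Dolbeault resolution of a single holomorphic bundle, where it reduces to the statement that $\mathcal{O}_{G_0}\otimes_{\psi^{-1}\mathcal{O}_{H_0}}\psi^{-1}\mathcal{A}^{0,\bullet}_{H_0}(E)\to\mathcal{A}^{0,\bullet}_{G_0}(\psi_0^*E)$ is a quasi-isomorphism because both sides resolve $\psi^*E$. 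With that repair (and your flatness input, which makes the naive pullback compute $L\psi^*$), the argument closes; as written, the claim that $\Phi$ is an isomorphism of graded sheaves is the gap.
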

\begin{proof}
It is a direct check with definitions. And we leave the details to the reader. 
\end{proof}

\begin{prop}[Compatibility with tensor products]\label{prop: compatibility with tensor products}
The tensor product in $B(X)$ as in Definition \ref{defi: tensor product of antiholo} and the derived tensor product in Definition \ref{defi: derived tensor product} are compatible under $F_X$.
\end{prop}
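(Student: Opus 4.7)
The plan is to construct an explicit natural comparison morphism and show it is a quasi-isomorphism, using that $F_X(\mathcal{E})$ can be used to compute derived tensor products over $\mathcal{O}_X$. First, I would write down the wedge-product map
\[
\mu\colon F_X(\mathcal{E}) \otimes_{\mathcal{O}_X} F_X(\mathcal{F}) \longrightarrow F_X(\mathcal{E} \widehat{\otimes}_b \mathcal{F}),
\]
defined on local sections by $(\alpha \otimes e) \otimes (\beta \otimes f) \mapsto (-1)^{|e||\beta|}(\alpha \wedge \beta) \otimes (e \otimes f)$. The chain condition follows from the graded-Leibniz definition of $A^{E^{\bullet}\prime\prime}\widehat{\otimes}_b A^{F^{\bullet}\prime\prime}$ as in Definition \ref{defi: tensor product of antiholo}, together with the graded commutativity of $\wedge^{\bullet}\overline{T^{*}G_0}$.

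Second, the left-hand side will be shown to already represent the derived tensor product. Each term $\wedge^p \overline{T^{*}G_0} \times_{\mathcal{G}} E^q$ is locally free over $\mathcal{O}_{\mathcal{G}}^{\infty}$, hence $\mathcal{O}_{\mathcal{G}}^{\infty}$-flat. Combined with the classical fact that $\mathcal{O}_{\mathcal{G}}^{\infty}$ is flat over $\mathcal{O}_{\mathcal{G}}$ on a complex manifold (a stalkwise Malgrange-type division argument, which transfers directly to the proper \'etale groupoid setting by working in local charts $(U,G)$), transitivity of flatness shows that every term of $F_X(\mathcal{E})$ is $\mathcal{O}_X$-flat. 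Therefore
\[
F_X(\mathcal{E}) \otimes^L_{\mathcal{O}_X} F_X(\mathcal{F}) \simeq F_X(\mathcal{E}) \otimes_{\mathcal{O}_X} F_X(\mathcal{F})
\]
in $D^b_{\coh}(X)$, so it suffices to prove $\mu$ is a quasi-isomorphism.

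Third, the quasi-isomorphism check is local on $X$, so the plan is to restrict to a chart $(U,G)$ and reduce to the case where the underlying graded bundles of $\mathcal{E}|_U$ and $\mathcal{F}|_U$ are bounded complexes of holomorphic vector bundles $\mathcal{L}^{\bullet}, \mathcal{M}^{\bullet}$ with superconnections of the form $d+\dbar$. The reduction is supplied by Proposition \ref{prop: Syzygy} (the equivariant Syzygy theorem, yielding local holomorphic locally free resolutions of the coherent cohomologies of $F_X(\mathcal{E})$ and $F_X(\mathcal{F})$) followed by Proposition \ref{prop: quasi-isom is homo inver}, which lifts these resolutions to quasi-isomorphisms of antiholomorphic flat superconnections (condition (1) $\Rightarrow$ (3)). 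In this reduced setting, $\mu$ becomes the standard wedge-product comparison
\[
\bigl(\Omega^{0,\bullet}_U(\mathcal{L}^{\bullet})\bigr) \otimes_{\mathcal{O}_U} \bigl(\Omega^{0,\bullet}_U(\mathcal{M}^{\bullet})\bigr) \longrightarrow \Omega^{0,\bullet}_U(\mathcal{L}^{\bullet} \otimes_{\mathcal{O}_U}\mathcal{M}^{\bullet}),
\]
which is a quasi-isomorphism by a termwise Dolbeault-K\"unneth computation using that both factors are bounded complexes of locally free $\mathcal{O}_U$-modules.

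The main obstacle will be the flatness of $\mathcal{O}_{\mathcal{G}}^{\infty}$ over $\mathcal{O}_{\mathcal{G}}$ in the second step. On smooth complex manifolds this is classical, but in the orbifold/equivariant context one must verify that the stalkwise flatness at points of $G_0$ is compatible with the $\mathcal{G}$-equivariant structure used to define $F_X(\mathcal{E})$ as a sheaf of $\mathcal{O}_{\mathcal{G}}$-modules. Since flatness is a local property and the $\mathcal{G}$-action is by local biholomorphisms, this reduction should be routine, but it is the one genuinely non-formal input; once it is secured, the remaining identifications reduce to the graded-algebra structure of $\wedge^{\bullet}\overline{T^{*}G_0}$ and the Dolbeault-K\"unneth theorem.
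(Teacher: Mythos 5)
Your proposal is correct and rests on the same key observation as the paper's proof, namely that each term of $F_X(\mathcal{E})$ is flat over $\mathcal{O}_X$ (locally free over $\mathcal{O}^{\infty}_{\mathcal{G}}$ combined with Malgrange flatness of $\mathcal{O}^{\infty}_{\mathcal{G}}$ over $\mathcal{O}_{\mathcal{G}}$), so that the underived tensor product already computes the derived one. The paper stops there and declares the claim clear, whereas you additionally write down the wedge-product comparison map $\mu$ and verify locally that it is a quasi-isomorphism; this extra detail is consistent with, and in fact fills in, the argument the paper leaves implicit.
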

\begin{proof}
For an antiholomorphic flat superconnection $(E^{\bullet}, A^{E^{\bullet}\prime\prime})$, its sheafification $F_X(E^{\bullet}, A^{E^{\bullet}\prime\prime})$ is a complex of flat sheaves, where $F_X$ is given in Definition \ref{defi: the functor}. The claim is then clear.
\end{proof}

\section{Generalized Metrics and Curvature}\label{sec:gen_metrics}
In this section, let $X$ be an $n$-dimensional complex orbifold with a proper complex \'{e}tale effective groupoid representation $\mathcal{G}=(G_0, G_1)$. We introduce the concepts of generalized metric and curvature on a complex orbifold. 

\subsection{Generalized Metrics}
For $\alpha\in  \wedge^{p}T^{*}_{\mathbb{C}}G_0$, let 
\begin{equation}
\tilde{\alpha}=(-1)^{\frac{p(p+1)}{2}}\alpha, \quad \alpha^*=\bar{\tilde{\alpha}}.
\end{equation}

Let $E^{\bullet}$ be a graded complex vector bundle over $G_0$. For $A\in \Hom(E^{\bullet},(\bar{E}^{\bullet})^*)$, let $A^*\in \Hom(E^{\bullet},(\bar{E}^{\bullet})^*)$ be its adjoint. For
$$
h=\alpha\widehat{\otimes}A\in \wedge^{\bullet}T^{*}_{\mathbb{C}}G_0\,\widehat{\otimes}\, \Hom(E^{\bullet},(\bar{E}^{\bullet})^*),
$$
we define its adjoint
\begin{equation}
h^*\colon=\alpha^*\widehat{\otimes}A^*.
\end{equation}
If $h\in \wedge^{\bullet}T^{*}_{\mathbb{C}}G_0\,\widehat{\otimes}\, \Hom(E^{\bullet},(\bar{E}^{\bullet})^*)$, we can write $h$ as
$$
h=\sum_{i=0}^{2n}h_i,
$$
where
\begin{equation}
h_i\in \wedge^{i}T^{*}_{\mathbb{C}}G_0\,\widehat{\otimes}\, \Hom(E^{\bullet},(\bar{E}^{\bullet})^*).
\end{equation}

We will often count the degrees in $\wedge^{p}T^{*}G_0$ and $\wedge^{q}\overline{T^{*}G_0}$ as $-p$ and $q$ respectively, and we introduce the corresponding degree $\deg_{-}$. Therefore, for $h\in \wedge^{p,q}T^{*}G_0\,\widehat{\otimes}\, \Hom^r(E^{\bullet},(\bar{E}^{\bullet})^*)$, we define
\begin{equation}
\deg_{-}h=q-p+r.
\end{equation}

Now we are ready to define generalized metrics.

\begin{defi}\label{defi: generalized metric}
Let $X$ and $\mathcal{G}=(G_0, G_1)$ be as above. Let $E^{\bullet}$ be a $\mathcal{G}$-equivariant graded complex vector bundle on $G_0$. An element $h\in \wedge^{i}T^{*}_{\mathbb{C}}G_0\,\widehat{\otimes}\, \Hom(E^{\bullet},(\bar{E}^{\bullet})^*)$ is called a generalized metric on $E^{\bullet}$ if it satisfies the following conditions:
\begin{enumerate}
\item $h$ is $\mathcal{G}$-equivariant;
\item $h$ is of degree $0$, i.e $\deg_{-}h=0$;
\item $h$ is self-adjoint, i.e. $h^*=h$;
\item the $0$-th component $h_0$ defines a graded Hermitian metric on $E^{\bullet}$.
\end{enumerate}
Let $\mathscr{M}^E$ denote the set of generalized metrics on $E^{\bullet}$.

A generalized metric is said to be pure if $h=h_0$.
\end{defi}

\subsection{Superconnections and generalized metrics}
Recall that a cut-off function $\chi\colon G_0\to \mathbb{R}_{\geq 0}$ is a smooth function on $G_0$ such that $\forall x\in G_0$, 
\[
\int_{t(g)=x} \chi(s(g))dg=1. 
\]
According to \cite{tu1999conjecture}, a cut-off function exists on all proper \'etale groupoid $\mathcal{G}$. Furthermore, when $G_0/{G_1}$ is compact, we can choose a cut-off function $\chi$ that has a compact support. 

Put $\Omega_\mathcal{G}(G_0, \mathbb{C})=C^{\infty}(G_0,\wedge^{\bullet}T^{*}_{\mathbb{C}}G_0)^{\mathcal{G}}$. If $\alpha$, $\alpha^{\prime}\in \Omega_\mathcal{G}(G_0, \mathbb{C})$, then we put
\begin{equation}
\theta(\alpha, \alpha^{\prime})=\Big(\frac{\sqrt{-1}}{2\pi}\Big)^n\int_{G_0}\chi \tilde{\alpha}\wedge \overline{\alpha^{\prime}},
\end{equation}
where $\chi$ is the cut-off function. 

Let $E^{\bullet}$ be a $\mathcal{G}$-equivariant graded complex vector bundle on $G_0$. Put
$$
\Omega_\mathcal{G}(G_0, E^{\bullet})=C^{\infty}(G_0,\wedge^{\bullet}T^{*}_{\mathbb{C}}G_0\,\widehat{\otimes}\,E^{\bullet})^{\mathcal{G}}. 
$$
We again equip $\Omega_\mathcal{G}(G_0, E^{\bullet})$ with the total degree associated with $\deg_{-}$ on $\Omega_\mathcal{G}(G_0, \mathbb{C})$ and the degree on $E^{\bullet}$.

\begin{defi}\label{defi: metric on forms}
Let $h$ be a generalized metric. For $s$, $s^{\prime}\in \Omega_c(G_0, E^{\bullet})$, put
\begin{equation}
\theta_h(s,s^{\prime})=\theta(s,hs^{\prime}).
\end{equation}
\end{defi}

\begin{defi}\label{defi: adjoint superconnection}
Let $\mathcal{E}=(E^{\bullet}, A^{E^{\bullet}\prime\prime})$ be a flat antiholomorphic superconnection on $\mathcal{G}=(G_0, G_1)$ equipped with a generalized metric $h$. Let $$A^{E^{\bullet}\prime}$$ be the formal adjoint of $A^{E^{\bullet}\prime\prime}$ with respect to $\theta_h$.
\end{defi}

\begin{prop}\label{prop: existence of adjoint superconnection}
Under the above condition, $A^{E^{\bullet}\prime}$ exists, is unique, of degree $-1$, and is $\mathcal{G}$-equivariant.
\end{prop}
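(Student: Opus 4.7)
The strategy is to construct $A^{E^{\bullet}\prime}$ explicitly by reducing to the case of a pure Hermitian metric and then twisting by a nilpotent invertible operator. First, I would dispose of uniqueness: since $h_0$ is a genuine $\mathcal{G}$-equivariant Hermitian metric on $E^{\bullet}$, the pairing $\theta_{h_0}$ on $\Omega_{\mathcal{G}}(G_0, E^{\bullet})$ (with cut-off $\chi$) induces a non-degenerate pairing between bi-degrees $(p,q)$ and $(n-q,n-p)$; the higher-form components $h_i$ ($i \geq 1$) make $w := h_0^{-1}(h-h_0)$ a finite-rank nilpotent operator with $\deg_{-} w = 0$, so $1+w$ is invertible and $h$ remains non-degenerate. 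Hence any operator satisfying the adjoint identity against $\theta_h$ on $\mathcal{G}$-invariant compactly supported sections is unique.

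For existence, I would first construct the formal adjoint $(A^{E^{\bullet}\prime\prime})^{*,h_0}$ with respect to $\theta_{h_0}$. Decompose $A^{E^{\bullet}\prime\prime} = v_0 + \nabla^{E^{\bullet}\prime\prime} + v_2 + \cdots$ as in (\ref{eq: decomposition of anti super conn}). Each $v_i$ ($i \neq 1$) is $C^{\infty}(G_0)$-linear and $\mathcal{G}$-equivariant, so its $\theta_{h_0}$-adjoint is the fiberwise Hermitian adjoint $v_i^{*,h_0}$; it is again $\mathcal{G}$-equivariant since $h_0$ is. The $\dbar$-connection $\nabla^{E^{\bullet}\prime\prime}$ admits, via integration by parts, a $(1,0)$-type adjoint $\nabla^{E^{\bullet}\prime,h_0}$, which is the $(1,0)$-part of the Chern connection attached to $(E^{\bullet}, h_0)$ as a $\mathbb{Z}$-graded Hermitian bundle. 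Summing these yields $(A^{E^{\bullet}\prime\prime})^{*,h_0}$, a first-order differential operator of $\deg_{-} = -1$. From the identity $\theta_h(s,s') = \theta_{h_0}(s,(1+w)s')$, I then define
\begin{equation*}
A^{E^{\bullet}\prime} := (1+w)^{-1}\, (A^{E^{\bullet}\prime\prime})^{*,h_0}\, (1+w).
\end{equation*}
A direct check shows this satisfies the defining adjoint identity for $\theta_h$, and it has $\deg_{-} = -1$ since $(1+w)$ preserves $\deg_{-}$ and $(A^{E^{\bullet}\prime\prime})^{*,h_0}$ has $\deg_{-} = -1$. $\mathcal{G}$-equivariance follows by uniqueness: for any $g \in G_1$, the conjugate $g \cdot A^{E^{\bullet}\prime} \cdot g^{-1}$ satisfies the same adjoint relation because $A^{E^{\bullet}\prime\prime}$ and $h$ are $\mathcal{G}$-equivariant, hence it equals $A^{E^{\bullet}\prime}$.

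The main obstacle I anticipate is the careful treatment of the cut-off function $\chi$ in the integration-by-parts step used to define the $\theta_{h_0}$-adjoint of $\nabla^{E^{\bullet}\prime\prime}$. Naively, Stokes' theorem produces a $d\chi$ boundary term that is not pointwise; the point is that since $s, s'$ are restricted to $\mathcal{G}$-invariant sections and the cut-off satisfies $\int_{t(g)=x} \chi(s(g))\, dg = 1$, these spurious boundary contributions average to zero over $\mathcal{G}$-orbits. This ensures $A^{E^{\bullet}\prime}$ is a genuine pointwise (equivariant) differential operator, not merely a densely defined adjoint depending on $\chi$, which in turn legitimizes the uniqueness argument above.
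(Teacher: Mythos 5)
Your proof is correct, and the part of it that the paper actually argues itself --- deducing $\mathcal{G}$-equivariance from uniqueness of the formal adjoint, using that $A^{E^{\bullet}\prime\prime}$ and $h$ are equivariant --- is exactly the paper's argument. For existence, uniqueness, and the degree count, the paper simply cites \cite[Section 7.1]{bismut2023coherent} ``by forgetting the $\mathcal{G}$-action,'' whereas you unwind that reference: splitting off the pure part $h_0$, taking fiberwise adjoints of the $C^{\infty}$-linear pieces $v_i$ and the Chern-type adjoint of $\nabla^{E^{\bullet}\prime\prime}$, and then conjugating by the invertible operator $1+w$ with $w=h_0^{-1}(h-h_0)$ nilpotent of $\deg_-=0$. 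This is a faithful reconstruction of the cited construction rather than a different route. Your handling of the cut-off function is also sound: for $\mathcal{G}$-invariant sections the integrand $\langle s,hs'\rangle$ is an invariant form, and since the orbitwise average of $\chi$ is the constant $1$, the average of $d\chi$ vanishes, so the Stokes boundary term drops out and the adjoint is a genuine differential operator independent of the choice of $\chi$ --- a point the paper leaves implicit.
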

\begin{proof}
The existence, uniqueness, and degree condition follow from \cite[Section 7.1]{bismut2023coherent} by forgetting the $\mathcal{G}$-action. Since $A^{E^{\bullet}\prime\prime}$ and $h$ are $\mathcal{G}$-equivariant, the adjoint $A^{E^{\bullet}\prime}$ must be $\mathcal{G}$-equivariant by uniqueness.
\end{proof}

Let $\nabla^{E^{\bullet}\prime}$ be the adjoint of $\nabla^{E^{\bullet}\prime\prime}$ with respect to $\theta_h$ and $v_i^*$ be the adjoint of $v_i$ with respect to $\theta_h$, $i=0$ or $i\geq 2$. We have an analogy of \eqref{eq: decomposition of anti super conn}
\begin{equation}\label{eq: decomposition of adjoint super conn}
A^{E^{\bullet}\prime}=v_0^*+\nabla^{E^{\bullet}\prime}+v_2^*+\ldots.
\end{equation}

\begin{rmk}
If $h$ is a pure metric, then we have
$$
v_i^*\colon E^{\bullet}\to \wedge^{i}T^{*}G_0  \times_G E^{\bullet+i-1}, ~i=0, ~2,\ldots
$$
and $\nabla^{E^{\bullet}\prime}$ is an ordinary $\partial$-connection. This is not the case if $h$ is not pure.
\end{rmk}

\subsection{Curvature}
Let $\mathcal{E}=(E^{\bullet}, A^{E^{\bullet}\prime\prime})$ be a flat antiholomorphic superconnection on $\mathcal{G}=(G_0, G_1)$. Recall that $A^{E^{\bullet}\prime}$ is the formal adjoint of $A^{E^{\bullet}\prime\prime}$ in Definition \ref{defi: adjoint superconnection}. It is clear that
\begin{equation}
(A^{E^{\bullet}\prime})^2=0.
\end{equation}

Let 
\begin{equation}
A^{E^{\bullet}}=A^{E^{\bullet}\prime\prime}+A^{E^{\bullet}\prime}
\end{equation}
be  the superconnection on $E^{\bullet}$. The curvature of $A^{E^{\bullet}}$ is given by
\begin{equation}
A^{E^{\bullet},2}=[A^{E^{\bullet}\prime\prime},A^{E^{\bullet}\prime}].
\end{equation}
Then $A^{E^{\bullet},2}$ is a smooth section of $ \wedge^{\bullet}T^{*}_{\mathbb{C}}G_0  \times_G \End(E^{\bullet})$ with total degree $0$ with respect to $\deg_{-}$. 

We also have the Bianchi identities
\begin{equation}\label{eq: Bianchi id}
[A^{E^{\bullet}\prime\prime}, A^{E^{\bullet},2}]=0,~[A^{E^{\bullet}\prime}, A^{E^{\bullet},2}]=0.
\end{equation}
In addition, it is easy to see that
\begin{equation}
( A^{E^{\bullet},2})^*= A^{E^{\bullet},2}.
\end{equation}

\section{Chern Character}\label{sec:chern}
In this section, we define the Chern character for a flat antiholomorphic superconnection on an orbifold. 
\subsection{Supertrace and Chern character}
Let $\mathcal{E}=(E^{\bullet}, A^{E^{\bullet}\prime\prime})$ be a flat antiholomorphic superconnection on $\mathcal{G}=(G_0, G_1)$ equipped with a generalized metric $h$. In Definition \ref{defi: beta_G} we have the morphism $\beta_{\mathcal{G}}\colon I\mathcal{G}\to \mathcal{G}$ mapping the inertia groupoid $I\mathcal{G}$ to $\mathcal{G}$. Consider the pullback of $\mathcal{E}$ via $\beta_{\mathcal{G}}$, as in Proposition \ref{prop: pull back functor},
$$
\beta^*_{\mathcal{G},b}\mathcal{E}=(\beta^*_{\mathcal{G}}E^{\bullet}, \beta^*_{\mathcal{G},b}A^{E^{\bullet}\prime\prime}).
$$
In addition, we can also pull back the metric $h$ to $\beta^*_{\mathcal{G},b} h$, which is a generalized metric on $\beta^*_{\mathcal{G},b}\mathcal{E}$. We denote the adjoint of $\beta^*_{\mathcal{G},b}A^{E^{\bullet}\prime\prime}$ under $\beta^*_{\mathcal{G},b} h$ by $\beta^*_{\mathcal{G},b}A^{E^{\bullet}\prime}$. Let 
\begin{equation}
\beta^*_{\mathcal{G},b}A^{E^{\bullet}}=\beta^*_{\mathcal{G},b}A^{E^{\bullet}\prime}+\beta^*_{\mathcal{G},b}A^{E^{\bullet}\prime\prime}.
\end{equation}
We can therefore form the curvature 
\begin{equation}
\beta^*_{\mathcal{G},b}A^{E^{\bullet},2}=[\beta^*_{\mathcal{G},b}A^{E^{\bullet}\prime\prime},\beta^*_{\mathcal{G},b}A^{E^{\bullet}\prime}],
\end{equation}
which is a section of $\wedge^{\bullet}T^{*}_{\mathbb{C}}(I\mathcal{G})_0  \times_{I\mathcal{G}} \End(E^{\bullet})$ with total degree $0$ with respect to $\deg_{-}$

On $I\mathcal{G}$ we also have the tautological section $\tau_{\mathcal{G}}$ with value in $G_1$ as in Definition \ref{defi: tau_G}.
For any $g\in (I\mathcal{G})_0$, let $x$ be the point $s(g)=t(g)\in G_0$. By definition we know that $\beta^*E^{\bullet}_{g}=E^{\bullet}_x$. Since $\mathcal{E}=(E^{\bullet}, A^{E^{\bullet}\prime\prime})$ is $\mathcal{G}$-equivariant, we know that $\tau_{\mathcal{G}}(g)$ acts on $\beta_{\mathcal{G}}^*E^{\bullet}_{g}$.

We have a $\mathcal{G}$-equivariant supertrace map
\begin{equation}
\str\colon  \wedge^{\bullet}T^{*}_{\mathbb{C}}(I\mathcal{G})_0  \times \End(E^{\bullet})\to  \wedge^{\bullet}T^{*}_{\mathbb{C}}(I\mathcal{G})_0 
\end{equation}
that vanishes on supercommutators.

Let $\varphi$ be the following morphism of $\wedge^{\bullet}T^{*}_{\mathbb{C}}(I\mathcal{G})_0$,
$$\varphi\colon \wedge^{\bullet}T^{*}_{\mathbb{C}}(I\mathcal{G})_0\to \wedge^{\bullet}T^{*}_{\mathbb{C}}(I\mathcal{G})_0,\quad \alpha \mapsto (2\pi i)^{\frac{\deg\alpha}{2}} \alpha.$$ 
Here we use the ordinary degree instead of $\deg_{-}$.

\begin{defi}\label{defi: Chern character}
We set
\begin{equation}\label{eq: Chern character}
\ch(A^{E^{\bullet}\prime\prime},h)=\varphi\str[\tau_{\mathcal{G}}\exp(-\beta^*_{\mathcal{G},b}A^{E^{\bullet},2})]\in \Omega^{(=)}_{I\mathcal{G}}((I\mathcal{G})_0,\mathbb{C}).
\end{equation}
\end{defi}

\begin{rmk}
In the local case, we have $\mathcal{G}=G\ltimes X$ and $I\mathcal{G}=\coprod_{(g)\in \text{Conj}(G)}Z_G(g)\ltimes X^g$ as in \eqref{eq: inertia groupoid local 2}. Then we have
\begin{equation}\label{eq: Chern char defi local}
\varphi\str[\tau_{\mathcal{G}}\exp(-\beta^*_{\mathcal{G},b}A^{E^{\bullet},2})]=\varphi\str[g\exp(-A^{E^{\bullet},2})|_{X^g}],
\end{equation}
which is the definition of equivariant Chern character in \cite[Equation (1.6)]{ma2005orbifolds} and \cite[Equation (4.3.10)]{bismut2011hypoelliptic}.
\end{rmk}

\begin{prop}\label{prop: Chern character is closed}
The form $\ch(A^{E^{\bullet}\prime\prime},h)$ is de Rham closed.
\end{prop}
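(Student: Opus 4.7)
The plan is to follow the classical Chern-Weil transgression argument, transferred to the inertia groupoid setting. The key ingredient is the identity
$$d\,\str[\alpha] = \str[\beta^*_{\mathcal{G},b}A^{E^\bullet}, \alpha]$$
for any smooth section $\alpha$ of $\wedge^{\bullet}T^{*}_{\mathbb{C}}(I\mathcal{G})_0 \,\widehat{\otimes}\, \End(\beta^{*}_{\mathcal{G}} E^\bullet)$, where $[\cdot,\cdot]$ denotes the supercommutator. This is a purely local statement on $(I\mathcal{G})_0$, so it reduces to the manifold version used in \cite{bismut2023coherent}: the supertrace absorbs all endomorphism-valued form pieces of $\beta^*_{\mathcal{G},b}A^{E^\bullet}$, and only the connection part $\beta^*_{\mathcal{G},b}\nabla^{E^\bullet}$ produces the de Rham differential on the left-hand side.

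Given this, two commutation facts will complete the argument. First, pulling back the Bianchi identity \eqref{eq: Bianchi id} along $\beta_{\mathcal{G}}\colon I\mathcal{G}\to \mathcal{G}$ gives $[\beta^*_{\mathcal{G},b}A^{E^\bullet}, \beta^*_{\mathcal{G},b}A^{E^\bullet,2}] = 0$, and functional calculus yields $[\beta^*_{\mathcal{G},b}A^{E^\bullet}, \exp(-\beta^*_{\mathcal{G},b}A^{E^\bullet,2})] = 0$. Second, I claim the tautological endomorphism $\tau_{\mathcal{G}}$ of $\beta_{\mathcal{G}}^*E^\bullet$ commutes with $\beta^*_{\mathcal{G},b}A^{E^\bullet}$. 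Combined with the Leibniz rule for the supercommutator, these two facts yield
$$[\beta^*_{\mathcal{G},b}A^{E^\bullet},\,\tau_{\mathcal{G}}\exp(-\beta^*_{\mathcal{G},b}A^{E^\bullet,2})] = 0,$$
and hence $d\,\str[\tau_{\mathcal{G}}\exp(-\beta^*_{\mathcal{G},b}A^{E^\bullet,2})] = 0$. Since $\varphi$ is a rescaling by scalars depending only on form degree, it commutes with $d$, and one concludes that $\ch(A^{E^\bullet\prime\prime},h)$ is $d$-closed.

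The step requiring genuine care is the claim that $\tau_{\mathcal{G}}$ is parallel with respect to $\beta^*_{\mathcal{G},b}A^{E^\bullet}$. In the local chart picture of Remark \ref{rmk:inertia_local}, $\mathcal{G}$ is a transformation groupoid $G\ltimes X$, and on each component $X^g$ of $(I\mathcal{G})_0$ the tautological section acts as the element $g\in G$ on $E^\bullet|_{X^g}$. The $\mathcal{G}$-equivariance built into Definition \ref{defi: antiholo superconn} says exactly that $g$ commutes with $A^{E^\bullet\prime\prime}$; the equivariance of the generalized metric $h$ then, via the uniqueness in Proposition \ref{prop: existence of adjoint superconnection}, transfers this commutation to the formal adjoint, so $g$ also commutes with $A^{E^\bullet\prime}$, and therefore with $A^{E^\bullet}$. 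Identifying this action on local fixed-point components with the action of $\tau_{\mathcal{G}}$ in Definition \ref{defi: tau_G} and globalizing over all sectors of $I\mathcal{G}$ establishes the desired commutation, completing the proof.
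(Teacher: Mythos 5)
Your proof is correct and takes essentially the same route as the paper, whose proof consists of the single remark that the claim follows from the Bianchi identities together with the $I\mathcal{G}$-equivariance of $\beta^*_{\mathcal{G},b}A^{E^{\bullet}\prime\prime}$, $\beta^*_{\mathcal{G},b}A^{E^{\bullet}\prime}$, and the de Rham operator. You have simply made explicit the two ingredients the paper leaves implicit, namely the supertrace transgression identity and the fact that equivariance (via the uniqueness of the adjoint) forces $\tau_{\mathcal{G}}$ to commute with the pulled-back superconnection.
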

\begin{proof}
Since $\beta^*_{\mathcal{G},b}A^{E^{\bullet}\prime\prime}$, $\beta^*_{\mathcal{G},b}A^{E^{\bullet}\prime}$, and the de Rham differential operator are $I\mathcal{G}$-equivariant, the claim is a simple consequence of the Bianchi identities \eqref{eq: Bianchi id}.
\end{proof}

Recall that $\mathscr{M}^E$ is the set of generalized metrics on $E^{\bullet}$. Let $\diff^{\mathscr{M}^E}$ denote the de Rham operator on $\mathscr{M}^E$. Then $h^{-1}\diff^{\mathscr{M}^E}h$ is a $1$-form on $\mathscr{M}^E$ with values in degree $0$ morphisms in $\wedge^{\bullet}T^{*}_{\mathbb{C}}I\mathcal{G}_0  \times_{I\mathcal{G}} \End(E^{\bullet})$ that are self-adjoint with respect to $h$.

\begin{thm}\label{thm: Chern character is independent of the metric}
The Bott-Chern cohomology class of $\ch(A^{E^{\bullet}\prime\prime},h)$ is independent of the metric $h$. More precisely we have
\begin{equation}
\diff^{\mathscr{M}^E}\ch(A^{E^{\bullet}\prime\prime},h)=-\frac{\dbar\dpar}{2\pi i}\varphi\str[h^{-1}\diff^{\mathscr{M}^E}h\exp(-A^{E^{\bullet},2})].
\end{equation}
\end{thm}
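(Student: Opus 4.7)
The plan is to differentiate $\ch(A^{E^{\bullet}\prime\prime}, h) = \varphi\str[\tau_{\mathcal{G}}\exp(-\beta^*_{\mathcal{G},b}A^{E^{\bullet},2})]$ with respect to $h \in \mathscr{M}^E$ and recognize the outcome as $\dbar\dpar$-exact. All calculations take place on the inertia groupoid $I\mathcal{G}$; I suppress $\beta^*_{\mathcal{G},b}$ and abbreviate $\omega := h^{-1}\diff^{\mathscr{M}^E}h$ and $R := A^{E^{\bullet},2}$. Since $A^{E^{\bullet}\prime\prime}$ is intrinsic to the superconnection and independent of $h$, all $h$-dependence of $R = [A^{E^{\bullet}\prime\prime}, A^{E^{\bullet}\prime}]$ enters through the adjoint $A^{E^{\bullet}\prime}$. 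Writing the formal adjoint as $A^{E^{\bullet}\prime} = h^{-1}(A^{E^{\bullet}\prime\prime})^{*_\theta}h$, where $*_\theta$ denotes the adjoint with respect to the fixed pairing $\theta$, and applying Leibniz yields the key identity
\begin{equation*}
\diff^{\mathscr{M}^E} A^{E^{\bullet}\prime} = [A^{E^{\bullet}\prime},\, \omega]
\end{equation*}
as a graded commutator; consequently $\diff^{\mathscr{M}^E}R = [A^{E^{\bullet}\prime\prime}, [A^{E^{\bullet}\prime}, \omega]]$.

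Next I invoke Duhamel's formula for $\diff^{\mathscr{M}^E}\exp(-R)$ together with cyclicity of $\str$ and the fact that $\tau_{\mathcal{G}}$ commutes with $A^{E^{\bullet}\prime\prime}$, $A^{E^{\bullet}\prime}$, and $R$ by $\mathcal{G}$-equivariance, to obtain
\begin{equation*}
\diff^{\mathscr{M}^E}\ch(A^{E^{\bullet}\prime\prime}, h) = -\varphi\str\bigl[\tau_{\mathcal{G}}\,[A^{E^{\bullet}\prime\prime}, [A^{E^{\bullet}\prime}, \omega]]\exp(-R)\bigr].
\end{equation*}
The Bianchi identities \eqref{eq: Bianchi id} ensure $[A^{E^{\bullet}\prime\prime}, \exp(-R)] = 0 = [A^{E^{\bullet}\prime}, \exp(-R)]$, so the two outer commutators may be pulled past $\exp(-R)$. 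The crucial identification is that, under $\str[\tau_{\mathcal{G}}\,\cdot\,]$, the operator $[A^{E^{\bullet}\prime\prime}, -]$ descends to $\dbar$ and $[A^{E^{\bullet}\prime}, -]$ descends to $\dpar$ acting on scalar forms on $I\mathcal{G}$. To see this, expand $A^{E^{\bullet}\prime\prime} = v_0 + \nabla^{E^{\bullet}\prime\prime} + v_2 + \cdots$: each $v_i$ ($i\neq 1$) is $C^\infty$-linear, and since $\tau_{\mathcal{G}}$ commutes with $v_i$, the supertrace $\str[\tau_{\mathcal{G}}[v_i, X]]$ vanishes by graded cyclicity; the only surviving contribution is $\str[\tau_{\mathcal{G}}[\nabla^{E^{\bullet}\prime\prime}, X]] = \dbar\str[\tau_{\mathcal{G}}X]$, since $\tau_{\mathcal{G}}$ is parallel and $\str$ is compatible with the induced connection on $\End(E^{\bullet})$. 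Symmetrically, $\str[\tau_{\mathcal{G}}[A^{E^{\bullet}\prime}, X]] = \dpar\str[\tau_{\mathcal{G}}X]$. Applying these in turn gives $\diff^{\mathscr{M}^E}\ch = -\varphi\dbar\dpar\str[\tau_{\mathcal{G}}\omega\exp(-R)]$, and the commutation rule $\varphi\circ\dbar\dpar = 2\pi i\cdot\dbar\dpar\circ\varphi$, inherent in $\varphi(\alpha) = (2\pi i)^{\deg\alpha/2}\alpha$, rearranges this into the stated form. The Bott-Chern class independence of $\ch$ follows immediately: the right-hand side is $\dbar\dpar$-exact, and $\mathscr{M}^E$ is path-connected (generalized metrics form a convex cone in the sense of Definition \ref{defi: generalized metric}).

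The main obstacle is the careful bookkeeping of graded signs under $\deg_{-}$ and the term-by-term verification of the crucial identification that $[A^{E^{\bullet}\prime\prime}, -]$ and $[A^{E^{\bullet}\prime}, -]$ descend to $\dbar$ and $\dpar$ after applying $\str[\tau_{\mathcal{G}}\,\cdot\,]$. Beyond the sign check, one must confirm that the vanishing of $\str[\tau_{\mathcal{G}}[v_i, X]]$ persists for every bidegree contribution arising in the expansion of $\exp(-R)$, and that the complex structure on $(I\mathcal{G})_0$ pulled back through $\beta_{\mathcal{G}}$ agrees with the intrinsic one relative to which $\dbar$ and $\dpar$ are defined in the Bott-Chern cohomology $\HBC^{(=)}(I\mathcal{G},\mathbb{C})$ of Definition \ref{defi: Bott-Chern cohomology}. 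Once these checks are in place, the computation runs in direct parallel with the non-equivariant case treated in \cite{bismut2023coherent}, with the twist $\tau_{\mathcal{G}}$ functioning as a parallel section throughout.
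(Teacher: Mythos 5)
Your proposal is correct and is exactly the transgression computation that the paper's one-line proof delegates to \cite[Theorem 4.7.1]{bismut2011hypoelliptic}: differentiate $A^{E^{\bullet}\prime}=h^{-1}(A^{E^{\bullet}\prime\prime})^{*_\theta}h$ to get $\diff^{\mathscr{M}^E}A^{E^{\bullet}\prime}=[A^{E^{\bullet}\prime},\omega]$, apply Duhamel and the Bianchi identities, and use that $\str[\tau_{\mathcal{G}}[A^{E^{\bullet}\prime\prime},\cdot]]=\dbar\str[\tau_{\mathcal{G}}\cdot]$ and $\str[\tau_{\mathcal{G}}[A^{E^{\bullet}\prime},\cdot]]=\dpar\str[\tau_{\mathcal{G}}\cdot]$, all of which goes through equivariantly because $\tau_{\mathcal{G}}$ is parallel and commutes with the superconnection data on the fixed locus. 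The only blemish is the last normalization step: with the paper's stated convention $\varphi(\alpha)=(2\pi i)^{\deg\alpha/2}\alpha$ your identity $\varphi\circ\dbar\dpar=2\pi i\,\dbar\dpar\circ\varphi$ would produce $-2\pi i\,\dbar\dpar\,\varphi\str[\cdots]$ rather than $-\tfrac{\dbar\dpar}{2\pi i}\varphi\str[\cdots]$; the displayed formula matches only under the standard convention $\varphi(\alpha)=(2\pi i)^{-\deg\alpha/2}\alpha$, so this discrepancy lies in the paper's definition of $\varphi$ rather than in your argument.
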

\begin{proof}
The proof is essentially the same as \cite[Theorem 4.7.1]{bismut2011hypoelliptic}.
\end{proof}

Theorem \ref{thm: Chern character is independent of the metric} implies that the following definition is independent of the choice of the generalized metric $h$.
\begin{defi}\label{defi: Chern character in Bott-Chern}
We denote by $\chBC(A^{E^{\bullet}\prime\prime})$ the Bott-Chern cohomology class of $\ch(A^{E^{\bullet}\prime\prime},h)$ in $\HBC^{(=)}(I\mathcal{G},\mathbb{C})$.
\end{defi}

\subsection{Chern Character Form and Scaling of the Metric}
In this subsection, we assume that the metric $h$ is a pure metric. Let $N^E$ be the number operator on $E^{\bullet}$, i.e. for $e\in E^k$ we have $N^E(e)=ke$.

\begin{defi}
For a parameter $T>0$, let $h_T$ be the deformed pure metric defined by
\begin{equation}
h_T=hT^{N^E}.
\end{equation}
In other words, for $e_1, e_2\in E^k$, we have
$$
h_T(e_1,e_2)=T^kh(e_1,e_2).
$$
\end{defi}

Let $(E^{\bullet}, A^{E^{\bullet}\prime\prime})$ be a flat antiholomorphic superconnection with a pure metric $h$. Now we assume that the cohomologies $HE^{\bullet}$ of the complex $(E^{\bullet}, v_0)$ are locally of constant ranks, hence they are $C^{\infty}$-vector bundles. In this case, the $\dbar$-connection $\nabla^{E^{\bullet}\prime\prime}$ induces a flat $\dbar$-connection $\nabla^{HE^{\bullet}\prime\prime}$ on the graded $C^{\infty}$-vector bundle $HE^{\bullet}$.

Put
\begin{equation}
\mathcal{H}E^{\bullet}=\{e\in E^{\bullet}|v_0e=0\text{ and }v_0^*e=0\}.
\end{equation}
By finite-dimensional Hodge theory, we know that $HE^{\bullet}\cong \mathcal{H}E^{\bullet}$. Hence $\mathcal{H}E^{\bullet}$ is a graded $C^{\infty}$-vector subbundle of $E^{\bullet}$. As a subbundle,  $\mathcal{H}E^{\bullet}$ inherits from $E^{\bullet}$ a graded metric which we denote by $h^{\mathcal{H}E}$. Let $\nabla^{\mathcal{H}E}$ be the corresponding Chern connection on $\mathcal{H}E^{\bullet}$.

\begin{lemma}\label{lemma: Chern connection coincide with projection}
We have 
\begin{equation}
\nabla^{\mathcal{H}E}=P\nabla^{E},
\end{equation}
where $\nabla^{E}=\nabla^{E^{\bullet}\prime}+\nabla^{E^{\bullet}\prime\prime}$ as in \eqref{eq: decomposition of anti super conn} and  \eqref{eq: decomposition of adjoint super conn}, and $P\colon E^{\bullet}\to \mathcal{H}E^{\bullet}$ is the orthogonal projection.
\end{lemma}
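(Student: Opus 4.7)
The plan is to invoke the standard characterization of the Chern connection: on a Hermitian holomorphic vector bundle, it is the unique connection that is compatible with the Hermitian metric and whose $(0,1)$-component agrees with the given $\dbar$-operator. Thus it suffices to verify that $P\nabla^{E}$, restricted to sections of $\mathcal{H}E^{\bullet}\subset E^{\bullet}$, satisfies these two properties with respect to $h^{\mathcal{H}E}$ and the induced $\dbar$-connection $\nabla^{\mathcal{H}E\prime\prime}$ corresponding to $\nabla^{HE^{\bullet}\prime\prime}$ under the Hodge isomorphism $\mathcal{H}E^{\bullet}\cong HE^{\bullet}$.

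First I would check metric compatibility. By construction, $h$ is $\nabla^{E}$-compatible: for any $s_{1}, s_{2}\in C^{\infty}(G_{0},E^{\bullet})$,
\begin{equation*}
\diff\, h(s_{1},s_{2})=h(\nabla^{E}s_{1},s_{2})+h(s_{1},\nabla^{E}s_{2}),
\end{equation*}
since $\nabla^{E^{\bullet}\prime}$ and $\nabla^{E^{\bullet}\prime\prime}$ are mutual adjoints with respect to $h$. If now $s_{1},s_{2}$ are sections of $\mathcal{H}E^{\bullet}$, then $h^{\mathcal{H}E}(s_{1},s_{2})=h(s_{1},s_{2})$, and because $\mathcal{H}E^{\bullet}$ is orthogonal to its $h$-complement in $E^{\bullet}$, we may replace $\nabla^{E}s_{i}$ by $P\nabla^{E}s_{i}$ on the right-hand side without changing the pairing. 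Hence $P\nabla^{E}$ is compatible with $h^{\mathcal{H}E}$.

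Second I would verify the $(0,1)$-part. The $\dbar$-connection $\nabla^{HE^{\bullet}\prime\prime}$ on $HE^{\bullet}$ is induced by $\nabla^{E^{\bullet}\prime\prime}$: for a local section of $HE^{\bullet}$ represented by an $E^{\bullet}$-valued section $s$ with $v_{0}s=0$, we have $\nabla^{HE^{\bullet}\prime\prime}[s]=[\nabla^{E^{\bullet}\prime\prime}s]$, which is well-defined because $v_{0}$ commutes with $\nabla^{E^{\bullet}\prime\prime}$ up to exact terms coming from the flatness relation $(A^{E^{\bullet}\prime\prime})^{2}=0$. Under finite-dimensional Hodge theory with respect to $v_{0}$, the cohomology class $[\nabla^{E^{\bullet}\prime\prime}s]$ is identified with its harmonic representative $P\nabla^{E^{\bullet}\prime\prime}s$, so under the isomorphism $HE^{\bullet}\cong\mathcal{H}E^{\bullet}$ the induced $\dbar$-connection is exactly $P\nabla^{E^{\bullet}\prime\prime}=(P\nabla^{E})^{0,1}$ acting on harmonic sections. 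By the uniqueness of the Chern connection, $\nabla^{\mathcal{H}E}=P\nabla^{E}$.

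The main technical point to handle carefully is the compatibility of $\nabla^{E^{\bullet}\prime\prime}$ with $v_{0}$ needed for the induced $\dbar$-connection on cohomology to be well-defined, and the commutation of $P$ with $\nabla^{E^{\bullet}\prime\prime}$ at the level of harmonic representatives modulo $v_{0}$-exact terms; these follow from unpacking the identity $[A^{E^{\bullet}\prime\prime},A^{E^{\bullet}\prime\prime}]=0$ in bidegrees $(0,1)$ and using that $v_{0}$-exact sections are $h$-orthogonal to $\mathcal{H}E^{\bullet}$.
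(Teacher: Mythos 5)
Your proof is correct and follows the standard route (uniqueness of the Chern connection, checking metric compatibility of the projected connection and identifying its $(0,1)$-part with the induced $\dbar$-operator via the Hodge isomorphism), which is exactly the argument of the reference \cite[Theorem 4.10.4]{bismut2011hypoelliptic} that the paper cites in lieu of a written proof. One tiny remark: the commutation $[\nabla^{E^{\bullet}\prime\prime},v_{0}]=0$ holds exactly (it is the degree-one component of $(A^{E^{\bullet}\prime\prime})^{2}=0$), not merely up to $v_{0}$-exact terms, which only strengthens your well-definedness step.
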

\begin{proof}
The proof is the same as that of \cite[Theorem 4.10.4]{bismut2011hypoelliptic}.
\end{proof}

Under the isomorphism $HE^{\bullet}\cong \mathcal{H}E^{\bullet}$, we obtain the corresponding graded metric $h^{HE}$ and connection $\nabla^{HE}$ on $HE^{\bullet}$.

\begin{thm}\label{thm: limit T to infinity}
If the cohomologies $HE^{\bullet}$ of the complex $(E^{\bullet}, v_0)$ are locally of constant ranks, then we have
\begin{equation}
\ch(A^{E^{\bullet}\prime\prime},h_T)=\ch(\nabla^{HE},h^{HE})+\mathcal{O}(1/\sqrt{T}) \text{ as } T\to \infty. 
\end{equation}
Hence, under the same condition we have
\begin{equation}
\chBC(A^{E^{\bullet}\prime\prime})=\chBC(HE) \text{ in } \HBC^{(=)}(\mathcal{IG},\mathbb{C}).
\end{equation}
In particular, if the complex $(E^{\bullet}, v_0)$ is acyclic, then 
\begin{equation}
\chBC(A^{E^{\bullet}\prime\prime})=0 \text{ in } \HBC^{(=)}(\mathcal{IG},\mathbb{C}).
\end{equation}
\end{thm}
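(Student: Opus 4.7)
The plan is to show the asymptotic convergence
$$
\ch(A^{E^{\bullet}\prime\prime},h_T)\longrightarrow \ch(\nabla^{HE},h^{HE})\quad\text{as }T\to\infty
$$
at the level of differential forms on $I\mathcal{G}$, with quantitative error $\mathcal{O}(1/\sqrt{T})$. Once this is in hand, the second claim is automatic: by Theorem \ref{thm: Chern character is independent of the metric} the Bott-Chern class $\chBC(A^{E^{\bullet}\prime\prime})$ does not depend on $T$, so it must coincide with the class of the limiting form, namely $\chBC(HE)$. The third claim is then trivial, because $HE^{\bullet}=0$ in the acyclic case.

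The first step is to compute how the rescaling alters the adjoint superconnection. For the pure metric $h_T=h\,T^{N^E}$, a direct pairing computation gives $v_i^{*_T}=T^{1-i}v_i^*$ for each $i\ne 1$, while $\nabla^{E^{\bullet}\prime,T}=\nabla^{E^{\bullet}\prime}$ because multiplying a Hermitian metric on $E^k$ by a positive scalar leaves its Chern connection unchanged. Hence
$$
A^{E^{\bullet}\prime,T}= T\,v_0^*+\nabla^{E^{\bullet}\prime}+T^{-1}v_2^*+T^{-2}v_3^*+\cdots,
$$
and the curvature decomposes as
$$
A^{E^{\bullet},2}_T=[A^{E^{\bullet}\prime\prime},A^{E^{\bullet}\prime,T}]=T[v_0,v_0^*]+R_T,
$$
where the remainder $R_T$ is polynomial in $T^{\pm 1/2}$ with coefficients built from the finitely many $v_i$, $v_i^*$, $\nabla^{E^{\bullet}\prime}$, $\nabla^{E^{\bullet}\prime\prime}$ and their commutators.

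The second step is to invoke finite-dimensional Hodge theory fibrewise on $(I\mathcal{G})_0$: the operator $[v_0,v_0^*]=v_0 v_0^*+v_0^*v_0$ is non-negative self-adjoint on $E^{\bullet}$, with kernel exactly $\mathcal{H}E^{\bullet}$ and a uniform spectral gap on $(\mathcal{H}E^{\bullet})^{\perp}$ over any compact subset. I would expand $\exp(-A^{E^{\bullet},2}_T)$ via the Volterra (Duhamel) series around the leading operator $\exp(-T[v_0,v_0^*])$. Outside $\mathcal{H}E^{\bullet}$ this exponential decays like $e^{-cT}$; on $\mathcal{H}E^{\bullet}$ it converges to the orthogonal projection $P$. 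All Duhamel terms of positive order in $R_T$ either contribute exponentially small contributions on the non-harmonic part or pick up an odd power of $T^{-1/2}$ (from terms $T^{-1/2}v_0$ and $T^{-1/2}v_2^*$ after sandwiching with $P$), yielding the $\mathcal{O}(1/\sqrt T)$ bound by a standard estimate. Taking $\varphi\str[\tau_{\mathcal{G}}\,\cdot\,]$ of the leading surviving piece, one is reduced to the expression $\varphi\str[\tau_{\mathcal{G}}\exp(-P\nabla^{E,2}P)]$ on $\mathcal{H}E^{\bullet}$; by Lemma \ref{lemma: Chern connection coincide with projection}, $P\nabla^E P=\nabla^{\mathcal{H}E}$, and this is precisely $\ch(\nabla^{HE},h^{HE})$ after the identification $\mathcal{H}E^{\bullet}\simeq HE^{\bullet}$.

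The main obstacle is the analytic Duhamel-expansion estimate producing the $\mathcal{O}(1/\sqrt T)$ remainder; this is the orbifold–equivariant analogue of \cite[Theorem 4.10.4]{bismut2011hypoelliptic}. Equivariance is compatible with the argument: $\tau_{\mathcal{G}}$ commutes with $T^{N^E}$ and with $[v_0,v_0^*]$ because the $\mathcal{G}$-action preserves the $\mathbb{Z}$-grading and the generalized metric, so pulling back to $I\mathcal{G}$ and inserting $\tau_{\mathcal{G}}$ does not disturb the pointwise spectral analysis. The bookkeeping of which Volterra terms actually contribute at leading order, and verifying that odd–order $T^{-1/2}$ terms cancel after supertrace while even–order terms give genuine $T^{-1}$ corrections, is the routine but nontrivial step where care is required.
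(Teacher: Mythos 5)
Your proposal is correct and follows essentially the same route as the paper, which proves this result by invoking the finite-dimensional analogue of \cite[Theorem 4.10.4]{bismut2011hypoelliptic}: rescale the metric by $T^{N^E}$, observe that the curvature is dominated by $T[v_0,v_0^*]$, apply fibrewise Hodge theory with a local spectral gap so that the non-harmonic part is exponentially suppressed and the harmonic part converges to $\exp(-\nabla^{\mathcal{H}E,2})$ via Lemma \ref{lemma: Chern connection coincide with projection}, then conclude equality of Bott--Chern classes from metric-independence (Theorem \ref{thm: Chern character is independent of the metric}). Your elaboration supplies the details the paper delegates to that reference, so no further comparison is needed.
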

\begin{proof}
This is a finite-dimensional analogue of \cite[Theorem 4.10.4]{bismut2011hypoelliptic}.
\end{proof}

\subsection{Chern character of mapping cones}
Recall the mapping cone $(C^{\bullet}, A^{C^{\bullet}\prime\prime}_{\phi})$ of a degree $0$ closed map $\phi\colon (E^{\bullet}, A^{E^{\bullet}\prime\prime})\to (F^{\bullet}, A^{F^{\bullet}\prime\prime})$, defined in Section \ref{Section: ahfs}.

\begin{thm}\label{thm: Chern character and mapping cone}
The following identity holds:
\begin{equation}
\chBC( A^{C^{\bullet}\prime\prime}_{\phi})=\chBC(A^{E^{\bullet}\prime\prime})-\chBC(A^{F^{\bullet}\prime\prime}) \text{ in } \HBC^{(=)}(I\mathcal{G},\mathbb{C}).
\end{equation}
In particular, if $\phi$ is a quasi-isomorphism as in Definition \ref{defi: quasi-isomorphism}, then we have
\begin{equation}
\chBC(A^{E^{\bullet}\prime\prime})=\chBC(A^{F^{\bullet}\prime\prime}) \text{ in }  \HBC^{(=)}(I\mathcal{G},\mathbb{C}).
\end{equation}
\end{thm}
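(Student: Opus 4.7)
The plan is to deform the mapping cone superconnection to a split one through a smooth family of flat antiholomorphic superconnections, establish homotopy invariance of the Bott-Chern class along the family, and then compute at the split endpoint.

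To begin, I would introduce the one-parameter family $A^{C^{\bullet}\prime\prime}_s$, $s\in[0,1]$, on the graded bundle $C^{\bullet}=E^{\bullet+1}\oplus F^{\bullet}$, defined by
$$A^{C^{\bullet}\prime\prime}_s=\begin{bmatrix}A^{E^{\bullet}\prime\prime}&0\\ s\phi(-1)^{\deg(\cdot)}& A^{F^{\bullet}\prime\prime}\end{bmatrix},$$
so that $A^{C^{\bullet}\prime\prime}_1=A^{C^{\bullet}\prime\prime}_{\phi}$ and $A^{C^{\bullet}\prime\prime}_0$ is a direct sum. Flatness $A^{C^{\bullet}\prime\prime}_s\circ A^{C^{\bullet}\prime\prime}_s=0$ for every $s$ follows since $\phi$ is a closed degree $0$ morphism: the off-diagonal term in the square is $s(A^{F^{\bullet}\prime\prime}\phi-\phi A^{E^{\bullet}\prime\prime})(-1)^{\deg}=s\,d\phi\cdot(-1)^{\deg}=0$, and all other entries vanish automatically since $\phi$ does not compose with itself.

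The key step is to prove a homotopy invariance statement: for a smooth family $\{A^{C^{\bullet}\prime\prime}_s\}$ of flat antiholomorphic superconnections on a fixed graded bundle, the Bott-Chern class $\chBC(A^{C^{\bullet}\prime\prime}_s)\in \HBC^{(=)}(I\mathcal{G},\mathbb{C})$ is independent of $s$. I would prove this by a transgression formula in the spirit of Theorem \ref{thm: Chern character is independent of the metric}: fix a generalized metric $h$, differentiate
$$\ch(A^{C^{\bullet}\prime\prime}_s,h)=\varphi\str\bigl[\tau_{\mathcal{G}}\exp(-\beta^*_{\mathcal{G},b}A^{C^{\bullet},2}_s)\bigr]$$
in $s$, apply the Bianchi identities \eqref{eq: Bianchi id} together with cyclicity of $\str$ (which annihilates supercommutators) to rewrite $\partial_s\ch$ as a total derivative of a form, and then use the $(p,q)$-bidegree to refine this into a $\dbar\partial$-exact expression on $I\mathcal{G}$. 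This is the main technical obstacle, as it requires careful tracking of the bigrading and the $\mathcal{G}$-equivariance on the inertia groupoid.

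At $s=0$ the superconnection is block diagonal, and choosing a block-diagonal metric $h_C=h_E\oplus h_F$ makes the curvature $\beta^*_{\mathcal{G},b}A^{C^{\bullet},2}_0$ block diagonal as well. The tautological section $\tau_{\mathcal{G}}$ also respects this decomposition, so
$$\chBC(A^{C^{\bullet}\prime\prime}_0)=\chBC(A^{E^{\bullet}\prime\prime}[1])+\chBC(A^{F^{\bullet}\prime\prime}),$$
and the shift identity $\chBC(A^{E^{\bullet}\prime\prime}[1])=-\chBC(A^{E^{\bullet}\prime\prime})$, a direct consequence of the $(-1)^{\deg(\cdot)}$ twist in the definition of the shift together with the supertrace sign, yields the displayed formula. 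The second assertion is immediate: if $\phi$ is a quasi-isomorphism, then $(C^{\bullet},v_0)$ is acyclic, so Theorem \ref{thm: limit T to infinity} gives $\chBC(A^{C^{\bullet}\prime\prime}_{\phi})=0$, forcing $\chBC(A^{E^{\bullet}\prime\prime})=\chBC(A^{F^{\bullet}\prime\prime})$.
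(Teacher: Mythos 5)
The paper's own ``proof'' is only a pointer to \cite[Proposition 8.7.1]{bismut2023coherent}, and that argument runs through conjugation and metric rescaling rather than through a homotopy of superconnections, so the real comparison is with that mechanism. Your setup is fine: the family $A^{C^{\bullet}\prime\prime}_s$ is flat for every $s$ precisely because $\phi$ is closed, and your derivation of the second assertion from acyclicity of $(C^{\bullet},v_0)$ together with Theorem \ref{thm: limit T to infinity} is correct. The gap is exactly at the step you flag as the main obstacle: homotopy invariance of the Bott--Chern class along the family. Differentiating $\ch(A^{C^{\bullet}\prime\prime}_s,h)$ in $s$ and using the Bianchi identities and cyclicity of the supertrace produces a variation of the form $\dpar\alpha_s+\dbar\beta_s$, i.e.\ only $d$-exactness; no amount of bidegree bookkeeping upgrades this to $\dbar\dpar$-exactness. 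The reason the anomaly formula of Theorem \ref{thm: Chern character is independent of the metric} lands in the image of $\dbar\dpar$ is precisely that $A^{E^{\bullet}\prime\prime}$ is held fixed there, so the whole variation sits in the adjoint $A^{E^{\bullet}\prime}$; once the antiholomorphic part itself moves, the Bott--Chern class is not invariant in general --- this is the very reason $\chBC$ is a finer invariant than the topological Chern character. So the transgression argument as sketched would fail.

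What saves your strategy for this particular family is its special structure, which you do not use: for $s\in(0,1]$ one has $A^{C^{\bullet}\prime\prime}_s=g_s^{-1}A^{C^{\bullet}\prime\prime}_1 g_s$ with $g_s=\mathrm{diag}(s\cdot\mathrm{id}_{E[1]},\mathrm{id}_{F})$, hence $\ch(A^{C^{\bullet}\prime\prime}_s,h)=\ch(A^{C^{\bullet}\prime\prime}_1,h_s)$ for the transported generalized metric $h_s$, and Theorem \ref{thm: Chern character is independent of the metric} gives constancy of the class on $(0,1]$. The only remaining issue is the degenerate endpoint $s\to 0$, where $g_s$ ceases to be invertible; there one needs convergence of the transgression integral exactly as in the $h_T=hT^{N^E}$ argument of Theorem \ref{thm: limit T to infinity}, with the number operator replaced by the projection onto the $E$-summand of $C^{\bullet}$. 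This conjugation/metric-rescaling reduction is the missing idea and is the content of the cited proof in \cite{bismut2023coherent}. Finally, a sign: since the supertrace of $C^{\bullet}$ restricted to $E^{n+1}\subset C^{n}$ equals minus the supertrace of $E^{\bullet}$ in degree $n+1$, your block-diagonal computation at $s=0$ yields $\chBC(A^{F^{\bullet}\prime\prime})-\chBC(A^{E^{\bullet}\prime\prime})$, the negative of the displayed identity; this needs to be reconciled with the statement rather than asserted to agree.
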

\begin{proof}
This is an equivariant version of \cite[Proposition 8.7.1]{bismut2023coherent}. The proof is the same and is left to the reader. 
\end{proof}

The following is an immediate consequence of Theorem \ref{thm: Chern character and mapping cone}.
\begin{coro}\label{coro: Chern character isomorphism class}
Let $\mathcal{F}\in D^b_{\coh}(\mathcal{G})$ and $(E^{\bullet}, A^{E^{\bullet}\prime\prime})$ be as in Theorem \ref{thm: ess surj}. Then the Bott-Chern class of $\chBC(A^{E^{\bullet}\prime\prime})$ depends only on the isomorphism class of $\mathcal{F}$ in $D^b_{\coh}(\mathcal{G})$.
\end{coro}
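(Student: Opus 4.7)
The plan is to combine three ingredients already established in the excerpt: the fully faithfulness of $F_X$ (Theorem \ref{thm: fully faithful}), the equivalence between homotopy equivalences in $\underline{B}(X)$ and quasi-isomorphisms of superconnections (Proposition \ref{prop: quasi-isom is homo inver}), and the invariance of the Chern character class under quasi-isomorphisms (Theorem \ref{thm: Chern character and mapping cone}). In outline, an isomorphism of underlying coherent complexes gets lifted to a homotopy equivalence between the associated superconnections, and any such homotopy equivalence preserves $\chBC$.

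More concretely, suppose $(E^{\bullet}, A^{E^{\bullet}\prime\prime})$ and $(\widetilde{E}^{\bullet}, A^{\widetilde{E}^{\bullet}\prime\prime})$ are two antiholomorphic flat superconnections produced from $\mathcal{F}$ via Theorem \ref{thm: ess surj}, or more generally from two complexes of sheaves representing the same isomorphism class in $D^b_{\coh}(\mathcal{G})$. First I would apply $F_X$ to obtain an isomorphism $F_X(E^{\bullet}, A^{E^{\bullet}\prime\prime}) \simeq F_X(\widetilde{E}^{\bullet}, A^{\widetilde{E}^{\bullet}\prime\prime})$ in $D^b_{\coh}(\mathcal{G})$. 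By Theorem \ref{thm: fully faithful}, this isomorphism comes from a closed degree-$0$ morphism $\phi\colon (E^{\bullet}, A^{E^{\bullet}\prime\prime}) \to (\widetilde{E}^{\bullet}, A^{\widetilde{E}^{\bullet}\prime\prime})$ that is invertible in $\underline{B}(X)$.

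Next I would invoke Proposition \ref{prop: quasi-isom is homo inver}: because $\phi$ is a homotopy equivalence in $\underline{B}(X)$, it is automatically a quasi-isomorphism in the sense of Definition \ref{defi: quasi-isomorphism}. Finally, Theorem \ref{thm: Chern character and mapping cone} gives
\begin{equation*}
\chBC(A^{E^{\bullet}\prime\prime}) = \chBC(A^{\widetilde{E}^{\bullet}\prime\prime}) \quad \text{in } \HBC^{(=)}(I\mathcal{G}, \mathbb{C}),
\end{equation*}
which is precisely the assertion that the Bott--Chern class depends only on the isomorphism class of $\mathcal{F}$ in $D^b_{\coh}(\mathcal{G})$.

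There is no genuine obstacle to this argument once the previous results are in hand; the only point deserving care is verifying that the isomorphism in $D^b_{\coh}(\mathcal{G})$ lifts to a \emph{single} closed degree-$0$ morphism in $\underline{B}(X)$ (rather than a zig-zag of quasi-isomorphisms), but this is exactly the content of fully faithfulness combined with essential surjectivity, i.e.\ Corollary \ref{coro: equiv of cats}.
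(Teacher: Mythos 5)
Your argument is correct and is essentially the paper's intended proof: the paper states the corollary as an immediate consequence of Theorem \ref{thm: Chern character and mapping cone}, with the implicit step being exactly what you spell out, namely that an isomorphism in $D^b_{\coh}(\mathcal{G})$ lifts via the equivalence $F_X$ (Theorems \ref{thm: ess surj} and \ref{thm: fully faithful}) to a quasi-isomorphism of superconnections in the sense of Proposition \ref{prop: quasi-isom is homo inver}, to which the invariance statement of Theorem \ref{thm: Chern character and mapping cone} applies.
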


The following definition thus makes sense.
\begin{defi}\label{defi: Chern character of derived category}
We denote by $\chBC(\mathcal{F})$ the Bott-Chern cohomology class of $\chBC(A^{E^{\bullet}\prime\prime})$ in Corollary \ref{coro: Chern character isomorphism class} .
\end{defi}

The following is a direct consequence of Corollary \ref{coro: equiv of cats} and Theorem \ref{thm: Chern character and mapping cone}. 
\begin{coro}
The Chern character $\chBC$ can be
viewed as a homomorphism from $K(\mathcal{G})$ into $\HBC^{(=)}(I\mathcal{G},\mathbb{C})$.
\end{coro}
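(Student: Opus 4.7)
The plan is to combine the equivalence of categories (Corollary~\ref{coro: equiv of cats}) with the mapping cone additivity (Theorem~\ref{thm: Chern character and mapping cone}) and the fact, recalled at the end of Section~\ref{section: coherent sheaves on complex orbifolds}, that $K(\mathcal{G})\cong K(D^b_{\coh}(\mathcal{G}))$. Concretely, I would view the Grothendieck group $K(D^b_{\coh}(\mathcal{G}))$ as the free abelian group on isomorphism classes $[\mathcal{F}]$ in $D^b_{\coh}(\mathcal{G})$ modulo the relations $[\mathcal{F}_2]=[\mathcal{F}_1]+[\mathcal{F}_3]$ for every distinguished triangle $\mathcal{F}_1\to \mathcal{F}_2\to \mathcal{F}_3\to \mathcal{F}_1[1]$. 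The goal is then to verify that the assignment $\mathcal{F}\mapsto \chBC(\mathcal{F})$ of Definition~\ref{defi: Chern character of derived category} descends to this quotient.

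First I would note that Definition~\ref{defi: Chern character of derived category}, together with Corollary~\ref{coro: Chern character isomorphism class}, already shows that $\chBC$ is well-defined on isomorphism classes in $D^b_{\coh}(\mathcal{G})$, taking values in $\HBC^{(=)}(I\mathcal{G},\mathbb{C})$. Next, for a distinguished triangle $\mathcal{F}_1\to \mathcal{F}_2\to \mathcal{F}_3\to \mathcal{F}_1[1]$ in $D^b_{\coh}(\mathcal{G})$, I would invoke Corollary~\ref{coro: equiv of cats} to produce objects $(E^{\bullet}_i,A^{E^{\bullet}_i\prime\prime})\in B(\mathcal{G})$ whose images under $F_X$ are isomorphic to the $\mathcal{F}_i$, and a closed degree zero morphism $\phi\colon (E^{\bullet}_1,A^{E^{\bullet}_1\prime\prime})\to (E^{\bullet}_2,A^{E^{\bullet}_2\prime\prime})$ realizing $\mathcal{F}_1\to \mathcal{F}_2$. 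The pre-triangulated structure on $\underline{B}(\mathcal{G})$ then identifies $\mathcal{F}_3$ with the image under $F_X$ of the mapping cone $(C^{\bullet},A^{C^{\bullet}\prime\prime}_{\phi})$, up to an isomorphism in $\underline{B}(\mathcal{G})$. Applying Theorem~\ref{thm: Chern character and mapping cone} gives the additive relation among $\chBC(A^{E^{\bullet}_1\prime\prime})$, $\chBC(A^{E^{\bullet}_2\prime\prime})$ and $\chBC(A^{C^{\bullet}\prime\prime}_{\phi})$ in $\HBC^{(=)}(I\mathcal{G},\mathbb{C})$, which translates by Definition~\ref{defi: Chern character of derived category} into the desired relation $\chBC(\mathcal{F}_2)=\chBC(\mathcal{F}_1)+\chBC(\mathcal{F}_3)$.

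Finally, I would conclude that $\chBC$ factors through a group homomorphism $K(D^b_{\coh}(\mathcal{G}))\to \HBC^{(=)}(I\mathcal{G},\mathbb{C})$, and transport it through the isomorphism $K(\mathcal{G})\cong K(D^b_{\coh}(\mathcal{G}))$. The main subtlety I expect is the bookkeeping of signs: the shift functor $[1]$ on $B(\mathcal{G})$ multiplies superconnections by $(-1)^{\deg(\cdot)}$, so that the supertrace in \eqref{eq: Chern character} produces $\chBC(\mathcal{F}[1])=-\chBC(\mathcal{F})$, and one must check that the sign in Theorem~\ref{thm: Chern character and mapping cone} matches the $K$-theoretic relation $[\mathcal{F}_2]=[\mathcal{F}_1]+[\mathcal{F}_3]$ dictated by the distinguished triangle $\mathcal{F}_1\to \mathcal{F}_2\to \mathcal{F}_3\to \mathcal{F}_1[1]$. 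Apart from this sign verification, the argument is a direct combination of the results already established.
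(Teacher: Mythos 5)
Your proposal is correct and follows exactly the route the paper takes: the paper's entire justification is the one-line remark that the corollary is a direct consequence of Corollary \ref{coro: equiv of cats} and Theorem \ref{thm: Chern character and mapping cone}, and your argument simply spells out that deduction (well-definedness on isomorphism classes via Corollary \ref{coro: Chern character isomorphism class}, additivity over distinguished triangles via mapping cones, and transport through $K(\mathcal{G})\cong K(D^b_{\coh}(\mathcal{G}))$). The sign check you flag at the end is a reasonable point of care, but it does not change the approach.
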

 
Next, we consider two compatibility properties of the Chern character.
\begin{prop}\label{prop: chern character pull back}
For $f\colon \mathcal{G}\to \mathcal{H}$ and $\mathcal{F}\in D^b_{\coh}(\mathcal{H})$, we have
\begin{equation}
\chBC(Lf^*\mathcal{F})=If^*\chBC(\mathcal{F}) \text{ in }\HBC^{(=)}(I\mathcal{G},\mathbb{C}).
\end{equation}
\end{prop}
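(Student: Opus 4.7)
The plan is to prove the identity at the level of differential forms representing the Chern character, using the explicit description via antiholomorphic flat superconnections, and then descend to Bott-Chern cohomology.

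First, by Corollary \ref{coro: equiv of cats}, I can choose an antiholomorphic flat superconnection $(E^\bullet, A^{E^\bullet \prime\prime}) \in B(\mathcal{H})$ representing $\mathcal{F}$ in $\underline{B}(\mathcal{H})$, together with a generalized metric $h \in \mathscr{M}^E$ in the sense of Definition \ref{defi: generalized metric}. By Proposition \ref{prop: compatibility with pull-backs}, the pullback $f^*_b(E^\bullet, A^{E^\bullet \prime\prime})$ represents $Lf^*\mathcal{F}$ in $\underline{B}(\mathcal{G})$. Since a generalized metric is defined as a $\mathcal{G}$-equivariant self-adjoint element of pure degree whose $0$-th component is Hermitian, all of these conditions are preserved under pullback by $f$, so $f^*_b h$ is a generalized metric on $f^*_b E^\bullet$. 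By the uniqueness clause of Proposition \ref{prop: existence of adjoint superconnection} and the fact that $\theta_{f^*_b h}$ is manifestly compatible with $\theta_h$ at the fibre level, I get $(f^*_b A^{E^\bullet \prime\prime})^* = f^*_b(A^{E^\bullet \prime})$, so the whole superconnection and its curvature pull back: $f^*_b A^{E^\bullet, 2}$ equals the curvature of the pulled back superconnection.

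The key geometric input is the commutative square
\begin{equation*}
\begin{CD}
I\mathcal{G} @>If>> I\mathcal{H} \\
@V \beta_{\mathcal{G}} VV @VV \beta_{\mathcal{H}} V \\
\mathcal{G} @>f>> \mathcal{H}
\end{CD}
\end{equation*}
together with the observation that the tautological sections are compatible, i.e.\ $If^* \tau_{\mathcal{H}} = \tau_{\mathcal{G}}$ under the natural identification of pulled back arrows (one sees this at a point $(g, z, h) \in IZ$ from Definition \ref{defi: generalized morphism for inertial groupoid}, since the $g$-component is precisely the arrow in $G_1$ labelling the fixed-point data). Applying $If^*$ to the defining formula (\ref{eq: Chern character}) for $\ch(A^{E^\bullet \prime\prime}, h)$, and using the fact that the supertrace and the morphism $\varphi$ are natural with respect to pullbacks of differential forms, gives
\begin{equation*}
If^*\,\ch(A^{E^\bullet \prime\prime}, h) \;=\; \ch\bigl(f^*_b A^{E^\bullet \prime\prime},\, f^*_b h\bigr) \;\in\; \Omega^{(=)}_{I\mathcal{G}}((I\mathcal{G})_0, \mathbb{C}).
\end{equation*}
Passing to Bott-Chern cohomology via Theorem \ref{thm: Chern character is independent of the metric}, Definition \ref{defi: Chern character in Bott-Chern} and Definition \ref{defi: Chern character of derived category}, and noting that $If^*$ descends to a well-defined map on $\HBC^{(=)}$ by the functoriality developed in Section \ref{subsec: functorial Bott-Chern cohomology}, yields the desired identity.

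The main obstacle is verifying carefully the compatibility of $If^*$ on the tautological sections $\tau$ and on the formal adjoints $A^{E^\bullet \prime}$, particularly when $f$ is a general generalized morphism (not just an honest groupoid homomorphism). For the former, I would unwind the construction of $(IZ, I\rho, I\sigma)$ in Definition \ref{defi: generalized morphism for inertial groupoid} and check directly that the tautological arrow-valued section is preserved under the induced maps to $G_1$ and $H_1$. For the latter, I would pull $\theta$-pairings back through the Morita equivalence $\phi_\rho$ used to define $f^*_b$ in Corollary \ref{coro: functoriality of superconnections}, using that Morita equivalences at the level of bibundles preserve $L^2$-pairings (with respect to the cut-off function) and hence formal adjoints. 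Once these naturality checks are in place, the rest of the argument is a direct computation with the exponential series.
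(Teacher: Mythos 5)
Your proposal is correct and follows essentially the same route as the paper, whose proof is a two-line sketch ("choose a superconnection representative, then apply Proposition \ref{prop: compatibility with pull-backs} and the construction of Chern characters"); you have simply filled in the naturality checks (the square relating $\beta_{\mathcal{G}},\beta_{\mathcal{H}},f,If$, the compatibility of the tautological sections, and the pullback of the adjoint superconnection) that the paper leaves implicit. The only minor remark is that the compatibility of $A^{E^{\bullet}\prime}$ with pullback is most cleanly seen from the fact that the adjoint is determined fiberwise (for the $v_i$) together with the Chern-connection construction (for $\nabla^{E^{\bullet}\prime\prime}$), rather than through the global $\theta$-pairing, but your argument reaches the same conclusion.
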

\begin{proof}
Let $\mathcal{E}$ be an object in $B(X)$ which represents $\mathcal{F}$. The claim is a direct consequence of Proposition \ref{prop: compatibility with pull-backs} and the construction of Chern characters.
\end{proof}

\begin{prop}\label{prop: chern character tensor product}
For $\mathcal{F}_1$ and $\mathcal{F}_2\in D^b_{\coh}(\mathcal{G})$, we have
\begin{equation}
\chBC(\mathcal{F}_1\widehat{\otimes}^L\mathcal{F}_2)=\chBC(\mathcal{F}_1)\chBC(\mathcal{F}_2) \text{ in }\HBC^{(=)}(I\mathcal{G},\mathbb{C}).
\end{equation}
\end{prop}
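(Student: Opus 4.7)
The plan is to reduce to a computation at the level of Chern character forms and then invoke a standard multiplicativity property of supertraces on graded tensor products. By Corollary \ref{coro: equiv of cats}, I may represent $\mathcal{F}_1$ and $\mathcal{F}_2$ by antiholomorphic flat superconnections $(E_1^\bullet, A^{E_1^\bullet\prime\prime})$ and $(E_2^\bullet, A^{E_2^\bullet\prime\prime})$ in $B(\mathcal{G})$, and by Proposition \ref{prop: compatibility with tensor products} the derived tensor product $\mathcal{F}_1 \widehat{\otimes}^L \mathcal{F}_2$ is represented by the graded tensor product $\mathcal{E}_1 \widehat{\otimes}_b \mathcal{E}_2$ of Definition \ref{defi: tensor product of antiholo}. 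By Corollary \ref{coro: Chern character isomorphism class}, it suffices to verify the identity at the level of the Chern character forms $\ch(A^{E^\bullet\prime\prime},h)$ from Definition \ref{defi: Chern character}.

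Next I would equip $E_i^\bullet$ with generalized metrics $h_i \in \mathscr{M}^{E_i}$ ($i=1,2$) and form the graded tensor product metric $h_1 \widehat{\otimes} h_2$ on $E_1^\bullet \widehat{\otimes} E_2^\bullet$. Using the uniqueness of adjoints in Proposition \ref{prop: existence of adjoint superconnection} together with the standard fact that the adjoint of the tensor product of two superconnections with respect to the tensor product metric is the tensor product of their adjoints, one obtains
\begin{equation*}
A^{E_1^\bullet \widehat{\otimes} E_2^\bullet} \;=\; A^{E_1^\bullet} \widehat{\otimes} \id_{E_2^\bullet} + \id_{E_1^\bullet} \widehat{\otimes}\, A^{E_2^\bullet}.
\end{equation*}
Since the two summands on the right supercommute, their curvature splits as
\begin{equation*}
A^{E_1^\bullet \widehat{\otimes} E_2^\bullet,2} \;=\; A^{E_1^\bullet,2} \widehat{\otimes} \id + \id \widehat{\otimes}\, A^{E_2^\bullet,2},
\end{equation*}
and the two terms still supercommute, so the exponential factorizes: $\exp(-A^{E_1^\bullet \widehat{\otimes} E_2^\bullet,2}) = \exp(-A^{E_1^\bullet,2}) \,\widehat{\otimes}\, \exp(-A^{E_2^\bullet,2})$.

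Pulling back along $\beta_{\mathcal{G}}\colon I\mathcal{G}\to \mathcal{G}$ and using that the tautological section of Definition \ref{defi: tau_G} acts diagonally on the tensor product, i.e.\ $\tau_{\mathcal{G}}$ acts on $\beta^*_{\mathcal{G},b}(E_1^\bullet \widehat{\otimes} E_2^\bullet)$ as the tensor product of the actions of $\tau_{\mathcal{G}}$ on each factor (this is immediate because the $\mathcal{G}$-action is by diagonal tensor product on $E_1^\bullet\widehat{\otimes} E_2^\bullet$), I would apply the multiplicativity of the supertrace for graded tensor products: for any homogeneous operators $B_i$ on $E_i^\bullet$ one has $\str_{E_1\widehat{\otimes} E_2}(B_1 \widehat{\otimes} B_2) = \str_{E_1}(B_1)\,\str_{E_2}(B_2)$. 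Combined with the factorization of the exponential and the compatibility of $\varphi$ with products of even forms, this gives
\begin{equation*}
\ch(A^{E_1^\bullet \widehat{\otimes} E_2^\bullet \prime\prime}, h_1\widehat{\otimes} h_2) \;=\; \ch(A^{E_1^\bullet\prime\prime}, h_1)\wedge \ch(A^{E_2^\bullet\prime\prime}, h_2)
\end{equation*}
in $\Omega^{(=)}_{I\mathcal{G}}((I\mathcal{G})_0,\mathbb{C})$, and passing to Bott-Chern classes via Theorem \ref{thm: Chern character is independent of the metric} yields the claim.

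The main point of care, rather than a genuine obstacle, is the bookkeeping of signs in the graded tensor product: one must ensure that the supercommutativity of $A^{E_1^\bullet,2}\widehat{\otimes}\id$ and $\id\widehat{\otimes} A^{E_2^\bullet,2}$ is set up correctly so that the exponential factorizes, and that the multiplicativity of the supertrace is applied with the correct Koszul signs coming from both the form degree (through $\deg_{-}$) and the internal $\mathbb{Z}/2$-grading. Once these signs are verified in one local chart via Lemma \ref{lemma: local antiholomorphic flat superconnecction}, the global identity on $\mathcal{G}$ follows by $\mathcal{G}$-equivariance, and the equality on $I\mathcal{G}$ follows from naturality of $\beta_{\mathcal{G}}$ and $\tau_{\mathcal{G}}$.
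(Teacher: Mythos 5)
Your proposal is correct and follows essentially the same route as the paper: the paper's proof likewise represents $\mathcal{F}_1,\mathcal{F}_2$ by superconnections, invokes Proposition \ref{prop: compatibility with tensor products} and the diagonal $\mathcal{G}$-action from Definition \ref{defi: tensor product of antiholo}, and then asserts that the claim ``follows from the construction of orbifold Chern characters,'' leaving the details to the reader. Your write-up simply supplies those details (tensor-product metric, splitting of the curvature, multiplicativity of the supertrace, diagonal action of $\tau_{\mathcal{G}}$), all of which are the intended ones.
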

\begin{proof}
Let $\mathcal{E}_1$ and $\mathcal{E}_2$ be objects in $B(X)$ which represent $\mathcal{F}_1$ and $F_2$ respectively. By Proposition \ref{prop: compatibility with tensor products}, $\mathcal{E}_1\widehat{\otimes}_b \mathcal{E}_2$ represents $\mathcal{F}_1\widehat{\otimes}^L\mathcal{F}_2$. Recall Definition \ref{defi: tensor product of antiholo}, the $\mathcal{G}$-action on $\mathcal{E}_1\widehat{\otimes}_b \mathcal{E}_2$ is given by the diagonal action. The claim then follows from the construction of orbifold Chern characters. We leave the details to the reader.
\end{proof}

\section{Riemann-Roch-Grothendieck Theorem for embeddings}\label{sec:RRG_embeddings}

The goal of this section is to prove a Riemann-Roch-Grothendieck theorem for embeddings between complex orbifolds. 

In Proposition \ref{prop: decompose an embedding into two types}, we show that an orbifold embedding can be decomposed into a composition of an iso-spatial embedding followed by a stabilizer-preserving embedding. This splits our proof into the corresponding two cases.

\subsection{Iso-spatial case}
Recall that \eqref{eq: definitionofpushforward} defines the pushforward of sheaves of $\mathcal{O}_X$-modules under a generalized morphism. We define the pushforward of sheaves of $C^{\infty}$-modules in the same way.

The following proposition is important in subsequent constructions.

\begin{prop}\label{prop: push forward of bundle under iso-spatial}
Let $f= (Z, \rho, \sigma)\colon \mathcal{G}\to \mathcal{H}$ be an iso-spatial embedding of complex orbifold groupoids. For any finite-dimensional $C^{\infty}$-vector bundle $E$ on $\mathcal{G}$, its pushforward $f_*(E)$ is a $C^{\infty}$-vector bundle on $\mathcal{H}$.
\end{prop}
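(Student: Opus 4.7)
The plan is to reduce to a local computation on $\mathcal{H}$, and then identify $f_{\ast} E$ with an ``induced'' bundle by averaging over the finite group $G$.

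By Proposition \ref{prop: local chart compact} I may work in a chart where $\mathcal{H}$ is the transformation groupoid $H \ltimes U$ for a finite group $H$ acting holomorphically on a complex manifold $U$. By Remark \ref{rmk: two kind embeddings local case}, in this chart the iso-spatial embedding $f$ is presented by $\mathcal{G} = G \ltimes U$ for a subgroup $G \subset H$ together with the identity map on $U$. I would then make the generalized morphism $(Z, \rho, \sigma)$ explicit via the comma construction of Section \ref{subsect: generalized morphisms}: one finds $Z \cong U \times H$, with $\rho(x, h) = x$, $\sigma(x, h) = h^{-1} x$, left $G$-action $g \cdot (x, h) = (g x, g h)$, and right $H$-action $(x, h) \cdot \tilde h = (x, h \tilde h)$. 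In particular $\rho$ is a local diffeomorphism and $\sigma$ is a trivial $|H|$-sheeted \'etale cover of $U$.

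Next I would compute the sheaf pushforward $\sigma_{\ast} \rho^{\ast} E$. Since $\rho$ is \'etale, $\rho^{\ast} E$ is a $C^{\infty}$-vector bundle on $Z$ with fiber $E_x$ at the point $(x, h)$. Pushing forward along the finite \'etale cover $\sigma$ produces a $C^{\infty}$-vector bundle on $U$ whose fiber at $y \in U$ is
\[
(\sigma_{\ast} \rho^{\ast} E)_y \;\cong\; \bigoplus_{h \in H} E_{h y}.
\]
Taking $\mathcal{G}$-invariants to obtain $f_{\ast} E$ is fiberwise the same as taking $G$-invariants, where $g \in G$ sends the summand $E_{h y}$ to the summand $E_{g h y}$ via the $G$-equivariance of $E$. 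Choosing a set of coset representatives $\{h_i\}$ for the quotient $G \backslash H$, a $G$-invariant family $(e_h)_{h \in H}$ is determined freely by the components $e_{h_i} \in E_{h_i y}$, giving a canonical isomorphism
\[
(f_{\ast} E)_y \;\cong\; \bigoplus_{[h_i] \in G \backslash H} E_{h_i y}.
\]
This identifies $f_{\ast} E$ locally on $U$ with the smooth vector bundle $\bigoplus_i h_i^{\ast} E$, and the right $\mathcal{H}$-action on $Z$ descends to an $H$-equivariant structure (since $H$ permutes the coset representatives). Equivalently, averaging over the finite group $G$ yields a projector onto the invariants, so the invariant subsheaf is a direct summand of the locally free sheaf $\sigma_{\ast} \rho^{\ast} E$ and is itself locally free.

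The main point requiring care is simply bookkeeping: tracking the left and right actions, and source and target conventions, when translating between the abstract comma-construction description of $(Z, \rho, \sigma)$ and the explicit local formulas above, and then verifying that the resulting invariant bundle is independent of the choice of coset representatives and glues across local charts on $\mathcal{H}$ to produce a globally defined $\mathcal{H}$-equivariant $C^{\infty}$-vector bundle. Once this is set up, the local freeness of $f_{\ast} E$ is immediate because $|G|$ is invertible and $\sigma$ is a finite \'etale cover.
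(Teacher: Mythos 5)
Your proof is correct and follows essentially the same route as the paper's: both reduce to a local computation identifying the fiber of $f_*E$ over a point of $H_0$ with a finite direct sum of fibers of $E$ indexed by a coset space of the finite (isotropy) groups, and conclude local freeness by taking invariants. The only difference is presentational: the paper derives the fiber count directly from the axioms of the generalized morphism $(Z,\rho,\sigma)$ (Lemma \ref{lemma: rho inverse intersect sigma inverse}), whereas you first pass to the explicit local normal form $G\ltimes U\hookrightarrow H\ltimes U$ of Remark \ref{rmk: two kind embeddings local case} and the comma construction.
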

\begin{proof}[Proof of Proposition \ref{prop: push forward of bundle under iso-spatial}]
By \eqref{eq: definitionofpushforward}, we know $f_*(E)=\sigma_*\rho^*(E)$. Recall the definition of $\tilde{\mathcal{G}}$ in Section \ref{subsec: orbifold embeddings}. 
Since $f= (Z, \rho, \sigma)$ is iso-spatial (in the sense of Definition \ref{defi: iso-spatial embedding}), $\sigma\colon Z\to H_0$ is a submersion. So $\sigma_*$ is simply taking  $\tilde{\mathcal{G}}$-invariant sections. Notice that $\tilde{\mathcal{H}}$ only acts on $Z$ while $\tilde{\mathcal{G}}$ acts on $Z$ as well as on the fibers of $E$.

Since $\rho\colon Z\to G_0$ and $\sigma\colon Z\to H_0$ are local diffeomorphisms, for a point $x\in H_0$, we can pick a neighborhood $U$ of $x$ in $H_0$ such that $\sigma^{-1}(U)$ and $\rho(\sigma^{-1}(U))$  are finite disjoint unions of open subsets which are diffeomorphic to $U$. More precisely,  we know $\sigma^{-1}\{x\}$ is a discrete subset of $Z$ and $\rho(\sigma^{-1}\{x\})$ is a discrete subset of $G_0$. Furthermore, we have diffeomorphisms
\begin{equation}
\sigma^{-1}(U)\cong U\times \sigma^{-1}\{x\},
\end{equation}
and 
\begin{equation}
\rho(\sigma^{-1}(U))\cong U\times \rho(\sigma^{-1}\{x\}).
\end{equation}

Since the map $\tilde{\mathcal{G}}\hookrightarrow \tilde{\mathcal{H}}$ is saturated,  it is easy to see that $\mathcal{G}$ acts transitively on $\rho(\sigma^{-1}\{x\})$. Pick a point $\tilde{x}\in \rho(\sigma^{-1}\{x\}) \subset G_0$, we have the following result.

\begin{lemma}\label{lemma: rho inverse intersect sigma inverse}
We have the following bijection between sets:
\begin{equation}\label{eq: rho inverse x intersect sigma inverse x}
\rho^{-1}(\tilde{x})\cap \sigma^{-1}(x)\cong \mathcal{H}_x,
\end{equation}
hence a diffeomorphism
\begin{equation}\label{eq: rho inverse U intersect sigma inverse U}
\rho^{-1}(\tilde{U})\cap \sigma^{-1}(U)\cong U\times \mathcal{H}_x,
\end{equation}
where $\mathcal{H}_x$ is the isotropy group of $x$.
\end{lemma}
\begin{proof}[Proof of Lemma \ref{lemma: rho inverse intersect sigma inverse}]
\eqref{eq: rho inverse U intersect sigma inverse U} follows from \eqref{eq: rho inverse x intersect sigma inverse x} so it suffices to prove \eqref{eq: rho inverse x intersect sigma inverse x}.

We pick and fix a $z\in \rho^{-1}(\tilde{x})\cap \sigma^{-1}(x)$. Since the $\mathcal{H}$-action on $Z$ is free, it is clear that $z\mathcal{H}_x $, which is a subset of $\rho^{-1}(\tilde{x})\cap \sigma^{-1}(x)$, is bijective to $\mathcal{H}_x$. 

Moreover, since $Z/\mathcal{H}\cong G_0$, for any $z^{\prime}\in \rho^{-1}(\tilde{x})\cap \sigma^{-1}(x)$, there exists $h\in H_1$ such that $z^{\prime}h=z$. Since $\sigma (z^{\prime})=\sigma (z)=x$, we get $xh=x$, i.e. $h\in \mathcal{H}_x$. Therefore, we know that $z\mathcal{H}_x$ is the whole set $\rho^{-1}(\tilde{x})\cap \sigma^{-1}(x)$, hence the latter is bijective to $\mathcal{H}_x$. This completes the proof of \eqref{eq: rho inverse x intersect sigma inverse x} and of Lemma \ref{lemma: rho inverse intersect sigma inverse}.
\end{proof}

By definition we know that
\begin{equation}
\Gamma(U,\sigma_*\rho^*(E))=\Gamma(\sigma^{-1}(U),\rho^*(E))^{\mathcal{G}}.
\end{equation}
Let $\mathcal{G}_{\tilde{x}}$ be the isotopy group at $\tilde{x}\in G_0$. Since the map $\tilde{\mathcal{G}}\hookrightarrow \tilde{\mathcal{H}}$ is saturated, $\mathcal{G}$ acts transitively on $\rho(\sigma^{-1}\{x\})$. Hence we have
\begin{equation}\label{eq: reduce of G to Gx}
\Gamma(\sigma^{-1}(U),\rho^*(E))^{\mathcal{G}}\cong \Gamma(\rho^{-1}(\tilde{U})\cap \sigma^{-1}(U), \rho^*(E))^{\mathcal{G}_{\tilde{x}}}.
\end{equation}

Since the map $\tilde{\mathcal{G}}\hookrightarrow \tilde{\mathcal{H}}$ is an embedding, it is easy to see that $\mathcal{G}_{\tilde{x}}$ acts freely on $\rho^{-1}(\tilde{x})\cap \sigma^{-1}(x)$. Hence $\mathcal{G}_{\tilde{x}}$ can be considered as a subgroup of $\mathcal{H}_x$. Therefore \eqref{eq: rho inverse U intersect sigma inverse U} and \eqref{eq: reduce of G to Gx} imply
\begin{equation}
\Gamma(\sigma^{-1}(U),\rho^*(E))^{\mathcal{G}}\cong \Gamma(U\times (\mathcal{H}_x/\mathcal{G}_{\tilde{x}}), E).
\end{equation}
This completes the proof of Proposition \ref{prop: push forward of bundle under iso-spatial}.
\end{proof}

Recall that the definition of antiholomorphic flat superconnections (Definition \ref{defi: antiholo superconn}) requires the superconnection to be $\mathcal{G}$-equivariant, hence it induces an antiholomorphic flat superconnection on the $\tilde{\mathcal{G}}$-invariant sections. Hence for an antiholomorphic flat superconnection $\mathcal{E}=(E^{\bullet}, A^{E^{\bullet}\prime\prime})$ on $\mathcal{G}$, we can define its pushforward $f_{b,*}(\mathcal{E})$ as an antiholomorphic flat superconnection on $\mathcal{H}$ under an iso-spatial embedding $f= (Z, \rho, \sigma)\colon \mathcal{G}\to \mathcal{H}$. 

\begin{prop}\label{prop: derived push-forward iso-spatial embedding}
Let $f= (Z, \rho, \sigma)\colon \mathcal{G}\to \mathcal{H}$ be an iso-spatial embedding of complex orbifold groupoids. For any $\mathcal{F}\in D^b_{\coh}(\mathcal{G})$ which corresponds to $\mathcal{E}=(E^{\bullet}, A^{E^{\bullet}\prime\prime})$ under the equivalence $F_{\mathcal{G}}$ in Definition \ref{defi: the functor}, the derived pushforward $Rf_*\mathcal{F}$ is given by $f_{b,*}(\mathcal{E})$ under the equivalence $F_{\mathcal{H}}$.
\end{prop}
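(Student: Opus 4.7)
The plan is to realize both $Rf_*\mathcal{F}$ and $F_\mathcal{H}(f_{b,*}\mathcal{E})$ as coming from the same complex of fine sheaves on $\mathcal{H}$, obtained by applying $f_*$ to the Dolbeault-type resolution of $\mathcal{F}$ supplied by $F_\mathcal{G}(\mathcal{E})$. By Theorem \ref{thm: ess surj}, there is a canonical quasi-isomorphism $F_\mathcal{G}(\mathcal{E}) \simeq \overline{\mathcal{F}}^\infty \simeq \mathcal{F}$ in $D^b_{\coh}(\mathcal{G})$, and each term of $F_\mathcal{G}(\mathcal{E})$ is a sheaf of $\mathcal{O}_\mathcal{G}^\infty$-modules, hence fine thanks to the $\mathcal{G}$-equivariant cut-off function on proper \'etale groupoids. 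Fine sheaves are $f_*$-acyclic, so by the remarks following Definition \ref{defi: derived push forward} we obtain
\[
Rf_*\mathcal{F} \cong f_*\bigl(F_\mathcal{G}(\mathcal{E})\bigr) \qquad \text{in } D^b_{\coh}(\mathcal{H}).
\]

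The next step is to construct a natural isomorphism $f_*(F_\mathcal{G}(\mathcal{E})) \cong F_\mathcal{H}(f_{b,*}\mathcal{E})$ at the level of complexes of $\mathcal{O}_\mathcal{H}^\infty$-modules. Because $f$ is iso-spatial, $\sigma\colon Z \to H_0$ is a surjective \'etale submersion, hence a local diffeomorphism; and $\rho\colon Z \to G_0$ is a local diffeomorphism as well, because the $\mathcal{H}$-action on $Z$ is free and proper and $\mathcal{H}$ is \'etale. Consequently, both $\rho^*$ and $\sigma_*$ interact cleanly with the antiholomorphic cotangent sheaves, giving canonical identifications $\rho^*(\wedge^p \overline{T^*G_0}) \cong \wedge^p \overline{T^*Z}$ and $\sigma_*\bigl(\wedge^p \overline{T^*Z}\bigr)^{\tilde{\mathcal{G}}} \cong \wedge^p \overline{T^*H_0}$, both compatible with the Dolbeault differentials. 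Combined with Proposition \ref{prop: push forward of bundle under iso-spatial}, which identifies $\sigma_*\rho^*(E^\bullet)^{\tilde{\mathcal{G}}}$ with the pushforward bundle $f_{b,*}(E^\bullet)$, this yields the desired isomorphism of the underlying graded sheaves.

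The main obstacle is to verify that the pushforward intertwines the superconnection $A^{E^\bullet\prime\prime}$ with its pushforward $f_{b,*} A^{E^\bullet\prime\prime}$ as constructed on $f_{b,*}(E^\bullet)$. I would reduce this to a local computation: in a chart where $\mathcal{G} = G \ltimes U$ and $\mathcal{H} = H \ltimes U$ with $G \hookrightarrow H$ a subgroup inclusion (Remark \ref{rmk: two kind embeddings local case}), Lemma \ref{lemma: rho inverse intersect sigma inverse} identifies $f_{b,*}(E^\bullet)$ with the bundle induced from $E^\bullet$ along $G \hookrightarrow H$, with fiber $E^\bullet \otimes \mathbb{C}[\mathcal{H}_x/\mathcal{G}_{\tilde{x}}]$. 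The $\mathcal{G}$-equivariance of each component $v_i$ and of $\nabla^{E^\bullet\prime\prime}$ in $A^{E^\bullet\prime\prime}$ then forces the chart-wise pushforward to coincide with the canonical extension of these operators to the induced $H$-equivariant bundle, which is precisely $f_{b,*} A^{E^\bullet\prime\prime}$. Patching these local identifications via the $\mathcal{G}$-equivariant partition of unity completes the identification $f_*(F_\mathcal{G}(\mathcal{E})) \cong F_\mathcal{H}(f_{b,*}\mathcal{E})$ and, together with the first paragraph, yields the desired isomorphism $Rf_*\mathcal{F} \cong F_\mathcal{H}(f_{b,*}\mathcal{E})$ in $D^b_{\coh}(\mathcal{H})$.
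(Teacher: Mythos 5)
Your proposal is correct and follows the same route as the paper, whose entire proof is the one-line observation (citing Gelfand--Manin, Prop.\ III.8.12) that $Rf_*$ may be computed on the soft (fine) Dolbeault-type resolution $F_{\mathcal{G}}(\mathcal{E})$; you have simply spelled out the termwise identification $f_*(F_{\mathcal{G}}(\mathcal{E}))\cong F_{\mathcal{H}}(f_{b,*}\mathcal{E})$ that the paper leaves implicit. One small caution: the intermediate claim $\sigma_*\bigl(\wedge^p\overline{T^*Z}\bigr)^{\tilde{\mathcal{G}}}\cong\wedge^p\overline{T^*H_0}$ is not literally true (the invariant pushforward of even the trivial bundle already has local rank $|\mathcal{H}_x/\mathcal{G}_{\tilde{x}}|$); what you actually need is the projection formula $\sigma_*\bigl(\sigma^*(\wedge^p\overline{T^*H_0})\otimes\rho^*E^q\bigr)^{\tilde{\mathcal{G}}}\cong\wedge^p\overline{T^*H_0}\otimes f_{b,*}(E^q)$, which yields exactly the identification of graded sheaves you then use.
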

\begin{proof}
According to \cite[Proposition  III.8.12]{gelfand2003methods}, we can choose the soft resolution in the computation of $Rf_*\mathcal{F}$.
\end{proof}

Now we come to the Riemann-Roch-Grothendieck theorem for iso-spatial embeddings.
\begin{thm}\label{thm: GRR iso-spatial embedding}
Let $f= (Z, \rho, \sigma)\colon \mathcal{G}\to \mathcal{H}$ be an iso-spatial embedding of complex orbifold groupoids. 
Let $\mathcal{F}\in K(\mathcal{G})$ and $f_!\mathcal{F}\in K(\mathcal{H})$ be its direct image. Then we have 
\begin{equation}\label{eq: GRR iso-spatial embedding}
\chBC(f_!\mathcal{F})=If_*\chBC(\mathcal{F}) \text{ in }\HBC^{(=)}(I\mathcal{H},\mathbb{C}),
\end{equation}
where $If\colon  I\mathcal{G}\to I\mathcal{H}$ is the induced generalized morphism as in Definition \ref{defi: generalized morphism for inertial groupoid}.
\end{thm}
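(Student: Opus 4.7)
The plan is to use the equivalence $F_{\mathcal{G}}\colon \underline{B}(\mathcal{G})\simeq D^b_{\coh}(\mathcal{G})$ of Corollary~\ref{coro: equiv of cats} to represent $\mathcal{F}$ by a flat antiholomorphic superconnection $\mathcal{E}=(E^{\bullet},A^{E^{\bullet}\prime\prime})$ equipped with a generalized metric $h$. By Proposition~\ref{prop: derived push-forward iso-spatial embedding}, the derived pushforward $Rf_{*}\mathcal{F}$ is represented by $f_{b,*}(\mathcal{E})$, and Proposition~\ref{prop: push forward of bundle under iso-spatial} ensures that the underlying graded bundle $f_{b,*}(E^{\bullet})$ is an honest $\mathcal{H}$-equivariant $C^{\infty}$-bundle on $H_{0}$. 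Hence $\chBC(f_{!}\mathcal{F})$ can be computed by applying Definition~\ref{defi: Chern character} to $f_{b,*}(\mathcal{E})$, equipped with the generalized metric induced from $h$ via the same $\tilde{\mathcal{G}}$-invariants construction.

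By Proposition~\ref{prop: local chart compact}, the verification can be carried out in local charts of $\mathcal{H}$. As in Remark~\ref{rmk: two kind embeddings local case}, in such a chart an iso-spatial embedding is given by an inclusion $G\hookrightarrow H$ of finite groups acting on a common complex manifold $X$, and the proof of Proposition~\ref{prop: push forward of bundle under iso-spatial} identifies $f_{b,*}(E^{\bullet})$ with the induced bundle $\text{Ind}_{G}^{H}E^{\bullet}=\bigoplus_{[k]\in H/G}k\cdot E^{\bullet}$, with superconnection and metric inherited summand by summand after pulling back $A^{E^{\bullet}\prime\prime}$ and $h$ along a choice of coset representatives, and with $H$-action permuting the summands (twisted by the $G$-action on $E^{\bullet}$).

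The central computation is then the local version of \eqref{eq: GRR iso-spatial embedding}. By the local formula \eqref{eq: Chern char defi local}, the Chern character on the component $(h)\in\text{Conj}(H)$ is $\varphi\str[h\exp(-A^{\text{Ind},2})]|_{X^{h}}$. Because $h$ permutes the summands of $\text{Ind}_{G}^{H}E^{\bullet}$ via $[k]\mapsto[hk]$, only cosets with $k^{-1}hk\in G$ contribute to the supertrace; on such a summand the action of $h$ becomes the action of $g':=k^{-1}hk\in G$ on $E^{\bullet}$ after identification along the diffeomorphism $k^{-1}\colon X^{h}\to X^{g'}$. This yields the character identity
\begin{equation*}
\chBC(f_{!}\mathcal{F})_{(h)}=\sum_{[k]\in H/G,\,k^{-1}hk\in G}(k^{-1})^{*}\,\chBC(\mathcal{F})_{(k^{-1}hk)}.
\end{equation*}

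To match this with $If_{*}\chBC(\mathcal{F})|_{(h)}$, I group the above sum by the value $g':=k^{-1}hk$: the cosets $[k]\in H/G$ producing a fixed $g'$ are in bijection with $Z_{H}(g')/Z_{G}(g')$, and the resulting inner sum reproduces exactly the averaging formula of Proposition~\ref{prop: pushforward map of forms for iso-spatial embedding local case} for the local iso-spatial map $Z_{G}(g')\ltimes X^{g'}\hookrightarrow Z_{H}(g')\ltimes X^{g'}$. The outer sum over $G$-conjugacy classes $(g')\subset(h)_{H}\cap G$ then assembles the contributions of all components of $I\mathcal{G}$ mapping into the $(h)$-component of $I\mathcal{H}$ under $If$. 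The main obstacle is precisely this combinatorial bookkeeping, together with tracking the conjugation isomorphisms that identify $Z_{H}(g')\ltimes X^{g'}$ with $Z_{H}(h)\ltimes X^{h}$; once the identity holds locally, global patching is automatic since both sides of \eqref{eq: GRR iso-spatial embedding} live in $\HBC^{(=)}(I\mathcal{H},\mathbb{C})$ and are chart-independent.
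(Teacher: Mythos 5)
Your proposal is correct and follows essentially the same route as the paper: represent $\mathcal{F}$ by a superconnection, identify $f_{b,*}(\mathcal{E})$ locally with the induced bundle $H\times_G E^{\bullet}$, and compare both sides at the level of forms on each component of the inertia groupoid using the averaging formula of Proposition \ref{prop: pushforward map of forms for iso-spatial embedding local case}. The only difference is that you carry out explicitly the induced-character bookkeeping (the supertrace over cosets with $k^{-1}hk\in G$, regrouped via the bijection with $Z_H(g')/Z_G(g')$) that the paper delegates to the citation of the induced character formula in Fulton--Harris, and your combinatorics check out.
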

\begin{proof}
Recall that, for $\mathcal{G}$ and $\mathcal{H}$, Definition \ref{defi: beta_G} gives natural morphisms $\beta_{\mathcal{G}}\colon I\mathcal{G}\to \mathcal{G}$ and $\beta_{\mathcal{H}}\colon I\mathcal{H}\to \mathcal{H}$. Let $\mathcal{E}$ be an antiholomorphic flat superconnection on $\mathcal{G}$ which corresponds to $\mathcal{F}$. By Definition \ref{defi: Chern character}, we have
\begin{equation}\label{eq: chBC F}
\chBC(\mathcal{F})=\varphi\str[\tau_{\mathcal{G}}\exp(-(\beta_{\mathcal{G},b}^*A^{E^{\bullet}})^2)],
\end{equation}
and
\begin{equation}\label{eq: chBC f!F}
\chBC(f_!\mathcal{F})=\varphi\str[\tau_{\mathcal{H}}\exp(-(\beta_{\mathcal{H},b}^*f_{b,*}A^{E^{\bullet}})^2)],
\end{equation}
where $\tau_{\mathcal{G}}$ and $\tau_{\mathcal{H}}$ are the tautological sections on $I\mathcal{G}$ and $I\mathcal{H}$ respectively, and $\varphi$ is the normalizing operator.

We can prove that, locally, the two sides in \eqref{eq: GRR iso-spatial embedding} are equal at the level of differential forms. Recall that, as in Remark \ref{rmk: two kind embeddings local case}, the iso-spatial embedding is locally given by (up to Morita equivalence) the inclusion of finite groups $G\hookrightarrow H$ and the identity map on the complex manifold $X$.  Moreover, by Remark \ref{rmk: inertia morphism local case}, the induced morphism between the inertia groupoids 
$$
If\colon  \coprod_{(g)\in C(G)}Z_G(g)\ltimes X^g\to  \coprod_{(h)\in C(H)}Z_H(h)\ltimes X^h
$$
consists of the inclusion of finite groups $G\hookrightarrow H$ together with the identity map $X^g\to X^g$.  

Locally we have $\mathcal{E}=(E^{\bullet}, A^{E^{\bullet}\prime\prime})$ where $E^{\bullet}$ is a $\mathbb{Z}$-graded, $G$-equivariant vector bundle on $X$ and $A^{E^{\bullet}\prime\prime}$ is a $G$-equivariant anti-holomorphic flat superconnection on $E^{\bullet}$. Hence \eqref{eq: chBC F} becomes
\begin{equation}
\chBC(\mathcal{F})=\varphi\str[g\exp(-(A^{E^{\bullet}})^2)] \text{ on } X^g.
\end{equation}
In Section \ref{subsec: functorial Bott-Chern cohomology} we use current to define the pushforward of Bott-Chern cohomology as $If_*=(I\sigma_*)\circ (I\rho_*)^{-1}$. Since $f$ is an iso-spatial embedding, locally we can define $If_*$ using differential forms as shown in Proposition \ref{prop: pushforward map of forms for iso-spatial embedding local case}. For $h\in H$, let $g_1,\ldots, g_k$ be representatives of $G$-conjugate classes of elements in $G$ such that $g_i$ and $h$ are conjugate in $H$. On $X^h$ we have
\begin{equation}
If_*\chBC(\mathcal{F})=\sum_{i=1}^k\frac{1}{Z_G(g_i)}\sum_{\tilde{h}\in Z_H(g_i)}(\tilde{h})^*\Big(\varphi\str[g_i \exp(-(A^{E^{\bullet}})^2)]\Big).
\end{equation}
Moreover, in the local case we have
\begin{equation}
f_{b,*}(\mathcal{E})=(H\times_G E^{\bullet}, 1\times A^{E^{\bullet}\prime\prime}).
\end{equation}
Therefore \eqref{eq: chBC f!F} locally becomes
\begin{equation}
\chBC(f_!\mathcal{F})=\varphi\str[h\exp(-1\times (A^{E^{\bullet}})^2)] \text{ on } X^h.
\end{equation} 
The proof then follows from \cite[Exercise 3.19]{fulton1991representation}.
\end{proof}

\subsection{Transversality and direct image}
In this section, we consider two stabilizer-preserving embeddings of complex orbifold groupoids 
\begin{equation}\label{eqn:two_embeddings}
i_{X,Z}\colon X\hookrightarrow Z, \quad i_{Y,Z}\colon Y\hookrightarrow Z.
\end{equation}
\begin{defi}\label{defi: transverse intersect}
Let $X\hookrightarrow Z$ and $Y\hookrightarrow Z$ be as in (\ref{eqn:two_embeddings}). Let $\mathcal{G}_1$, $\mathcal{G}_2$, and $\mathcal{H}$ be the groupoid representations of $X$, $Y$, and $Z$, respectively, and let $(T_1,\rho_1,\sigma_1)\colon \mathcal{G}_1\to \mathcal{H}$ and $(T_2,\rho_2,\sigma_2)\colon \mathcal{G}_2\to \mathcal{H}$ be two generalized morphisms representing the embeddings. We say that $X$ and $Y$ {\em intersect transversely} if the images of $\sigma_1\colon T_1\to H_0$ and $\sigma_2\colon T_2\to H_0$ intersect transversely. 
\end{defi}

\begin{defi}\label{defi: intersection}
Let $X\hookrightarrow Z$ and $Y\hookrightarrow Z$ be as in (\ref{eqn:two_embeddings}) which intersect transversely. We define $X\cap Y$ to be the fiber product $X\times_Z Y$.
\end{defi}

The following proposition is an orbifold version of \cite[Proposition 9.1.1]{bismut2023coherent}.

\begin{prop}\label{prop: cartesian diagram coherent sheaves}
Let $i_{X,Z}\colon X\hookrightarrow Z$ and $i_{Y,Z}\colon Y\hookrightarrow Z$ be as in (\ref{eqn:two_embeddings}) which intersect transversely. Let $U=X\times_Z Y$ be the fiber product and $i_{U,X}\colon U\to X$, $i_{U,Y}\colon U\to Y$ be the natural maps, summarized in the following diagram:
\begin{equation*}
\xymatrixcolsep{3pc}\xymatrix{X\times_Z Y\ar@{=}[r] &U\ar@{^{(}->}[d]_{i_{U,Y}}\ar@{^{(}->}[r]^{i_{U,X}} & X\ar@{^{(}->}[d]^{i_{X,Z}}\\
\, & Y\ar@{^{(}->}[r]_{i_{Y,Z}} & Z.}     
\end{equation*}
Then for any $\mathcal{F}\in D^b_{\coh}(X)$ there exists an isomorphism in $D^b_{\coh}(Y)$
\begin{equation}
Li_{Y,Z}^*i_{X,Z,*}\mathcal{F}\simeq i_{U,Y,*} Li_{U,X}^*\mathcal{F}.
\end{equation}
Notice that for closed embeddings, the derived pushforward coincides with the pushforward.
\end{prop}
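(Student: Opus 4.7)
The plan is to reduce to a local statement via Morita equivalence and then to apply an equivariant Koszul resolution argument. Since the proposed isomorphism lies in $D^b_{\coh}(Y)$, it suffices to verify it locally on $Y$, so I work in a small invariant neighborhood of an arbitrary point in the image of $U$. Because both $i_{X,Z}$ and $i_{Y,Z}$ are stabilizer-preserving, Remark \ref{rmk: two kind embeddings local case} lets me choose, after Morita equivalence, a common finite group $H$ together with $H$-equivariant embeddings of complex manifolds $\tilde{X}, \tilde{Y} \hookrightarrow \tilde{Z}$ that locally represent $i_{X,Z}$ and $i_{Y,Z}$. The transversality hypothesis (Definition \ref{defi: transverse intersect}) then becomes $H$-equivariant transverse intersection of $\tilde{X}$ and $\tilde{Y}$ inside $\tilde{Z}$, and $U$ is represented locally by $[\tilde{U}/H]$, where $\tilde{U} = \tilde{X} \cap \tilde{Y}$ is an $H$-invariant complex submanifold of $\tilde{Z}$.

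In this local model I reduce the claim to an $H$-equivariant analogue of the classical base-change formula for transverse closed embeddings. Given an $H$-equivariant coherent sheaf $\mathcal{F}$ on $\tilde{X}$, an $H$-equivariant version of Proposition \ref{prop: Syzygy} (whose proof adapts since $H$ is finite, exactly as in Proposition \ref{prop: Oka coherence}) provides a finite length $H$-equivariant locally free resolution of $\mathcal{F}$ on $\tilde{X}$. Combining it with the $H$-equivariant Koszul resolution of $\mathcal{O}_{\tilde{X}}$ as an $\mathcal{O}_{\tilde{Z}}$-module, built from the conormal bundle $N^*_{\tilde{X}/\tilde{Z}}$ which carries a natural $H$-action, yields an $H$-equivariant locally free resolution of $i_{\tilde{X},\tilde{Z},*}\mathcal{F}$ on $\tilde{Z}$. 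Pulling this resolution back along $i_{\tilde{Y},\tilde{Z}}$ computes both sides of the proposed isomorphism: the left side by definition of $Li^*_{Y,Z}$, and the right side by combining transversality with the corresponding $H$-equivariant Koszul resolution of $\mathcal{O}_{\tilde{U}}$ inside $\tilde{Y}$.

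The key technical input is the vanishing of the higher sheaf $\operatorname{Tor}$ groups $\operatorname{Tor}^{\mathcal{O}_{\tilde{Z}}}_{i}(\mathcal{O}_{\tilde{X}}, \mathcal{O}_{\tilde{Y}})$ for $i > 0$ along $\tilde{U}$, which is exactly the content of transversality: the Koszul complex for $\mathcal{O}_{\tilde{X}}$ remains exact after restriction to $\tilde{Y}$ outside degree zero, where the restriction recovers the Koszul resolution of $\mathcal{O}_{\tilde{U}}$ in $\tilde{Y}$. All constructions are canonical and $H$-equivariant, so the resulting quasi-isomorphism descends to the desired isomorphism on $[\tilde{Y}/H]$.

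The main obstacle I anticipate is the globalization step: although each point of $U$ admits a chart of the above form, the local isomorphisms must glue into a global isomorphism in $D^b_{\coh}(Y)$, and this requires checking compatibility across different groupoid presentations. This should follow from the fact that the constructed map is nothing but the standard base-change natural transformation $Li^*_{Y,Z} \circ i_{X,Z,*} \to i_{U,Y,*} \circ Li^*_{U,X}$ induced by the commutative square via adjunction, whose canonicity reduces the global question to the local quasi-isomorphism verified above. Alternatively, one could work throughout via the equivalence of Corollary \ref{coro: equiv of cats} and perform the analogous resolution argument inside $B(X)$, where pullbacks (Proposition \ref{prop: pull back functor}, Corollary \ref{coro: functoriality of superconnections}) and direct images are manifestly functorial and equivariant.
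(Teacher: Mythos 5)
Your proposal is correct and follows essentially the same route as the paper: reduce to a local chart in which $Z$ is a finite-group quotient of an open subset of $\mathbb{C}^m$ with $X$ and $Y$ presented by equivariant transverse submanifolds, and then run the equivariant Koszul-resolution/base-change argument. The only cosmetic difference is that the paper represents $\mathcal{F}$ locally by an equivariant antiholomorphic superconnection via Corollary \ref{coro: equiv of cats} and defers the Koszul computation to the corresponding result of Bismut--Shen--Wei, whereas you build an equivariant locally free resolution from the Syzygy theorem and spell out the Tor-vanishing coming from transversality; these are the same argument in substance.
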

\begin{proof}
Since the statement is local, for any $x\in U$, we can focus on a small neighborhood of $x$ in $Z$. Therefore, we can assume that $X$, $Y$, $Z$, and $U$ are global quotient orbifolds and furthermore, they are suborbifolds of a quotient orbifold associated to $\mathbb{C}^m$. More precisely, we can assume that there exists a finite group $G$ acting holomorphically on a complex manifold $M$ such that
$$
Z=[M/ G].
$$
Moreover we can assume $M$ is an open subset of $\mathbb{C}^m$ and $x$ corresponds to $0\in \mathbb{C}^m$.

Then we can further assume that there exist $L_1$ and $L_2$ which are open neighborhood of $\mathbb{C}^{l_1}\subset \mathbb{C}^m$ and $\mathbb{C}^{l_2}\subset \mathbb{C}^m$, respectively, and subgroups $H_1$, $H_2$ of $G$ acting on $L_1$ and $L_2$, respectively, such that
$$
X=[L_1/ H_1] \text{ and } Y=[L_2/ H_2.]
$$
In this case, the embeddings of $X$ in $Z$ and $Y$ in $Z$ are given by
\begin{equation}
X\cong [(L_1\times_{H_1} G)/G]\hookrightarrow [M/G]
\end{equation}
and
\begin{equation}
Y\cong [(L_2\times_{H_2} G)/G]\hookrightarrow [M/G],
\end{equation}
respectively. Then $U$ is given by 
\begin{equation}
U=\left[\left(\coprod_{[g_1]\in G/H_1,~[g_2]\in G/H_2}g_1L_1\cap g_2L_2\right)\Bigg/G\right]. 
\end{equation}

By Corollary \ref{coro: equiv of cats}, there exists a flat antiholomorphic superconnection $(E^{\bullet}, A^{E^{\bullet}\prime\prime})$ on $X=[L_1/ H_1]$ which represents $\mathcal{F}$. By definition, $(E^{\bullet}, A^{E^{\bullet}\prime\prime})$ is actually an $H_1$-equivariant flat superconnection on $L_1$.

The normal bundle $N_{X/Z}$ is then a trivial bundle of dimension $m-l_1$ on $X$. Since $X$ and $Y$ intersect transversely, the normal bundle $N_{U/Y}$ is also a trivial bundle of dimension $m-l_1$ on $U$. We then use the same Koszul resolution as in the proof of \cite[Proposition 9.1.1]{bismut2023coherent} to prove the proposition.
\end{proof}

\subsection{Deformation to the normal cone}
In this subsection we focus on stabilizer-preserving embeddings as in Definition \ref{defi: stabilizer-preserving embedding}.

Let $i_{X,Y}\colon X\hookrightarrow Y$ be a stabilizer-preserving embedding of compact complex orbifold groupoids. Let
$N_{X/Y}$ be the normal bundle to $X$ in $Y$. We construct the deformation to the normal cone of $X$ to $Y$, which generalizes the construction in \cite[Section 4]{bismut1990complex} and \cite[Section 9.2]{bismut2023coherent}.

Let $W$ be the blow-up\footnote{In the case of orbifold groupoids, we can apply the blow up construction introduced by \cite{debord2021blowup, posthumatangwang}.} of $Y\times \mathbb{P}^1$ along $X\times \infty$. Then we have the embedding $i_{X\times \mathbb{P}^1,W}\colon X\times \mathbb{P}^1\hookrightarrow W$. Let $P$ be the exceptional divisor of the blow-up, i.e.
\begin{equation}\label{eq: definition of P}
P\colon=\mathbb{P}(N_{X\times \infty/Y\times \mathbb{P}^1}).
\end{equation}
So we have the natural embedding $i_{X\times \infty, P}\colon X\times \infty \hookrightarrow P$ as the $0$-section.

Let $p_X\colon X\times \infty\to X$ and $p_{\infty}\colon X\times \infty\to \infty$ be the projections. Let 
\begin{equation}
A:=p_X^*N_{X/Y}\otimes p_{\infty}^*N_{\infty/\mathbb{P}^1}^{-1}.
\end{equation}
Then we have 
\begin{equation}
P=\mathbb{P}(A\oplus \underline{\mathbb{C}}), 
\end{equation}
and $i_{\mathbb{P}(N_{X/Y}), P}\colon \mathbb{P}(N_{X/Y})\cong \mathbb{P}(A) \hookrightarrow P$ as the $\infty$-section. 

Let $\widetilde{Y}$ be the blow-up of $Y$ along $X$. The exceptional divisor in $\widetilde{Y}$ of this blow-up may be identified with $\mathbb{P}(N_{X/Y})$. Let $q_{W,Y}\colon W\to Y$ and $q_{W,\mathbb{P}^1}\colon W\to \mathbb{P}^1$ be the obvious maps. For $z\in \mathbb{P}^1$, put
\begin{equation}\label{eq: Yz definition}
Y_z:=q_{W,\mathbb{P}^1}^{-1}z\subset W.
\end{equation}
Then 
\begin{equation}\label{eq: Yz}
Y_z\cong \begin{cases}
    Y,      & ~ \text{if } z\neq\infty,\\
    P\cup \widetilde{Y},  & ~ \text{if } z=\infty.
  \end{cases}
\end{equation}
For $z=\infty$, $P$ and $\widetilde{Y}$ meet transversely along $\mathbb{P}(N_{X/Y})$. The map $q_{W,\mathbb{P}^1}$ is a submersion except on $\mathbb{P}(N_{X/Y})$, where it has ordinary double points as singularities. 

\begin{figure}
    \centering
    \includegraphics[width=0.5\linewidth]{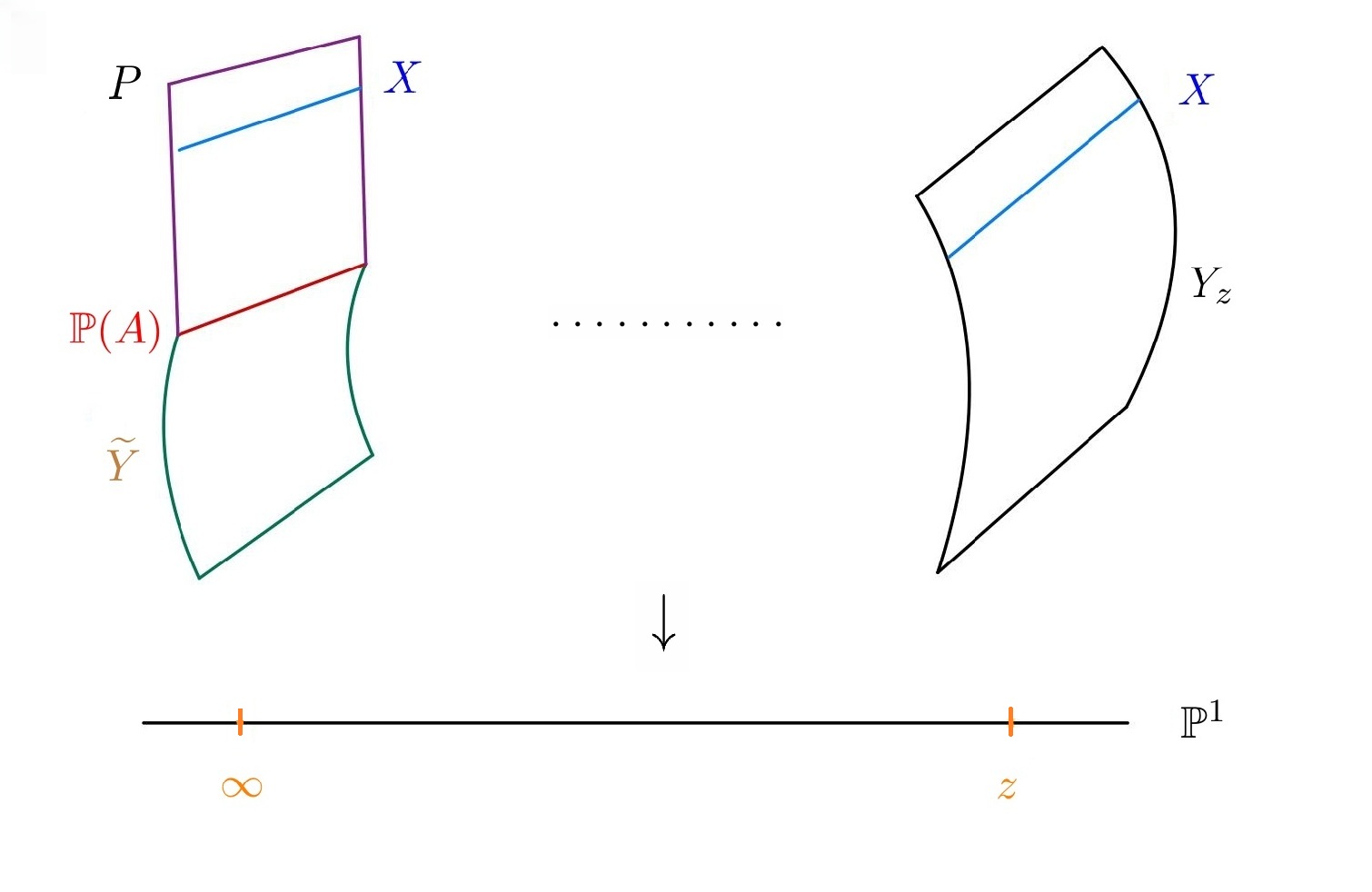}
    \caption{The total space $W$}
    \label{fig:enter-label}
\end{figure}

Let $U=\mathcal{O}_P(-1)$ be the universal line bundle on $P$. We have the following exact sequence of holomorphic vector bundles on $P$:
\begin{equation}\label{eq: universal exact sequence}
0\to U\to A\oplus \underline{\mathbb{C}} \to (A\oplus \underline{\mathbb{C}})/U\to 0.
\end{equation} 
Let $\sigma$ be the image of $1\in \underline{\mathbb{C}}$ in $(A\oplus \underline{\mathbb{C}})/U$. Then $\sigma$ is a holomorphic section of $(A\oplus \underline{\mathbb{C}})/U$ which vanishes exactly on $X\times \infty$. On $\mathbb{P}(A)$, $U$ restricts to the corresponding universal line bundle, the exact sequence \eqref{eq: universal exact sequence} restricts to 
\begin{equation}
0\to U\to A\oplus \underline{\mathbb{C}} \to  (A/U)\oplus  \underline{\mathbb{C}}\to 0,
\end{equation}
and $\sigma$ restricts to the section $1$ of $\underline{\mathbb{C}}$ in $(A/U)\oplus  \underline{\mathbb{C}}$.

We consider the Koszul complex 
$$(\wedge^{\bullet}((A\oplus \underline{\mathbb{C}})/U)^*,i_{\sigma}),$$ 
where $i_{\sigma}$ is the contraction by $\sigma$. This complex provides a resolution of $i_{X\times \infty, P,*}\mathcal{O}_{X\times \infty}$.  The restriction of the Koszul complex to $\mathbb{P}(A)\simeq \mathbb{P}(N_{X/Y})$ is just the split complex $$\wedge^{\bullet}(A/U)^*\widehat{\otimes}(\wedge^{\bullet}(\underline{\mathbb{C}}),i_1).$$

\subsection{Stabilizer-preserving case}
As in \cite[Equation (1.7) and (1.8)]{ma2005orbifolds}, we define the Bott-Chern Todd class $\TdBC(N_{X/Y})\in \HBC^{(=)}(IX,\mathbb{C})$ where $IX$ is the inertia groupoid of $X$.

The following is the Riemann-Roch-Grothendieck theorem for stabilizer-preserving embeddings.
\begin{thm}\label{thm: GRR stablizer-preserving embedding}
Let $i_{X,Y}\colon X\hookrightarrow Y$ be a stabilizer-preserving embedding of compact complex orbifold groupoids. 
Let $\mathcal{F}\in D^b_{\coh}(X)$ and $i_{X,Y,*}\mathcal{F}\in D^b_{\coh}(Y)$ be its direct image. We have
\begin{equation}
\chBC(i_{X,Y,*}\mathcal{F})=Ii_{X,Y,*}\left(\frac{\chBC(\mathcal{F})}{\TdBC(N_{X/Y})}\right) \text{ in }\HBC^{(=)}(IY,\mathbb{C}).
\end{equation}
\end{thm}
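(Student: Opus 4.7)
The plan is to extend the deformation-to-the-normal-cone argument of Bismut-Shen-Wei \cite{bismut2023coherent} to the orbifold setting, using the space $W$ and Koszul resolution constructed in the previous subsection. Compared to \cite{ma2005orbifolds}, which handles the case when $\mathcal{F}$ is a holomorphic vector bundle with a locally free resolution of its pushforward, the key new ingredient is the antiholomorphic superconnection framework from Corollary \ref{coro: equiv of cats}, which gives a metric description of $\chBC$ for arbitrary objects in $D^b_{\coh}(X)$. Specifically, I would represent $\mathcal{F}$ by a flat antiholomorphic superconnection $\mathcal{E}=(E^{\bullet}, A^{E^{\bullet}\prime\prime})$ on $X$ equipped with a generalized metric; by Proposition \ref{prop: global resolution} the object $i_{X,Y,*}\mathcal{F}\in D^b_{\coh}(Y)$ admits a similar representation on $Y$.

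The central construction is a flat antiholomorphic superconnection on the total space $W$ representing $i_{X\times\mathbb{P}^1, W, *}(p_X^*\mathcal{F})$, whose restriction to the generic fiber $Y_z\simeq Y$ for $z\in\mathbb{C}^*$ represents $i_{X,Y,*}\mathcal{F}$, and whose restriction to the special fiber $Y_\infty = P\cup\widetilde{Y}$ is supported in $P$. By Theorem \ref{thm: Chern character is independent of the metric} together with a Stokes-type argument along the base $\mathbb{P}^1$, the Bott-Chern class in $\HBC^{(=)}(IY,\mathbb{C})$ represented by the Chern character form on any fiber is the same, modulo $\dbar\dpar$-exact corrections. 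This reduces the problem to computing the Chern character on $Y_\infty$.

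On $P=\mathbb{P}(N_{X/Y}\oplus \underline{\mathbb{C}})$, the Koszul complex $(\wedge^{\bullet}((A\oplus\underline{\mathbb{C}})/U)^*, i_{\sigma})$ provides a locally free resolution of $i_{X\times\infty, P,*}\mathcal{O}_{X\times\infty}$. Tensoring with the pullback of $\mathcal{E}$ and invoking Proposition \ref{prop: chern character tensor product}, the Chern character contribution on $P$ is $p_X^*\chBC(\mathcal{F})$ multiplied by the Chern character of the Koszul complex. Fiberwise integration along the $\mathbb{P}^1$-bundle $P\to X$, carried out equivariantly on each component $X^g\subset IX$ with respect to the induced action on $N_{X/Y}|_{X^g}$ and its weight-space decomposition, identifies this with $Ii_{X,Y,*}(\chBC(\mathcal{F})/\TdBC(N_{X/Y}))$ on $IY$, using the definition of $\TdBC(N_{X/Y})$ from \cite[Section 1.2]{ma2005orbifolds}. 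The contribution from $\widetilde{Y}$ and the intersection locus is absorbed into $\dbar\dpar$-exact corrections via Theorem \ref{thm: limit T to infinity} applied to the acyclic directions.

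The main obstacle will be the equivariant computation on the inertia orbifold $IX$: because the tautological section $\tau_\mathcal{G}$ acts nontrivially on the normal fibers, the fiberwise Chern character of the Koszul complex must match precisely the orbifold Bott-Chern Todd class after equivariant fiber integration, which requires careful bookkeeping of characters and weights on each component $X^g$. A secondary obstacle is ensuring the blow-up construction of $W$ and associated superconnections behave compatibly with the groupoid structure; this is handled by observing that a stabilizer-preserving embedding (Definition \ref{defi: stabilizer-preserving embedding}) locally reduces to an equivariant embedding of complex manifolds under a single finite group, so the classical blow-up and deformation arguments extend straightforwardly, with Proposition \ref{prop: inertia morphism of stablizer-preserving embedding is  a stablizer-preserving embedding} ensuring compatibility with the inertia orbifold.
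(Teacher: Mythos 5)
Your proposal follows essentially the same route as the paper's proof: deformation to the normal cone via the blow-up $W$, a Poincar\'e--Lelong/Stokes argument along $\mathbb{P}^1$ to move the computation to the fiber at infinity, vanishing of the $\widetilde{Y}$ contribution by acyclicity, and the equivariant Koszul-complex computation on $P$ (which the paper resolves by citing Bismut's equivariant result) to produce $\TdBC(N_{X/Y})^{-1}$. The only slips are cosmetic: $P\to X$ is a $\mathbb{P}^{\mathrm{rk}\,N_{X/Y}}$-bundle rather than a $\mathbb{P}^1$-bundle, and the paper handles the $\widetilde{Y}$ term by the cleaner observation that $\widetilde{Y}\cap(X\times\mathbb{P}^1)=\emptyset$ forces the restricted object to be zero in $D^b_{\coh}(\widetilde{Y})$.
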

\begin{proof}
Let $p_{X\times \mathbb{P}^1, X}\colon X\times \mathbb{P}^1\to X$ be the natural projection. Since  $p_{X\times \mathbb{P}^1, X}$ is flat, we have $p_{X\times \mathbb{P}^1, X}^*=Lp_{X\times \mathbb{P}^1, X}^*$.

Recall that we have the natural embedding $i_{X\times \mathbb{P}^1,W}\colon X\times \mathbb{P}^1\hookrightarrow W$. For $\mathcal{F}\in D^b_{\coh}(X)$ we have 
\begin{equation}
i_{X\times \mathbb{P}^1,W,*}p_{X\times \mathbb{P}^1, X}^* \mathcal{F}\in D^b_{\coh}(W).
\end{equation}

Moreover, let 
\begin{equation}
\begin{split}
i_{Y,W}\colon & Y\hookrightarrow W, ~i_{P,W}\colon P\hookrightarrow W,\\
i_{\widetilde{Y},W}\colon & \widetilde{Y}\hookrightarrow W, ~i_{X,X\times \mathbb{P}^1}\colon X\times 0=X\hookrightarrow X\times \mathbb{P}^1\\
\end{split}
\end{equation}
be the natural embeddings, where the last embedding is the inclusion of $0\in \mathbb{P}^1$ into $\mathbb{P}^1$. We summarize the various maps in the following diagram:
\begin{equation*}
\xymatrixcolsep{5pc}\xymatrix{
&Y\ar@{^{(}->}[r]_{i_{Y,W}}& W & Y\ar@{_{(}->}[l]^{i_{Y,W}}& P\ar@/_10pt/@{_{(}->}[ll]_{i_{P,W}}\\
X\ar@{=}[r]&X\times \infty\ar@{^{(}->}[r]^{i_{X\times\infty,X\times\mathbb{P}^1}}\ar@{^{(}->}[u]^{i_{X\times\infty,Y}}& X\times\mathbb{P}^1\ar@{^{(}->}[u]_{i_{X\times\mathbb{P}^1,W}}\ar@/_15pt/[rr]|{p_{X\times\mathbb{P}^1,X}}\ar@/^15pt/[ll]|{p_{X\times\mathbb{P}^1,X}}& X\times 0\ar@{_{(}->}[l]_{i_{X,X\times\mathbb{P}^1}}\ar@{^{(}->}[u]_{i_{X,Y}}& X\ar@{=}[l]\ar@{^{(}->}[u]_{i_{X,P}}.
}    
\end{equation*}

Notice that the composition $p_{X\times \mathbb{P}^1, X}\circ i_{X, X\times \mathbb{P}^1}$ is the identity map on $X$, so we have
\begin{equation}
Li_{X, X\times \mathbb{P}^1}^*~p_{X\times \mathbb{P}^1, X}^*\mathcal{F}\simeq \mathcal{F} \in D^b_{\coh}(X).
\end{equation}
By the same reason, if we identify $X\times \infty$ with $X$, then we get
\begin{equation}
Li_{X\times \infty, X\times \mathbb{P}^1}^*~p_{X\times \mathbb{P}^1, X}^*\mathcal{F}\simeq \mathcal{F} \in D^b_{\coh}(X).
\end{equation}

Recall that $W$ is the blow-up of $Y\times \mathbb{P}^1$ along $X\times \infty$ with $P$ the exceptional divisor. Then $Y$ and $P$ are both transverse to $X\times \mathbb{P}^1$ in $W$. By Proposition \ref{prop: cartesian diagram coherent sheaves} we have
\begin{equation}
Li_{Y,W}^*\circ i_{X\times \mathbb{P}^1,W,*}\circ p_{X\times \mathbb{P}^1,X}^*\mathcal{F}\simeq i_{X,Y,*} \mathcal{F} \text{ in }  D^b_{\coh}(Y),
\end{equation}
and
\begin{equation}
Li_{P,W}^*\circ i_{X\times \mathbb{P}^1,W,*}\circ p_{X\times \mathbb{P}^1,X}^*\mathcal{F}\simeq i_{X,P,*} \mathcal{F} \text{ in }  D^b_{\coh}(P).
\end{equation}

Then by Proposition \ref{prop: chern character pull back} we have
\begin{equation}\label{eq: chern character pull back 1}
Ii_{Y,W}^*\chBC(i_{X\times \mathbb{P}^1,W,*}\circ p_{X\times \mathbb{P}^1,X}^*\mathcal{F})=\chBC( i_{X,Y,*} \mathcal{F}) \text{ in } \HBC^{(=)}(IY,\mathbb{C})
\end{equation}
and
\begin{equation}\label{eq: chern character pull back 2}
Ii_{P,W}^*\chBC(i_{X\times \mathbb{P}^1,W,*}\circ p_{X\times \mathbb{P}^1,X}^*\mathcal{F})=\chBC( i_{X,P,*} \mathcal{F}) \text{ in } \HBC^{(=)}(IP,\mathbb{C}).
\end{equation}

Let $z$ be the canonical meromorphic function on $\mathbb{P}^1$ that vanishes at $0$ and with a pole at $\infty$. We have the Poincar\'{e}-Lelong equation
\begin{equation}\label{eq: Poincare-Lelong equation}
\frac{\dbar^{\mathbb{P}^1}\dpar^{\mathbb{P}^1}}{2\pi i}\log(|z|^2)=\delta_0-\delta_{\infty}.
\end{equation}

Recall \eqref{eq: Yz definition} that $Y_z=q_{W,\mathbb{P}^1}^{-1}z\subset W$ and $IY_z$ is the inertia groupoid of $Y_z$.

Let $\delta_{IY_0}$ and $\delta_{IY_{\infty}}$ be the currents on $IW$ defined by integration along $IY_0$ and $IY_{\infty}$ respectively. Since $q_{W,\mathbb{P}^1}$ has ordinary double points as singularities near $\mathbb{P}(N_{X/Y})$ in the sense of orbifolds, we have an integrable current $q_{IW,\mathbb{P}^1}^*\log(|z|^2)$ in $IW$.  
Then \eqref{eq: Poincare-Lelong equation} gives
\begin{equation}\label{eq: pull back of Poincare-Lelong equation}
\frac{\dbar^{IW}\dpar^{IW}}{2\pi i}q_{IW,\mathbb{P}^1}^*\log(|z|^2)=\delta_{IY_0}-\delta_{IY_{\infty}}.
\end{equation}

Let $\alpha\in  \Omega^{(=)}(IW,\mathbb{C})$ denote the smooth form representing $\chBC(i_{X\times \mathbb{P}^1,W,*}\circ p_{X\times \mathbb{P}^1,X}^*\mathcal{F})$ in $\HBC^{(=)}(IW,\mathbb{C})$. Then \eqref{eq: pull back of Poincare-Lelong equation} gives 
\begin{equation}\label{eq: pull back of Poincare-Lelong equation with alpha}
\frac{\dbar^{IW}\dpar^{IW}}{2\pi i}\big(\alpha q_{IW,\mathbb{P}^1}^*\log(|z|^2)\big)=\alpha\delta_{IY_0}-\alpha\delta_{IY_{\infty}}.
\end{equation}
Let $q_{Y_{\infty},Y}$ be the restriction of $q_{W,Y}$ to $Y_{\infty}$. For $z\in \mathbb{P}^1$, let $Ii_z\colon IY_z\hookrightarrow IW$ be the embedding. It is clear that $Ii_0=Ii_{Y,W}$ which is the induced morphism of $i_{Y,W}$ on inertia defined before. Then \eqref{eq: pull back of Poincare-Lelong equation with alpha} gives 
\begin{equation}\label{eq: pull back to Y of Poincare-Lelong equation with alpha}
\frac{\dbar^{IY}\dpar^{IY}}{2\pi i}q_{IW,IY,*}\big(\alpha q_{W,\mathbb{P}^1}^*\log(|z|^2)\big)=Ii_0^*\alpha-q_{IY_{\infty},IY,*}Ii_{\infty}^*\alpha.
\end{equation}
By \eqref{eq: chern character pull back 1} and \eqref{eq: chern character pull back 2}, we know that $Ii_0^*(\alpha)=Ii_{Y,W}^*(\alpha)$ represents $\chBC( i_{X,Y,*} \mathcal{F})$ in $\HBC^{(=)}(IY,\mathbb{C})$, and $Ii_{P,W}^*(\alpha)$ represents $\chBC( i_{X,P,*} \mathcal{F})$ in $\HBC^{(=)}(IP,\mathbb{C})$. Then \eqref{eq: pull back to Y of Poincare-Lelong equation with alpha} gives 
\begin{equation}\label{eq: chern character of push forward deform}
\chBC( i_{X,Y,*} \mathcal{F})=q_{IY_{\infty},IY,*}Ii_{\infty}^*\alpha  \text{ in } \HBC^{(=)}(IY,\mathbb{C}).
\end{equation}

Recall \eqref{eq: Yz} that $Y_{\infty}=P\cup \widetilde{Y}$. Let $q_{P,Y}$ and $q_{\widetilde{Y},Y}$ be the restriction of $q_{Y_{\infty},Y}$ to $P$ and $\widetilde{Y}$ respectively.  Notice that
\begin{equation}\label{eq: chern character of push forward decompose}
q_{IY_{\infty},IY,*}Ii_{\infty}^*\alpha=q_{IP,IY,*}i_{IP,IW}^*\alpha+q_{I\widetilde{Y},IY,*}Ii_{\widetilde{Y},W}^*\alpha.
\end{equation}
Since $\widetilde{Y}\cap (X\times \mathbb{P}^1)=\emptyset$, we have
\begin{equation}\label{eq: pull back to Y tilde is zero}
Li_{\widetilde{Y},W}^*\circ i_{X\times \mathbb{P}^1,W,*}\circ p_{X\times \mathbb{P}^1,X}^*\mathcal{F}\simeq 0 \text{ in } D^b_{\coh}(\widetilde{Y}).
\end{equation}
Then by Proposition \ref{prop: chern character pull back} and the definition of $\alpha$, \eqref{eq: pull back to Y tilde is zero} gives $
Ii_{\widetilde{Y},W}^*\alpha=0$ in $ \HBC^{(=)}(I\widetilde{Y},\mathbb{C})$. Hence
\begin{equation}\label{eq: vanish of alpha on Y tilde}
q_{I\widetilde{Y},IY,*}Ii_{\widetilde{Y},W}^*\alpha=0  \text{ in } \HBC^{(=)}(IY,\mathbb{C}).
\end{equation}
Combining \eqref{eq: chern character pull back 2}, \eqref{eq: chern character of push forward deform}, \eqref{eq: chern character of push forward decompose}, and \eqref{eq: vanish of alpha on Y tilde}, we get
\begin{equation}\label{eq: chern character after deform}
\chBC( i_{X,Y,*} \mathcal{F})=q_{IP,IY,*}\chBC(i_{X\times \infty,P,*}\mathcal{F})  \text{ in } \HBC^{(=)}(IY,\mathbb{C}).
\end{equation}

We can compute the right-hand side of \eqref{eq: chern character after deform} explicitly.

Recall that  $(\wedge^{\bullet}((A\oplus \underline{\mathbb{C}})/U)^*,i_{\sigma})$, with  $\sigma$  the image of $1\in \underline{\mathbb{C}}$ in $(A\oplus \underline{\mathbb{C}})/U$, provides
a Koszul resolution of $i_{X\times \infty, P, *}\mathcal{O}_{X\times \infty}$.  Let $q_{P,X\times \infty}$ be the natural projection. By the equivariant version of the  projection formula  \cite[\href{https://stacks.math.columbia.edu/tag/0943}{Tag 0943}]{stacks-project}, we have
\begin{equation}\label{eq: projection formula to change F}
\begin{split}
i_{X\times \infty,P,*}\mathcal{F}&\simeq Lq_{P,X\times \infty}^*\mathcal{F}\widehat{\otimes}^L_{\mathcal{O}_P}i_{X\times \infty, P, *}\mathcal{O}_{X\times \infty}\\
&\simeq Lq_{P,X\times \infty}^*\mathcal{F}\widehat{\otimes}_{\mathcal{O}_P}(\wedge^{\bullet}((A\oplus \underline{\mathbb{C}})/U)^*,i_{\sigma}).
\end{split}
\end{equation}
Notice that since $(\wedge^{\bullet}((A\oplus \underline{\mathbb{C}})/U)^*,i_{\sigma})$ is a complex of locally free sheaves, derived tensor product coincides with tensor product.
Then by Proposition \ref{prop: chern character pull back} and Proposition \ref{prop: chern character tensor product} we have
\begin{equation}
\chBC(i_{X\times \infty,P,*}\mathcal{F})=(Iq_{P,X\times \infty}^*\chBC(\mathcal{F}))\chBC(\wedge^{\bullet}((A\oplus \underline{\mathbb{C}})/U)^*,i_{\sigma}).
\end{equation}
Since $i_{X,Y}: X\to Y$ is a stabilizer-preserving embedding, it follows from Prop. \ref{prop: inertia morphism of stablizer-preserving embedding is  a stablizer-preserving embedding} the map $Ii_{X,Y}$ is a stablizer-preserving embedding. Furthermore, we can directly check 
\[
I_{q_{P, Y}}=Ii_{X,Y}\circ I_{q_{P, X\times\infty}}
\]
by working with the local models
\[
H\ltimes M\rightarrow H\ltimes
W,
\]
where $H$ is a finite group and $M\hookrightarrow W$ is an $H$-equivariant embedding.

By the projection formula for Bott-Chern cohomology\footnote{This follows from a suitable generalization of \cite[Eq. (1.15)]{berline2004heat} to the Bott-Chern cohomology.}, 
we get
\begin{equation}\label{eq: chern character to Koszul}
\begin{split}
Iq_{P,Y,*}\chBC(i_{X\times \infty,P,*}\mathcal{F}) &=Ii_{X,Y,*}\circ Iq_{P,X\times \infty,*}\Big((Iq_{P,X\times \infty}^*\chBC(\mathcal{F}))\chBC(\wedge^{\bullet}((A\oplus \underline{\mathbb{C}})/U)^*,i_{\sigma})\Big)\\
& =Ii_{X,Y,*}\Big(\chBC(\mathcal{F})Iq_{P,X\times \infty,*}\chBC(\wedge^{\bullet}((A\oplus \underline{\mathbb{C}})/U)^*,i_{\sigma})\Big).
\end{split}
\end{equation}

By \cite[Theorem 6.7]{bismut1995equivariant}, we get
\begin{equation}\label{eq: chern character of Koszul}
\chBC(\wedge^{\bullet}((A\oplus \underline{\mathbb{C}})/U)^*,i_{\sigma})=\TdBC(N_{X\times \infty,P})^{-1}\delta_{IX\times \infty}  \text{ in } \HBC^{(=)}(IP,\mathbb{C}).
\end{equation}
By \eqref{eq: definition of P} it is also clear that $N_{X\times \infty,P}=N_{X/Y}$ under the identification $X\times \infty\cong X$. Therefore \eqref{eq: chern character of Koszul} gives
\begin{equation}\label{eq: Iq Chern character}
    Iq_{P,X\times \infty,*}\chBC(\wedge^{\bullet}((A\oplus \underline{\mathbb{C}})/U)^*,i_{\sigma})=\TdBC(N_{X/Y})^{-1}\text{ in }\HBC^{(=)}(IX,\mathbb{C}).
\end{equation}

Now \eqref{eq: chern character to Koszul} and \eqref{eq: Iq Chern character} together give 
\begin{equation}
\chBC(i_{X,Y,*}\mathcal{F})=Ii_{X,Y,*}\left(\frac{\chBC(\mathcal{F})}{\TdBC(N_{X/Y})}\right) \text{ in }\HBC^{(=)}(IY,\mathbb{C}).
\end{equation}
\end{proof}

\subsection{General case}\label{sec:pf_RRG_emb}

Combining Theorem \ref{thm: GRR iso-spatial embedding} and Theorem \ref{thm: GRR stablizer-preserving embedding} we get Theorem \ref{thm:RRG_embeddings}, the Riemann-Roch-Grothendieck theorem for embeddings of complex orbifolds.

\begin{proof}[Proof of Theorem \ref{thm:RRG_embeddings}]
By Proposition \ref{prop: decompose an embedding into two types}, $i_{X,Y}$ is the composition of an iso-spatial embedding 
$$
i_1\colon X\hookrightarrow \bar{Y}
$$
and a stabilizer-preserving embedding
$$
i_2\colon \bar{Y}\hookrightarrow Y.
$$

By Theorem \ref{thm: GRR iso-spatial embedding} and Theorem \ref{thm: GRR stablizer-preserving embedding} we get
\begin{equation}
\chBC(i_{X,Y,*}\mathcal{F})=\chBC(i_{2,*}i_{1,*}\mathcal{F})=Ii_{2,*}\left(\frac{\chBC(i_{1,*}\mathcal{F})}{\TdBC(N_{\bar{Y}/Y})}\right)=Ii_{2,*}\left(\frac{i_{1,*}\chBC(\mathcal{F})}{\TdBC(N_{\bar{Y}/Y})}\right).
\end{equation}
By the projection formula for Bott-Chern cohomology, we have
\begin{equation}
\frac{Ii_{1,*}\chBC(\mathcal{F})}{\TdBC(N_{\bar{Y}/Y})}=Ii_{1,*}\left(\frac{\chBC(\mathcal{F})}{Ii_1^*\TdBC(N_{\bar{Y}/Y})}\right).
\end{equation}
Therefore 
\begin{equation}\label{eq: chBC next to final}
\chBC(i_{X,Y,*}\mathcal{F})=Ii_{2,*}Ii_{1,*}\left(\frac{\chBC(\mathcal{F})}{Ii_1^*\TdBC(N_{\bar{Y}/Y})}\right)=Ii_{X,Y,*}\left(\frac{\chBC(\mathcal{F})}{Ii_1^*\TdBC(N_{\bar{Y}/Y})}\right).
\end{equation}

We want to show that 
\begin{equation}\label{eq: TdBc pullback}
Ii_1^*\TdBC(N_{\bar{Y}/Y})=\TdBC(N_{X/Y}).
\end{equation}
Since $\TdBC$ is defined at the level of differential forms, the identity (\ref{eq: TdBc pullback}) can be proved by working locally on $Y$. 

Locally on $Y$, the factorization $i_{X,Y}=i_2\circ i_1$ can be described as
\begin{equation}\label{eqn:embedding_local}
i_{X,Y}\colon G\ltimes M\overset{i_1}{\longrightarrow} H\ltimes M\overset{i_2}{\longrightarrow} H\ltimes \mathsf{W},    
\end{equation}
where $M\subset \mathsf{W}$ is an embedding of manifolds, $H$ is a finite group and $G\subset H$ is a subgroup. The normal bundle $N_{i_2}$ is the normal bundle $N_{M/\mathsf{W}}$ together with the $H$-action. The pullback $i_1^*N_{i_2}$ is the bundle $N_{M/\mathsf{W}}$ with the $G$-action induced from the $H$-action and $G\subset H$. Therefore the equality 
\begin{equation}
i_1^*N_{i_2}=N_{i_{X,Y}}    
\end{equation}
is valid locally and globally. Therefore 
\begin{equation}\label{eqn:Todd1}
\TdBC(N_{i_{X,Y}})=\TdBC(i_1^*N_{i_2}).
\end{equation}

It remains to show 
\begin{equation}\label{eqn:Todd2}
\TdBC(i_1^*N_{i_2})=Ii_1^*\TdBC(N_{i_2}),
\end{equation}
for which we can work locally as in (\ref{eqn:embedding_local}). The induced map $Ii_1$ between inertia orbifolds can be described locally as
\begin{equation}
\coprod_{(g)\in \text{Conj}(G)} Z_G(g)\ltimes M^g \longrightarrow \coprod_{(h)\in\text{Conj}(H)} Z_H(h)\ltimes M^h
\end{equation}
where $Z_G(g)\to Z_H(h)$ is induced from $G\subset H$. Therefore, a component $Z_H(h)\ltimes M^h$ is in the image of $Ii_1$ if and only if $h$ is conjugate in $H$ to some $g\in G\subset H$. In this situation, consider the restriction of $Ii_1$:
\begin{equation}
i_g\colon Z_G(g)\ltimes M^g\to Z_H(h)\ltimes M^h=Z_H(g)\ltimes M^g.   \end{equation}
Then we have the decomposition into eigenbundles of $g$:
\begin{equation}
N_{M/\mathsf{W}}|_{M^g}=\bigoplus_k N_k,
\end{equation}
which induces isomorphic eigenbundle decomposition of $N_{i_2}|_{Z_H(g)\ltimes M^g}$ and $(i_1^*N_{i_2})|_{Z_G(g)\ltimes M^g}$. The definition of $\TdBC$ as a differential form then implies that 
\begin{equation}
\TdBC(i_1^*(N_{i_2}))|_{Z_G(g)\ltimes M^g}=Ii_g^*(\TdBC(N_{i_2})|_{Z_H(g)\ltimes M^g}). 
\end{equation}
This proves (\ref{eqn:Todd2}). Equation (\ref{eq: TdBc pullback}) follows from (\ref{eqn:Todd1}) and (\ref{eqn:Todd2}).

Finally, \eqref{eq: chBC next to final} and \eqref{eq: TdBc pullback} together give \eqref{eq: GRR for embedding}.
\end{proof}

\subsection{Uniqueness of orbifold Chern character}

\begin{proof}[Proof of Theorem \ref{thm:unique_Chern}]
Certainly $\chBC$ defined in this paper satisfies the conditions listed in the Theorem. To show that any such group homomorphism must coincide with $\chBC$, we may adopt the argument in the proof of \cite[Theorem 8]{Grivaux}. This requires two general results, which we discuss below. 

The first result asserts that for a coherent sheaf $\mathcal{F}$ on $X$, there exists a bimeromorphic map $\pi\colon \tilde{X}\to X$ which is the composition of a sequence of blow-ups along smooth centers, such that the pullbacks $\pi^*\mathcal{F}$ admits a locally free quotient of maximal rank. As indicated in \cite[Proposition 7]{Grivaux}, this is an immediate consequence of Hironaka's flattening result \cite{Hironaka_flattening}. To make this work for orbifolds, we simply observe that Hironaka's flattening result is valid for complex orbifolds, because the desired property (flatness) and the construction (blow-ups along smooth centers) commute with \'etale base changes.    

The second result asserts an isomorphism between $G$-theory $G(D)$ of coherent sheaves on a smooth divisor $D\subset \tilde{X}$ and the $G$-theory $G_D(\tilde{X})$ of coherent sheaves on $\tilde{X}$ supported on $D$. By the argument of Proposition 7.2 in the arXiv version of \cite{Grivaux}, this follows from d\'evissage theorem (see e.g. \cite[Devissage Theorem 6.3]{Weibel_K-book}) applied to abelian categories of coherent sheaves on orbifolds.
\end{proof}


\bibliographystyle{alpha}

\end{document}